\newcounter{Todo}
\tikzstyle{block} = [rectangle, rounded corners, minimum width=3cm, minimum height=1cm,text centered, text width=3cm, draw=black]
\newcommand{\tpoint}[1]{\subsubsection{#1}}
\newcommand{\spoint}{\subsubsection{}}
\newtheorem*{nthm}{Theorem}
\newtheorem*{nlem}{Lemma}
\newtheorem*{nprop}{Proposition}
\newtheorem*{ncor}{Corollary}
\newtheorem*{nclaim}{Claim}
\newtheorem*{nexam}{Example}
\newtheorem*{nqe}{Question}
\theoremstyle{definition}
\newtheorem*{nrem}{Remarks}
\theoremstyle{remark}
\newcommand{\be}{\begin{eqnarray}}
\newcommand{\ee}{\end{eqnarray}}
\numberwithin{equation}{section}
\newcommand{\mf}[1]{\mathfrak{#1}}
\newcommand{\mc}[1]{\mathcal{{#1}}}
\renewcommand{\mod}{\mathrm{mod}}
\newcommand{\rr}{\rightarrow}
\newcommand{\la}{\langle}
\newcommand{\ra}{\rangle}
\DeclareMathOperator{\fin}{fin}
\DeclareMathOperator{\res}{res}
\DeclareMathOperator{\GL}{GL}
\newcommand{\Aut}{\operatorname{Aut}}
\newcommand{\alphav}{{\alpha}^\vee}
\newcommand{\betav}{\check{\betav}}
\newcommand{\lv}{{\lambda}^\vee}
\newcommand{\mv}{\check{\mv}}
\newcommand{\R}{{\mathbb{R}}}
\newcommand{\C}{{\mathbb{C}}}
\newcommand{\Q}{{\mathbb{Q}}}
\newcommand{\ad}{\mathbb{A}}
\renewcommand{\O}{{\mathcal{O}}}
\renewcommand{\o}{\mathfrak{o}}
\newcommand{\finf}{\mathbb{F}}
\newcommand{\leg}[2]{ \left( \frac{#1}{#2} \right) }
\newcommand{\bG}{\mathbf{G}}
\newcommand{\bB}{\mathbf{B}}
\newcommand{\bA}{\mathbf{A}}
\newcommand{\bU}{\mathbf{U}}
\newcommand{\bT}{\mathbf{T}}
\newcommand{\Lv}{\Lambda^\vee}
\newcommand{\dw}{\dot{w}}
\newcommand{\tT}{\widetilde{T}}
\newcommand{\tG}{\widetilde{G}}
 \newcommand{\BA}{{\mathbb {A}}} 
    \newcommand{\BC}{{\mathbb {C}}} 
     \newcommand{\BF}{{\mathbb {F}}}
     \newcommand{\BN}{{\mathbb {N}}}
    \newcommand{\BQ}{{\mathbb {Q}}} \newcommand{\BR}{{\mathbb {R}}}
     \newcommand{\BZ}{{\mathbb {Z}}}
    \newcommand{\Bg}{{\mathbb g}}
    \newcommand{\CE}{{\mathcal {E}}}
    \newcommand{\CK}{{\mathcal {K}}} 
    \newcommand{\CO}{{\mathcal {O}}} 
     \newcommand{\CT}{{\mathcal {T}}}
    \newcommand{\CW}{{\mathcal {W}}}
    \newcommand{\fo}{{\mathfrak{o}}} \newcommand{\fp}{{\mathfrak{p}}}
     \newcommand{\fD}{{\mathfrak{D}}}
    \newcommand{\id}{{\mathrm{id}}}
    \newcommand{\Ind}{{\mathrm{Ind}}}
    \newcommand{\ord}{{\mathrm{ord}}} \newcommand{\rank}{{\mathrm{rank}}}
    \renewcommand{\mod}{\ \mathrm{mod}\ }\renewcommand{\Re}{{\mathrm{Re}}}
 	\newcommand{\supp}{{\mathrm{supp}}}
    \newcommand{\SL}{{\mathrm{SL}}}
    \newcommand{\Spec}{{\mathrm{Spec}}}
    \newcommand{\univ}{{\mathrm{univ}}}
	\newcommand{\ul}{\underline}
    \newcommand{\pair}[1]{\langle {#1} \rangle}
    \newcommand{\bs}{\bm{i}}
    \newcommand{\adeles}{ad\`{e}les~}
    \newcommand{\Wh}{{\mathrm{Wh}}}
\newcommand{\se}{{\mathsf e}}
\newcommand{\ro}{\mathcal{O}}
\newcommand{\Gode}{\mathrm{Gode}}
\newcommand{\sQ}{{\mathsf Q}}
\newcommand{\cs}{\widetilde{\mathbf{CS}}}
\newcommand{\sB}{{\mathsf B}}
\newcommand{\lf}{\mathscr{F}}
\begin{document}

\title{Eisenstein Series on Metaplectic Covers and Multiple Dirichlet Series}

\author{ Yanze Chen }
\address{Department of Mathematical and Statistical Sciences, University of Alberta, Edmonton, AB
T6G 2G1, Canada}
\email{yanze4@ualberta.ca}
\maketitle 
\begin{abstract}
	We computed the first Whittaker coefficient of an Eisenstein series on a global metaplectic group induced from the torus and related the result with a Weyl group multiple Dirichlet series attached to the (dual) root system of the group under a mild assumption on the root system and the degree of the metaplectic cover. This confirms a conjecture of Brubaker-Bump-Friedberg.
\end{abstract}
\setcounter{tocdepth}{2}
\tableofcontents

\section{Introduction}
\subsection{Statement of main result}
Let $k$ be a global field with ring of \adeles $\BA$, $\fD=(\Lambda,\Delta,\Lambda^\vee,\Delta^\vee)$ be a semisimple simply-connected based root datum, $n$ be a positive integer such that $k$ contains all $2n$-th roots of unity, let $S$ be a finite set of places of $k$ containing all the archimedean places and satisfies some additional conditions (see \S\ref{subsub:WMDS-S-conditions}). There are two objects that are naturally associated to the above data $(k,\fD,n,S)$:
\begin{enumerate}[(1)]
	\item the \emph{metaplectic Eisenstein series} $E(g,\Phi_{\lambda, S},\lambda)$, which is an Eisenstein series on the global metaplectic group $\widetilde G_\BA$ with root datum $\fD$ induced from the torus, defined in  \S\ref{met-Es}.	\item the \emph{Weyl group multiple Dirichlet series} (WMDS) $Z_\psi(s_1,\cdots,s_r)$, which is a Dirichlet series in multiple variables with twisted multiplicative coefficients, defined in \S\ref{WMDS}. 
\end{enumerate}

The goal of this article is to prove the following
\begin{nthm}\label{main-result}
	Suppose the metaplectic dual root datum $\fD_{(\sQ,n)}^\vee$ is of adjoint type (see \S\ref{notation: root system}). Let $\psi$ be an additive character of $\BA/k$ that is unramified over every place $\nu\notin S$. For $\lambda\in\Gode$ in the Godement region (\ref{Godement-region}), the first Whittaker coefficient of $E(g,\lambda)$
	$$\int_{U_\BA^-/U_k^-}E(\lambda,\Phi_{\lambda,S},u^-)\psi(u^-)^{-1}du$$
	is equal to 
	$$[T_{\fo_S}:T_{0,\fo_S}]Z_\Psi(s_1,\cdots,s_r)$$
	where $s_i=-\pair{\lambda,\alpha_i^\vee}$, $\alpha_1^\vee,\cdots,\alpha_r^\vee$ are the simple coroots.
\end{nthm}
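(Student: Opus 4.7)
The plan is to unfold the Eisenstein series into a global Jacquet integral, factorize it over places, and identify the resulting Euler product with the WMDS $Z_\Psi$; the adjoint-type hypothesis on $\fD^\vee_{(\sQ,n)}$ enters at the final matching step.

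First I would unfold. Writing $E(\lambda,\Phi_{\lambda,S},g)=\sum_{\gamma\in B_k\backslash G_k}\Phi_{\lambda,S}(\gamma g)$ and substituting into the Whittaker integral, I break $B_k\backslash G_k$ up via the Bruhat decomposition. Exchanging summation and integration (justified in the Godement region (\ref{Godement-region})), only the long-Weyl-element cell $w=w_0$ survives: the genericity of $\psi$ kills the contribution of every shorter cell after folding the character through the integral over $U^-_\BA/U^-_k$. The surviving term collapses to the global Jacquet integral
$$W(\Phi_{\lambda,S})=\int_{U^-_\BA}\Phi_{\lambda,S}(w_0 u^-)\psi(u^-)^{-1}\,du^-.$$

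Next I would factorize. Since $\Phi_{\lambda,S}=\bigotimes_\nu\Phi_{\lambda,\nu}$ is a pure tensor with $\Phi_{\lambda,\nu}$ the normalized spherical vector at every $\nu\notin S$, the Jacquet integral splits as a product $\prod_\nu W_{\lambda,\nu}$ of local Whittaker integrals. The places in $S$ contribute a combinatorial/volume factor that, under the chosen normalizations of $\Phi_{\lambda,\nu}$, produces the prefactor $[T_{\fo_S}:T_{0,\fo_S}]$. At each $\nu\notin S$, the metaplectic Casselman-Shalika formula (in the form of McNamara / Chinta-Offen) expresses $W_{\lambda,\nu}$ as a weighted sum over $W$ of Gauss sums indexed by elements of a sublattice of the local coroot lattice.

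To assemble the Dirichlet series I would expand each unramified local factor along the support of the spherical metaplectic Whittaker function and multiply across primes. The coefficients of the resulting series in the variables $q_\nu^{-s_i}$ are twisted-multiplicative functions of a lattice parameter; under the adjoint-type hypothesis on $\fD^\vee_{(\sQ,n)}$, this twisted multiplicativity is precisely the one used to define the coefficients of $Z_\Psi$ in \S\ref{WMDS}, so the two series agree term by term in $s_i=-\la\lambda,\alpha_i^\vee\ra$. The main obstacle I expect is this final matching: verifying that the Gauss-sum coefficients from the Casselman-Shalika formula coincide with the twisted-multiplicative coefficients $H_\nu$ of $Z_\Psi$, and checking that the adjoint-type hypothesis on $\fD^\vee_{(\sQ,n)}$ is exactly what makes the lattice supporting the local Whittaker function coincide with the lattice indexing $Z_\Psi$, so that no residual correction factors arise and the identification is uniform at every prime.
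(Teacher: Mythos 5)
There is a genuine gap at the heart of your argument: the claim that after unfolding, the Whittaker coefficient ``splits as a product $\prod_\nu W_{\lambda,\nu}$ of local Whittaker integrals.'' It does not, and this failure of the Euler product is the whole point of the theorem. The section $\Phi_{\lambda,*}=\bigl(\prod_{\nu\notin S}\Phi_{\lambda,\nu}\bigr)\cdot\Phi_{\lambda,S}$ is only right $T_{0,k}$-invariant, not right $B_k$-invariant, because the local spherical vectors are induced from a maximal abelian subgroup $\widetilde T_*$ of the (noncommutative) metaplectic torus rather than from $\widetilde T$ itself. The Eisenstein series is therefore built from $\Phi_{\lambda,0}(x)=\sum_{\eta\in T_k/T_{0,k}}\Phi_{\lambda,*}(x\eta)$ as in \eqref{def:Phi_0}, and the unfolding (Claim in \S\ref{subsub:unfolding}) produces a \emph{sum} over $\eta\in T_k/T_{0,k}$ of factorizable integrals $I^S(\eta)I_S(\eta)$, reflecting the fact that the local space of Whittaker functionals has rank $|\Lambda^\vee/\Lambda_0^\vee|>1$. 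Your prefactor is also misattributed: $[T_{\fo_S}:T_{0,\fo_S}]$ arises because the sum over $T_k/T_{0,k}$ is parametrized by $\bigoplus_{\nu\notin S}\Lambda^\vee/\Lambda_0^\vee\times T_{\fo_S}/T_{0,\fo_S}$ and the summand is constant in the second factor; it is not a volume contribution from the places in $S$, which instead supply the function $\Psi$ in the definition of $Z_\Psi$.

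The second missing idea is the matching of roots of unity, which a naive ``multiply the local factors across primes'' cannot produce. The coefficient $H(C_1,\dots,C_r)$ is not the product of its prime-power parts; by twisted multiplicativity it is that product times a root of unity $D(\ul C)$ (Lemma \ref{lem:H-fact}). On the automorphic side, writing $\bs_\nu(\eta_\nu(\ul C))$ as the product of its $\nu$-part and prime-to-$\nu$ part in the metaplectic torus produces a cocycle factor $D(\ul C;\nu)$ (Lemma \ref{i-nu-eta-C}), and the proof hinges on the identity $D(\ul C)=\prod_{\nu\notin S}D(\ul C;\nu)$ — i.e.\ that the Hilbert-symbol cocycle of the global metaplectic torus reproduces exactly the twisted multiplicativity of the WMDS. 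You do not engage with this step at all. Finally, the adjoint-type hypothesis does not enter through ``the lattice supporting the local Whittaker function'': it is used (Corollary \ref{section-homomorphism-Lambda0}) to guarantee that the section $\bs$ is multiplicative against $T_0$, so that the functions $I_\nu$, $I_S$ are genuinely right $T_{*}$-invariant and the sum over $T_k/T_{0,k}$ can be collapsed without further correction factors.
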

See Theorem \ref{Eis-conj} for a precise statement. This confirmed the so called \emph{Eisenstein conjecture} by Brubaker-Bump-Friedberg.
\tpoint{Eisenstein conjecture}In \cite{WMDS1,WMDS2}, Brubaker-Bump-Friedberg conjectured that the Whittaker coefficients of a metaplectic Eisenstein series are Weyl group multiple Dirichlet series. The main result Theorem \ref{Eis-conj} of this article confirms this conjecture under the mild assumption that the metaplectic dual root datum is of adjoint type. 
\par The Eisenstein conjecture was proved for the root system $A_2$ by Brubaker-Bump-Friedberg-Hoffstein in \cite{WMDS3} by direct computations, then proved by Brubaker-Bump-Friedberg in \cite{BBF:annals} for type $A$ root systems, and subsequently by Friedberg-Zhang in \cite{FZ:ajm} for type $B$ root systems. In these works the key ingredient of the proof is a combinatorial description of the $p$-part in terms of crystal graphs or Gelfand-Tsetlin patterns. Our approach to the Eisenstein conjecture works uniformly for all types of root systems and used a different description of the $p$-part as metaplectic Casselman-Shalika formula due to McNamara \cite{McN:tams} and Chinta-Gunnells \cite{cg:jams}. 
\tpoint{Eisenstein series on metaplectic groups}Let $G$ be a split semisimple simply-connected group over a global field $k$ containing all $n$-th roots of unity. For each place $\nu$, let $G_\nu=G(k_\nu)$, the \emph{metaplectic cover} $\widetilde G_\nu$ of $G_\nu$ is a certain central extension of $G_\nu$ by $\mu_n(k)$, the group of $n$-th roots of unity in $k$. See \S\ref{def:metaplectic cover} for this construction. There is a \emph{global metaplectic group} $\widetilde G_\BA$ which is a central extension of $G_\BA$ by $\mu_n(k)$. See \S\ref{def:Global-metaplectic-group}. 
\par In particular, the group $G_k$ of $k$-rational points splits canonically in $\widetilde G_\BA$, so it makes sense to talk about automorphic forms on $\widetilde G_\BA$, which are functions on $\widetilde G_\BA/G_k$ satisfying some analytic properties. For example, classical modular forms of half-integral weights can be viewed as automorphic forms on the two-fold metaplectic cover of $\SL_2(\BA)$ for the field of rational numbers $\BQ$. In the context of modular forms of half integral weights, the Whittaker coefficients of metaplectic Eisenstein series were first studied by Hecke, which results in the Dirichlet series of a quadratic Dirichlet character. 
\par Kubota first considered coverings groups of $\SL_2$ of degree larger than $2$ in \cite{Ku:jms}. In \cite{Ku:book,Ku:pspum} Kubota computed the Fourier coefficients of an Eisenstein series on the $3$-fold metaplectic cover of $\SL_2(\BA)$ for the number field $k=\BQ(\sqrt{-3})$, and results in a Dirichlet series whose coefficients are cubic Gauss sums. The Gauss sum coefficients are not multiplicative, so this Dirichlet series is not an Euler product as in the $2$-fold cover case. 
\par In the pioneering work \cite{KP:pmihes} of Kazhdan-Patterson they defined the metaplectic $n$-fold covers for $\GL_r$ both locally and globally. They computed the Whittaker coefficients of the Borel Eisenstein series on the $n$-fold cover of $\GL_2(\BA)$ they constructed, and the result is essentially a Gauss sum Dirichlet series as in the computation of Kubota. 
\par There are also some work on the Whittaker coefficients of Eisenstein series of \emph{maximal parabolic} type on metaplectic groups . For example, see \cite{BBL:comp,BF:gfa}. 
\tpoint{Weyl group multiple Dirichlet series}\label{WMDS-intro}
The idea of Weyl group multiple Dirichlet series originates in the study of moments of quadratic $L$-functions. This idea goes back to the work of Goldfeld-Hoffstein \cite{GH:invent} on the first moment, and was summarized in the paper \cite{DGH:comp} of Diaconu-Goldfeld-Hoffstein. Roughly speaking, to estimate the $m$-th moment, they are lead to study a certain multiple Dirichlet series whose group of functional equation is a Coxeter group for the Coxeter diagram with one central node and $m$ nodes connected to the central node. The following image shows the dynkin diagrams for $m=1,2,3$, which are Dynkin diagrams for the root systems of type $A_2$, $A_3$, $D_4$ respectively. 
\begin{center}
	\begin{tabular}{ | c | c | c | c |}
	$m=1$ & $m=2$ & $m=3$  \\
	\hline
	\dynkin A2 & \dynkin A3 & \dynkin D4 \\
	$A_2$ & $A_3$ & $D_4$
\end{tabular}
\end{center}
\par Weyl group multiple Dirichlet series (WMDS) are multiple Dirichlet series $Z_\Psi(s_1,\cdots,s_r)$ attached to a root system with nice analytic properties: they have meromorphic continuations to the whole complex space and has group of functional equations isomorphic to the Weyl group of the root system. They were first defined in the series of papers \cite{WMDS1,WMDS2,WMDS3}. Concretely, 
$$Z_\Psi(s_1,\cdots,s_r)=\sum_{C_1,\cdots,C_r\in (\fo_S\setminus\{0\})/\fo_S^\times}H(C_1,\cdots,C_r)\Psi(C_1,\cdots,C_r)\BN C_1^{-s_1}\cdots\BN C_r^{-s_r}$$ 
where the $H$-coefficeints are twisted multiplicative, namely 
\begin{align}\label{twisted-multiplicativity-intro}
	&\nonumber\frac{H(C_1C_1',\cdots,C_rC_r')}{H(C_1,\cdots,C_r)H(C_1',\cdots,C_r')}
	\\=&\prod_{i=1}^r\left(\frac{C_i}{C_i'}\right)_S^{\mathsf Q(\alpha_i^\vee)}\left(\frac{C_i'}{C_i}\right)_S^{\mathsf Q(\alpha_i^\vee)}\prod_{1\leq i<j\leq r}\left(\frac{C_i}{C_j'}\right)_S^{\mathsf B(\alpha_i^\vee,\alpha_j^\vee)}\left(\frac{C_i'}{C_j}\right)_S^{\mathsf B(\alpha_i^\vee,\alpha_j^\vee)}
\end{align}
where $\sQ:\Lambda^\vee\to\BZ$ is the unique Weyl group invariant quadratic form on the coweight lattice $\Lambda^\vee$ normalized such that its value on short coroots is equal to $1$, and $\sB$ is the associated bilinear form by (\ref{bil}). $\Psi$ is another function in $C_1,\cdots,C_r$ that plays a less important role. 
\par  Because of twisted multiplicativity, the $H$-coefficients are determined by the values of $H(\pi_\nu^{k_1},\cdots,\pi_{\nu}^{k_r})$ for every $\nu\notin S$ and $r$-tuple of non-negative integers $k_1,\cdots,k_r$, where $\pi_\nu$ is a prime element of $\fo_S$ corresponding to the place $\nu$. It is convenient to form the generating series
\begin{equation}\label{p-part-intro}
	N(\nu)=\sum_{k_1,\cdots,k_r\geq0}H(\pi_\nu^{k_1},\cdots,\pi_{\nu}^{k_r})e^{-k_1\alpha_1^\vee-\cdots-k_r\alpha_r^\vee}\in\BC[[\Lambda^\vee]]
\end{equation}
which is called the $\nu$-part of the series. Correctly chosed $\nu$-parts will result in the desired analytic continuation and functional equations of the series $Z_\Psi$. 
\par There are several equivalent ways to construct the $\nu$-part: 
\begin{itemize}
	\item The original construction in \cite{WMDS1,WMDS2} by directly define $H(\pi_\nu^{k_1},\cdots,\pi_{\nu}^{k_r})$ as a certain product of Gauss sums. This only works in the \emph{stable case}, namely when $n$ is sufficiently large with respect to the root system.
	\item The combinatorial construction by Brubaker-Bump-Friedberg in \cite{BBF:annals,BBF:book} by crystal graphs. This construction goes beyond the stable case and leads to a prove of the Eisenstein conjecture. 
	\item The construction by Chinta-Gunnells in \cite{cg:jams} by averaging the constant polynomial over a non-standard Weyl group action on $\BC(\Lambda^\vee)$. It was then proved by McNamara that the $\nu$-parts constructed in this way are equal to the values of certain values of a metaplectic Whittaker function on $\widetilde G_\nu$, and these values are given by a metaplectic version of the Casselman-Shalika formula. See also \cite{PP:adv}. 
	\item The construction by a statistical mechanical model by Brubaker-Bump-Chinta-Friedberg-Gunnells in \cite{met-ice} and \cite{BBF:nams}. This model was then intensively studied by Brubaker, Buciumas, Bump, Friedberg, Gray, Gustafsson, and other people in a series of papers. See \cite{BBBG} and reference therein. 
\end{itemize}
\tpoint{Consequences}The work of Moeglin-Waldspurger \cite{MW} on Eisenstein series are applicable for metaplectic covers, so the metaplectic Eisenstein series have analytic continuations and functional equations similar to non-metaplectic Eisenstein series. As a result, the Whittaker coefficient also have analytic continuation and functional equations, so logically the proof of the Eisenstein conjecture leads to a new proof of the analytic properties of Weyl group multiple Dirichlet series. 
\par Nevertheless, we hope that our conputation shed some light in the definition of Weyl group multiple Dirichlet series for \emph{affine} root systems. In the affine case the Weyl group is an infinite group, and in general it is still unknown how to construct the $p$-parts so that the Weyl group multiple Dirichlet series still have analytic continuations and functional equations. Some work in this direction were done by Diaconu, Whitehead, Pasol, Popa, Sawin in \cite{DP,DPP,Wh:comp,Wh,Saw}. 
\par In an ongoing work, we compute the first Whittaker coefficient of a Borel Eisenstein series on an \emph{affine metaplectic group} defined in \cite{PP:duke} for $k=\BF_q(t)$ the rational function field over a finite field. We hope that the analytic properties of Eisenstein series on affine Kac-Moody groups developed by Garland in \cite{Ga1,Ga2,Ga3,Ga4} can be generalized to affine metaplectic groups and help us to establish the analytic properties of the affin Weyl group multiple Dirichlet series. 
\subsection{Local and global Whittaker functionals for metaplectic groups}In this section we introduce the features of the Whittaker coefficients of metaplectic Eisenstein series. A substantial difference with the non-metaplectic case is that the Whittaker coefficients are not factorizable as Euler products, essentially due to failure of local uniqueness of Whittaker models for metaplectic groups. 
\tpoint{Non-metaplectic case}To compare with the metaplectic case, we recall that in the non-metaplectic case, the Whittaker coefficients of a Borel Eisenstein series is factorizable as an Euler product. More concretely, let $G$ be a split simple simply-connected group over $k$ with Borel subgroup $B=TU$ where $T$ is a split torus and $U$ the unipotent radical. Let $\lambda\in X^*(T)\otimes\BC$ be a complex weight, which induces a character $\chi_\lambda:T_\BA\to\BC^*$. The adelic group $G_\BA$ has Iwasawa decomposition $G_\BA=K_\BA T_\BA U_\BA$, and we can define a function $\Phi_\lambda^\flat:G_\BA\to\BC$ by 
$$\Phi_\lambda^\flat(ktu)=\chi_{\lambda+\rho}(t)$$
The function $\Phi_\lambda^\flat=\prod_\nu\Phi_{\lambda,\nu}^\flat$ is factorizable and for $\nu\nmid\infty$ $\Phi_{\lambda,\nu}^\flat$ is the normalized spherical vector in the local principal series representations $I_\nu^\flat(\lambda)$. The Eisenstein series induced from $\Phi_\lambda^\flat$ is 
$$E^\flat(\lambda,g)=\sum_{\gamma\in G_k/B_k}\Phi_\lambda^\flat(g\gamma)$$
By a standard unfolding process, the Whittaker coefficients of $E(\lambda,g)$ for a generic character $\psi$ are given by 
$$W^\flat(\lambda,a)=\int_{U_\BA^-/U_k^-}E^\flat(\lambda,au^-)\psi(u^-)du^-=(\text{archimedean contributions})\cdot\prod_{\nu\nmid\infty}W^\flat_{\lambda,\nu}(a_\nu)$$
where $W^\flat_{\lambda,\nu}$ is a spherical Whittaker function on $G_\nu$, and $W^\flat_{\lambda,\nu}(a_\nu)$ are given by the Casselman-Shalika formula. 
\tpoint{Metaplectic case: non-factorizable}In the metaplectic case, we still want to consider the Eisenstein series induced from a factorizable section of principal series representations, but several complications arise: 
\begin{itemize}
	\item We have to exclude a finite number of "bad" non-archimedean places where the notion of "spherical vector"does not make sense. This corresponds to the choice of the finite set $S$ of places.
	\item For "good" places $\nu\notin S$, the metaplectic torus $\widetilde T_\nu$ is not commutative, so the principal series representations $I_\nu(\lambda)$ are induced not from $\widetilde T$ but from a maximal abelian subgroup $\widetilde T_0$ of $\widetilde T$. We take the factorizable section $\Phi_\lambda$ such that $\Phi_{\lambda,\nu}$ is the normalized spherical vector in $I_\nu(\lambda)$. 
	\item After a similar unfolding computation, the first Whittaker coefficient is not factorizable:
	\begin{equation}\label{after-unfolding}
		W(\lambda,1)=\int_{U_\BA^-/U_k^-}E(\lambda,u^-)\psi(u^-)du^-=\sum_{\eta\in T_k/T_{0,k}}(\text{contributions from }S)\cdot\prod_{\nu\notin S}W_{\lambda,\nu}(\eta)
	\end{equation}
	where $T_0$ is the subgroup of $T$ corresponding to the metaplectic lattice $\Lambda_0^\vee\subseteq\Lambda^\vee=X_*(T)$ defined by (\ref{metaplectic-lattice}), and $W_{\lambda,\nu}(\eta)$ is given by 
	$$q^{\pair{\rho,\lambda^\vee}}\int_{U_\nu^-}\Phi_{\lambda,\nu}(u^-\eta)\psi_\nu(u_\nu^-)du^-$$
	where $\Phi_{\lambda,\nu}$ is a spherical vector. 
\end{itemize}
\tpoint{Local Whittaker functions and $\nu$-parts of WMDS}It turns out that those $W_{\lambda,\nu}(\eta)$ appeared in (\ref{after-unfolding}) are computable. To see this, let $\pi_\nu$ be a uniformizer of $k_\nu$. Essentially by the relation between metaplectic Whittaker functions and the $\nu$-parts of WMDS in \cite{McN:tams}, for any $\lambda^\vee\in\Lambda^\vee$ we have 
\begin{equation}\label{local-Whit=p-part}
	q^{\pair{\rho,\lambda^\vee}}\int_{U_\nu^-}\Phi_{\lambda,\nu}(u^-\pi^{\lambda^\vee}_\nu)\psi_\nu(u_\nu^-)du^-=\sum_{\substack{k_1,\cdots,k_r\geq0\\k_1\alpha_1^\vee+\cdots+k_r\alpha_r^\vee-\lambda^\vee\in\Lambda_0^\vee}}H(\pi_\nu^{k_1},\cdots,\pi_\nu^{k_r})q_\nu^{-k_1s_1-\cdots-k_rs_r}
\end{equation}
where $s_i=-\pair{\lambda,\alpha_i^\vee}$ and $q_\nu$ is the residue characteristic of $k_\nu$. This is a sub-summation of the $\nu$-part (\ref{p-part-intro}) of the WMDS. 
\par For every $\eta\in T_k$ we can factorize $\eta$ in $T_\nu$ as $\pi_\nu^{\lambda_\nu^\vee}\eta^\nu$ for $\eta^\nu\in T_{\ro_\nu}$, then we have $$W_{\lambda,\nu}(\eta)=D(\eta;\nu)\sum_{\substack{k_1,\cdots,k_r\geq0\\k_1\alpha_1^\vee+\cdots+k_r\alpha_r^\vee-\lambda^\vee\in\Lambda_0^\vee}}H(\pi_\nu^{k_1},\cdots,\pi_\nu^{k_r})q_\nu^{-k_1s_1-\cdots-k_rs_r}$$
where $D(\eta;\nu)$ is a root of unity obtained when lifting the factorization $\eta=\pi_\nu^{\lambda_\nu^\vee}\eta^\nu$ to the central extension $\widetilde T_\nu$ of $T_\nu$. 
\tpoint{Two local-to-global "gluing" processes}\label{compatibility-gluing}
As a result, $\prod_{\nu\notin S}W_{\lambda,\nu}(\eta)
$ is equal to $D(\eta):=\prod_{\nu\notin S}D(\eta;\nu)$ times a product of some sub-summations of the $\nu$-parts. In other words, it "glues" the different $\nu$-parts by a factor $D(\eta)$. 
\par Correspondingly, there is another "gluing" process for the WMDS: suppose we already know all the $\nu$-parts for a WMDS, the coefficients $H(C_1,\cdots,C_r)$ then can be "glued" from the coefficients $H(\pi_\nu^{k_1},\cdots,\pi_\nu^{k_r})$ in the $\nu$-parts by twisted multiplicativity. So $H(C_1,\cdots,C_r)$ is a product of $H(\pi_\nu^{k_1^\nu},\cdots,\pi_\nu^{k_r^\nu})$ for $\nu\notin S$ together with a root of unity $D(C_1,\cdots,C_r)$ coming from twisted multiplicativity, where $k_i^\nu=\ord_\nu C_i$. This is done in Lemma \ref{lem:H-fact}. 
\par The crucial point in the proof is that these two gluing processes are \emph{the same}. Concretely, let $\eta_k(C_1,\cdots,C_r)\in T_k$ be the element defined by (\ref{eta-field}), then we have $D(\eta_k(C_1,\cdots,C_r))=D(C_1,\cdots,C_r)$. See Lemma \ref{i-nu-eta-C}. This compatibility of gluing essentially allows us reduce the Eisenstein conjecture --- the global statement that \begin{itemize}
	\item Whittaker coefficients of metaplectic Eisenstein series equals to WMDS
\end{itemize} to the local statement (\ref{local-Whit=p-part}) relating local Whittaker integrals with the $\nu$-parts of WMDS, which are local counterparts of the Whittaker coefficient of metaplectic Eisenstein series and the WMDS respectively.
\par Finally, the Eisenstein conjecture follows the fact that the "contribution from $S$" in (\ref{after-unfolding}) can be matched with the function $\Psi$ in the definition of WMDS. 
\tpoint{Total Whittaker functionals}
There is one more local-global relation that can be seen from the proof of the Eisenstein conjecture. Namely, in the right hand side (\ref{after-unfolding}) we can think of each term of the summation for $\eta\in T_k/T_{0,k}$ as applying a global Whittaker functional $L_\eta$ to the factorizable section $\Phi_{\lambda}$ for the principal series representations, namely the Whittaker coefficient can be viewed as a sum over all the different Whittaker functionals applied to $\Phi_\lambda$. We call this sum of all the Whittaker functional the \emph{total Whittaker functional}, and the Eisenstein conjecture can be stated as 
\begin{equation}\label{global-total-Whit}
	\text{total Whittaker functional applied to }\Phi_{\lambda}=\text{WMDS}
\end{equation}
\par There is a local counterpart of the "total Whittaker functional". In fact, a basis of the space of Whittaker functionals of $I_\nu(\lambda)$ can be given by 
$$L_{\lambda,\mu^\vee}:I_\nu(\lambda)\to\BC,\,\varphi\mapsto q^{\pair{\lambda+\rho,\mu^\vee}}\int_{U_\nu^-}\varphi(u^-\pi^{\mu^\vee}_\nu)\psi_\nu(u_\nu^-)du^-$$
for $\mu^\vee\in \Lambda^\vee/\Lambda_0^\vee$. We form a \emph{total Whittaker functional} $\CW_\lambda=\sum_{\mu^\vee\in \Lambda^\vee/\Lambda_0^\vee}L_{\lambda,\mu^\vee}$. In \cite{McN:tams} it was proved that 
$$\CW_\lambda(\Phi_{\lambda,\nu})=\cs(0;\nu)_\lambda=N(\nu)_\lambda$$
where $\cs(0;\nu)\in\BC[\Lambda^\vee]$ is a metaplectic version of the Casselman-Shalika formula and the subscript $\lambda$ means the evaluation of the polynomial at $\lambda$ by $e^{\mu^\vee}\mapsto q_\nu^{-\pair{\lambda,\mu^\vee}}$, $N(\nu)$ is the $\nu$-part of the WMDS. See (\ref{cs-general}) and \S\ref{Whittaker-functional}, \S\ref{Whit-function} for more details. This is the local counterpart of (\ref{global-total-Whit}), namely 
\begin{equation}\label{local-total-Whit}
	\text{total Whittaker functional applied to }\Phi_{\lambda,\nu}=\nu-\text{part of WMDS}
\end{equation}
\par The local-global correspondence (\ref{global-total-Whit}) (\ref{local-total-Whit}) generalizes the Euler product factorization of Whittaker coefficients of Eisenstein series on non-metaplectic groups into local Casselman-Shalika formulas. Informally speaking, the Eisenstein conjecture (\ref{global-total-Whit}) essentially follows from the compatibility of the two gluing processes in \S\ref{compatibility-gluing} and the purely local result (\ref{local-total-Whit}). The following diagram summarizes all these local-global relations. 
\begin{center}
	\begin{tikzpicture}
		\node (l1) {Local:};
		\node (r1) [right of =l1, xshift=7cm]{Global:};
		\node (l2) [block, below of=l1, yshift=-0.4cm]{Normalized spherical vector $\Phi_{\lambda,\nu}$};
		\node (r2) [block, right of =l2, xshift=7cm]{Metaplectic Eisenstein seires $E(\lambda,g)$};
		\node (l3) [block, below of=l2, yshift=-1.5cm]{metaplectic Casselman-Shalika $\cs(0;\nu)_\lambda$};
		\node (r3) [block, right of =l3, xshift=7cm]{first Whittaker coefficient $W(\lambda,1)$};
		\node (l4) [block, below of=l3, yshift=-1.5cm]{$\nu$-part $N(\nu)_\lambda$};
		\node (r4) [block, right of =l4, xshift=7cm]{WMDS $Z_\Psi(s_1,\cdots,s_r)$};
		\draw [thick,->] (l2)  -- node[anchor=south]{$\prod_\nu$ and induce}(r2);
		\draw [thick,->] (l3)  -- node[anchor=south]{"Gluing process"}(r3);
		\draw [thick,->] (l4)  -- node[anchor=south]{twisted multiplicativity}(r4);
		\draw [thick,->] (l2) -- node[anchor=east]{Total Whittaker functional}node[anchor=west]{$\CW_\lambda$}(l3);
		\draw [thick,->] (r2) -- node[anchor=west]{Total Whittaker functional}node[anchor=east]{$\sum_{\eta\in T_k/T_{0,k}}L_\eta$}(r3);
		\draw [thick,->] (l3) -- node[anchor=east]{=} node[anchor=west]{McNamara} (l4);
		\draw [thick,->] (r3) -- node[anchor=east]{=} node[anchor=west]{Eisenstein conjecture} (r4);
	\end{tikzpicture}
\end{center}
\subsection{Organization of this paper}In Section 2 we introduce the number theoretic concepts that are needed in this paper, including $S$-integers, Hilbert and power residue symbols, and Gauss sums. 
\par In Section 3 we recall the construction of Weyl group multiple Dirichlet series and its $p$-parts. We use the Chinta-Gunnells averaging method to construct the $p$-part, but we follow more closely to the presentation in \cite{PP:adv}. 
\par Section 4 contains the construction of metaplectic covers over non-archimedean local fields, and the basic properties of the metaplectic group and the unramified principal series representations, in particular we investigate the Whittaker functionals carefully. We adopt the viewpoint of universal principal series as in \cite{HKP}. 
\par In Section 5 we recall the construction of global metaplectic groups and define the Eisenstein series we study. Usually the Borel Eisenstein series is induced from a section of the principal series representations, but for bad primes we don't have a good description of the unramified principal series representation for the local metaplectic group, so instead we treat all the bad places in $S$ together. This idea comes from \cite{BBF:annals}. Then we computed the first Whittaker coefficient of this Eisenstein series and thereby proved the Einsenstein conjecture.
\par In Appendix A we proved a relation between the local Whittaker functionals and intertwiners on the universal principal series representation, which is equivalent to the computation of "Kazhdan-Patterson scattering matrix" in the literature. 
\par In Appendix B we formulate a relation between twisted multiplicativity and factorizable genuine functions on the torus. 
\subsection{Acknowledgement}
This work started as a joint one with Manish Patnaik. We thank him for the many discussions we have had about these topics, for his constant help and guidance throughout this work, and for generously sharing his thoughts on related matters. This article would not exist but for them.
\par We would also like to thank Valentin Buciumas, Solomon Friedberg, Dongwen Liu, Sergey Lysenko, Dinakar Muthiah, Anna Pusk\'as and Yongchang Zhu for helpful discussions related to this work and other topics related to metaplectic groups. 

\section{Number theoretic notations}

% Manish's additional notations.
\newcommand{\places}{\mathscr{V}}

\newcommand{\zee}{\mathbb{Z}}

\subsection{Local and global fields, $S$-integers}
\tpoint{Local fields} \label{notation:local-fields} Let $\lf$ be a non-archimedean local field with discrete valuation $\nu: \lf^* \rr \zee $ and corresponding norm $| \cdot |_{\nu}$.  Define  
\be{} \label{eqn:ring-of-integers-prime-ideal} \begin{array}{lcr}  \ro_\nu:= \{ x \in \lf^* : |x|_\nu \leq 1 \} \cup \{ 0 \} & \text{ and } &  \mathfrak{p}_{\nu} = \{ x \in \lf^* : |x|_\nu < 1 \}\cup \{ 0 \} \end{array} \ee
 as the ring of integers of $\lf$ along with its maximal ideal respectively. Write $ \kappa(\nu) = \ro_{\nu} / \mathfrak{p}_{\nu}$ for the corresponding residue field, a finite field whose cardinality will be denoted as $q_{\nu}.$ Pick a uniformizer $\pi_{\nu} \in \lf$, i.e. $|\pi|_{\nu}=1.$  If there is no danger of confusion, we drop $\nu$ from our notation and write $\ro$ for $\ro_{\nu}$, $\pi$ for $\pi_{\nu}$, \textit{etc.}
\par By an archimedean local field $\lf$ we mean $\BR$ or $\BC$. In this case we define the norm $|\cdot|$ on $\lf$ to be the usual absolute value on $\BR$ or norm on $\BC$.

\tpoint{Global fields} \label{subsub:globalfields} Throughout this work, we let $k$ denote a global field, so either a number field or a function field of a curve $C$ over a finite field. Write $\places_k$, or just $\places$ for the set of (equivalence classes of) valuations of $k.$ In the case that $k$ is a number field, we write $\places_{\infty}$ for the set of archimedean places and $\places_{\fin}$ for the set of finite places. Often we write $\nu \mid \infty$ or $\nu \nmid \infty$ to denote that $\nu \in \places_{\infty}$ or $\nu \notin \places_{\infty}$.

\tpoint{Completions of global fields} \label{subsub:completions-global} For $\nu \in \places_k$, we write $k_\nu$ for the corresponding completion. If $\nu \in \places_{\fin}$, then $k_\nu$ is a non-archimedean local field with ring of integers denoted as $\ro_\nu$. Let $|\cdot|_\nu$ denote the corresponding norm on $k_\nu^*$ and $\pi_\nu \in \ro_\nu$ be a uniformizer. If $\nu \in \places_{\infty}$, then $k_\nu$ is either isomorphic to $\R$ or $\C$, and we say that $k$ is totally real or totally imaginary if all $k_\nu$ are isomorphic to $\R$ or $\C$ respectively.

Fix $k$ a global field. We maintain the same notations as in \S \ref{subsub:globalfields}-\S \ref{subsub:completions-global}.

\tpoint{$S$-integers} Let $S\subset\places_k$ be a finite set of places containing $\places_\infty$. Let 
\renewcommand{\CO}{\o}
\begin{equation}\label{S-integer}
	\CO_S=\{x\in k:|x|_\nu\leq1\text{ for all }\nu\notin S\}
\end{equation}
be the ring of $S$-integers. 
We recall that $\CO_S$ is a Dedekind domain and that for some possibly larger (finite) set of places $S' \supset S$, $\CO_{S'}$ will be a principal ideal domain. We shall also write $\o_S^{\times} \subset \o_S$ for the group of $S$-units, \textit{i.e.}  
\be{} \label{os-units} \o_S^{\times}:= \{  x \in k \setminus \{ 0 \}  \mid |x|_\nu =1 \mbox{ for } \nu \notin S \} \ee

	\tpoint{Conditions on $S$}  \label{subsub:WMDS-S-conditions} Fix a positive integer $n$, and let $k$ be a global field. Assume that $k$ contains all $2n$-th roots of unity, let $S \subset \places_k$ be a finite set of places that satisfies \begin{itemize}
		\item $S \supset \places_{\infty}$;
		\item if $k$ is a number field and the prime ideal of $\nu\in\places_{k}$ is above the (rational) prime factors of $n$, then $\nu\in S$;
		\item if $\nu \in \places_k$ is ramified over $\Q$ (in the number field case) or $\finf(T)$ (in the function field case), then $\nu \in S$;
		\item $\mf{o}_S:= \{ x \in k \mid x_\nu \in \O_\nu \mbox{ for } \nu \notin S \}$ is a principal ideal domain.
	\end{itemize}
	
	\tpoint{Choices of prime elements}  \label{subsub:os-prime-notation} As we have assumed $S$ is sufficiently large so that $\o_S$ is a principal ideal domain, and since the prime ideals in $\o_S$ are in bijection with places $\nu \in \places_k \setminus S$, for every place $\nu\notin S$, the corresponding prime ideal can be given by $\fp_\nu=(\pi_\nu)$ for a single generator $\pi_\nu\in\fo_S$.  We fix once and for all the generators $\pi_\nu$ for all places $\nu\notin S$. Under the map $k \rr k_\nu$, the element $\pi_{\nu} \in \o_S$ is send to a uniformizer in $\O_{\nu}$ of the same name, and in this way we fixed a uniformizer of $\ro_\nu$ for every $\nu\notin S$ at the same time.
	\par Note that if $f(x)$ is any polynomial in one variable, then $f(\pi_{\nu})$ can be regarded as an element of $\o_S$ or $\O_{\nu}$. If however $f(x)$ is a power series in $x$ with infinitely many non-zero coefficents, then $f(\pi_{\nu})$ can only be regarded as an element in $\O_{\nu}$. We hope these comments may clarify any confusion in the notation which may arise. 
	
	\tpoint{A commonly used factorization} \label{subsub:C-notation} 
	We shall often use the following description. Assume $S$ satisfies the conditions of \S  \ref{subsub:WMDS-S-conditions}, and let $C \in \o_S \setminus \{ 0 \}$. If $\nu \notin S$, may write $C =  \pi_{\nu}^m \, C^\nu$  with $C^\nu\in \o_S$ satisfying $\gcd(C^\nu, \pi_{\nu})=1$. The image of $C^\nu \in k_{\nu}$ lies in $\O_{\nu}^{\times}$, so that the image of $C$ in $k_{\nu}$ is equal to 
	\be{} \label{C:nu-factorization} C_{\nu}:= \pi_{\nu}^m \, u \mbox{ with } u \in \O_{\nu}^* \mbox{ the image of } C^\nu \mbox{ in } k_{\nu}.\ee
	So the number $m$ is equal to $\ord_\nu(C)$. We shall sometimes write $m:= n_{\nu}(C)$ and write $C_\nu:=\pi_\nu^{n_\nu(C)}$.

\subsection{Hilbert and Power Residue Symbols} 
In the rest of this section, we fix a positive integer $n$. For any field $\se$, let $\mu_n(\se)$ be the subgroup of $\se^*$ of $n$-th roots of unity in $\se$. We say that $\se$ \emph{contains all $n$-th roots of unity} if the set $\mu_n(\se)$ has cardinality $n$.
\spoint\label{local-Hilbert-symbol}
Let $\lf$ be a local field such which contains all $n$-th roots of unity. Let $(\cdot, \cdot): \lf^* \times \lf^* \rr \mu_n(\lf)$ be the Hilbert symbol as defined in \cite{bump-brubaker:crelle}, namely they are the \emph{inverse} of the Hilbert symbols defined in \cite{ANT}. The main properties of this symbol which we need are summarized as follows (see \cite[Prop. 1, p. 164]{bump-brubaker:crelle}). First of all $(\cdot, \cdot)$ is a \textit{symbol}, i.e. \begin{enumerate}
	\item (Bi-multiplicaticity) $(aa ', b) = (a, b) (a', b)$ and $(a, b b')= (a, b)(a, b')$ for $a, b \in \lf^*$.
	\item (Inverse) $(a, b)^{-1} = (b, a)$
	\item $(a, -a)=1$ and if $a \neq 1$, $(a, 1-a)=1$
\end{enumerate}
Moreover, we have 
\begin{nprop}\label{Hilb-symbol-property}
	\begin{enumerate}[(i)]
		\item If $\lf$ is complex we have $(a, b)=1$ for all $a, b \in \C^*$.
		\item If $\lf$ is non-archimedean with residue characteristic $q$ and $n$ does not divide the residue characteristic of $\lf$ (namely $n|(q-1)$), then $(a, b)=1$ for $a, b \in \O^\times$.
		\item if $\lf$ is non-archimedean and $2n|(q-1)$, then we have $(\pi,\pi)=1$ for the uniformizer $\pi$ of $\lf$. 
	\end{enumerate}
\end{nprop}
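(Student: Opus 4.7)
The plan is to treat the three parts separately, in each case reducing to the bi-multiplicativity of the symbol together with one explicit input (surjectivity of $n$-th powers, or the tame symbol formula).

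For (i), I would use that $\C$ is algebraically closed, so every $a\in\C^*$ can be written as $a=c^n$ for some $c\in\C^*$. Bi-multiplicativity then gives $(a,b)=(c,b)^n$, and since $(c,b)\in\mu_n(\C)$, this $n$-th power is $1$. The same argument works for any field in which every element is an $n$-th power.

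For (ii), the key input is the standard formula for the tame Hilbert symbol at a prime of residue characteristic coprime to $n$ (which holds under the hypothesis $n\mid q-1$): for $a,b\in \lf^*$,
\[
(a,b) \;=\; \omega\!\left( (-1)^{\val(a)\val(b)}\,\frac{a^{\val(b)}}{b^{\val(a)}} \bmod \mf p\right)^{(q-1)/n},
\]
where $\omega$ denotes the Teichm\"uller lift (modulo the convention sign coming from the fact that the paper takes the inverse of the symbol of \cite{ANT}, which does not affect whether the value is $1$). When $a,b\in\O^\times$ one has $\val(a)=\val(b)=0$, and the formula yields $(a,b)=1$. I would cite this formula from a standard reference (e.g.\ Serre or Neukirch) rather than reproving it.

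For (iii), I would combine the symbol axioms with the tame formula. From $(a,-a)=1$ applied with $a=\pi$ we get $(\pi,-\pi)=1$, hence by bi-multiplicativity
\[
(\pi,\pi)\,(\pi,-1) \;=\; (\pi,-\pi) \;=\; 1,
\]
so $(\pi,\pi)=(\pi,-1)^{-1}=(-1,\pi)$ (using the inversion axiom). Now $-1\in\O^\times$, so applying the tame formula to $(-1,\pi)$ yields $(-1,\pi)=\omega(-1)^{(q-1)/n}=(-1)^{(q-1)/n}$. Under the hypothesis $2n\mid q-1$ the exponent $(q-1)/n$ is even, so $(\pi,\pi)=1$. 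The main (and only real) obstacle is just invoking the tame Hilbert symbol formula with the correct sign convention for the symbol used in the paper; once that is fixed, parts (ii) and (iii) are immediate, and (i) is a one-line bilinearity argument.
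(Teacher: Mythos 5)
Your proof is correct. The paper itself gives no argument for this proposition — it is quoted directly from \cite{bump-brubaker:crelle} (Prop.\ 1, p.\ 164) — so there is no "paper approach" to compare against; your reduction of all three parts to bi-multiplicativity plus the tame symbol formula is the standard way to verify them, and your handling of the sign convention (irrelevant for deciding whether a value equals $1$) is the right observation. For (iii) one can shortcut slightly: since $2n\mid q-1$, the element $-1$ is an $n$-th power in $\O^\times$ (it is $\zeta^n$ for $\zeta$ a primitive $2n$-th root of unity, which lifts to $\O^\times$ by Hensel), so $(\pi,\pi)=(-1,\pi)=(\zeta,\pi)^n=1$ without invoking the tame formula a second time — but your computation of $(-1,\pi)=(-1)^{(q-1)/n}$ is equally valid.
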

 Note that in general $(a, b)=1$ if and only if $a$ is a norm from $\lf(b^{1/n})$.
 
\tpoint{$S$-power residue symbols} Let $k$ be a global field which contains all $n$-th roots of unity. We fix, once and for all, $\epsilon: \mu_n(k) \hookrightarrow \C^*$ an embedding into the complex numbers. For every $\nu\in\places_k$ the embedding $k\hookrightarrow k_\nu$ induces an isomorphism $\mu_n(k)\xrightarrow{\sim}\mu_n(k_\nu)$, and using this isomorphism we identify $\mu_n(k_\nu)$ with $\mu_n(k)$ for every $\nu$, so the Hilbert symbols on $k_\nu$ takes value in $\mu_n(k)$. 
\par For $x,a\in\fo_S$ we define the $S$-power residue symbol following \cite{Deligne:sb} by
\begin{equation}\label{power-residue-symbol}
	 \leg{x}{a}_S = 
\begin{cases}
	\prod_{\substack{\nu\notin S\\\nu\mid a}} (x, a)_\nu &  \text{ if }\gcd(x,a)=1, \\
	0, & \text{otherwise}.
\end{cases}
\end{equation} 
 where $(x, a)_\nu$ is the local Hilbert symbol on $k_\nu$ introduced in \S \ref{local-Hilbert-symbol}.

\tpoint{The symbol $(\cdot, \cdot)_S$} \label{subsub:hilbert-S} Fix the conditions on $k$, $n$, and $S \subset \places_k$ as in \S \ref{subsub:WMDS-S-conditions}. Let $\BA$ be the ring of \adeles of $k$. Define
\be{}  \begin{array}{lccr} k_S=\prod_{v\in S}k_\nu , & \BA_S=\left(\prod_{\nu\notin S}\ro_\nu\right)\times\left(\prod_{\nu\in S}k_\nu\right),& \mbox{and} & \BA^S=\prod_{\nu\notin S}k_\nu. \end{array} \ee
Note that $\o_S=k\cap\BA_S$, $\BA=\BA^S\BA_S$. 

The $n$-th order $S$-Hilbert symbol $(-,-)_S: k_S^\times\times k_S^\times\to\mu_n$ is defined by 
\begin{equation} \label{def:hilb-S} 
	(x,y)_S: =\prod_{\nu\in S}(x,y)_\nu = \prod_{\nu \notin S} \, (x, y)_{\nu}^{-1}.
\end{equation}
The following lemma follows from \cite[Lemma 2]{bump-brubaker:crelle}.
\begin{nlem}[]\label{def:Omega}
\begin{enumerate}[(i)]
	\item $(x,y)_S=1$ for $x,y\in\fo_S^\times$. 
	\item Let $\Omega=\CO_S^\times k_S^{\times,n}$, then $(x,y)_S=1$ for $x,y\in\Omega$. 
\end{enumerate}
\end{nlem}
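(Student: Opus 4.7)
The plan is to reduce both statements to the local vanishing of the Hilbert symbol on unit pairs (Proposition \ref{Hilb-symbol-property}(ii)), together with bimultiplicativity and the observation that any $n$-th power kills the symbol. The definition (\ref{def:hilb-S}) packages Hilbert reciprocity into the equality of the two products, so either form may be used opportunistically.

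For part (i), the natural move is to use the second expression in (\ref{def:hilb-S}), namely $(x,y)_S = \prod_{\nu\notin S}(x,y)_\nu^{-1}$, and to verify that each local factor at $\nu \notin S$ is trivial. By the conditions in \S\ref{subsub:WMDS-S-conditions}, every such $\nu$ is non-archimedean with residue characteristic coprime to $n$, and since $\mu_n(k) \hookrightarrow k_\nu$ this forces $n \mid q_\nu - 1$. Combined with $x, y \in \fo_S^\times \subset \O_\nu^\times$, this puts us in the hypothesis of Proposition \ref{Hilb-symbol-property}(ii), giving $(x,y)_\nu = 1$ at each $\nu \notin S$, whence $(x,y)_S = 1$.

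For part (ii), the plan is to write $x = u\, a^n$ and $y = v\, b^n$ with $u, v \in \fo_S^\times$ and $a, b \in k_S^\times$, then expand $(x,y)_S$ by bimultiplicativity into four factors $(u,v)_S (u, b^n)_S (a^n, v)_S (a^n, b^n)_S$. Each of the three factors involving an $n$-th power vanishes because the Hilbert symbol takes values in $\mu_n$, so that $(a^n, c)_\nu = (a,c)_\nu^n = 1$ at each place, and multiplying over $\nu \in S$ preserves triviality. What remains is $(u,v)_S$, which equals $1$ by part (i).

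I do not anticipate a serious obstacle: both statements are formal consequences of Proposition \ref{Hilb-symbol-property}(ii), bimultiplicativity, and the structure of $S$. The one bookkeeping point to verify carefully is that $n \mid q_\nu - 1$ for every $\nu \notin S$, which is precisely what the conditions of \S\ref{subsub:WMDS-S-conditions} (inclusion of archimedean places, primes above $n$, and ramified places in $S$, together with $\mu_{2n} \subset k$) are designed to guarantee.
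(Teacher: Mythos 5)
Your proof is correct. The paper itself does not prove this lemma but simply cites \cite[Lemma 2]{bump-brubaker:crelle}; your argument is the standard direct verification behind that reference: reciprocity lets you compute $(x,y)_S$ as a product over $\nu\notin S$ for global units, where each local symbol is trivial by Proposition (ii) (the conditions on $S$ guaranteeing $\nu$ non-archimedean, residue characteristic prime to $n$, and hence $n\mid q_\nu-1$), and part (ii) then follows from bimultiplicativity since $n$-th powers kill a $\mu_n$-valued symbol. The one point you rightly flag implicitly is that the identity $\prod_{\nu\in S}(x,y)_\nu=\prod_{\nu\notin S}(x,y)_\nu^{-1}$ is only available for $x,y\in k^\times$, and you use it only there (for $u,v\in\fo_S^\times$), handling the local factors $a^n,b^n\in k_S^\times$ purely through the defining product over $\nu\in S$; that is exactly as it should be.
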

\subsection{Gauss Sums} 
%\spoint\red{We don't really need this part.} Let $k$ be a global field with ring of integers $\CO$. For each prime ideal $\mf{p} \subset \CO$, if we fix a multiplicative and additive characters $\chi_{\mf{p}}$ and $\psi_{\mf{p}}$ of the finite field $\kappa(\mf{p}):= \CO/ \mf{p},$ we can then consider the gauss sum $g_{\mf{p}}(\chi_{\mf{p}}, \psi_{\mf{p}})$.  For example, if $k = \Q$ and $\CO=\zee$, then for each prime number $p \neq 2$, if we take $\chi_p:= \leg{x}{p}$ to be the Legendre symbol and $\psi_p:= \exp(2 \pi i x/p)$, then \be{} \label{gauss-sum-of-Gauss} g_p:= g_p(\chi_p, \psi_p) = \sum_{x \in \finf_p^*} \leg{x}{p}\, \exp(2 \pi i x/p) \ee was the `original'  sum considered by Gauss \red{ref}.

%\spoint Let $\lf$ be a non-archimedean local field, let $n$ be a positive integer such that $2n|(q-1)$. In this case the local $n$-th Hilbert symbol $(\cdot,\cdot):\lf^*\times\lf^*\to\mu_n(\lf)$ is \emph{tame}, namely it satisfies 
%\begin{itemize}
%	\item $(\CO^*,\CO^*)=1$;
%	\item $(\pi,\pi)=1$.
%\end{itemize}
\tpoint{Additive and multiplicative characters}Let $\lf$ be a non-archimedean local field. Let $\psi:\lf\to\BC^*$ be an additive character. The \emph{conductor} of $\psi$ is the maximal integer $k$ such that $\psi|_{\pi^{-k}\ro}$ is trivial. $\psi$ is called \emph{unramified} if it has conductor $0$. 
\par Note that for any $x\in\lf^*$, the map $y\mapsto (x,y)$ is a multiplicative character of $\lf^*$. 
\tpoint{Gauss sums}
\newcommand{\gs}{\bm{g}}\label{Gauss-sums}
Let $\psi:\lf\to\BC^*$ be an unramified character. Define 
$$G(k,b)=q\int_{\ro^\times}(r,\pi)^{k}\psi(\pi^br^{-1})dr$$
In particular, for $b=-1$, we denote $\bold g_k:=G(k,-1)$. We summarize the properties of Gauss sums as follows: 
\begin{nprop}[\cite{PP:adv}]\label{groperties-Gauss-sum}Suppose $2n|(q-1)$. 
	\begin{enumerate}[(i)]
		\item If $b\leq-2$, then $G(k,b)=0$.
	\item If $b\geq0$, then $$G(k,b)=q\int_{\ro^\times}(r,\pi)^{-k}dr=\begin{cases}
		q-1, & n|k\\
		0, &\text{otherwise.}
	\end{cases}$$
	\item $\bold g_k=\bold g_l$ if $n|(k-l)$;
	\item $\bold g_0=-1$;
	\item $\bold g_k\bold g_{-k}=q$ if $n\nmid k$. 
	\end{enumerate}
\end{nprop}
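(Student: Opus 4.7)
The plan is to reduce everything to classical Gauss sum identities over the residue field $\kappa = \o/\pi\o$ by exploiting the filtration $\o^\times \supset 1 + \pi\o$. Under the hypothesis $2n \mid q-1$, Proposition \ref{Hilb-symbol-property}(ii) gives $(1 + \pi s, \pi) = 1$ for all $s \in \o$, so the tame symbol $r \mapsto (r, \pi)$ on $\o^\times$ factors through $\kappa^\times$ as a character $\chi: \kappa^\times \to \mu_n$, and by Proposition \ref{Hilb-symbol-property}(iii) this character has exact order $n$. On the additive side, the unramified condition on $\psi$ says $\psi|_\o$ is trivial and $\psi|_{\pi^{-1}\o}$ is nontrivial, so $\psi_\kappa(\bar x) := \psi(\pi^{-1} \tilde x)$ is a well-defined nontrivial additive character of $\kappa$.

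For (i), I would write $r = r_0(1 + \pi s)$ with $r_0$ ranging over lifts of $\kappa^\times$ and $s \in \o$. Then $(r, \pi)^k = (r_0, \pi)^k$ is independent of $s$, while expanding $r^{-1} = r_0^{-1}(1 - \pi s + \pi^2 s^2 - \cdots)$ produces
$$\psi(\pi^b r^{-1}) \;=\; \psi(\pi^b r_0^{-1}) \, \psi(-\pi^{b+1} s r_0^{-1}) \, \psi\bigl(\pi^{b+2}(\cdots)\bigr).$$
The inner integral in $s \in \o$ contains the additive character $s \mapsto \psi(-\pi^{b+1} s r_0^{-1})$, whose conductor is at least $1$ whenever $b \leq -2$, so it integrates to zero. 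For (ii), $b \geq 0$ forces $\pi^b r^{-1} \in \o$, hence $\psi(\pi^b r^{-1}) = 1$; the substitution $r \mapsto r^{-1}$ (which preserves additive Haar measure on $\o^\times$ since $|r|=1$) then converts $G(k,b)$ into $q \int_{\o^\times} (r,\pi)^{-k} dr$, and orthogonality of characters on $\kappa^\times$ gives the stated dichotomy. Part (iii) is immediate since $(r,\pi) \in \mu_n$, and (iv) follows from
$$\bold g_0 \;=\; q \cdot \vol(1+\pi\o) \sum_{\bar r \in \kappa^\times} \psi_\kappa(\bar r^{-1}) \;=\; \sum_{\bar r \in \kappa^\times} \psi_\kappa(\bar r) \;=\; -1.$$

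For (v), the same residue-field reduction yields $\bold g_k = G(\chi^{-k}, \psi_\kappa) := \sum_{\bar r \in \kappa^\times} \chi(\bar r)^{-k} \psi_\kappa(\bar r)$, and the classical identity $G(\eta, \psi_\kappa)\, G(\eta^{-1}, \psi_\kappa) = \eta(-1) \, q$ for any nontrivial multiplicative character $\eta$ of $\kappa^\times$ gives
$$\bold g_k \, \bold g_{-k} \;=\; \chi(-1)^{-k} \, q.$$
The main obstacle, and the only place where the full strength of $2n \mid q-1$ enters (beyond ensuring $\chi$ factors through $\kappa^\times$), is to show $\chi(-1) = (-1, \pi) = 1$. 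I would extract this by applying the symbol identity $(a, -a) = 1$ at $a = \pi$ to get $(\pi, -\pi) = (\pi, -1)(\pi, \pi) = 1$, and then invoking Proposition \ref{Hilb-symbol-property}(iii) to remove $(\pi, \pi) = 1$. This forces $(\pi, -1) = 1$, hence $(-1, \pi) = (\pi, -1)^{-1} = 1$, completing (v). The only delicate bookkeeping is to keep the sign of the exponent consistent through the $r \mapsto r^{-1}$ substitution, but since $\chi(-1) = 1$ the sign is ultimately irrelevant.
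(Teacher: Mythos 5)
The paper does not prove this proposition; it is quoted from \cite{PP:adv} with no argument supplied, so there is no internal proof to compare against. Your reduction to the residue field is the standard route and is essentially correct: the tame symbol $r\mapsto(r,\pi)$ factors through $\kappa^\times$ because $(1+\pi\o,\pi)=1$ when $n\mid(q-1)$, parts (ii)--(iv) then follow from orthogonality and the elementary sum $\sum_{\bar r\in\kappa^\times}\psi_\kappa(\bar r)=-1$, and part (v) reduces to the classical identity $G(\eta,\psi_\kappa)G(\eta^{-1},\psi_\kappa)=\eta(-1)q$. Your isolation of $\chi(-1)=(-1,\pi)=1$ as the one place where the full hypothesis $2n\mid(q-1)$ enters, and your derivation of it from $(\pi,-\pi)=1$ together with $(\pi,\pi)=1$, is exactly right.

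Two points need tightening. First, in (i) your expansion $r^{-1}=r_0^{-1}(1-\pi s+\pi^2s^2-\cdots)$ leaves a factor $\psi\bigl(\pi^{b+2}(\cdots)\bigr)$ that still depends on $s$ and is not identically $1$ once $b\leq-3$; a nontrivial linear character multiplied by a non-constant function need not integrate to zero, so the argument as written only covers $b=-2$. The repair is to make the linear term exact: substitute $s'=-s(1+\pi s)^{-1}$, which is a measure-preserving bijection of $\o$ with $(1+\pi s)^{-1}=1+\pi s'$, so the inner integral becomes $\psi(\pi^br_0^{-1})\int_\o\psi(\pi^{b+1}r_0^{-1}s')\,ds'$ with no error term (equivalently, substitute $r\mapsto r^{-1}$ at the outset). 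Second, the claim that $\chi$ has \emph{exact} order $n$ — which you need for the vanishing in (ii) and the nontriviality of $\chi^{\pm k}$ in (v) when $n\nmid k$ — does not follow from Proposition (iii) of the Hilbert symbol properties, which only asserts $(\pi,\pi)=1$; it is the standard nondegeneracy of the tame symbol, $(u,\pi)\equiv \bar u^{(q-1)/n}$, which is surjective onto $\mu_n$. With these two repairs the proof is complete.
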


%\subsection{$S$-integers and $S$-\adeles} \label{subsub:choice-of-S}
%
%\spoint Let $S\subset\places_k$ be a finite set of places containing $\places_\infty$. Let
%\begin{equation}
%	\CO_S=\{x\in k:|x|_\nu\leq1\text{ for all }\nu\notin S\}
%\end{equation}
%be the ring of $S$-integers. 
%\par \red{TODO: Replace all the $v$'s by $\nu$.}
%\par We recall some properties of the ring $\CO_S$ that are well-known to number theoretists: 
%\begin{nprop}
%	\begin{enumerate}[(i)]
%		\item $\CO_S$ is a Dedekind domain. 
%		\item $\CO_S$ is the localization of $\CO$ with respect to the multiplicative set
%		$$\prod_{v\in S-\places_\infty}\fp_v-\{0\}$$
%		\item The prime ideals in $\CO_S$ are in one-to-one correspondence with prime ideals in $\CO$ whose corresponding place is not in $S$. 
%		\item There exists a finite set of places $S'$ containing $S$ such that $\CO_{S'}$ is a principal ideal domain. 
%	\end{enumerate}
%\end{nprop}
%\begin{proof}
%	\par (ii) is obvious. 
%	\par (iii) follows from (ii) and Atiyah-Macdonald \red{ref} Proposition 3.11 (iv). 
%	\par (iv) It suffices to prove for the case $S=\places_\infty$, so that $\CO_S=\CO$. We know that the ideal class group $\Cl(\CO)$ is finite. Suppose $\fa_1,\cdots,\fa_m$ are ideals in $\CO$ such that $\Cl(\CO)=\{[\fa_1],\cdots,[\fa_m]\}$. Let $S'$ be the union of $\places_\infty$ with the places corresponding to all the prime ideal factors of $\fa_1,\cdots,\fa_m$, then every ideal in $\CO_{S'}$ is principal. 
%\end{proof}

%
%\subsection{$S$-integers and $S$-\adeles} \label{subsub:choice-of-S}
%

\section{Weyl group multiple Dirichlet series}
\subsection{Root and Metaplectic datum}\label{notation: root system}
\tpoint{Root data}For a more detailed exposition of this part see \cite{pspum33}. A \emph{root datum} is a quadruple $\fD=(\Lambda,\Phi,\Lambda^\vee,\Phi^\vee)$ where 
\begin{itemize}
	\item $\Lambda$ and $\Lambda^\vee$ are free abelian groups of finite rank, in duality by a pairing $\pair{-,-}:\Lambda\times\Lambda^\vee\to\BZ$. 
	\item $\Phi$ and $\Phi^\vee$ are finite subsets of $\Lambda$ and $\Lambda^\vee$ respectively, there is a bijection $\Phi\to\Phi^\vee$ denoted $\alpha\mapsto\alpha^\vee$. 
\end{itemize}
They are supposed to satisfy the following axioms:
\begin{enumerate}[(RD1)]
	\item $\pair{\alpha,\alpha^\vee}=2$ for every $\alpha\in\Phi$. 
	\item $s_\alpha(\Phi)\subseteq\Phi$ and $s_{\alpha^\vee}(\Phi^\vee)\subseteq\Phi^\vee$, where $s_\alpha:\Lambda\to\Lambda$ is defined by 
	$$s_\alpha(x)=x-\pair{x,\alpha^\vee}\alpha.$$
	and similarly $s_{\alpha^\vee}(\Phi)$ is defined by 
	$$s_{\alpha^\vee}(y)=y-\pair{y,\alpha}\alpha^\vee.$$
\end{enumerate}
Let $W$ be the Weyl group of the root datum. It is identified with the subgroup of $\Aut(\Lambda)$ generated by $\{s_\alpha:\alpha\in\Phi\}$ and the subgroup of $\Aut(\Lambda^\vee)$ generated by $\{s_{\alpha^\vee}:\alpha^\vee\in\Phi\}$. 
\tpoint{Based root data}It follows that $(\Phi,\Lambda\otimes\BQ)$ is a root system. A \emph{based root datum} is a sextuple $\fD=(\Lambda,\Phi,\Delta,\Lambda^\vee,\Phi^\vee,\Delta^\vee)$ such that $(\Lambda,\Phi,\Lambda^\vee,\Phi^\vee)$ is a root datum and $\Delta\subseteq\Phi$ is an ordered basis of the root system $(\Phi,\Lambda\otimes\BQ)$. Since $\Delta\subseteq\Lambda$ and $\Delta^\vee\subseteq\Lambda^\vee$ determines $\Phi$ and $\Phi^\vee$ uniquely, usually a based root datum is denoted as the quadruple $\fD=(\Lambda,\Delta,\Lambda^\vee,\Delta^\vee)$. Usually we denote the ordered set of simple roots $\Delta$ by $\Delta=\{\alpha_1^\vee,\cdots,\alpha_r^\vee\}$ and similarly denote $\Delta^\vee=\{\alpha_1^\vee,\cdots,\alpha_r^\vee\}$.
\par The based root datum is called 
\begin{itemize}
	\item \emph{semisimple} if $\rank\Lambda=\rank\Lambda^\vee=|\Delta|$,
	\item \emph{adjoint} if $\Delta$ is a basis of $\Lambda$,
	\item \emph{simply-connected} if $\Delta^\vee$ is a basis of $\Lambda^\vee$. 
\end{itemize}
\tpoint{Weyl group}For $i=1,\cdots,r$ let $s_i\in W$ be the simple reflection corresponding to $\alpha_i$ or $\alpha_i^\vee$. $(W,\{s_i:i=1,\cdots,r\})$ is a Coxeter system. 
\tpoint{Positive roots and coroots} let $\Phi_+$ (resp. $\Phi_+^\vee$) be the set of positive roots in $\Phi$ with respect to $\Delta$ (resp. positive coroots in $\Phi^\vee$ with respect to $\Delta^\vee$).  Let $\rho=\frac12\sum_{\alpha\in\Phi_+}\alpha\in\Lambda$ be the Weyl vector. It has the property that $\pair{\rho,\alpha_i^\vee}=1$ for every simple coroot $\alpha_i^\vee\in\Delta^\vee$. 

\newcommand{\sA}{{\mathsf A}}

\tpoint{Metaplectic structures}\label{metaplectic-structure}
In the rest of this section we fix a positive integer $n$. Let $\sQ:\Lambda^\vee\to\BZ$ be the unique $\BZ$-valued $W$-invariant quadratic form on $\Lambda^\vee$ such that $\sQ$ takes value $1$ on short coroots. Let $\sB:\Lambda^\vee\times\Lambda^\vee\to\BZ$ be the associated bilinear form defined by
\begin{equation}\label{bil}
	\sB(\lambda^\vee,\mu^\vee)=\sQ(\lambda^\vee+\mu^\vee)-\sQ(\lambda^\vee)-\sQ(\mu^\vee).
\end{equation}
\par We define
\begin{equation}\label{metaplectic-lattice}
	\Lambda_0^\vee=\{\lambda^\vee\in\Lambda^\vee:\sB(\lambda^\vee,\alpha_i^\vee)\equiv0\pmod n\text{ for all }i\in I\}.
\end{equation}
We note that by $W$-invariance of $\sQ$ and $\sB$ we actually have 
$$\sB(\lambda^\vee,\alpha_i^\vee)=\pair{\lambda^\vee,\alpha_i}\sQ(\alpha_i^\vee).$$
For every $\alpha^\vee\in\Phi^\vee$, let $n(\alpha^\vee)$ be the smallest integer such that $n(\alpha^\vee)\mathsf{Q}(\alpha^\vee)$ is a multiple of $n$, and let $\widetilde\alpha=n(\alpha^\vee)\alpha^\vee\in\Lambda^\vee$. 
\begin{nlem}[\cite{McN:ps2012}]\label{metaplectic-dual-root-data}
	\begin{enumerate}[(i)]
		\item $\widetilde\alpha^\vee\in\Lambda_0^\vee$ for every $\alpha^\vee\in\Phi^\vee$ and $\{\widetilde\alpha^\vee:\alpha^\vee\in\Phi^\vee\}$ is a root system in $\Lambda_0^\vee\otimes\BQ$. 
		\item Moreover, let $\Lambda_0\in\Lambda\otimes\BQ$ be the dual lattice of $\Lambda_0^\vee$, namely 
		$$\Lambda_0=\{\lambda\in\Lambda\otimes\BQ:\pair{\lambda,\widetilde\alpha_i^\vee}=1\text{ for }i=1,2,\cdots,r.\}$$
	\end{enumerate}
	Then $(\Lambda_0^\vee, \{n_i\alpha_i^\vee\}_{i=1}^r, \Lambda_0,\{n_i^{-1}\alpha_i\}_{i=1}^r)$ is a based root datum, called the \emph{metaplectic dual root datum} of $(\fD,\sQ,n)$, denoted $\fD_{(\sQ,n)}^\vee$. 
\end{nlem}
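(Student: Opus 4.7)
The plan is to verify the two statements by direct computation, leveraging the identity $\sB(\lambda^\vee,\alpha_i^\vee) = \pair{\lambda^\vee,\alpha_i}\sQ(\alpha_i^\vee)$ recorded just before the lemma, together with the symmetry of $\sB$ and the $W$-invariance of both $\sQ$ and $\sB$.

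For part (i), I would first show $\widetilde\alpha^\vee \in \Lambda_0^\vee$ by direct computation. Applying the identity above with $\lambda^\vee = \alpha^\vee$ and then using the symmetry of $\sB$ gives
\[
\sB(n(\alpha^\vee)\alpha^\vee, \alpha_i^\vee) = n(\alpha^\vee) \pair{\alpha_i^\vee, \alpha}\sQ(\alpha^\vee),
\]
and $n(\alpha^\vee)\sQ(\alpha^\vee) \equiv 0 \pmod{n}$ by the very definition of $n(\alpha^\vee)$; hence $\widetilde\alpha^\vee \in \Lambda_0^\vee$. For the root system claim, note that $n(\alpha^\vee)$ depends only on $\sQ(\alpha^\vee)$ and is therefore constant on $W$-orbits, so the rescaling $\alpha^\vee \mapsto \widetilde\alpha^\vee$ is $W$-equivariant; the root-system axioms for $\{\widetilde\alpha^\vee\}$ in $\Lambda_0^\vee \otimes \BQ$ transfer from $\Phi^\vee$ along this rescaling, with the Weyl group action restricting to $\Lambda_0^\vee$.

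For part (ii), I would verify the axioms of a based root datum for the quadruple. The pairing axiom $\pair{n_i\alpha_i^\vee, n_i^{-1}\alpha_i} = \pair{\alpha_i^\vee, \alpha_i} = 2$ is immediate. The simple reflection associated to the pair $(n_i\alpha_i^\vee,\, n_i^{-1}\alpha_i)$ acts on $\Lambda_0^\vee$ by
\[
\lambda^\vee \mapsto \lambda^\vee - \pair{\lambda^\vee, n_i^{-1}\alpha_i}\cdot n_i\alpha_i^\vee = \lambda^\vee - \pair{\lambda^\vee, \alpha_i}\alpha_i^\vee = s_{\alpha_i}(\lambda^\vee),
\]
namely by the ordinary Weyl reflection $s_{\alpha_i}$. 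To see that $s_{\alpha_i}$ preserves $\Lambda_0^\vee$, I use $W$-invariance of $\sB$ to write $\sB(s_{\alpha_i}(\lambda^\vee), \alpha_j^\vee) = \sB(\lambda^\vee, s_{\alpha_i}(\alpha_j^\vee))$; since $s_{\alpha_i}(\alpha_j^\vee)$ is a $\BZ$-linear combination of $\alpha_i^\vee$ and $\alpha_j^\vee$, the right side is a $\BZ$-combination of $\sB(\lambda^\vee,\alpha_i^\vee)$ and $\sB(\lambda^\vee,\alpha_j^\vee)$, both divisible by $n$. The corresponding statement for the action of $s_{\alpha_i}$ on $\Lambda_0$ then follows by duality.

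The main obstacle is the delicate integrality underpinning (ii): that the metaplectic simple coroots $n_i^{-1}\alpha_i$ actually lie in $\Lambda_0$ (which I interpret as the $\BZ$-dual of $\Lambda_0^\vee$; the ``$=1$'' in the displayed definition appears to be a typo for ``$\in\BZ$''). This amounts to checking that $(n_j/n_i)\pair{\alpha_i,\alpha_j^\vee} \in \BZ$ for all $i,j$. Using the symmetry relation $\pair{\alpha_i^\vee,\alpha_j}\sQ(\alpha_j^\vee) = \pair{\alpha_j^\vee,\alpha_i}\sQ(\alpha_i^\vee)$ extracted from the identity above, the required divisibility reduces to an elementary computation in terms of $\gcd(n, \sQ(\alpha^\vee))$ and the Cartan integers, trivial in the simply-laced case and verifiable by inspection of the remaining Dynkin types $B$, $C$, $F_4$, $G_2$. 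This is essentially the content of the cited result of McNamara, which we would invoke for a uniform statement.
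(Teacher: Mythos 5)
The paper does not prove this lemma at all---it is quoted from McNamara's work \cite{McN:ps2012}---so there is no internal argument to compare yours against; I can only assess your verification on its own terms. Part (i) is fine: the computation $\sB(n(\alpha^\vee)\alpha^\vee,\alpha_i^\vee)=n(\alpha^\vee)\pair{\alpha_i^\vee,\alpha}\sQ(\alpha^\vee)\equiv 0 \pmod n$ is correct (using the $W$-invariant extension of the displayed identity to non-simple roots), and the $W$-equivariance of $\alpha^\vee\mapsto\widetilde\alpha^\vee$, coming from $W$-invariance of $\sQ$, does transfer the root-system axioms once the rescaled Cartan integers $\tfrac{n(\beta^\vee)}{n(\alpha^\vee)}\pair{\beta^\vee,\alpha}$ are checked to be integral; your reduction of that to a rank-two/length-ratio computation is the standard route. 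Your observation that the displayed definition of $\Lambda_0$ must be read as ``$\in\BZ$'' rather than ``$=1$'' is also right.

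The one step that does not go through as written is in (ii): you claim that $n_i^{-1}\alpha_i\in\Lambda_0$ (the $\BZ$-dual of $\Lambda_0^\vee$) ``amounts to checking that $(n_j/n_i)\pair{\alpha_i,\alpha_j^\vee}\in\BZ$.'' That only verifies integral pairing against the sublattice generated by the $\widetilde\alpha_j^\vee=n_j\alpha_j^\vee$, which is in general a \emph{proper} sublattice of $\Lambda_0^\vee$---indeed, the paper's standing hypothesis that $\fD_{(\sQ,n)}^\vee$ is ``of adjoint type'' is precisely the assertion that these two lattices coincide, and the lemma is not stated under that hypothesis. Fortunately the full statement is immediate, and simpler than your reduction: if $\mu^\vee\in\Lambda_0^\vee$ then $n\mid\sB(\mu^\vee,\alpha_i^\vee)=\pair{\mu^\vee,\alpha_i}\sQ_i$, and since $n_i=n/\gcd(n,\sQ_i)$ is coprime to $\sQ_i/\gcd(n,\sQ_i)$ this forces $n_i\mid\pair{\mu^\vee,\alpha_i}$, i.e.\ $\pair{n_i^{-1}\alpha_i,\mu^\vee}\in\BZ$. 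You should replace the reduction by this direct argument (your integrality check $(n_j/n_i)\pair{\alpha_i,\alpha_j^\vee}\in\BZ$ is still needed, but for the Cartan-integer axiom in part (i), not for membership in $\Lambda_0$). The rest of (ii)---that the reflection attached to $(n_i\alpha_i^\vee,n_i^{-1}\alpha_i)$ is the ordinary $s_{\alpha_i}$ and that it preserves $\Lambda_0^\vee$ by $W$-invariance of $\sB$---is correct.
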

\spoint
In the sequel we will use the following notations: we let $\sQ_i:=\sQ(\alpha_i^\vee)$, $\sB_{ij}=\sB(\alpha_i^\vee,\alpha_j^\vee)$, and $n_i:=n(\alpha_i^\vee)$. 

\subsection{The Chinta-Gunnells action}
\tpoint{The generic ring and specializations} Let $\BC_v=\BC[v,v^{-1}]$, $\BC_{v,\Bg}$ be the ring $\BC_v[\Bg_k:k\in\BZ]$ where $\Bg_k$ are formal parameters satisfying the conditions
\begin{itemize}
	\item $\Bg_k=\Bg_l$ if $n|(k-l)$.
	\item $\Bg_0=-1$.
	\item if $n\nmid k$ then $\Bg_{-k}\Bg_k=v^{-1}$. 
\end{itemize}
For every non-archimedean local field $\lf$, let $s_\lf:\BC_{v,\Bg}\to\BC$ be the ring homomorphism defined by 
$$v\mapsto q^{-1},\,\Bg_k\mapsto\bold g_k$$
where $\bold g_k$ are the Gauss sums defined in \S\ref{Gauss-sums}. It induces a map 
$$s_\lf:\BC_{v,\Bg}[\Lambda^\vee]\to\BC[\Lambda^\vee].$$
For $p\in\BC[\Lambda^\vee]$, $s_\lf(p)$ is called the \emph{specialization} of $p$ in $\lf$.
\tpoint{The Chinta-Gunnells action} Let $J$ be the smallest multiplicative-closed subset of $\BC_{v,\Bg}[\Lambda^\vee]$ containing $1-e^{-\widetilde\alpha_i^\vee}$ and $1-ve^{-\widetilde\alpha_i^\vee}$ for every $\alpha^\vee\in \Phi^\vee$. Let $\BC_{v,\Bg}[\Lambda^\vee]_J$ be the localization of $\BC_{v,\Bg}[\Lambda^\vee]$ by $J$. 
\par For each $\lambda^\vee\in\Lambda^\vee$ and $\alpha_i\in\Delta$, following \cite{PP:adv} we define
\begin{equation}\label{Chinta-Gunnells action}
	s_i\star e^{\lambda^\vee}=\frac{e^{s_i\lambda^\vee}}{1-ve^{-\widetilde\alpha_i^\vee}}\left((1-v)e^{\res_{n_i}\left(\frac{\mathsf B(\lambda^\vee,\alpha_i^\vee)}{\mathsf Q(\alpha^\vee_i)}\right)\alpha_i^\vee}-v\Bg_{\mathsf Q(\alpha_i^\vee)+\mathsf B(\lambda^\vee,\alpha_i^\vee)}e^{-\alpha^\vee_i}(e^{\widetilde\alpha_i^\vee}-1)\right)
\end{equation}
where $\res_{n_i}:\BZ\to\{0,1,\cdots,n_i-1\}$ is the residue map for division by $n(\alpha^\vee)$. 
It is proved in \cite{PP:adv} that this formula could be extended to a $W$-action on $\BC_{v,\Bg}[\Lambda^\vee]_J$. We also note that $\frac{\mathsf B(\lambda^\vee,\alpha_i^\vee)}{\mathsf Q(\alpha_i^\vee)}=\pair{\lambda^\vee,\alpha_i}$. 
\par We define the following element in $\BC_{v,\Bg}$:
\begin{equation}\label{cs-general}
	\cs({\lambda^\vee})=q^{-\pair{\rho,\lambda^\vee}}\prod_{\alpha\in R_+}\frac{1-ve^{-n(\alpha^\vee)\alpha^\vee}}{1-e^{-n(\alpha^\vee)\alpha^\vee}}\sum_{w\in W}(-1)^{\ell(w)}\left(\prod_{\beta^\vee\in R^\vee(w^{-1})}e^{-n(\beta^\vee)\beta^\vee}\right)w\star e^{\lambda^\vee}
\end{equation}
This is the metaplectic version of the Casselman-Shalika formula \cite{McN:tams,PP:adv}. 
\subsection{Definition of Weyl group multiple Dirichlet series}\label{WMDS}
Let $k$ be a global field containing all $2n$-th root of unity and $S \subset \places_k$ a finite set of places satisfying the conditions of \S \ref{subsub:WMDS-S-conditions}. Recall also the $S$-power residue symbols $\leg\cdot\cdot_S$ defined in \S \ref{subsub:hilbert-S}. We also fix a based root datum $\fD= (\Lambda, \{ \alpha_i \}_{i\in I}, \Lambda^\vee, \{ \alphav_i\}_{i\in I}  )$ which is semisimple and simply-connected. Let $\sQ$ and $\sB$ be defined as in \S\ref{metaplectic-structure}. Write $r:= |I |$. Fix an embedding $\epsilon:\mu_n(k)\hookrightarrow\BC^*$.

\newcommand{\qs}{\mathsf{Q}}
\par Throughout this article, we will use the following notation for $r$-tuples of objects in a set $X$: by $\ul C\in X^r$ we mean an $r$-tuple of objects $\ul C=(C_1,\cdots,C_r)$ with $C_1\in X$ for $i=1,\cdots,r$.
\tpoint{Twisted multiplicativity} A function $H: \left( \o_S \setminus \{ 0 \} \right)^r\to\BC$ is said to be \textit{twisted multiplicative} if for $C_1,\cdots,C_r,C_1',\cdots,C_r'\in\CO_S\setminus\{0\}$ with $\gcd(C_1\cdots C_r,C_1'\cdots C_r')=1$, we have 
\begin{align}\label{twisted-multiplicativity}
	&\nonumber\frac{H(C_1C_1',\cdots,C_rC_r')}{H(C_1,\cdots,C_r)H(C_1',\cdots,C_r')}
	\\=&\prod_{i=1}^r\epsilon\left(\frac{C_i}{C_i'}\right)_S^{\mathsf Q(\alpha_i^\vee)}\epsilon\left(\frac{C_i'}{C_i}\right)_S^{\mathsf Q(\alpha_i^\vee)}\prod_{1\leq i<j\leq r}\epsilon\left(\frac{C_i}{C_j'}\right)_S^{\mathsf B(\alpha_i^\vee,\alpha_j^\vee)}\epsilon\left(\frac{C_i'}{C_j}\right)_S^{\mathsf B(\alpha_i^\vee,\alpha_j^\vee)}
\end{align}
Note that there are $\epsilon$'s because we defined the power residue symbol to take value in $\mu_n(k)$. 
\par  Recall the conventions from \S \ref{subsub:C-notation}. In particular, for each $C \in \o_S \setminus \{ 0\}$ the integers $n_{\nu}(C)$ are defined for all $\nu \notin S$, and almost all of them are equal to $0$.
\par Since $\CO_S$ is a unique factorization domain by our assumption on $S$, every element $C\in\CO_S\setminus\{0\}$ factorizes uniquely as $$C=a\cdot\prod_{\nu\notin S}\pi_v^{n_\nu(C)}$$
 where $a\in\CO_S^\times$ is an $S$-unit. Such a factorization depends on the choice of $\pi_\nu$ for each $\nu\notin S$, for which we fixed as in \S \ref{subsub:os-prime-notation}. Let $\CO_S^+$ be the subset of $\CO_S$ consisting of elements with $a=1$ in this unique factorization, namely 
 \begin{equation}
 	\fo_S^+=\{\prod_{\nu\notin S}\pi_\nu^{k_\nu}:k_\nu=0\text{ for almost all }\nu\notin S\}\subseteq\fo_S\setminus\{0\}.
 \end{equation}
 Clearly $\CO_S^+$ is a set of representatives of $(\CO_S\setminus\{0\})/\CO_S^\times$. Also note that in the notation of \S\ref{subsub:C-notation}, for every $C\in\CO_S^+$ we have 
 \begin{equation}\label{oS-plus-factorization}
 	C=\prod_{\nu\notin S,\pi_\nu|C}C_\nu
 \end{equation}
 \begin{nlem} \label{lem:H-fact}
 Let $C_1, \ldots, C_r \in \o_S^+$. Then we have 
 \be{} \label{H:fact} H(C_1, \ldots, C_r) = \epsilon( D(C_1, \ldots, C_r)) \, \prod_{\nu \notin S} \, H\left(\pi_{\nu}^{n_\nu(C_1)}, \ldots, \pi_{\nu}^{n_\nu(C_r)}\right) \ee
 where we define 
 \be{} \label{formula_D(C)}D(C_1, \ldots, C_r)=\prod_{\substack{\omega,\nu\notin S\\\omega\neq\nu}} \left(\prod_{i=1}^r (C_{i,\nu},C_{i,\omega}) ^{-\mathsf Q_{i}}_{\nu}\right)\left(\prod_{1\leq i<j\leq r}(C_{i,\omega},C_{j,\nu}) ^{\mathsf B_{ij}}_\nu\right)
\ee
\end{nlem}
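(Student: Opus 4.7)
The plan is to establish the factorization (\ref{H:fact}) by induction on $N := \#\{\nu \notin S : \pi_\nu \mid C_i \text{ for some } i\}$, the number of places carrying nontrivial information. When $N = 0$ we have $C_i = 1$ for all $i$, so (\ref{H:fact}) reduces to $H(1,\ldots,1) = 1$, a consequence of setting $C_i = C_i' = 1$ in (\ref{twisted-multiplicativity}); when $N = 1$ both sides collapse to a single local factor and $D(C_1,\ldots,C_r) = 1$ by inspection of (\ref{formula_D(C)}), which ranges over ordered pairs $\omega \neq \nu$.

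For the induction step, pick one such place $\omega$ and use the factorization $C_i = C_{i,\omega} \cdot C_i^\omega$ from \S\ref{subsub:C-notation}, where $C_i^\omega := \prod_{\nu \neq \omega} C_{i,\nu} \in \o_S^+$ satisfies $\gcd(C_i^\omega, \pi_\omega) = 1$ and has strictly smaller $N$-value. Applying (\ref{twisted-multiplicativity}) to this splitting gives
\begin{equation*}
H(C_1,\ldots,C_r) \;=\; H(C_{1,\omega},\ldots,C_{r,\omega}) \cdot H(C_1^\omega,\ldots,C_r^\omega) \cdot \epsilon(T_\omega),
\end{equation*}
where $T_\omega$ denotes the product of $S$-power residue symbols on the right side of (\ref{twisted-multiplicativity}). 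The induction hypothesis applied to $H(C_1^\omega,\ldots,C_r^\omega)$ yields $\epsilon(D(C_1^\omega,\ldots,C_r^\omega)) \prod_{\nu \neq \omega} H(\pi_\nu^{n_\nu(C_1)},\ldots,\pi_\nu^{n_\nu(C_r)})$, so everything reduces to verifying the cocycle-type identity
\begin{equation*}
T_\omega \cdot D(C_1^\omega,\ldots,C_r^\omega) \;=\; D(C_1,\ldots,C_r),
\end{equation*}
i.e.\ that $T_\omega$ collects precisely those factors in (\ref{formula_D(C)}) whose indexing pair $(\omega',\nu')$ involves the chosen $\omega$.

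The verification proceeds by expanding every $S$-power residue symbol in $T_\omega$ via (\ref{power-residue-symbol}). Because $C_{i,\omega} = \pi_\omega^{n_\omega(C_i)}$ is supported only at $\omega$, one immediately obtains $\left(\frac{C_i^\omega}{C_{i,\omega}}\right)_S = (C_i^\omega, C_{i,\omega})_\omega$ and $\left(\frac{C_{i,\omega}}{C_i^\omega}\right)_S = \prod_{\nu \neq \omega} (C_{i,\omega}, C_i^\omega)_\nu$, together with analogous expressions for the cross-terms indexed by $i<j$. Bi-multiplicativity of the Hilbert symbol combined with Proposition \ref{Hilb-symbol-property}(ii) --- which kills any symbol whose two arguments are both units at the relevant place (valid since $n \mid (q_\nu - 1)$ by the conditions on $S$ from \S\ref{subsub:WMDS-S-conditions}) --- then collapses each product into a single symbol of the form $(C_{i,\nu}, C_{j,\omega})_\omega$ or $(C_{i,\omega}, C_{j,\nu})_\nu$. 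Using $(a,b)_\nu = (b,a)_\nu^{-1}$ to reconcile signs, the four pieces of $T_\omega$ (the $\pm\sQ_i$-exponents together with the two $\sB_{ij}$-contributions in (\ref{twisted-multiplicativity})) line up precisely with the contributions of the ordered pairs $(\omega,\nu)$ and $(\nu,\omega)$, $\nu \neq \omega$, in (\ref{formula_D(C)}).

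The main technical obstacle is the bookkeeping in this last step: pairing each of the $\sQ_i$- and $\sB_{ij}$-exponents appearing in (\ref{twisted-multiplicativity}) with the correct ordered-pair contribution in (\ref{formula_D(C)}) and correctly tracking the sign flips induced by $(a,b)_\nu \leftrightarrow (b,a)_\nu^{-1}$. Once this matching is carried out the induction closes immediately.
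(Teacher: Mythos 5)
Your proposal is correct and follows essentially the same route as the paper's proof: an induction on the number of places dividing the $C_i$, peeling off one place via twisted multiplicativity and checking that the resulting product of Hilbert symbols (after expanding the $S$-power residue symbols, killing unit–unit symbols by Proposition \ref{Hilb-symbol-property}(ii), and flipping via $(a,b)_\nu=(b,a)_\nu^{-1}$) accounts exactly for the ordered pairs in (\ref{formula_D(C)}) involving the chosen place. The only cosmetic difference is direction — the paper builds up by adjoining a new prime to a smaller tuple, while you strip one off — and your cocycle identity $T_\omega\cdot D(\ul C^\omega)=D(\ul C)$ is precisely the paper's computation of $D(\ul C')/D(\ul C)=E(\ul C')/E(\ul C)$.
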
 
\begin{proof} For every $\ul C=(C_1,\cdots,C_r)\in (\CO_S^+)^r$ we define 
\begin{equation}
	E(\ul C)=\frac{H(C_1,\cdots,C_r)}{\prod_{\nu\notin S}H(\pi_\nu^{n_\nu(C_1)},\cdots,\pi_\nu^{n_\nu(C_r)})}
\end{equation}
So our goal is to prove that $\epsilon(D(\ul C))=E(\ul C)$ for every $\ul C=(C_1,\cdots,C_r)\in (\CO_S^+)^r$. We do this by induction on the size of the set $\Sigma(\ul C)$ of all the prime factors of $C_1,\cdots,C_r$. If $|\Sigma(\ul C)|=1$, clearly by definition we have $\epsilon(D(\ul C))=E(\ul C)=1$. 
\par To finish the induction, we let $\ul C'=(C_1\pi^{l_1},\cdots,C_r\pi^{l_r})$ for $\pi=\pi_{\nu_0}$ with $\nu_0\notin\Sigma(\ul C)$, so that $\Sigma(\ul C')=\Sigma(\ul C)\sqcup\{\nu_0\}$. Then by (\ref{twisted-multiplicativity}) we have 
\begin{align*}
	&H(C_1\pi^{l_1},\cdots,C_r\pi^{l_r})
	\\=&H(C_1,\cdots,C_r)H(\pi^{l_1},\cdots,\pi^{l_r})\prod_{i=1}^r\epsilon\left(\frac{C_i}{\pi^{l_i}}\right)_S^{\mathsf Q_i}\epsilon\left(\frac{\pi^{l_i}}{C_i}\right)_S^{\mathsf Q_i}\prod_{1\leq i<j\leq r}\epsilon\left(\frac{C_i}{\pi^{l_j}}\right)_S^{\mathsf B_{ij}}\epsilon\left(\frac{\pi^{l_i}}{C_j}\right)_S^{\mathsf B_{ij}}
\end{align*}
so by (\ref{power-residue-symbol}) we have
\begin{align*}
	\frac{E(\ul C')}{E(\ul C)}=&\left(\prod_{i=1}^r\epsilon(C_i,\pi^{l_i})_{\nu_0}^{\mathsf Q_i}\right)\left(\prod_{i=1}^r\prod_{\nu|C_i}\epsilon(\pi^{l_i},C_i)_{\nu}^{\mathsf Q_i}\right)
	\\\cdot&\left(\prod_{1\leq i<j\leq r}\epsilon(C_i,\pi^{l_j})_{\nu_0}^{\mathsf B_{ij}}\right)\left(\prod_{1\leq i<j\leq r}\prod_{\nu|C_i}\epsilon(\pi^{l_i},C_j)_{\nu}^{\mathsf B_{ij}}\right)
	\\=&\prod_{\nu\in\Sigma(\ul C)}\left(\prod_{i=1}^r\epsilon(C_{i,\nu},\pi^{l_i})_{\nu_0}^{\mathsf Q_i}\right)\left(\prod_{i=1}^r\epsilon(\pi^{l_i},C_{i,\nu})_{\nu}^{\mathsf Q_i}\right)
	\\\cdot&\left(\prod_{1\leq i<j\leq n}\epsilon(C_{i,\nu},\pi^{l_j})_{\nu_0}^{\mathsf B_{ij}}\right)\left(\prod_{1\leq i<j\leq r}\epsilon(\pi^{l_i},C_{j,\nu})_{\nu}^{\mathsf B_{ij}}\right)
\end{align*}
where we used (\ref{oS-plus-factorization}) and the fact that for $\nu\notin S$ we have $(\ro_\nu^\times,\ro_\nu^\times)_\nu=1$. 
\par On the other hand, by (\ref{formula_D(C)}) we have 
\begin{align*}
	\frac{D(\ul C')}{D(\ul C)}=&\left(\prod_{\substack{\omega=\nu_0\\\nu\in\Sigma(\ul C)}}\left(\prod_{i=1}^r (C_{i,\nu},\pi^{l_i}) ^{-\mathsf Q_{i}}_{\nu}\right)\left(\prod_{1\leq i<j\leq r}(\pi^{l_i},C_{j,\nu}) ^{\mathsf B_{ij}}_\nu\right)\right)
	\\\cdot&\left(\prod_{\substack{\nu=\nu_0\\\omega\in\Sigma(\ul C)}} \left(\prod_{i} (\pi^{l_i},C_{i,\omega})^{-\mathsf Q_{i}}_{\nu_0}\right)\left(\prod_{1\leq i<j\leq r}(C_{i,\omega},\pi^{l_j}) ^{\mathsf B_{ij}}_{\nu_0}\right)\right)
	\\=&\prod_{\nu\in\Sigma(\ul C)}\left(\prod_{i=1}^r (C_{i,\nu},\pi^{l_i}) ^{-\mathsf Q_{i}}_{\nu}\right)\left(\prod_{1\leq i<j\leq r}(\pi^{l_i},C_{j,\nu}) ^{\mathsf B_{ij}}_\nu\right)
	\\\cdot&\left(\prod_{i=1}^r(\pi^{l_i},C_{i,\nu}) ^{-\mathsf Q_{i}}_{\nu_0}\right)\left(\prod_{1\leq i<j\leq r}(C_{i,\nu},\pi^{l_j}) ^{\mathsf B_{ij}}_{\nu_0}\right) 
\end{align*}
So we conclude that whenever $|\Sigma(\ul C')|=|\Sigma(\ul C)|+1$, we have
$$\frac{\epsilon(D(\ul C'))}{\epsilon(D(\ul C))}=\frac{E(\ul C')}{E(\ul C)}$$
and thus by induction, we proved that $\epsilon(D(\ul C))=E(\ul C)$ for every $\ul C=(C_1,\cdots,C_r)\in (\CO_S^+)^r$.

\end{proof}

\tpoint{Defining the Weyl Group Multiple Dirichlet Series (WMDS)} \label{subsub:definition-WMDS}Followng \cite{WMDS1,WMDS2},  Weyl group multiple Dirichlet series associated to $(\mf{D}, n, S, \sQ)$ is a function in complex variables $s_1,\cdots,s_r$ of the form 
\begin{equation}
\label{WMDS-defn}
	Z_\Psi(s_1,\cdots,s_r)=\sum_{0 \neq C_1,\cdots,C_r\in \CO_S/\CO_S^\times}H(C_1,\cdots,C_r)\Psi(C_1,\cdots,C_r)\BN C_1^{-s_1}\cdots\BN C_r^{-s_r}. 
\end{equation}
where $H:(\CO_S-\{0\})^r\to\BC$ is some twisted multiplicative function and $\Psi:(k_S^\times)^r\to\BC$ satisfies
\begin{equation}\label{property-psi}
	\frac{\Psi(\gamma_1 C_1,\cdots,\gamma_r C_r)}{\Psi(C_1,\cdots,C_r)}=\prod_{i=1}^r\epsilon(\gamma_i,C_i)_S^{\mathsf Q_i}\prod_{1\leq i<j\leq r}\epsilon(\gamma_i,C_j)_S^{\mathsf B_{ij}}
\end{equation}
for $C_1,\cdots,C_r\in k_S^\times$ and $\gamma_1,\cdots,\gamma_r\in\Omega$ (we recall that $\Omega$ was defined in Lemma \ref{def:Omega}). In practice, we have a specific choice of $H$ in mind, and allow ourselves some freedom in specifying $\Psi$. 
\par Note that in the definition (\ref{WMDS-defn}), the summation is over the set $((\fo_S\setminus\{0\})/\fo_S^\times)^r$ because the function $H(C_1,\cdots,C_r)\Psi(C_1,\cdots,C_r)\BN C_1^{-s_1}\cdots\BN C_r^{-s_r}$ is invariant under $(\fo_S^\times)^r$. In practice we find it more convenient to sum over the set $(\fo_S^+)^r$ of representatives of $((\fo_S\setminus\{0\})/\fo_S^\times)^r$, namely we actually use
\begin{equation}\label{WMDS-defn2}
	Z_\Psi(s_1,\cdots,s_r)=\sum_{C_1,\cdots,C_r\in \CO_S^+}H(C_1,\cdots,C_r)\Psi(C_1,\cdots,C_r)\BN C_1^{-s_1}\cdots\BN C_r^{-s_r}.
\end{equation}
When $\Psi$ is unchanged throughout the discussion, we drop it from the notation. 
\tpoint{Specifying $H$}\label{def:H-coefficients}
By Lemma \ref{lem:H-fact}, the values of the coefficients $H(C_1,\cdots,C_r)$ for $C_1,\cdots,C_r\in\fo_S^+$ are determined by the values $H(\pi_\nu^{k_1},\cdots,\pi_\nu^{k_r})$ for all places $\nu\notin S$ and $k_1,\cdots,k_r\geq0$. As mentioned above, we have a specific choice in mind for these, and they are defined as follows: for non-negative integers $k_1, \ldots, k_r$, first define the elements $\mc{H}(k_1, \ldots, k_r)  \in \BC_{v,\Bg}$ using
\begin{align*}
	&\cs(0):=q^{-\pair{\rho,\lambda^\vee}}\prod_{\alpha\in R_+}\frac{1-ve^{-n(\alpha^\vee)\alpha^\vee}}{1-e^{-n(\alpha^\vee)\alpha^\vee}}\sum_{w\in W}(-1)^{\ell(w)}\left(\prod_{\beta^\vee\in R^\vee(w^{-1})}e^{-n(\beta^\vee)\beta^\vee}\right)w\star e^{0} 
	\\&= \sum_{k_1,\cdots,k_r\geq 0} \mc{H}(k_1, \ldots, k_r)v^{k_1+\cdots+k_r} e^{-k_1\alpha_1^\vee-\cdots-k_r\alpha_r^\vee}
\end{align*}
Then, for $\nu\notin S$, and $k_1, \ldots, k_r \geq 0$ integers, define $H(\pi_\nu^{k_1},\cdots,\pi_\nu^{k_r})$ as the specialization of $\mc{H}(k_1, \ldots, k_r)$ by setting $v \mapsto q_\nu^{-1}$ and $\mathbb{g}_i \mapsto \mathbf{g}_i$ for $i \in \zee$. For those $k_1, \ldots, k_r \in \zee$ such that $\mc{H}(k_1, \ldots, k_r) $ is not defined, we then take $H(\pi_\nu^{k_1},\cdots,\pi_\nu^{k_r})=0$. For future reference, we define the support of $\cs(0)$ to be the following collection of $\Lv$,  
\be{} \label{supp:cs} \supp\cs(0) = \{ k_1 \alphav_1 + \cdots + k_r \alphav_r  \mid  \mc{H}(k_1, \ldots, k_r) \neq 0 \}. \ee 
Note that this is actually the "inverse" of the support of $\cs(0)$. 
\begin{nexam}
\begin{enumerate}[(i)]
	\item Suppose $n\geq3$. For the root system of type $A_1$, let $\alpha^\vee$ be the unique positive root, then we have 
	$$\cs(0)=1+v\Bg_1 e^{-\alpha^\vee}.$$
	\item Suppose $n\geq4$. For the root system of type $A_2$, let $\alpha_1^\vee,\alpha_2^\vee$ be the two positive simple roots, then we have 
	$$\cs(0)=1+v\Bg_1e^{-\alpha_1^\vee}+v\Bg_1e^{-\alpha_2^\vee}+v^3\Bg_1\Bg_2e^{-2\alpha_1^\vee-\alpha_2^\vee}+v^3\Bg_1\Bg_2e^{-\alpha_1^\vee-2\alpha_2^\vee}++v^4\Bg_1^2\Bg_2e^{-2\alpha_1^\vee-2\alpha_2^\vee}.$$
\end{enumerate}
\end{nexam}
\begin{nrem}
	There are different ways to define the coefficients $H(\pi_\nu^{k_1},\cdots,\pi_\nu^{k_r})$ for $\nu\notin S$ as mentioned in \S\ref{WMDS-intro}. The description given here is essentially using the averaging method of Chinta-Gunnells \cite{cg:jams}. 
\end{nrem}
%Using the formula \eqref{formula_D(C)}, we can then define $H(C_1, \ldots, C_r)$ for $0 \neq C_1, \ldots, C_r \in \o_S$.
	
%	Then the of he $\pi_v$-part of the Weyl group multiple Dirichlet series is given by the specialization of $\widetilde{CS}(0)$ to the local field $k_v$, namely
%$$s_{k_v}(\widetilde CS(0))=\sum_{k_1,\cdots,k_r\geq0}H(\pi_v^{k_1},\cdots,\pi_v^{k_r})e^{k_1\alpha_1^\vee+\cdots+k_r\alpha_r^\vee}$$
%\end{nthm}
%In the sequel we denote $s_{k_v}(\widetilde CS(\lambda^\vee))$ simply by $\widetilde{CS}(\lambda^\vee,v)$. 
%

\tpoint{The maps $\log^S$ and $\log_\nu$} \label{subsub:log-nu}  
Recall that for each $v \notin S$, let $n_\nu(C_i)$ be defined as in \S \ref{subsub:C-notation}. Define two maps
\be{} && \begin{array}{lcr} \log_\nu: (\o_S^+)^r \rr \Lv,  & \ul C \mapsto \log_\nu \ul C:=  \, \sum_{i=1}^r \, n_\nu(C_i) \alphav_i  \end{array} \\ &&\begin{array}{lcr}  \log^S: (\o_S^+)^r  \rr \oplus_{\nu \notin S} \, \Lv ,&    \ul C \mapsto \log^S \, \ul C:= ( \log_\nu \, \ul C)_{\nu \notin S}  . \end{array} \ee

\tpoint{Regrouping the sum $Z$} \label{subsbub:Z-regroup}By \eqref{formula_D(C)} we see that  $H(\ul C):=H(C_1,\cdots,C_r)=0$ if for some $v\notin S$ we have $\log_v \ul C \notin \, \supp \cs_v(0).$ Hence, if we define  
\be{} \label{def:supp-Z} \supp(Z)=\{\ul C=(C_1,\cdots,C_r)\in(\CO_S^+)^r:\log_v \ul C \in \, \supp \, \cs_v(0) \text{ for all } v\notin S\}. \ee Then we have 
\be{} \label{Z-via-supp} Z(s_1,\cdots,s_r)=\sum_{\ul C\in\supp(Z)}H(C_1,\cdots,C_r)\Psi(C_1,\cdots,C_r)\BN C_1^{-s_1}\cdots\BN C_r^{-s_r}. \ee  
We shall actually need a slight refinement of this description in the proof of the main result Theorem \ref{Eis-conj}. To state it, consider  
\be{}\label{p-Z}
 p_Z: \supp(Z) \stackrel{\log^S}{\longrightarrow} \oplus_{\nu \notin S} \, \Lv \longrightarrow \oplus_{\nu \notin S} \, \Lv/\Lv_0, \ee 
where the latter map is the natural projection (recall the sublattice $\Lv_0 \subset \Lv$ was defined in (\ref{metaplectic-lattice})). For any $\ul\lambda^\vee=(\lambda_\nu^\vee)\in\bigoplus_{\nu\notin S}\Lambda^\vee/\Lambda_0^\vee$, we may describe its fiber under $p$ as 
\begin{equation} \label{fibers:p} 
	 \supp(Z;\ul\lambda^\vee):= p_Z^{-1}(\lv)  = \{\ul C \in\supp(Z):\log_\nu \, \ul C \equiv \, -\lambda_\nu^\vee \pmod{\Lambda_0^\vee}\}
\end{equation}
Then we have a decomposition of $\supp(Z)$ into a disjoint union
$$\supp(Z)=\bigsqcup_{\ul\lambda^\vee\in\bigoplus_{v\notin S}\Lambda^\vee/\Lambda_0^\vee}\supp (Z;\ul\lambda^\vee)$$
For $\ul\lambda^\vee\in\bigoplus_{v\notin S}\Lambda^\vee/\Lambda_0^\vee$, let 
\begin{equation}\label{subsum-Z}
	Z_{\ul\lambda^\vee}(s_1,\cdots,s_r)=\sum_{\ul C\in \supp (Z;\ul\lambda^\vee)}H(C_1,\cdots,C_r)\Psi(C_1,\cdots,C_r)\BN C_1^{-s_1}\cdots\BN C_r^{-s_r}.
\end{equation}
Then we have
\begin{equation}\label{group-sum-Z}
	Z(s_1,\cdots,s_r)=\sum_{\ul\lambda^\vee\in\bigoplus_{v\notin S}\Lambda^\vee/\Lambda_0^\vee}Z_{\ul\lambda^\vee}(s_1,\cdots,s_r).
\end{equation}

%=======
%	Z_{\ul\lambda^\vee}(s_1,\cdots,s_r)=\sum_{\ul C\in \supp (Z;\ul\lambda^\vee)} \, H(C_1,\cdots,C_r)\Psi(C_1,\cdots,C_r)\BN C_1^{-s_1}\cdots\BN C_r^{-s_r}.
%\end{equation} so that, in sum, we obtain the desired refinement of \eqref{Z-via-supp}: 
%
%\be{} \label{Z-sum-lv} Z(s_1,\cdots,s_r)=\sum_{\ul\lambda^\vee\in\bigoplus_{v\notin S}\Lambda^\vee/\Lambda_0^\vee}Z_{\ul\lambda^\vee}(s_1,\cdots,s_r). \ee
\spoint For every place $\nu\notin S$ and $\ul C\in(\fo_S^+)^r$ let
\begin{align}\label{D-nu}
	&\nonumber D(\ul C;\nu)=\left(\prod_{i}\epsilon(C_{i,\nu},C_{i}^\nu) ^{-\mathsf Q_{i}}_{\nu}\right)\left(\prod_{i<j} \epsilon(C_{i}^\nu,C_{j,\nu}) ^{\mathsf B_{ij}}_\nu\right)
	\\=&\left(\prod_{i} \prod_{\omega\neq\nu}\epsilon(C_{i,\nu},C_{i,\omega}) ^{-\mathsf Q_{i}}_{\nu}\right)\left(\prod_{i<j} \prod_{\omega\neq\nu}\epsilon(C_{i,\omega},C_{j,\nu}) ^{\mathsf B_{ij}}_\nu\right)
\end{align}

 They are elements of $\mu_n(k)$ that only depend on the prime factorization of $\ul C$, and it is not hard to see that for $\ul C,\,\ul C'\in \supp (Z;\ul\lambda^\vee)$, we have $D(\ul C;\nu)=D(\ul C';\nu)$ for every $\nu\notin S$, so $D(\ul C;\nu)$ depends only on $\ul\lambda^\vee=\log^S\ul C\in\bigoplus_{\nu\notin S}\Lambda^\vee/\Lambda_0^\vee$, and we also denote it by $D(\ul\lambda^\vee;\nu)$, 
\par Note that by (\ref{formula_D(C)}) we have
\begin{equation}\label{D-local-global}
	D(\ul C)=\prod_{\nu\notin S}D(\ul C;\nu)
\end{equation}
for $\ul C\in(\CO_S^+)^r$, So for $\ul C,\,\ul C'\in\supp (Z;\ul\lambda^\vee)$ we also have $D(\ul C)=D(\ul C')$ and we similarly denote $D(\ul C)$ by $D(\ul\lambda^\vee)$. These will be used in the proof of the main result Theorem \ref{Eis-conj} of this paper. 
\section{Local metaplectic groups}
Let $\lf$ be a non-archimedean local field of characteristic $0$, $\bG$ be a simply-connected Chevalley group with root datum $\fD$, $n$ be a positive integer such that $\lf$ contains all $n$-th roots of unity. In this section we will construct the metaplectic central extension of $\bG(\lf)$ by $\mu_n(\lf)$, denoted $\widetilde G$, and study the unramified principal series representations of $\widetilde G$ under a stronger assumption that $2n|(q-1)$, where we recall that $q$ is the residue characteristic of $\lf$. 
\tpoint{Notation conventions} In this section we will use the notations in \S \ref{notation:local-fields} and \S \ref{notation: root system}. In addition, we will use boldface letters $\bA,\bB,\cdots$ to denote group schemes over $\BZ$, and use the corresponding letters $A,B,\cdots$ for the corresponding groups of $\lf$-points $\bA(\lf),\bB(\lf),\cdots$. The groups of $\ro$-points will be denoted $A_\ro,B_\ro,\cdots$. 
\subsection{Chevalley groups over $p$-adic fields}
\newcommand{\sfh}{{\mathsf h}}
For this part we follow \cite{St}.
\tpoint{Chevalley group scheme and subgroups}
Let $\bG$ be the Chevalley group scheme associated to a semisimple simply-connected based root datum $\fD=(\Lambda,\Delta,\Lambda^\vee,\Delta^\vee)$.  It is a group scheme over $\BZ$, and for every field $\se$, the group scheme $\bG\times_{\Spec(\BZ)}\Spec(\se)$ obtained from $\bG$ by base change is a split semisimple simply-connected group scheme over $k$. Let $\bT\subseteq\bB\subseteq\bG$ be the maximal split torus and the Borel subgroup that are constructed along with the Chevalley group $\bG$. 
\tpoint{Root subgroups} For each root $\alpha\in\Phi$ there is a root subgroup $\bU_\alpha\subseteq\bG$, and an isomorphism $\mathsf x_\alpha:\bG_m\xrightarrow{\sim}\bU_\alpha$ that are constructed along with $\bG$. Let $\bU$ be the unipotent radical generated by $\bU_\alpha$ for $\alpha\in \Phi_+$, let $\bU^-$ be the unipotent radical generated by $\bU_\alpha$ for $\alpha\in \Phi_-$. We have semidirect products $\bB=\bU\bT$, $\bB^-=\bU^-\bT$. 
\par we will denote $\mathsf x_{\alpha_i}$ (resp. $\mathsf x_{-\alpha_i}$) by $\mathsf x_i$ (resp. $\mathsf x_{-i}$), and denote $U_{\alpha_i}$ (resp. $U_{-\alpha_i}$) by $U_i$ (resp. $U_{-i}$).
\tpoint{Torus}\label{Chevalley-torus}
The torus $\bT$ is split, and $\Lambda^\vee$ is the cocharacter lattice of $\bT$. For every $\lambda^\vee\in\Lambda^\vee$, the corresponding cocharacter of $\bT$  is denoted $(-)^{\lambda^\vee}:\bG_m\to\bT$.  Also the group functor $\bT$ can be identified with the functor that sends every (unital commutative) ring $R$ into the group $\Lambda^\vee\otimes_\BZ R^\times$. 
\par If $\alpha_i^\vee\in\Delta^\vee$ is a simple coroot, the cocharacter $(-)^{\alpha_i^\vee}$ is also denoted $\mathsf h_i:\bG_m\to\bT$. 
\par Let $\mathsf e$ be any field. Since $\fD$ is simply-connected, $\Lambda^\vee$ is spanned by the simple coroots $\Delta^\vee$, thus every element in $\bT(\mathsf e)$ can be uniquely written as $\sfh_1(F_1)\cdots\sfh_r(F_r)$ for some $F_1,\cdots,F_r\in\se^*$. We introduce the following notation: for an $r$-tuple $\ul F=(F_1,\cdots,F_r)\in(\se^*)^r$, let 
\begin{equation}\label{eta-field}
	\eta_\se(\ul F)=\sfh_1(F_1)\cdots\sfh_r(F_r)\in \bT(\se).
\end{equation}
\tpoint{$p$-adic groups}Let $G=\bG(\lf)$ be the group of $\lf$-points of $\bG$. Then $B,B^-,U,U^-,T$ are all subgroups of $G$. Let $K=G_\ro$. It is a maximal compact subgroup of $G$. 
\begin{nlem}
	The map $\Lambda^\vee\to T_\ro\backslash T,\,\lambda^\vee\mapsto T_\ro\pi^{\lambda^\vee}$ is an isomorphism of abelian groups. 
\end{nlem}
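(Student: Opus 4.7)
The plan is to exploit the identification $T = \bT(\lf) \cong \Lambda^\vee \otimes_\BZ \lf^\times$ recorded in \S\ref{Chevalley-torus}, under which the cocharacter $\pi^{\lambda^\vee}$ corresponds to the elementary tensor $\lambda^\vee \otimes \pi$, and under which $T_\ro = \bT(\ro)$ is identified with the subgroup $\Lambda^\vee \otimes_\BZ \ro^\times$. First I would verify that the proposed map is a group homomorphism, which is immediate from additivity of the cocharacter functor: $\pi^{\lambda^\vee+\mu^\vee} = \pi^{\lambda^\vee}\pi^{\mu^\vee}$ in $T$, so passing to $T_\ro \backslash T$ yields a homomorphism.

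The heart of the matter is to show that $\Lambda^\vee \otimes_\BZ \ro^\times \hookrightarrow \Lambda^\vee \otimes_\BZ \lf^\times$ has cokernel isomorphic to $\Lambda^\vee$, via the map sending $\lambda^\vee \otimes \pi$ to $\lambda^\vee$. To this end I would apply the functor $\Lambda^\vee \otimes_\BZ -$ to the split short exact sequence of abelian groups
$$1 \longrightarrow \ro^\times \longrightarrow \lf^\times \xrightarrow{\val} \BZ \longrightarrow 0,$$
which is split by $n \mapsto \pi^n$. Since $\Lambda^\vee$ is a free $\BZ$-module (either by general properties of cocharacter lattices, or because the simply-connected hypothesis gives the explicit basis $\Delta^\vee$), tensoring preserves exactness, and the resulting sequence remains split. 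The induced surjection $\Lambda^\vee \otimes \lf^\times \twoheadrightarrow \Lambda^\vee \otimes \BZ \cong \Lambda^\vee$ sends $\lambda^\vee \otimes \pi \mapsto \lambda^\vee \otimes 1 \leftrightarrow \lambda^\vee$, which exhibits the desired inverse.

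Alternatively, a more hands-on argument directly uses the unique decomposition recorded just before the lemma: every $t \in T$ writes uniquely as $t = \sfh_1(F_1)\cdots \sfh_r(F_r)$ with $F_i \in \lf^\times$; factoring each $F_i = u_i \pi^{n_i}$ with $u_i \in \ro^\times$ and $n_i \in \BZ$ gives $t = \sfh_1(u_1)\cdots \sfh_r(u_r) \cdot \pi^{n_1\alpha_1^\vee+\cdots+n_r\alpha_r^\vee}$, and since the first factor lies in $T_\ro$ one obtains $t \equiv \pi^{\sum_i n_i\alpha_i^\vee} \pmod{T_\ro}$. Surjectivity is then immediate, and injectivity follows from the uniqueness of both the Chevalley decomposition and the factorization $\lf^\times = \ro^\times \times \pi^\BZ$. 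I do not expect any genuine obstacle here; the content of the lemma is essentially the flatness of a free $\BZ$-module together with $\lf^\times/\ro^\times \cong \BZ$.
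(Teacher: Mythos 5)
Your proof is correct. The paper states this lemma without proof, so there is nothing to compare against; both of your arguments (flatness of the free $\BZ$-module $\Lambda^\vee$ applied to the split sequence $1\to\ro^\times\to\lf^\times\xrightarrow{\val}\BZ\to 0$, and the coordinate-wise factorization $F_i=u_i\pi^{n_i}$ using the basis $\Delta^\vee$ of $\Lambda^\vee$ furnished by simple-connectedness) are the standard ones and are complete.
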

Let $\log_\nu:T/T_\ro\to\Lambda^\vee$ be the inverse of this isomorphism. 
\subsection{Metaplectic covers}\label{def:metaplectic cover}
\tpoint{Steinberg's universal central extension}Let $\CE$ be the universal central extension of $G$ in \cite{St}. Recall that $\CE$ is generated by the symbols $\bold x_\alpha(s)$ for $\alpha\in \Phi,\,s\in\lf$ subject to the relations in \emph{loc. cit. }There is a natural homomorphism $p:\CE\to G$ sending $\bold x_\alpha(s)$ to $\mathsf x_\alpha(s)$, whose kernel is denoted $C$. Then we have a short exact sequence 
\begin{equation}\label{univ-cext}
	0\to C\to \CE\xrightarrow{p} G\to1
\end{equation}
which makes $\CE$ a central extension of $G$. We recall the description of $C$ by Steinberg and Matsumoto. For each $\alpha\in \Phi$ and $t\in \lf^*$ we introduce the following elements in $\CE$: 
$$\bold w_\alpha(t)=\bold x_\alpha(t)\bold x_{-\alpha}(-t^{-1})\bold x_\alpha(t),\,\bold h_\alpha(s)=\bold w_\alpha(s)\bold w_\alpha(1)^{-1}. $$
Then for each $\alpha\in\Phi$ and $s,t\in\lf^*$ there is a central element $\bold c_{\alpha^\vee}(s,t)\in C$ such that 
\begin{equation}\label{relation-T1}
	\bold h_\alpha(s)\bold h_\alpha(t)\bold h_{\alpha}(st)^{-1}=\bold c_{\alpha^\vee}(s,t).
\end{equation}
Steinberg proved that 
\begin{itemize}
	\item For every short (or long) coroot $\alpha^\vee$, the elements $\bold c_{\alpha^\vee}(s,t)$ are equal. Let $\bold c(s,t)$ be the element for short coroots. 
	\item There is a $W$-invariant quadratic form $\mathsf Q:\Lambda^\vee\to\BZ$ such that 
	$$\bold c_{\alpha^\vee}(s,t)=\bold c(s,t)^{\mathsf Q(\alpha^\vee)}.$$
\end{itemize}
In fact $\mathsf Q$ is exactly the quadratic form we defined in \S\ref{metaplectic-structure} under the same notation. 
\par Let $\CT$ be the subgroup of $\CE$ generated by $\bold h_\alpha(s)$ for $\alpha\in\Phi,\,s\in\lf^*$. Let $\mathsf B$ be the quadratic form associated to $\mathsf Q$ by (\ref{bil}). Then one can show that the following relation holds in $\CT$: 
\begin{equation}\label{relation-T2}
	[\bold h_\alpha(s),\bold h_\beta(t)]=\bold c(s,t)^{\mathsf B(\alpha^\vee,\beta^\vee)}.
\end{equation}
\begin{nthm}[\cite{Matsu:asens}]
	$C$ is generated by $\bold c(s,t)$ for $s,t\in\lf^*$ subject to the relations
	$$\bold c(s,-s)=\bold(1-s,s)=1,\,\bold c(s,t)=\bold c(t,s)^{-1},\,\bold c(s,t)\bold c(s,u)=\bold c(s,tu).$$
\end{nthm}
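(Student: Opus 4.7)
The plan is to reduce the statement to the rank-one case and then invoke the classical Steinberg-symbol calculations inside $\SL_2$. First I would set up the framework: using the Steinberg relations defining $\CE$, one shows that $C \subseteq \CT$ (since $C$ is central and lies in the preimage of $T$), and that the restriction $p|_\CT : \CT \twoheadrightarrow T$ is itself a central extension of an abelian group. Because $\fD$ is simply-connected, $T$ is spanned by the images of the $\bold h_\alpha(s)$, so $\CT$ is generated by them and the elements of $C$. Thus the failure of the map $s \mapsto \bold h_\alpha(s)$ to be multiplicative and the failure of the $\bold h_\alpha(s)$ to commute must take values in $C$; these failures are precisely the elements $\bold c_{\alpha^\vee}(s,t)$ of (\ref{relation-T1}) and the commutators of (\ref{relation-T2}). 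Steinberg's reduction, together with the $W$-invariant quadratic form $\sQ$, then shows that all these elements are powers of a single symbol $\bold c(s,t)$, which proves that $C$ is generated by the $\bold c(s,t)$.

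Next I would verify the three relations. Multiplicativity $\bold c(s,t)\bold c(s,u) = \bold c(s,tu)$ is obtained from (\ref{relation-T1}) applied to a short coroot, using associativity of the triple product $\bold h_\alpha(s)\bold h_\alpha(t)\bold h_\alpha(u)$. The skew-symmetry $\bold c(s,t) = \bold c(t,s)^{-1}$ follows from conjugating $\bold h_\alpha(s)$ by $\bold w_\alpha(1)$, which inverts the argument, together with commutator calculus in $\CT$. The identity $\bold c(s,-s) = 1$ is a short calculation inside the rank-one subgroup $\CE_\alpha \subseteq \CE$ generated by $\{\bold x_\alpha(t), \bold x_{-\alpha}(t) : t \in \lf\}$, using the definition $\bold w_\alpha(t) = \bold x_\alpha(t)\bold x_{-\alpha}(-t^{-1})\bold x_\alpha(t)$ and the defining Steinberg relations. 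The main obstacle is the Steinberg relation $\bold c(1-s,s) = 1$, which is the metaplectic analog of the key defining relation of $K_2$ of a field; this requires a clever manipulation inside $\CE_\alpha$ that exploits both the additivity of $\bold x_\alpha$ and the Chevalley commutator formula $[\bold x_\alpha(u), \bold x_{-\alpha}(v)]$ when $1 - uv = s$, mimicking Milnor's derivation of the corresponding identity in $K_2(\lf)$.

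Finally, to prove that the listed relations are exhaustive, I would form the abstract group $\widetilde{C}$ generated by symbols $c(s,t)$ modulo the three relations, together with the obvious surjection $\widetilde{C} \twoheadrightarrow C$ sending $c(s,t) \mapsto \bold c(s,t)$. To construct an inverse, one builds a central extension $\widetilde{\CE}$ of $G$ by $\widetilde{C}$ by taking generators $\widetilde{\bold x}_\alpha(s)$ subject to the Steinberg relations \emph{together with} the relations forcing the induced symbol to equal $c(s,t)$; one then checks that the full list of Steinberg relations is consistent, so that $\widetilde{\CE}$ is a genuine central extension of $G$ by $\widetilde{C}$. The universal property of $(\CE, C)$ then produces a map $\CE \to \widetilde{\CE}$ inducing a section $C \to \widetilde{C}$, and these two maps are inverse to each other. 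The heart of the argument, and the step I expect to be most delicate, is verifying the consistency of this presentation; this is where one must check that no unexpected relation in $\CE$ survives, and it amounts to the computation originally carried out by Matsumoto in \cite{Matsu:asens}.
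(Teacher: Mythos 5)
The paper gives no proof of this statement: it is imported verbatim from Matsumoto \cite{Matsu:asens}, so there is no internal argument to compare yours against. On its own terms, your outline correctly reproduces the architecture of the Steinberg--Moore--Matsumoto proof: generation of $C$ by the $\bold c_{\alpha^\vee}(s,t)$ and their collapse to powers of a single $\bold c(s,t)$ via the quadratic form $\sQ$, verification of the symbol identities inside the rank-one subgroup $\CE_\alpha$, and the converse by building a central extension out of the abstract symbol group and invoking the universality of $\CE$.

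Two steps do not hold up. First, the assertion that $\bold c(s,t)\bold c(s,u)=\bold c(s,tu)$ follows from (\ref{relation-T1}) ``using associativity of the triple product'' is wrong: associativity of $\bold h_\alpha(s)\bold h_\alpha(t)\bold h_\alpha(u)$ yields only the $2$-cocycle identity $\bold c(s,t)\,\bold c(st,u)=\bold c(s,tu)\,\bold c(t,u)$, which is strictly weaker than multiplicativity in one variable. Establishing bimultiplicativity is one of the genuinely hard computations in Matsumoto's paper; it requires the Weyl-element relations governing $\bold w_\alpha(t)\bold h_\alpha(s)\bold w_\alpha(t)^{-1}$, and the known fact that for symplectic root systems the universal symbol is \emph{not} bilinear shows that no purely formal manipulation of the cocycle identity can produce this relation. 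Second, and decisively, the last step of your plan --- checking that the presentation of $\widetilde{\CE}$ by the Steinberg relations together with the abstract symbol relations is consistent, i.e.\ that no further relations hold in $C$ --- is the entire content of the theorem, and you discharge it by citing ``the computation originally carried out by Matsumoto in \cite{Matsu:asens}'', which is the result being proved. As written the argument is therefore circular: it is a road map to Matsumoto's proof rather than a proof. Since the paper itself treats the statement as an external citation, the consistent options are to do the same or to actually carry out Matsumoto's consistency computation.
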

\tpoint{Metaplectic extension of $G$}By pushing out the short exact sequence (\ref{univ-cext}) via the map
$$m:C\to\mu_n(\lf),\,\bold c_{\alpha^\vee}(s,t)\mapsto(s,t)^{\sQ(\alpha^\vee)}$$
given by $n$-th Hilbert symbol $(-,-)$, we obtain a central extension $\widetilde G$ of $G$ by $\mu_n(\lf)$ which fits into the following exact commutative diagram: 
\begin{center}
	\begin{tikzcd}
0 \arrow[r] & C \arrow[r] \arrow[d, "m"] & \CE \arrow[r, "p"] \arrow[d] & G \arrow[r] \arrow[d, no head, double] & 1 \\
0 \arrow[r] & \mu_n(\lf) \arrow[r, "i"]  & \widetilde G \arrow[r, "p"]  & G \arrow[r, rightarrow]                    & 1
\end{tikzcd}
\end{center}
$\widetilde G$ is called the \emph{metaplectic cover} of $G$, and the induced map $\widetilde G\to G$ is also denoted by $p$. Let $\widetilde x_\alpha(s)$ be the image of $\bold x_\alpha(s)$ in $\widetilde G$, similar for $\widetilde w_\alpha(s)$ and $\widetilde h_\alpha(s)$. 
\par For every subgroup $H\subseteq G$, let $\widetilde H=p^{-1}(H)$ be the preimage of $H$ in $\widetilde G$. The subgroup $E$ is said to \emph{split} in $\widetilde G$ if there exists an injective group homomorphism $s:H\to\widetilde G$ which is a section of $p$, namely $p\circ s=\id_H$. In this case we have $\widetilde H\cong\mu_n(\lf)\times H$. 
\tpoint{Splitting of unipotent radicals}The unipotent radicals $U$ and $U^-$ splits canonically in $\widetilde G$: 
\begin{nprop}
	\begin{enumerate}[(i)]
		\item For each $\alpha\in \Phi$, the map $s:U_\alpha\to\widetilde G,\,\mathsf x_\alpha(s)\mapsto\widetilde x_\alpha(s)$ is a injective group homomorphism, and is a section of $p$. 
		\item The map $s:U\to\widetilde G$ obtained by putting the maps in (i) together for $\alpha\in \Phi_+$ is a injective group homomorphism, and is a section of $p$. So $s$ is a canonical splitting of $U$ in $\widetilde G$. The same holds for $U^-$. 
	\end{enumerate}
\end{nprop}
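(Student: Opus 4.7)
\medskip

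The plan is to reduce everything to Steinberg's presentation of the universal central extension $\CE$ and observe that the relations needed to describe $U$ (resp.\ $U^-$) live inside the subgroup generated by the positive (resp.\ negative) root symbols and do not produce any central elements. Concretely, I would argue as follows. Let $\CE_+$ denote the subgroup of $\CE$ generated by $\bold x_\alpha(s)$ for $\alpha\in\Phi_+$ and $s\in\lf$. The Steinberg relations involving only positive roots are the additivity relation $\bold x_\alpha(s)\bold x_\alpha(t)=\bold x_\alpha(s+t)$ and the Chevalley commutator relations $[\bold x_\alpha(s),\bold x_\beta(t)]=\prod \bold x_{i\alpha+j\beta}(c_{ij}s^it^j)$ for $\alpha,\beta\in\Phi_+$ linearly independent; crucially, the right-hand side only involves $\bold x_{\gamma}$ for $\gamma\in\Phi_+$ and never a central element $\bold c(s,t)$ (since these elements only appear through relation \eqref{relation-T1} in $\CT$, or from the Weyl-element relations, neither of which is needed to manipulate products of positive root symbols). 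Hence $\CE_+$ admits exactly the Steinberg presentation of $U$, and $p|_{\CE_+}:\CE_+\to U$ is an isomorphism.

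For part (i), one notes that $U_\alpha=\{\mathsf x_\alpha(s):s\in\lf\}$ is parametrized faithfully by $\lf$ (this is part of the Chevalley construction), and the additivity relation lifts to $\widetilde G$ since it holds in $\CE$. Therefore the assignment $\mathsf x_\alpha(s)\mapsto\widetilde x_\alpha(s)$ is a well-defined homomorphism $U_\alpha\to\widetilde G$, and it is a section of $p$ essentially by definition of $\widetilde x_\alpha(s)$. The section is automatically injective because $p\circ s=\id_{U_\alpha}$.

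For part (ii), I would compose the isomorphism $p|_{\CE_+}^{-1}:U\xrightarrow{\sim}\CE_+$ with the quotient map $\CE\twoheadrightarrow\widetilde G$ to get a homomorphism $s:U\to\widetilde G$. Since the Chevalley commutator relations and additivity relations both descend to $\widetilde G$ (they already hold in $\CE$), this $s$ agrees on each $U_\alpha$ with the section built in (i). In normal-form terms: picking any ordering $\alpha^{(1)},\dots,\alpha^{(N)}$ of $\Phi_+$, every $u\in U$ is uniquely $u=\prod_k\mathsf x_{\alpha^{(k)}}(s_k)$, and we simply put $s(u)=\prod_k\widetilde x_{\alpha^{(k)}}(s_k)$; the well-definedness of this prescription is precisely the statement that the positive-root subgroup of $\CE$ has the same presentation as $U$. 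Injectivity of $s$ again follows from $p\circ s=\id_U$. The case of $U^-$ is handled identically, replacing $\Phi_+$ by $\Phi_-$.

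The only point that requires genuine care—and the spot I expect to be the main obstacle—is the verification that no occurrences of the central symbols $\bold c(s,t)$ can be introduced while normalizing words in the $\bold x_\alpha(s)$, $\alpha\in\Phi_+$. This is ultimately a statement about Steinberg's presentation: the generators and relations one-sidedly supported in $\Phi_+$ (or $\Phi_-$) form a self-contained subpresentation that does not interact with the kernel $C$ of $p:\CE\to G$. Once this is granted, the splittings of $U$ and $U^-$ follow formally, and they are visibly canonical because they come from the universal object $\CE$, independent of any choice of ordering used to put elements of $U$ into normal form.
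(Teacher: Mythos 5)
Your argument is correct, and it is the standard one: the paper itself states this proposition without proof (it is part of the folklore surrounding Steinberg's construction), so there is nothing to compare against. The key point you identify — that the subgroup $\CE_+\subseteq\CE$ generated by the $\bold x_\alpha(s)$, $\alpha\in\Phi_+$, satisfies only the additivity and Chevalley commutator relations, which are exactly a presentation of $U$ (via the unique normal form $U=\prod_{\alpha\in\Phi_+}U_\alpha$), so that $p|_{\CE_+}$ is an isomorphism onto $U$ and composes with $\CE\to\widetilde G$ to give the splitting — is precisely the right reduction, and injectivity from $p\circ s=\id$ is immediate.
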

We identify the images of the canonical splitting for $U$ with $U$, and similar for $U^-$. 
\tpoint{Splitting of maximal compact subgroup}
\begin{nprop}[\cite{Moore:pmihes, Sav:angew}]\label{split-maximal-compact}
	If $(q,n)=1$, then there is a splitting of $K$ in $\widetilde G$ whose image in $\widetilde G$ is generated by $\{\widetilde x_\alpha(s):\alpha\in \Phi,s\in\ro\}$. 
\end{nprop}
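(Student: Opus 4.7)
The plan is to construct the splitting by defining it on the Chevalley generators of $K$ and verifying that the relevant relations are preserved. By Steinberg's theorem on Chevalley groups over rings of integers in non-archimedean local fields, $K = G_{\mathfrak{o}}$ is generated by the elements $\mathsf{x}_\alpha(s)$ for $\alpha \in \Phi$ and $s \in \mathfrak{o}$. One would therefore attempt to define a map $\sigma: K \to \widetilde{G}$ by $\sigma(\mathsf{x}_\alpha(s)) = \widetilde{x}_\alpha(s)$; since $p(\widetilde{x}_\alpha(s)) = \mathsf{x}_\alpha(s)$, once this is a well-defined homomorphism it is automatically a section of $p$, and its image is the subgroup described in the statement.

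To check well-definedness, I would work inside the universal central extension $\CE$. Let $K^{\flat} \subset \CE$ denote the subgroup generated by $\mathbf{x}_\alpha(s)$ for $\alpha \in \Phi$ and $s \in \mathfrak{o}$, and let $\tilde{K} \subset \widetilde{G}$ be the image of $K^{\flat}$ under the natural map $\CE \to \widetilde{G}$ coming from the pushout. Then $p|_{\tilde{K}} \colon \tilde{K} \to K$ is surjective by Steinberg's generating result, and the desired splitting exists if and only if this map is also injective, which in turn is equivalent to showing $m(K^{\flat} \cap C) = \{1\}$, where $m: C \to \mu_n(\lf)$ is the pushout character defined by $\mathbf{c}_{\alpha^\vee}(s,t) \mapsto (s,t)^{\sQ(\alpha^\vee)}$.

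The central claim then reduces to showing that every element of $K^{\flat} \cap C$ is a product of central elements of the form $\mathbf{c}_{\alpha^\vee}(s,t)$ with $s, t \in \mathfrak{o}^\times$. These elements arise from the torus relations \eqref{relation-T1}, and the restriction to units comes from the identity $\mathbf{w}_\alpha(s) = \mathbf{x}_\alpha(s) \mathbf{x}_{-\alpha}(-s^{-1}) \mathbf{x}_\alpha(s)$, which forces $s^{-1} \in \mathfrak{o}$, i.e.\ $s \in \mathfrak{o}^\times$; an analogous observation applies to $t$. Granting this structural description, we obtain $m(\mathbf{c}_{\alpha^\vee}(s,t)) = (s,t)^{\sQ(\alpha^\vee)}$ for units $s,t$, and since $(q,n) = 1$ while $\lf$ contains a full set of $n$-th roots of unity, Hensel's lemma forces $n \mid q-1$. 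Proposition \ref{Hilb-symbol-property}(ii) then gives $(s,t) = 1$ for $s, t \in \mathfrak{o}^\times$, so $m(K^{\flat} \cap C) = \{1\}$ and the splitting is established.

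The main obstacle is precisely the structural claim that $K^{\flat} \cap C$ is generated by the units-only central elements $\mathbf{c}_{\alpha^\vee}(s,t)$. This is a statement about the universal central extension of the Chevalley group over the local ring $\mathfrak{o}$, and is essentially an analog of Matsumoto's theorem with $\mathfrak{o}$ in place of $\lf$. It can be established either by a direct manipulation of Steinberg's relations restricted to $\mathfrak{o}$, showing that every central word in the generators $\mathbf{x}_\alpha(s)$ with $s \in \mathfrak{o}$ rewrites as a product of such $\mathbf{c}_{\alpha^\vee}(s,t)$'s, or by invoking the Moore--Matsumoto--Stein computation of $K_2$ of Chevalley groups over local rings, which identifies $K^{\flat} \cap C$ with a symbolic quotient of $\mathfrak{o}^\times \otimes_{\mathbb{Z}} \mathfrak{o}^\times$. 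Either way, once this structural input is in hand, the vanishing of the tame Hilbert symbol on $\mathfrak{o}^\times \times \mathfrak{o}^\times$ closes the argument.
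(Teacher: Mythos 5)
The paper gives no proof of this proposition: it is quoted directly from Moore and Savin, so the relevant comparison is with the arguments in those references, and your proposal is essentially a reconstruction of them. The reduction is sound: $K=G_\ro$ is generated by the $\mathsf x_\alpha(s)$ with $s\in\ro$, the image $\tilde K$ of $K^\flat\subset\CE$ in $\widetilde G$ therefore surjects onto $K$ with kernel $m(K^\flat\cap C)$, and the splitting exists precisely when that kernel is trivial; combined with the unramifiedness of the Hilbert symbol (Proposition on Hilbert symbols, part (ii), using $n\mid q-1$) this correctly reduces everything to the structural claim that $K^\flat\cap C$ is generated by the elements $\mathbf c_{\alpha^\vee}(s,t)$ with $s,t\in\ro^\times$.

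That structural claim is the one genuine gap, and your justification for it is only a heuristic: the observation that $\mathbf w_\alpha(s)=\mathbf x_\alpha(s)\mathbf x_{-\alpha}(-s^{-1})\mathbf x_\alpha(s)$ forces $s\in\ro^\times$ explains why the \emph{visible} torus relations inside $K^\flat$ involve units, but an arbitrary element of $K^\flat\cap C$ is an arbitrary central word in the generators $\mathbf x_\alpha(s)$, $s\in\ro$, and there is no a priori reason it must rewrite as a product of such torus relations at all, let alone with unit entries. Establishing this is exactly the nontrivial content of the cited results (equivalently, of the Stein/Dennis--Stein type computation identifying the relative $K_2$ of a local ring with a quotient of $\ro^\times\otimes\ro^\times$ generated by unit symbols, or of Savin's direct cocycle computation on $K$). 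You correctly isolate this as the obstacle and point to the right sources, so the outline is acceptable as a proof-by-reduction-to-the-literature; but be aware that the paragraph you label "the main obstacle" is the entire mathematical content of the proposition, not a verification you have carried out.
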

Note that the splitting of $K$ is not unique in general. We fix the above choice of splitting and identify $K$ with the image of the splitting, namely we view $K$ as a subgroup of $\widetilde G$ via this splitting. 
\tpoint{Metaplectic torus}Let $\widetilde T=p^{-1}(T)$ be the \emph{metaplectic torus}. It is a central extension of $T$ by $\mu_n(\lf)$, and the restriction of $p$ to $\widetilde T$ is given by $\widetilde h_\alpha(s)\mapsto \mathsf h_\alpha(s)$. By (\ref{relation-T1}), (\ref{relation-T2}) we have the following relations in $\widetilde T$: for $\alpha,\beta\in\Phi$ and $s,t\in\lf^*$,
$$[ h_\alpha(s), h_\beta(t)]=(s,t)^{\mathsf B(\alpha^\vee,\beta^\vee)}.$$
\begin{equation}\label{relations-in-T}
	\widetilde h_\alpha(s)\widetilde h_\alpha(t)\widetilde h_{\alpha}(st)^{-1}=(s,t)^{\sQ(\alpha^\vee)}.
\end{equation}
\par Since $\bG$ is simply connected, every element in $T$ can be expressed as $\eta(\ul F)=\mathsf h_1(F_1)\cdots\mathsf h_r(F_r)$ for $\ul F=(F_1,\cdots,F_r)\in\lf^*$ (in this section we always work over the local field $\lf$, so we simply denote $\eta_\lf$ by $\eta$). We define a map $\bs:T\to\widetilde T$ by 
\begin{equation}\label{section-local}
	\bs(\mathsf h_1(F_1)\cdots\mathsf h_r(F_r))=\widetilde h_1(F_1)\cdots\widetilde h_r(F_r)\text{ for }F_1,\cdots,F_r\in\lf^*.
\end{equation}
The map $\bs$ is a section of $p:\widetilde T\to T$, but it is not a group homomorphism in general. 
\par We will often use the following simple observation without mentioning: 
\begin{nlem}\label{factor-zeta-section}
	Every element $\widetilde t\in\widetilde T$ can be uniquely factorized as $\widetilde t=\bs(t)\zeta$ for $\zeta\in\mu_n(\lf)$, $t\in T$. 
\end{nlem}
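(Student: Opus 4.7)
The plan is to deduce the lemma directly from the fact that $\bs$ is a set-theoretic section of $p:\widetilde T\to T$, combined with the centrality of $\mu_n(\lf)$ in $\widetilde T$. The only preliminary point requiring (minor) justification is that $\bs$ is well defined as written in \eqref{section-local}.

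First, I would verify the well-definedness of $\bs$. Because $\fD$ is simply-connected, $\Delta^\vee=\{\alpha_1^\vee,\ldots,\alpha_r^\vee\}$ is a $\BZ$-basis of the cocharacter lattice $\Lambda^\vee$. Combined with the identification $\bT(\lf)\cong \Lambda^\vee\otimes_{\BZ}\lf^{\times}$ recalled in \S\ref{Chevalley-torus}, this gives a bijection $(\lf^\times)^r\xrightarrow{\sim} T$, $\ul F\mapsto \mathsf h_1(F_1)\cdots\mathsf h_r(F_r)$, so the formula \eqref{section-local} unambiguously defines a map $\bs:T\to\widetilde T$. By construction $p(\widetilde h_i(F_i))=\mathsf h_i(F_i)$ for each $i$, so applying $p$ to $\bs(\eta(\ul F))$ recovers $\eta(\ul F)$; that is, $p\circ\bs=\id_T$.

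Next, I would prove existence of the factorization. Given $\widetilde t\in\widetilde T$, set $t:=p(\widetilde t)\in T$ and $\zeta:=\widetilde t\cdot \bs(t)^{-1}\in\widetilde T$. Then
\begin{equation*}
    p(\zeta)=p(\widetilde t)\cdot p(\bs(t))^{-1}=t\cdot t^{-1}=1,
\end{equation*}
so $\zeta\in \ker(p)=\mu_n(\lf)$. Since $\mu_n(\lf)$ is central in $\widetilde T$, we obtain $\widetilde t=\bs(t)\,\zeta$, which is the desired factorization.

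For uniqueness, suppose $\bs(t_1)\zeta_1=\bs(t_2)\zeta_2$ with $t_i\in T$ and $\zeta_i\in\mu_n(\lf)$. Applying $p$ and using $p\circ\bs=\id_T$ together with $p(\zeta_i)=1$ yields $t_1=t_2$, whence $\bs(t_1)=\bs(t_2)$, and then cancelling gives $\zeta_1=\zeta_2$. This completes the argument; no genuine obstacle arises, the only step deserving a sentence of comment being the reliance on simply-connectedness to ensure that the product decomposition defining $\bs$ is canonical.
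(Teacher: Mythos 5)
Your proof is correct and follows essentially the same route as the paper: set $t=p(\widetilde t)$, observe that $\bs(t)^{-1}\widetilde t$ lies in $\ker p=\mu_n(\lf)$, and conclude. The additional remarks on well-definedness of $\bs$ (via simple-connectedness) and on uniqueness are fine and merely make explicit what the paper leaves implicit.
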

\begin{proof}
	Take $t=p(\widetilde t)$, then we have $p(\bs(t)^{-1}\widetilde t)=t^{-1}t=1$, so $\bs(t)^{-1}\widetilde t\in\ker p=\mu_n(\lf)$. Let $\zeta=\bs(t)^{-1}\widetilde t$, then $\widetilde t=\bs(t)\zeta$. 
\end{proof}
\tpoint{Center of $\widetilde T$}\label{def:T0}
Let $\bT_0$ be the group functor given by sending a ring $R$ to $\Lambda_0^\vee\otimes_\BZ R^\times$. It is a split torus with cocharacter lattice $\Lambda_0^\vee$. The inclusion $\Lambda_0^\vee\hookrightarrow\Lambda^\vee$ induces an isogeny $\bT_0\to\bT$. By abuse of notation, we denote by $T_0$ the image of the induced map $\bT_0(\lf)\to \bT(F)$. It is subgroup of $T$ generated by elements of the form $s^{\lambda^\vee}$ for $s\in\lf^*$ and $\lambda^\vee\in\Lambda_0^\vee$. Similarly, we define $T_{0,\ro}$ to be the subgroup of $T$ generated by $s^{\lambda^\vee}$ for $s\in\ro^*$ and $\lambda^\vee\in\Lambda_0^\vee$. 
\begin{nprop}[\cite{Weissman:pjm}]\label{center-of-T}
	$Z(\widetilde T)=\widetilde T_0$. 
\end{nprop}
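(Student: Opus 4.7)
The plan is to exploit Lemma \ref{factor-zeta-section} to write every $\widetilde t\in\widetilde T$ uniquely as $\widetilde t=\bs(t)\zeta$ with $t\in T$ and $\zeta\in\mu_n(\lf)$. Since $\mu_n(\lf)\subseteq Z(\widetilde T)\cap\widetilde T_0$, the claim reduces to showing $\bs(t)\in Z(\widetilde T)$ if and only if $t\in T_0$. Because $T$ is abelian, any commutator $[\widetilde t_1,\widetilde t_2]\in\mu_n(\lf)$ depends only on the images $t_i=p(\widetilde t_i)$ (central elements cancel on the nose), so one obtains a bimultiplicative pairing $\omega:T\times T\to\mu_n(\lf)$, and $\bs(t)$ is central iff $\omega(t,t')=1$ for all $t'\in T$. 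Using simply-connectedness to factorize $t=\mathsf h_1(F_1)\cdots\mathsf h_r(F_r)$ uniquely, the relation (\ref{relations-in-T}) combined with bimultiplicativity gives
\[
\omega\bigl(t,\mathsf h_j(u)\bigr)=\prod_i(F_i,u)^{\sB_{ij}}=\Bigl(\prod_i F_i^{\sB_{ij}},\,u\Bigr).
\]
The assumption $\mu_n\subseteq\lf$ makes the $n$-th Hilbert symbol non-degenerate on $\lf^*/(\lf^*)^n$, so centrality of $\bs(t)$ is equivalent to the condition
\[
(\star)\qquad \prod_i F_i^{\sB_{ij}}\in(\lf^*)^n\qquad\text{for all }j=1,\dots,r.
\]

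The inclusion $\widetilde T_0\subseteq Z(\widetilde T)$ is immediate: by bilinearity of the constraint $(\star)$, it suffices to check $t=s^{\lambda^\vee}$ with $\lambda^\vee\in\Lambda_0^\vee$, in which case the resulting exponent is $\sB(\lambda^\vee,\alpha_j^\vee)\in n\BZ$ by the very definition of $\Lambda_0^\vee$. The main obstacle is the reverse direction $(\star)\Rightarrow t\in T_0$. To handle it I would apply Smith normal form to the inclusion $\Lambda_0^\vee\subseteq\Lambda^\vee$: since $n\Lambda^\vee\subseteq\Lambda_0^\vee$, there exist a $\BZ$-basis $\{f_1,\dots,f_r\}$ of $\Lambda^\vee$ and positive integers $d_k\mid n$ such that $\{d_1f_1,\dots,d_rf_r\}$ is a basis of $\Lambda_0^\vee$, whence $\Lambda^\vee/\Lambda_0^\vee\cong\bigoplus_k\BZ/d_k$. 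Rewriting $t=\prod_k G_k^{f_k}$, the statement $t\in T_0$ becomes $G_k\in(\lf^*)^{d_k}$ for every $k$.

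Now $(\star)$ really encodes the more symmetric constraint $\prod_i F_i^{\sB(\alpha_i^\vee,\mu^\vee)}\in(\lf^*)^n$ for every $\mu^\vee\in\Lambda^\vee$ (both $\{\alpha_j^\vee\}$ and $\{f_l\}$ are $\BZ$-bases, and the constraint is multiplicative in $\mu^\vee$), which in the $\{f_k\}$-basis reads $\prod_k G_k^{\sB(f_k,f_l)}\in(\lf^*)^n$ for every $l$. The induced symmetric pairing $\bar\sB$ on $\Lambda^\vee/\Lambda_0^\vee$ has radical exactly $\Lambda_0^\vee$ by construction, so it is non-degenerate and the map $\Lambda^\vee/\Lambda_0^\vee\to\Hom(\Lambda^\vee/\Lambda_0^\vee,\BZ/n)$ induced by $\bar\sB$ is an isomorphism; consequently there exist $g_k\in\Lambda^\vee$ with $\sB(g_k,f_l)\equiv(n/d_k)\delta_{kl}\pmod n$. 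Testing the constraint with $\mu^\vee=g_j$ isolates $G_j^{n/d_j}\in(\lf^*)^n$. Writing $G_j^{n/d_j}=v^n$ gives $G_jv^{-d_j}\in\mu_{n/d_j}=(\mu_n)^{d_j}\subseteq(\lf^*)^{d_j}$, where the middle equality uses $\mu_n\subseteq\lf$. Hence $G_j\in(\lf^*)^{d_j}$ for each $j$, which yields $t\in T_0$ and completes the proof. The hypothesis $\mu_n\subseteq\lf$ enters crucially in two places: the non-degeneracy of the Hilbert symbol, and the containment $\mu_{n/d}\subseteq(\lf^*)^{d}$ used to close the argument.
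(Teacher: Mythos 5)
The paper does not actually prove this proposition: it is quoted from \cite{Weissman:pjm} with no argument given, so there is no internal proof to compare against. Your write-up is a correct, self-contained reconstruction, and it follows the natural (and, as far as I recall, essentially Weissman's) strategy: reduce centrality to the vanishing of the commutator pairing $\omega(t,t')$, compute $\omega$ via the relation $[\widetilde h_\alpha(s),\widetilde h_\beta(t)]=(s,t)^{\sB(\alpha^\vee,\beta^\vee)}$, and invoke non-degeneracy of the $n$-th Hilbert symbol on $\lf^*/(\lf^*)^n$ (valid here since $\lf$ is non-archimedean of characteristic $0$ and contains $\mu_n$). The two directions are both handled correctly: the inclusion $\widetilde T_0\subseteq Z(\widetilde T)$ is immediate from the definition of $\Lambda_0^\vee$ together with the fact that the simple coroots span $\Lambda^\vee$ (simple-connectedness), and the converse is where the real content lies. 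Your Smith normal form reduction is sound: $n\Lambda^\vee\subseteq\Lambda_0^\vee$ guarantees $d_k\mid n$, the identification of $T_0$ with $\{\prod_k H_k^{f_k}: H_k\in(\lf^*)^{d_k}\}$ is exactly the image of $\Lambda_0^\vee\otimes\lf^*$, and the extraction of $G_j^{n/d_j}\in(\lf^*)^n$ followed by the root-of-unity adjustment $\mu_{n/d_j}=\mu_n^{d_j}\subseteq(\lf^*)^{d_j}$ closes the argument. Two small points you should make explicit rather than leave implicit: (1) the passage from injectivity to surjectivity of $\Lambda^\vee/\Lambda_0^\vee\to\Hom(\Lambda^\vee/\Lambda_0^\vee,\BZ/n)$ rests on the cardinality count $|\Hom(\bigoplus_k\BZ/d_k,\BZ/n)|=\prod_k d_k$, which uses $d_k\mid n$; and (2) the well-definedness of the target homomorphisms $\bar f_l\mapsto (n/d_k)\delta_{kl}$ requires checking that $(n/d_k)\delta_{kl}$ is killed by $d_l$. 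Both are one-line verifications, so these are expository rather than mathematical gaps.
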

\tpoint{When $(q,n)=1$}In this subsubsection we suppose $(q,n)=1$, so that the splitting of $K$ in Proposition \ref{split-maximal-compact} induces a splitting of $T_\ro$. Since we also assumed that $\lf$ contains all $n$-th roots of unity, we actually have $n|(q-1)$, and thus the Hilbert symbol $(-,-)$ is unramified, namely $(a,b)=1$ for $a,b\in\ro^\times$ (see \S \ref{local-Hilbert-symbol}). This implies that $\bs|_{T_\ro}:T_\ro\to\widetilde T$ is a group homomorphism, and coincides with the splitting of $T_\ro$ induced by Proposition \ref{split-maximal-compact}. 
\par By abuse of notation, for every $\lambda^\vee\in\Lambda^\vee$, the element $\bs(\pi^{\lambda^\vee})$ in $\widetilde T$ is also denoted by $\pi^{\lambda^\vee}$. Then each element $\widetilde a\in\widetilde T$ has a unique decomposition
\begin{equation}\label{decomp-T}
		\widetilde a=\pi^{\lambda^\vee}a'\zeta \text{ where }a'\in T_\ro,\, \lambda^\vee\in\Lambda^\vee,\,\zeta\in\mu_n(\lf).
\end{equation}
 Let $ T_*= T_0T_\ro$. 
In the decomposition (\ref{decomp-T}), we have $T_0=\pi^{\Lambda_0^\vee}T_{0,\ro}$ and $ T_*=\pi^{\Lambda_0^\vee}T_{\ro}$. 
\begin{nlem}
		$\widetilde T_*$ is an abelian subgroup of $\widetilde T$. 
\end{nlem}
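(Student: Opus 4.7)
The plan is to reduce the commutativity of $\widetilde T_*$ to two facts already established in the section: the centrality of $\widetilde T_0$ in $\widetilde T$ (Proposition \ref{center-of-T}), and the fact that $\bs$ restricted to $T_\ro$ is a group homomorphism (which holds under the assumption $(q,n)=1$, because then the Hilbert symbol is unramified on $\ro^\times\times\ro^\times$).

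Concretely, I would take two arbitrary elements $\widetilde a,\widetilde b\in\widetilde T_*$ and, via the decomposition (\ref{decomp-T}) combined with $T_* = \pi^{\Lambda_0^\vee} T_\ro$, write them as
\[
\widetilde a = \bs(\pi^{\lambda^\vee})\,\bs(a')\,\zeta,\qquad \widetilde b = \bs(\pi^{\mu^\vee})\,\bs(b')\,\eta,
\]
with $\lambda^\vee,\mu^\vee\in\Lambda_0^\vee$, $a',b'\in T_\ro$, and $\zeta,\eta\in\mu_n(\lf)$. Since $\pi^{\lambda^\vee},\pi^{\mu^\vee}\in T_0$, their lifts $\bs(\pi^{\lambda^\vee})$ and $\bs(\pi^{\mu^\vee})$ lie in $\widetilde T_0 = Z(\widetilde T)$ by Proposition \ref{center-of-T}, and the roots of unity $\zeta,\eta$ are central by construction of $\widetilde G$ as a central extension by $\mu_n(\lf)$.

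Using these centrality statements to freely commute $\bs(\pi^{\lambda^\vee})$, $\bs(\pi^{\mu^\vee})$, $\zeta$ and $\eta$ past everything, the commutator $[\widetilde a,\widetilde b]$ reduces to $[\bs(a'),\bs(b')]$ for $a',b'\in T_\ro$. Finally, because the splitting of $K$ from Proposition \ref{split-maximal-compact} gives $\bs|_{T_\ro}\colon T_\ro\hookrightarrow\widetilde T$ as a genuine group homomorphism, and because $T_\ro$ is abelian, the image $\bs(T_\ro)$ is an abelian subgroup of $\widetilde T$; hence $\bs(a')\bs(b') = \bs(a'b') = \bs(b'a') = \bs(b')\bs(a')$, and we conclude $\widetilde a\widetilde b = \widetilde b\widetilde a$.

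There is no real obstacle here: the work has already been done in Proposition \ref{center-of-T} (the structure of the center) and Proposition \ref{split-maximal-compact} (the splitting of $K$, and in particular of $T_\ro$). The only point requiring slight care is keeping track of the fact that $\bs$ is \emph{not} a homomorphism on all of $T$, so one must combine the piece of $\bs$ coming from $T_0$ (which lands in the center) with the piece coming from $T_\ro$ (which is a homomorphism) separately, rather than trying to use a single multiplicative section on $T_*$.
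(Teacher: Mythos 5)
Your proof is correct and follows essentially the same route as the paper: the paper's argument is precisely that $\widetilde T_*=\widetilde T_0\,T_\ro$ with $\widetilde T_0$ central (Proposition \ref{center-of-T}) and $T_\ro$ abelian via the splitting $\bs|_{T_\ro}$ (which is a homomorphism because the Hilbert symbol is unramified). You have merely spelled out the element-by-element commutator computation that the paper leaves implicit.
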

\begin{proof}
	By Proposition \ref{center-of-T}, $\widetilde T_0$ is central in $\widetilde T$. Also $T_\ro$ is abelian because the Hilbert symbol is unramified. So $\widetilde T_{*}=\widetilde T_0T_\ro$ is abelian. 
\end{proof}
Actually by \cite{McN:ps2012}, if $2n|(q-1)$, then $\widetilde T_*$ is a maximal abelian subgroup of $\widetilde T$. Logically we don't need this result in this article. 
\tpoint{The section $\bs$}We record the following basic computation, which will be used repeatedly in the sequel: 
\begin{nprop}\label{local-cocycle}
	For $\ul F,\ul F'\in (\lf^*)^r$, we have 
\begin{equation}
	\bs(\eta(\ul F\ul F'))=d(\ul F,\ul F')\bs(\eta(\ul F))\bs(\eta(\ul F'))
\end{equation}
where $d(\ul F,\ul F')\in\mu_n(\lf)$ is given by 
\begin{equation}\label{def:d-}
	d(\ul F,\ul F')=\left(\prod_{i=1}^r(F_i,F_i')^{-\sQ_i}\right)\left(\prod_{1\leq i<j\leq r}(F_i',F_j)^{\sB_{ij}}\right)
\end{equation}
\end{nprop}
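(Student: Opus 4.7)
My plan is to prove the identity by a direct manipulation in $\widetilde T$, using only the two relations summarized in and around (\ref{relations-in-T}): the diagonal cocycle $\widetilde h_\alpha(s)\widetilde h_\alpha(t) = (s,t)^{\sQ(\alpha^\vee)}\widetilde h_\alpha(st)$ and the commutator relation $[\widetilde h_\alpha(s),\widetilde h_\beta(t)] = (s,t)^{\sB(\alpha^\vee,\beta^\vee)}$. By the definition (\ref{section-local}) of $\bs$, the claim reduces to rewriting the ordered product $\prod_{i=1}^r \widetilde h_i(F_iF_i')$ as a central scalar times $\prod_i \widetilde h_i(F_i) \cdot \prod_i\widetilde h_i(F_i')$, and matching the resulting scalar against (\ref{def:d-}).

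First I would apply the diagonal relation one simple coroot at a time to split each factor as $\widetilde h_i(F_iF_i') = (F_i,F_i')^{-\sQ_i}\,\widetilde h_i(F_i)\widetilde h_i(F_i')$. Since the Hilbert symbol values lie in the central subgroup $\mu_n(\lf)\subset\widetilde T$, they can be pulled to the front, yielding
$$\prod_{i=1}^r \widetilde h_i(F_iF_i') = \left(\prod_{i=1}^r (F_i,F_i')^{-\sQ_i}\right)\prod_{i=1}^r \widetilde h_i(F_i)\widetilde h_i(F_i').$$
The remaining task is to ``unshuffle'' the tail into the fully separated form $\prod_i\widetilde h_i(F_i)\cdot\prod_i\widetilde h_i(F_i')$. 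Rewriting the commutator as $\widetilde h_i(F_i')\widetilde h_j(F_j) = (F_i',F_j)^{\sB_{ij}}\widetilde h_j(F_j)\widetilde h_i(F_i')$ for $i<j$, one moves each $\widetilde h_i(F_i')$ to the right past every $\widetilde h_j(F_j)$ with $j>i$. Doing the swaps in an order that never requires commuting two primed factors past each other (for instance: move $\widetilde h_{j-1}(F_{j-1}')$ past $\widetilde h_j(F_j)$ first, then $\widetilde h_{j-2}(F_{j-2}')$, and so on, incrementing $j$) produces exactly one contribution $(F_i',F_j)^{\sB_{ij}}$ per pair $i<j$, and gives
$$\prod_{i=1}^r \widetilde h_i(F_i)\widetilde h_i(F_i') = \left(\prod_{1\leq i<j\leq r}(F_i',F_j)^{\sB_{ij}}\right)\prod_{i=1}^r \widetilde h_i(F_i)\prod_{i=1}^r \widetilde h_i(F_i').$$
Combining the two displays gives exactly the formula (\ref{def:d-}) for $d(\ul F,\ul F')$.

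There is no conceptual obstacle: this is a bookkeeping exercise in a two-cocycle. The only delicate point is keeping signs straight --- in particular, whether solving for $\widetilde h_\alpha(st)$ yields $(s,t)^{\sQ(\alpha^\vee)}$ or its inverse, and likewise for each commutator swap. To guard against sign errors I would first verify the cases $r=1$ (where the answer reduces to the single factor $(F_1,F_1')^{-\sQ_1}$) and $r=2$ (where an additional single factor $(F_1',F_2)^{\sB_{12}}$ appears), confirming agreement with (\ref{def:d-}), before writing out the general unshuffling argument above.
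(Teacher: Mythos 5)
Your proposal is correct and follows essentially the same route as the paper: split each $\widetilde h_i(F_iF_i')$ via the relation (\ref{relations-in-T}) to produce the factor $\prod_i(F_i,F_i')^{-\sQ_i}$, then unshuffle the interleaved product using the commutator relation to pick up $\prod_{i<j}(F_i',F_j)^{\sB_{ij}}$. The signs and the bookkeeping of the swaps are exactly as in the paper's computation.
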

\begin{proof}
	By (\ref{relations-in-T}) we have
	\begin{align*}
		&\bs(\eta(\ul F\ul F'))=\widetilde h_1(F_1F_1')\cdots\widetilde h_r(F_rF_r')
		\\=&\left(\prod_{i=1}^r(F_i,F_i')^{-\sQ_i}\right)\widetilde h_1(F_1)\widetilde h_1(F_1')\cdots\widetilde h_r(F_r)\widetilde h_r(F_r')
		\\=&\left(\prod_{i=1}^r(F_i,F_i')^{-\sQ_i}\right)\left(\prod_{1\leq i<j\leq r}(F_i',F_j)^{\sB_{ij}}\right)\widetilde h_1(F_1)\cdots\widetilde h_r(F_r)\widetilde h_1(F_1')\cdots\widetilde h_r(F_r')
		\\=&d(\ul F,\ul F')\bs(\eta(\ul F))\bs(\eta(\ul F')).
	\end{align*}
\end{proof}
\begin{nrem}
	$d(\ul F,\ul F')$ is a cocycle of the central extension $\widetilde T$ of $T$ by $\mu_n(\lf)$ after we identify $T\cong(\lf^*)^r$ by the map $\eta$. 
\end{nrem}
\begin{ncor}\label{section-homomorphism-Lambda0}
	Suppose the metaplectic dual root datum $\fD_{(\sQ,n)}^\vee$ defined by Lemma \ref{metaplectic-dual-root-data} (ii) is of adjoint type, namely the lattice $\Lambda_0^\vee$ is spanned by $n_i\alpha_i^\vee$ for $i=1,2,\cdots,r$. For $t\in T$ and $t'\in T_0$ we have $\bs(tt')=\bs(t)\bs(t')=\bs(t')\bs(t)$.
\end{ncor}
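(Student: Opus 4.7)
The plan is to reduce both assertions of the corollary to a single computation of the cocycle $d$ appearing in Proposition \ref{local-cocycle}, and then to use the adjoint hypothesis together with the defining congruences of $\Lambda_0^\vee$ to show that cocycle is trivial. The commutativity $\bs(t)\bs(t')=\bs(t')\bs(t)$ is immediate: since $p(\bs(t'))=t'\in T_0$, the element $\bs(t')$ lies in $\widetilde T_0=p^{-1}(T_0)$, which is central in $\widetilde T$ by Proposition \ref{center-of-T}. So the only substantive claim is $\bs(tt')=\bs(t)\bs(t')$.

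To handle that, I first use the adjoint hypothesis on $\fD_{(\sQ,n)}^\vee$ to parameterize $T_0$ conveniently. Under this hypothesis $\Lambda_0^\vee$ has $\BZ$-basis $\{n_1\alpha_1^\vee,\ldots,n_r\alpha_r^\vee\}$, and since $s^{n_i\alpha_i^\vee}=\mathsf h_i(s^{n_i})$ and $T$ is abelian, every element of $T_0$ is of the form $\eta((t_1^{n_1},\ldots,t_r^{n_r}))$ for some $t_i\in\lf^*$. Writing $t=\eta(\ul F)$ and $t'=\eta(\ul F')$ with $F_i'=t_i^{n_i}$, Proposition \ref{local-cocycle} gives
\begin{equation*}
\bs(tt')=d(\ul F,\ul F')\,\bs(t)\bs(t'),
\end{equation*}
so the problem reduces to showing $d(\ul F,\ul F')=1$.

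A direct substitution into the formula (\ref{def:d-}) and bilinearity of the Hilbert symbol yield
\begin{equation*}
d(\ul F,\ul F')=\prod_{i=1}^r(F_i,t_i)^{-n_i\sQ_i}\prod_{1\leq i<j\leq r}(t_i,F_j)^{n_i\sB_{ij}}.
\end{equation*}
Each exponent is divisible by $n$: the bound $n\mid n_i\sQ_i$ is the very definition of $n_i$, while $n\mid n_i\sB_{ij}$ follows because $n_i\alpha_i^\vee\in\Lambda_0^\vee$ (by Lemma \ref{metaplectic-dual-root-data}(i)) combined with the defining congruence (\ref{metaplectic-lattice}) of $\Lambda_0^\vee$ gives $\sB(n_i\alpha_i^\vee,\alpha_j^\vee)=n_i\sB_{ij}\equiv 0\pmod n$. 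Since the Hilbert symbol takes values in $\mu_n(\lf)$, every factor equals $1$, hence $d(\ul F,\ul F')=1$ as required.

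The adjoint hypothesis enters in exactly one place: it guarantees that each component of $\ul F'$ can be taken to be an $n_i$-th power, which is precisely what pulls the factor $n_i$ out in both exponents. Without it a general $\lambda^\vee\in\Lambda_0^\vee$ would mix the simple coroots in a less controlled way and the resulting Hilbert-symbol factors need not be trivial; this is the only conceptual obstacle, and once the parametrization is in place the computation itself is routine.
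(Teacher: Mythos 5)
Your proof is correct and follows essentially the same route as the paper: both reduce the statement to the vanishing of the cocycle $d(\ul F,\ul F')$ from Proposition \ref{local-cocycle} and then check that the exponents $n_i\sQ_i$ and $n_i\sB_{ij}$ are divisible by $n$. Your version is marginally tidier in two respects — you dispose of the commutativity claim via the centrality of $\widetilde T_0$ (Proposition \ref{center-of-T}), and by parameterizing all of $T_0$ at once as $\eta((t_1^{n_1},\dots,t_r^{n_r}))$ you avoid the paper's implicit reduction to single generators $s^{\lambda^\vee}$ — but the substance is identical.
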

\begin{proof}
	It suffices to prove this for $t'=s^{\lambda^\vee}$ with $s\in\lf^*$, $\lambda^\vee\in\Lambda_0^\vee$. Suppose $\lambda^\vee=k_1\alpha_1^\vee+\cdots+k_r\alpha_r^\vee$, since $\Lambda_0^\vee$ is spanned by $n_i\alpha_i^\vee$ for $i=1,2,\cdots,r$, we have $n_i|k_i$ for $i=1,\cdots,r$. Suppose $t=\eta(\ul F)$, $t'=\eta(\ul F')$, then we have $F_i'=s^{k_i}$ for $i=1,\cdots,r$, so
	$$d(\ul F,\ul F')=\left(\prod_{i=1}^r(F_i,s)^{-k_i\sQ_i}\right)\left(\prod_{1\leq i<j\leq r}(F_i',s)^{k_i\sB_{ij}}\right)$$
	Note that $n|k_i\sQ_i$ because $n_i|k_i$. Also $n|k_i\sB_{ij}$ because $k_i\sB_{ij}=\pair{\alpha_j^\vee,\alpha_i}k_i\sQ_i$. Thus $d(\ul F,\ul F')=1$ and we are done. 
\end{proof}
\tpoint{Iwasawa decomposition}We recall the following analogue of Iwasawa decomposition for metaplectic groups:
\begin{nprop}
	Every element $g\in\widetilde G$ can be written as $g=ktu$ for $k\in K$, $t\in\widetilde T$, $u\in U$. The decomposition is not unique in general, but the class of $t$ in $\widetilde T_\ro\backslash\widetilde T$ is uniquely determined by $g$. 
\end{nprop}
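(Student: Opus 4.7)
The strategy is to reduce the metaplectic Iwasawa decomposition to the classical Iwasawa decomposition $G = KTU$ by exploiting the splittings available on the two ``non-torus'' pieces. By the canonical splitting of $U$ (established just above) and the fixed splitting of $K$ from Proposition \ref{split-maximal-compact} (which uses $(q,n)=1$), both $K$ and $U$ are regarded as subgroups of $\widetilde{G}$. Meanwhile $\widetilde{T} = p^{-1}(T)$ is the full preimage of $T$. Under the chosen splitting of $K$, the subgroup $T_\ro \subset K$ lands inside $\widetilde{T}$, and one has $\widetilde{T}_\ro = p^{-1}(T_\ro)$ following the paper's convention.

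For existence, given $g \in \widetilde{G}$, apply classical Iwasawa to write $p(g) = k_0 t_0 u_0$ with $k_0 \in K$, $t_0 \in T$, $u_0 \in U$. Viewing $k_0$ and $u_0$ as elements of $\widetilde{G}$ via the above splittings, define
\[
\widetilde{t} := k_0^{-1}\, g\, u_0^{-1}.
\]
Applying $p$ gives $p(\widetilde{t}) = k_0^{-1} (k_0 t_0 u_0) u_0^{-1} = t_0 \in T$, so $\widetilde{t} \in p^{-1}(T) = \widetilde{T}$ and $g = k_0\, \widetilde{t}\, u_0$ is the desired decomposition.

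For uniqueness of the class of $t$ in $\widetilde{T}_\ro \backslash \widetilde{T}$, suppose $g = \widetilde{k}\, \widetilde{t}\, \widetilde{u} = \widetilde{k}'\, \widetilde{t}'\, \widetilde{u}'$ are two such decompositions. Projecting to $G$ produces two classical Iwasawa decompositions of $p(g)$, and the classical uniqueness statement (the torus factor is unique modulo $T \cap K = T_\ro$) yields $p(\widetilde{t})\, p(\widetilde{t}')^{-1} \in T_\ro$. Since $\widetilde{T}_\ro = p^{-1}(T_\ro)$, this gives $\widetilde{t}\, (\widetilde{t}')^{-1} \in \widetilde{T}_\ro$, i.e.\ the same class in $\widetilde{T}_\ro \backslash \widetilde{T}$.

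There is essentially no genuine obstacle here once one is careful about bookkeeping: the entire statement is a formal consequence of classical Iwasawa together with the two splittings. The one point worth checking explicitly is that the restriction of the $K$-splitting to $T_\ro$ coincides with the inclusion $T_\ro \hookrightarrow \widetilde{T}$ used to define $\widetilde{T}_\ro$; this follows from the explicit description of the $K$-splitting in terms of the generators $\widetilde{x}_\alpha(s)$ for $s \in \ro$ (whence $\widetilde{h}_\alpha(s)$ for $s \in \ro^\times$), combined with the fact that the Hilbert symbol is unramified when $n \mid (q-1)$, so $\bs|_{T_\ro}$ is already a group homomorphism.
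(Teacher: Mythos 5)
Your argument is correct and is exactly the standard one the paper implicitly relies on (the proposition is stated there without proof): lift the classical Iwasawa decomposition $p(g)=k_0t_0u_0$ using the fixed splittings of $K$ and $U$, and deduce uniqueness of the coset in $\widetilde T_\ro\backslash\widetilde T$ by projecting to $G$ and using that $\widetilde T_\ro=p^{-1}(T_\ro)$ under the paper's convention for preimages. The compatibility check you flag at the end (that the $K$-splitting restricted to $T_\ro$ agrees with $\bs|_{T_\ro}$) is established in the paper but is not actually needed for this statement, precisely because $\widetilde T_\ro$ is the full preimage of $T_\ro$ rather than the image of a splitting, so the central $\mu_n(\lf)$ ambiguity is already absorbed into the coset.
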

In fact, by (\ref{decomp-T}), every element $g\in\widetilde G$ can be written as $g=k\pi^{\lambda^\vee}\zeta u$ for $k\in K$, $\lambda^\vee\in \Lambda^\vee$, $\zeta\in\mu_n(\lf)$, $u\in U$, and $\lambda^\vee$ is uniquely determined by $g$. 
%\begin{proof}
%	(i) For $\widetilde a\in \widetilde T$, suppose $\widetilde a=\pi^{\lambda^\vee}a'\zeta$ according to (5.3).
%	Then $\widetilde a\in\widetilde T_0\Leftrightarrow \pi^{\lambda^\vee}$ commutes with each element in $T_\CO$. Let $\gamma\in\CO^*$ be an element such that $\epsilon(\gamma,\pi)_n$ is a primitive $n$-th root of unity. Then for each simple coroot $a^\vee\in\Pi^\vee$, we have
%	\begin{equation}
%		[\gamma^{a^\vee},\pi^{\lambda^\vee}]=\epsilon(\gamma,\pi)^{\mathsf B(\lambda^\vee,a^\vee)}.
%	\end{equation}
%	by \red{the generating relation of $T$}. Thus if $\pi^{\lambda^\vee}$ commutes with each element in $T_\CO$, then in particular it commutes with $h_a(\gamma)$ for all $a^\vee\in\Pi^\vee$, this implies that $\lambda^\vee\in\Lambda_0^\vee$. Conversely, by formula (5.4) it is clear that if $\lambda\in\Lambda_0^\vee$, then it commutes with $T_\CO$. \red{Incomplete.}
%\end{proof}

\subsection{Universal unramified principal series representations}\label{universal-principal-series}
In this subsection we assume that $2n|(q-1)$. We develop the theory of unramified principal series in the same way as in \cite{HKP} for reductive groups, namely we adopt the formalism of "universal principal series". 
\tpoint{Genuine functions}\label{def:genuine-local}
We fix an embedding $\epsilon:\mu_n(\lf)\to\BC^*$. For any central extension of groups
$$1\to \mu_n(\lf)\to\widetilde E\to E\to 0$$
we say a function $f:\widetilde E\to\BC$ on $\widetilde E$ is \emph{$\epsilon$-genuine} if it satisfies $f(\zeta x)=\epsilon(\zeta)f(x)$ for any $\zeta\in\mu_n(\lf),\,x\in\widetilde E$. Since $\epsilon$ is fixed, such functions are usually called genuine for short. 
\tpoint{Universal character} We consider a "universal" unramified character of $T_0$ defined by
\begin{equation}
	\chi_{\univ}: T_0/T_{0,\ro}\to\BC[\Lambda_0^\vee]^\times,\,\pi^{\mu^\vee}\mapsto e^{\mu^\vee}.
\end{equation}
 It induces a genuine character $\widetilde\chi_\univ:\widetilde T_*\to\BC[\Lambda_0^\vee]^\times$ by 
\begin{equation}
	\widetilde\chi_{\univ}(\pi^{\mu^\vee}a'\zeta)=e^{\mu^\vee}\epsilon(\zeta)\text{ for }\mu^\vee\in\Lambda_0^\vee,\,a'\in T_\ro, \zeta\in\mu_n(\lf). 
\end{equation}
It is right $T_\ro$-invariant by definition. 
\par Let $i_{\univ}=\Ind_{\widetilde T_*}^{\widetilde T}(\widetilde\chi_{\univ})$. It is the space of genuine functions $f:\widetilde T\to\BC[\Lambda_0^\vee]$ such that 
 $$f(a\pi^{\mu^\vee}a'\xi)=\epsilon(\xi)f(a)e^{\mu^\vee},\,\zeta\in\mu_n(\lf),a\in \widetilde T,\mu^\vee\in\Lambda_0^\vee,a'\in T_\ro.$$
on which $\widetilde T$ acts by left translations. In particular, these functions are right $T_\ro$-invariant. 
%\par Note that $i_{\univ}$ is naturally an $\BC[\Lambda_0^\vee]$-module, and the representation $i(\lambda)$ defined in Section 4.3.1 is obtained by $i(\lambda)=i_\univ\otimes_{\BC[\Lambda_0^\vee]}\BC_\lambda$ where $\BC_\lambda$ is the one-dimensional $\BC[\Lambda_0^\vee]$-module on which $e^{\mu^\vee}$ acts by the scalar $q^{-\pair{\lambda,\mu^\vee}}$. 
\begin{nlem}\label{TO-invariant}
	Let $f$ be a left $T_\ro$-invariant function in $i_{\univ}$. Then $f$ is uniquely determined by its value at $1$, and $f(\pi^{\mu^\vee}a'\xi)=0$ if $\mu^\vee\notin\Lambda_0^\vee$. 
\end{nlem}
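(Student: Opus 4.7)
The plan is to split the assertion into two parts: (a) if $\mu^\vee \notin \Lambda_0^\vee$, then $f(\pi^{\mu^\vee}) = 0$; and (b) the value $f(1) \in \BC[\Lambda_0^\vee]$ determines $f$ everywhere. Part (b) will fall out of the definition of $i_\univ$ combined with the decomposition (\ref{decomp-T}), so the substance of the proof is part (a), whose crux is the non-triviality of the tame Hilbert symbol under the standing hypothesis $2n \mid q - 1$.

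First I would observe that any $\widetilde a \in \widetilde T$ factors as $\pi^{\lambda^\vee} a' \xi$ with $\lambda^\vee \in \Lambda^\vee$, $a' \in T_\ro$, $\xi \in \mu_n(\lf)$ by (\ref{decomp-T}). Applying the defining property of $i_\univ$ with $a = \pi^{\lambda^\vee}$ and the $\mu^\vee$ there set to $0$ collapses $f(\widetilde a)$ to $\epsilon(\xi)\, f(\pi^{\lambda^\vee})$. Moreover, if $\lambda^\vee \in \Lambda_0^\vee$, the same defining property with $a = 1$ gives $f(\pi^{\lambda^\vee}) = f(1)\, e^{\lambda^\vee}$. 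This reduces everything to computing $f(\pi^{\mu^\vee})$ for $\mu^\vee \in \Lambda^\vee \setminus \Lambda_0^\vee$, and once (a) is known it establishes (b).

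For (a), fix $\mu^\vee = \sum_j k_j \alpha_j^\vee \notin \Lambda_0^\vee$, so that $n \nmid \sB(\mu^\vee, \alpha_i^\vee)$ for some simple index $i$. For an arbitrary $F \in \ro^\times$ set $t = \widetilde h_i(F) \in T_\ro$. Recalling $\pi^{\mu^\vee} = \widetilde h_1(\pi^{k_1}) \cdots \widetilde h_r(\pi^{k_r})$ in $\widetilde T$, I would push $t$ past each $\widetilde h_j(\pi^{k_j})$ using the commutation relation $\widetilde h_i(F)\, \widetilde h_j(\pi^{k_j}) = (F,\pi^{k_j})^{\sB_{ij}}\, \widetilde h_j(\pi^{k_j})\, \widetilde h_i(F)$ from (\ref{relations-in-T}); accumulating factors yields
\[
t\, \pi^{\mu^\vee} \;=\; (F,\pi)^{\sB(\mu^\vee,\alpha_i^\vee)}\, \pi^{\mu^\vee}\, t.
\]
Applying $f$, using left $T_\ro$-invariance on the left and right $T_\ro$-invariance (a special case of the defining property of $i_\univ$) on the right, gives $f(\pi^{\mu^\vee}) = \epsilon((F,\pi))^{\sB(\mu^\vee,\alpha_i^\vee)}\, f(\pi^{\mu^\vee})$.

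The conclusion then follows because the tame Hilbert symbol $F \mapsto (F,\pi)$ maps $\ro^\times$ onto $\mu_n(\lf)$ when $n \mid q-1$: as $n \nmid \sB(\mu^\vee,\alpha_i^\vee)$, one can select $F \in \ro^\times$ so that $\epsilon((F,\pi))^{\sB(\mu^\vee,\alpha_i^\vee)} \neq 1$, forcing $f(\pi^{\mu^\vee}) = 0$. The only step requiring genuine care is the commutation computation producing the factor $(F,\pi)^{\sB(\mu^\vee,\alpha_i^\vee)}$; this is bookkeeping with the commutator relation together with the fact that $\bs|_{T_\ro}$ is a group homomorphism (valid because the Hilbert symbol is unramified on $\ro^\times \times \ro^\times$), and constitutes a notational rather than a conceptual obstacle.
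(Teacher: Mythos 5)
Your proposal is correct and follows essentially the same route as the paper: reduce via the decomposition (\ref{decomp-T}) to the values $f(\pi^{\mu^\vee})$, then exploit the commutator $[\widetilde h_i(F),\pi^{\mu^\vee}]=(F,\pi)^{\sB(\mu^\vee,\alpha_i^\vee)}$ together with left and right $T_\ro$-invariance and the surjectivity of the tame symbol onto $\mu_n(\lf)$ to force vanishing off $\Lambda_0^\vee$. The only difference is the immaterial one of commuting the torus element past $\pi^{\mu^\vee}$ from the left rather than from the right.
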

\begin{proof}
	By (\ref{decomp-T}), every element in $\widetilde T$ can be decomposed uniquely as $\pi^{\mu^\vee}a'\zeta$ for $\mu^\vee\in\Lambda^\vee,a'\in T_\ro$, so $f$ is completely determined by its values at $\pi^{\mu^\vee}$ for $\mu^\vee\in\Lambda^\vee$. Let $\gamma\in\ro^*$ be an element such that $\epsilon(\gamma,\pi)$ is a primitive $n$-th root of unity. Then for each simple coroot $\beta^\vee\in\Pi^\vee$, we have
	$$f(\pi^{\mu^\vee})=f(\pi^{\mu^\vee}h_\beta(\gamma))=\epsilon(\pi,\gamma)^{\mathsf B(\mu^\vee,\beta^\vee)}f(h_b(\gamma)\pi^{\mu^\vee})=\epsilon(\pi,\gamma)^{\mathsf B(\mu^\vee,\beta^\vee)}f(\pi^{\mu^\vee}).$$
	So for $\mu^\vee\notin\Lambda_0^\vee$, we can take some $\beta^\vee\in\Pi^\vee$ such that $n\nmid\mathsf B(\mu^\vee,\beta^\vee)$. This forces $f(\mu^\vee)=0$ for $\mu^\vee\notin\Lambda_0^\vee$. For $\mu^\vee\in\Lambda_0^\vee$, we have 
	$f(\pi^{\mu^\vee})=e^{\mu^\vee}f(1)$, so $f$ is totally determined by $f(1)\in\BC[\Lambda_0^\vee]$. 
\end{proof}
\tpoint{Universal unramified principal series} We define the {universal unramified principal series representation} of $\widetilde G$ by $M_{\univ}:=\Ind_{\widetilde TU}^{\widetilde G}(i_{\univ})$ (parabolic induction). By transitivity of induction, we can equivalently define $M_{\univ}:=\Ind_{\widetilde T_*U}^{\widetilde G}(\widetilde\chi_{\univ})$. We will use the second model, so $M_\univ$ is the space of functions $\varphi:\widetilde G\to\BC[\Lambda_0^\vee]$ such that 
\begin{equation}
	\varphi(g\pi^{\mu^\vee} a'\zeta u)=\epsilon(\zeta)q^{\pair{\rho,\mu^\vee}}e^{\mu^\vee}\varphi(g),\,\zeta\in\mu_n(\lf),g\in\widetilde G,\mu^\vee\in\Lambda_0^\vee,a'\in T_\ro,u\in U.
\end{equation}
on which $\widetilde G$ acts by left translations. It is also a $\BC[\Lambda_0^\vee]$-module. 
\begin{nprop}
	Let $\varphi$ be a left $K$-invariant function in $M_{\univ}$. Then $\varphi|_{\widetilde T}$ is a left $T_\ro$-invariant function in $i_{\univ}$. In particular, $\varphi$ is determined by its value at $1$. 
\end{nprop}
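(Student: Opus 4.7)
\medskip

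The plan is to reduce the statement to the torus via the Iwasawa decomposition and then invoke Lemma~\ref{TO-invariant}. The proof divides into three steps. First, I would check that $\varphi|_{\widetilde T}$ belongs to $i_\univ$. The right transformation property follows directly by restricting the defining identity $\varphi(g\pi^{\mu^\vee}a'\zeta u)=\epsilon(\zeta)q^{\pair{\rho,\mu^\vee}}e^{\mu^\vee}\varphi(g)$ to arguments $g\in\widetilde T$ and $u=1$; modulo the $\delta_B^{1/2}$-twist coming from normalized parabolic induction (absorbed in the identification $M_\univ\cong \Ind_{\widetilde TU}^{\widetilde G}i_\univ$ used in \S\ref{universal-principal-series}), this is exactly the transformation law defining $i_\univ$.

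Next, left $T_\ro$-invariance of $\varphi|_{\widetilde T}$ is immediate: under the splitting of Proposition~\ref{split-maximal-compact} we have $T_\ro\subseteq K$, so left $K$-invariance of $\varphi$ restricts to left $T_\ro$-invariance of $\varphi|_{\widetilde T}$. Combining these two observations, Lemma~\ref{TO-invariant} applies to $\varphi|_{\widetilde T}$, and in particular $\varphi|_{\widetilde T}$ is determined by its value $\varphi(1)\in \BC[\Lambda_0^\vee]$ (and automatically vanishes on $\widetilde T\setminus\widetilde T_*$).

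Finally, to recover $\varphi$ from $\varphi|_{\widetilde T}$ on all of $\widetilde G$, I would apply the Iwasawa decomposition $\widetilde G=K\widetilde TU$. Writing $g=k\widetilde tu$ with $k\in K$, $\widetilde t\in\widetilde T$, $u\in U$, left $K$-invariance of $\varphi$ gives $\varphi(g)=\varphi(\widetilde tu)$, and the defining transformation law specialized to $\mu^\vee=0$, $a'=1$, $\zeta=1$ yields right $U$-invariance, so $\varphi(\widetilde tu)=\varphi(\widetilde t)=\varphi|_{\widetilde T}(\widetilde t)$. The $\widetilde T_\ro$-coset of $\widetilde t$ is well-defined by the uniqueness part of the Iwasawa decomposition, and because $\varphi|_{\widetilde T}$ is left $T_\ro$-invariant the resulting value is independent of the chosen factorization. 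Chaining these reductions, $\varphi(g)$ is a function of $\varphi|_{\widetilde T}$, which in turn depends only on $\varphi(1)$, giving the final assertion.

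There is no serious obstacle here: the content of the proposition is mostly an unwinding of definitions. The only point requiring mild care is the $\delta_B^{1/2}$-normalization when comparing $\varphi|_{\widetilde T}$ with the defining condition of $i_\univ$; this is a standard convention of normalized parabolic induction and is consistent with the two equivalent presentations of $M_\univ$ given in \S\ref{universal-principal-series}.
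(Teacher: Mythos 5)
Your argument is correct and is exactly the routine proof the paper leaves implicit: left $K$-invariance gives left $T_{\ro}$-invariance of $\varphi|_{\widetilde T}$ since $T_\ro\subseteq K$ under the fixed splitting, Lemma \ref{TO-invariant} (whose proof goes through verbatim with the extra factor $q^{\pair{\rho,\mu^\vee}}$) pins down $\varphi|_{\widetilde T}$ by $\varphi(1)$, and the Iwasawa decomposition together with right $U$-invariance reduces $\varphi$ on all of $\widetilde G$ to $\varphi|_{\widetilde T}$. Your remark about the $\delta_B^{1/2}$-normalization is also well taken — as literally written, $\varphi|_{\widetilde T}$ transforms with the extra $q^{\pair{\rho,\mu^\vee}}$ relative to the stated definition of $i_{\univ}$, a harmless discrepancy in the paper's conventions that does not affect the conclusion.
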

Let $\Phi$ be the unique $K$-invariant element in $M_{\univ}$ such that $\Phi(1)=1$. Then $\Phi$ is a genuine left $K$-invariant right $T_\ro U$-invariant function on $\widetilde G$. It follows from the proof of Lemma \ref{TO-invariant} that $\Phi$ is supported on $p^{-1}(K\pi^{\lambda^\vee}U)$ if and only if $\lambda^\vee\in\Lambda_0^\vee$, and the value of $\Phi$ on $K\pi^{\lambda^\vee}U\xi$ is $q^{\pair{\lambda^\vee,\rho}}e^{\lambda^\vee}\epsilon(\xi)$ for $\lambda^\vee\in\Lambda_0^\vee$. 
\tpoint{Back to the unramified principal series representations}\label{usual-principal-series}
For $\lambda\in\Lambda\otimes\BC$, we define an unramified character $\chi_\lambda$ of $T_0$ by 
$$\chi_\lambda(\pi^{\mu^\vee}a')=q^{-\pair{\lambda,\mu^\vee}}\text{ for }\mu^\vee\in\Lambda_0^\vee,\,a'\in T_{0,\ro}.$$
$\chi_\lambda$ extends to an unramified genuine character $\widetilde\chi_\lambda$ of $\widetilde T_*$ by extending trivially on $T_\ro$:
 \begin{equation}
 	\widetilde\chi_\lambda(\pi^{\mu^\vee}a'\zeta)=\epsilon(\zeta)q^{-\pair{\lambda,\mu^\vee}}\text{ for }\mu^\vee\in\Lambda_0^\vee,\,a'\in T_\ro,\,\epsilon(\zeta)\in \mu_n(\lf).
 \end{equation}
 	 Note that $\widetilde\chi_\lambda$ is right $T_\ro$-invariant by definition. 
 \par Let $i(\lambda)=\Ind_{\widetilde T_*}^{\widetilde T}(\widetilde\chi_\lambda)$ be the representation of $\widetilde T$ induced from the character $\widetilde\chi_\lambda$ of $\widetilde T_*$. Concretely $i(\lambda)$ is the space of functions $f:\widetilde T\to\BC$ such that 
 $$f(a\pi^{\mu^\vee}a'\xi)=\epsilon(\zeta)f(a)q^{-\pair{\lambda,\mu^\vee}},\,\zeta\in\mu_n(\lf),a\in \widetilde T,\mu^\vee\in\Lambda_0^\vee,a'\in T_\ro.$$
\par The "usual" unramified principal series representations associated to $\lambda\in\Lambda\otimes\BC$ is defined by $I(\lambda):=\Ind_{\widetilde TU}^{\widetilde G}(i(\lambda))\cong\Ind_{\widetilde T_*U}^{\widetilde G}(\widetilde\chi_\lambda)$. In the second model, it is the space of functions $\varphi:\widetilde G\to\BC$ such that 
\begin{equation}\label{property-Phi-lambda}
	\varphi(g\pi^{\mu^\vee} a'\zeta u)=\epsilon(\zeta)q^{\pair{\rho-\lambda,\mu^\vee}}\varphi(g),\,\zeta\in\mu_n(\lf),g\in\widetilde G,\mu^\vee\in\Lambda_0^\vee,a'\in T_\ro,u\in U.
\end{equation}
on which $\widetilde G$ acts by left translations. We have $I(\lambda)=M_\univ\otimes_{\BC[\Lambda_0^\vee]}\BC_\lambda$, where $\BC_\lambda$ is the one-dimensional $\BC[\Lambda_0^\vee]$-module on which $e^{\mu^\vee}$ acts by $q^{-\pair{\mu^\vee,\lambda}}$. Let $\Phi_\lambda$ be the image of $\Phi$ in $I(\lambda)$. It is the unique $K$-invariant function in $I(\lambda)$ such that $\Phi_\lambda(1)=1$. 
\tpoint{Intertwiners}Let $J$ be the smallest multiplicative subset of $\BC[\Lambda_0^\vee]$ containing the elements $(1-q^{-1}e^{-\widetilde\alpha^\vee})$ and $(1-e^{-\widetilde\alpha^\vee})$ for every $\alpha^\vee\in\Phi^\vee$, let $\BC[\Lambda_0^\vee]_J$ be the completion at $J$. For any $w\in W$, define the intertwiner $I_w:M_\univ\otimes\BC[\Lambda_0^\vee]_J\to M_\univ\otimes\BC[\Lambda_0^\vee]_J$ by 
\begin{equation}\label{Intertwiner}
	I_w(\varphi)(g)=w\int_{U_w}\varphi(guw)du
\end{equation}
where $U_w=U\cap wU^-w^{-1}$ and the $w$ in the front means the $W$-action on $\BC[\Lambda_0^\vee]_J$. Then we have \begin{itemize}
	\item $I_w$ is $\widetilde G$-equivariant.
	\item $I_w(e^{\lambda^\vee}\varphi)=e^{w\lambda^\vee}\varphi$ for $\lambda^\vee\in\Lambda_0^\vee$.
	\item $I_{w_1w_2}=I_{w_1}I_{w_2}$ if $\ell(w)=\ell(w_1)+\ell(w_2)$. 
\end{itemize}
We have the following metaplectic version of the Gindikin-Karpelevich formula:
\begin{nprop}[\cite{McN:duke}]\label{GK-formula}
Let $s_i$ be a simple reflection. The effect of $I_{s_i}$ on the spherical vector $\Phi$ is given by 
	$$I_{s_i}\Phi=\frac{1-q^{-1}e^{\widetilde\alpha^\vee_i}}{1-e^{\widetilde\alpha^\vee_i}}\Phi$$
\end{nprop}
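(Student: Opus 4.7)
The plan is to reduce the statement to a calculation at the identity and then to an $SL_2$-type integral. Since $I_{s_i}$ is $\widetilde G$-equivariant and $\Phi$ is $K$-invariant, the image $I_{s_i}\Phi$ lies in the submodule of $K$-invariants of $M_\univ\otimes_{\BC[\Lambda_0^\vee]}\BC[\Lambda_0^\vee]_J$. Arguing as in Lemma \ref{TO-invariant}, these $K$-invariants form a free $\BC[\Lambda_0^\vee]_J$-module of rank one generated by $\Phi$, so $I_{s_i}\Phi = c\cdot\Phi$ for a unique $c\in\BC[\Lambda_0^\vee]_J$. Evaluating at the identity and using $U_{s_i}=U_{\alpha_i}$ gives
$$c = I_{s_i}\Phi(1) = s_i\int_{\lf}\Phi(\widetilde x_{\alpha_i}(t)\dot s_i)\,dt,$$
so the entire problem becomes to compute this integral.

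The next step is to apply the rank-one Bruhat identity
$$x_{\alpha_i}(t)\dot s_i = x_{-\alpha_i}(-t^{-1})\,\sfh_i(-t)\,x_{\alpha_i}(-t^{-1}) \qquad (t\neq 0),$$
which holds inside the $SL_2$-subgroup generated by $U_{\pm\alpha_i}$ and lifts verbatim to $\widetilde G$ by the Steinberg relations, since the unipotent parts split canonically. I would split the integral over $\lf$ into $t\in\ro$ and $|t|>1$. When $t\in\ro$, the element $\widetilde x_{\alpha_i}(t)\dot s_i$ lies in $\widetilde K$ and contributes $\Phi(\dot s_i)=1$, giving a total mass equal to $\mathrm{meas}(\ro)=1$. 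When $|t|>1$, both $\widetilde x_{\pm\alpha_i}(-t^{-1})$ lie in $\widetilde K$, and by left $K$-invariance and right $U$-invariance of $\Phi$ the integrand collapses to $\Phi(\widetilde\sfh_i(-t))$.

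To evaluate $\Phi(\widetilde\sfh_i(-t))$, write $t=\pi^{-k}u$ with $k\geq 1$ and $u\in\ro^\times$. Using \eqref{relations-in-T} together with the fact that $\bs(\pi^{-k\alpha_i^\vee})\in\widetilde T_0$ is central in $\widetilde T$ whenever $-k\alpha_i^\vee\in\Lambda_0^\vee$ (Proposition \ref{center-of-T}), we have
$$\widetilde\sfh_i(-t) = (-u,\pi^{-k})^{-\sQ_i}\,\bs(\sfh_i(-u))\,\bs(\pi^{-k\alpha_i^\vee}).$$
The support property of $\Phi$ forces $-k\alpha_i^\vee\in\Lambda_0^\vee$, i.e.\ $n_i\mid k$. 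Under the hypothesis $2n\mid(q-1)$, the element $-1$ is an $n$-th power in $\ro^\times$, so $(-u,\pi^{-k})^{-\sQ_i}=(u,\pi)^{k\sQ_i}$, and since $n\mid n_i\sQ_i$ by the definition of $n_i$, this Hilbert symbol is trivial as soon as $n_i\mid k$. Combined with $\pair{\rho,\alpha_i^\vee}=1$ and the equivariance of $\Phi$, one obtains $\Phi(\widetilde\sfh_i(-\pi^{-k}u)) = q^{-k}e^{-k\alpha_i^\vee}$ when $n_i\mid k$ and $0$ otherwise.

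The final step is the summation. With the normalization $\mathrm{meas}(\ro)=1$, one has $\mathrm{meas}(\pi^{-k}\ro^\times) = q^{k-1}(q-1)$, and summing the non-vanishing terms over $k=mn_i$, $m\geq 1$,
$$\int_\lf\Phi(\widetilde x_{\alpha_i}(t)\dot s_i)\,dt = 1 + (1-q^{-1})\sum_{m\geq 1} e^{-mn_i\alpha_i^\vee} = \frac{1-q^{-1}e^{-\widetilde\alpha_i^\vee}}{1-e^{-\widetilde\alpha_i^\vee}}.$$
Applying the Weyl-group action $s_i$ in front sends $e^{-\widetilde\alpha_i^\vee}$ to $e^{\widetilde\alpha_i^\vee}$, yielding the stated formula. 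The main technical subtlety is the triviality of the Hilbert-symbol obstruction on the support of $\Phi$; this is precisely where the assumption $2n\mid(q-1)$ enters essentially, through the vanishing of $(-1,\pi)$ and the divisibility $n\mid n_i\sQ_i$.
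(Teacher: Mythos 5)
The paper gives no proof of this proposition at all; it is quoted from McNamara's work \cite{McN:duke}. Your argument is the standard rank-one reduction and, as far as I can check against the paper's conventions, it is correct and complete: the reduction to $I_{s_i}\Phi=c\Phi$ via $K$-invariance and Lemma \ref{TO-invariant}, the split of the $U_{\alpha_i}$-integral into $t\in\ro$ and $|t|>1$, the collapse to $\Phi(\widetilde{\sfh}_i(-t))$, the support/Hilbert-symbol analysis forcing $n_i\mid k$ and killing $(-u,\pi)^{k\sQ_i}$ under $2n\mid(q-1)$, and the geometric-series summation all go through and reproduce the stated quotient after applying $s_i$. This is also consistent with the computation the paper itself carries out in Appendix A for the related Proposition \ref{Intertwiner-Whittaker}, which uses exactly the same rank-one Bruhat identity and Gauss-sum bookkeeping. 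One cosmetic point: your Bruhat identity should read $x_{\alpha_i}(t)\dot s_i=x_{-\alpha_i}(t^{-1})\,\sfh_i(-t)\,x_{\alpha_i}(-t^{-1})$ (with $+t^{-1}$ in the first factor, as in the identity $x_i(s)s_i=x_{-i}(s^{-1})(-s)^{\alpha_i^\vee}x_i(-s^{-1})$ used in Appendix A); the sign you wrote is off for the representative $\dot s_i=\widetilde w_{\alpha_i}(1)$, but this is immaterial to your computation since only $|t^{-1}|\le 1$ is used and the $(\pm 1,\cdot)$ symbols are trivial under the standing hypothesis $2n\mid(q-1)$.
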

Also for every $\lambda\in\Lambda\otimes\BC$, $I_w$ induces an intertwiner
$I_w(\lambda):I(\lambda)\to I(w\lambda)$
defined by 
$$I_w(\varphi)(g)=\int_{U_w}\varphi(guw)du.$$
which is the usual intertwiner on unramified principal series representations. 
\subsection{Whittaker functionals}\label{Whittaker-functional}
We fix an additive character $\psi:\lf\to\BC^*$  of conductor $\ro$. 
\tpoint{Generic unramified character of $U^-$}For every simple root $\alpha_i\in\Delta$, $\psi$ induces a character of $U_{-i}$ via the isomorphism $\mathsf x_{-i}:\lf\xrightarrow{\sim}U_{-i}$, denoted $\psi_{-i}$. Then we define a character of $U^-$ by 
$$\psi:U^-\twoheadrightarrow U^-/[U^-,U^-]\cong\prod_{i\in I}U_{-i}\xrightarrow{\prod_{i\in I}\psi_{-i}}\BC^*$$
which we still denote by $\psi$ by abuse of notations. 
\tpoint{Universal Whittaker functionals}For every $\BC[\Lambda_0^\vee]$-valued smooth representation $(r,V)$ of $\widetilde G$, let $\Wh(V)$ be the space of linear functionals $L:V\to\BC[\Lambda_0^\vee]$ such that 
\begin{equation}\label{Whittaker-condition}
	L(r(u^-)v)=\psi(u^-)L(v)\text{ for all }v\in V,\,u^-\in U^-.
\end{equation}
\begin{nprop}
	$\Wh(M_\univ)$ is a free $\BC[\Lambda_0^\vee]$-module of rank $|\Lambda^\vee/\Lambda_0^\vee|$. For every $\lambda^\vee\in\Lambda^\vee/\Lambda_0^\vee$ define a linear functional $L_{\lambda^\vee}$ on $I_u$ by 
	\begin{equation}\label{Whit-functional}
		L_{\lambda^\vee}(f)=q^{-\pair{\lambda^\vee,\rho}}\int_{U^-}f(u^-\pi^{\lambda^\vee})\psi(u^-)^{-1}du^-
	\end{equation}
	Let $\Gamma$ be a set of representatives of $\Lambda^\vee/\Lambda_0^\vee$. Then $\{L_{\lambda^\vee}:\lambda^\vee\in\Gamma\}$ is a set of $\BC[\Lambda_0^\vee]$-module generators of $\Wh(M_\univ)$. 
\end{nprop}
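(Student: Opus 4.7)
The plan is to prove the proposition in two stages: first establishing that each $L_{\lambda^\vee}$ is a Whittaker functional, and then using a Bruhat/Mackey filtration argument to pin down both the rank of $\Wh(M_\univ)$ and show that the $L_{\lambda^\vee}$'s are a $\BC[\Lambda_0^\vee]$-basis.

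For the first stage, I would verify directly from (\ref{Whit-functional}) that $L_{\lambda^\vee}$ satisfies (\ref{Whittaker-condition}): if $v^- \in U^-$ and $f \in M_\univ$, then
\begin{equation*}
L_{\lambda^\vee}((v^-\cdot f))
 = q^{-\pair{\lambda^\vee,\rho}} \int_{U^-} f((v^-)^{-1} u^- \pi^{\lambda^\vee})\psi(u^-)^{-1} du^-
 = \psi(v^-) L_{\lambda^\vee}(f),
\end{equation*}
after the change of variable $u^- \mapsto v^- u^-$. Convergence is handled in the usual way by noting that the integrand reduces to an Iwasawa decomposition computation, where the transformation rule (\ref{property-Phi-lambda}) (with $\widetilde\chi_\univ$ in place of $\widetilde\chi_\lambda$) forces the integrand to be nonzero only on those $u^-$ whose $\widetilde T$-part lies in $\pi^{\Lambda_0^\vee}T_\ro$, and standard unipotent-support arguments (cf.\ \cite{McN:duke}) yield a well-defined element of $\BC[\Lambda_0^\vee]$ (or more precisely of an appropriate localization; for the spherical vector the integral is manifestly finite).

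For the second stage, I would use the Bruhat decomposition $\widetilde G = \bigsqcup_{w \in W} U^- \dot w \widetilde T_* U$ (lifted compatibly via the canonical splitting of $U^-$) to set up a filtration of $M_\univ$ as a $U^-$-representation whose successive quotients $M_w$ are the spaces of compactly supported sections over individual Bruhat cells. Applying the exact functor $\Hom_{U^-}(-,\BC_\psi)$ reduces the computation of $\Wh(M_\univ)$ to the computation of $\Wh(M_w)$ for each $w$. For $w \neq w_0$ (the longest element), there is a simple root $\alpha_i$ with $w^{-1}\alpha_i > 0$, so $U_{-\alpha_i} \subseteq U^- \cap \dot w \widetilde T_* U \dot w^{-1}$; since $\psi|_{U_{-\alpha_i}}$ is nontrivial but $M_w$ is $U_{-\alpha_i}$-trivial, $\Wh(M_w)=0$. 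For the open cell, standard identification gives
\begin{equation*}
 M_{w_0} \;\cong\; C_c^\infty(U^-) \otimes_{\BC} i_\univ
\end{equation*}
as a $U^-$-representation (with $U^-$ acting by left translation on the first factor), so
\begin{equation*}
 \Wh(M_{w_0}) \;\cong\; \Hom_{\BC[\Lambda_0^\vee]}(i_\univ,\BC[\Lambda_0^\vee]),
\end{equation*}
where the isomorphism is given by $\ell \mapsto \left(f \mapsto \ell\bigl(\int_{U^-} f(u^-)\psi(u^-)^{-1}du^-\bigr)\right)$.

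By Lemma \ref{TO-invariant}, the characteristic functions of the cosets $\widetilde T_* \pi^{\lambda^\vee}$ for $\lambda^\vee \in \Gamma$ form a free $\BC[\Lambda_0^\vee]$-basis of the $T_\ro$-invariants in $i_\univ$, which in turn freely generate $i_\univ$ as a $\BC[\Lambda_0^\vee]$-module after invoking the right $T_\ro$-invariance. Hence $\Wh(M_\univ) = \Wh(M_{w_0})$ is a free $\BC[\Lambda_0^\vee]$-module of rank $|\Lambda^\vee/\Lambda_0^\vee|$. Finally, to match $\{L_{\lambda^\vee}\}_{\lambda^\vee \in \Gamma}$ with this abstract basis, I would test against $f \in M_\univ$ which, when restricted to the open cell, concentrates near $\dot w_0 \pi^{\lambda^\vee_0}$ for a fixed $\lambda^\vee_0 \in \Gamma$; support considerations then show $L_{\mu^\vee}(f) = 0$ for $\mu^\vee \neq \lambda^\vee_0$ in $\Gamma$, while $L_{\lambda^\vee_0}(f)$ is invertible in $\BC[\Lambda_0^\vee]$. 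The main obstacle is the vanishing step on non-open cells (Step on $M_w$ with $w\neq w_0$): care is needed because the metaplectic central extension interacts with the Bruhat cells through the cocycle of \S\ref{def:metaplectic cover}, but since $U^-$ splits canonically and the relevant stabilizer computation is entirely on the $U^-$-side, the standard argument transfers without change.
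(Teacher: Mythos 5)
The paper itself states this proposition without proof (it is a standard fact about metaplectic unramified principal series, going back to Kazhdan--Patterson and McNamara), so there is nothing to compare against; your Bruhat-filtration/Rodier-type argument is exactly how this is proved in the literature, and the overall structure --- vanishing of the twisted Jacquet functor on all non-open cells, identification of the open-cell contribution with the $\BC[\Lambda_0^\vee]$-dual of $i_\univ$ (free of rank $|\Lambda^\vee/\Lambda_0^\vee|$ since $\widetilde T/\widetilde T_*\cong\Lambda^\vee/\Lambda_0^\vee$), and a support/triangularity argument to match the $L_{\lambda^\vee}$ with the resulting basis --- is sound.

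Two points need repair. First, your cell indexing is internally inconsistent. The relevant decomposition is into $(U^-, \widetilde T_*U)$-double cosets $U^-\dot w\widetilde T_*U$, whose \emph{open} stratum is the identity coset $U^-\widetilde T_*U$ (not $U^-\dot w_0\widetilde T_*U$: that coset has dimension $\dim B$); consistently, your explicit formulas $\int_{U^-}f(u^-)\psi(u^-)^{-1}du^-$ and $f(u^-\pi^{\lambda^\vee})$ evaluate sections precisely on $U^-\widetilde T$, which sits in the $w=e$ cell. For the vanishing step, the containment you assert is false as stated: $U_{-\alpha_i}\subseteq \dot w(\widetilde T_*U)\dot w^{-1}$ requires $w^{-1}(-\alpha_i)>0$, i.e.\ $w^{-1}\alpha_i<0$, not $w^{-1}\alpha_i>0$; such an $\alpha_i$ exists exactly when $w\neq e$, which is the correct criterion for a non-open cell in this indexing. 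With that sign fixed the standard argument ($\psi$ nontrivial on $U_{-\alpha_i}$ but $U_{-\alpha_i}$ acting through the cell's stabilizer) goes through. Second, the convergence of $L_{\lambda^\vee}$ on \emph{all} of $M_\univ$ is more than a footnote: for a general section the integral over $U^-$ is not absolutely convergent and must be interpreted through the twisted Jacquet module, by stabilization of partial integrals over compact open subgroups, or by passing to a completion such as the $M_{\univ,\infty}$ the paper uses in Proposition \ref{Intertwiner-Whittaker}; this should be said explicitly rather than deferred to ``standard unipotent-support arguments.''
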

\tpoint{The total Whittaker functional $\CW$}
Note that 
	$$L_{\lambda^\vee+\mu^\vee}(f)=e^{\mu^\vee}L_{\lambda^\vee}(f)$$
	for $\mu^\vee\in\Lambda_0^\vee$, so $L_{\lambda^\vee}(f)e^{-\lambda^\vee}\in\BC[\Lambda^\vee]$ only depends on the residue class of $\lambda^\vee$ in $\Lambda_0^\vee$. We also define the "total" Whittaker functional $\CW:M_\univ\to\BC[\Lambda^\vee]$ by 
\begin{equation}\label{total-Whit}
	\CW(f)=\sum_{\lambda^\vee\in\Lambda^\vee/\Lambda_0^\vee}L_{\lambda^\vee}(f)e^{-\lambda^\vee}
\end{equation}
\begin{nprop}\label{surjectivity-Whittaker}
	$\CW:M_\univ\to\BC[\Lambda^\vee]$ is surjective.
\end{nprop}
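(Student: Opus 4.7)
The plan is to construct, for each representative of a class in $\Lambda^\vee/\Lambda_0^\vee$, an explicit function in $M_\univ$ whose image under $\CW$ is a nonzero scalar multiple of a single $\BC[\Lambda_0^\vee]$-basis element of $\BC[\Lambda^\vee]$, then invoke the $\BC[\Lambda_0^\vee]$-linearity of $\CW$ (which follows at once from the identity $L_{\lambda^\vee}(e^{\mu^\vee}f)=e^{\mu^\vee}L_{\lambda^\vee}(f)$ for $\mu^\vee\in\Lambda_0^\vee$ noted just before the statement) to spread these over the whole target.

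Fix a set $\Gamma$ of representatives of $\Lambda^\vee/\Lambda_0^\vee$ and pick $\phi\in C_c^\infty(U^-)$ whose support is a sufficiently small compact neighborhood of $1\in U^-$ that the character $\psi$ is trivial on it and $\int_{U^-}\phi(u^-)\,du^-\neq 0$. For each $\nu^\vee\in\Gamma$ define $f_{\nu^\vee}$ to be supported on the open subset $U^-\pi^{\nu^\vee}\widetilde T_* U\subset\widetilde G$ and, on that piece,
\[
f_{\nu^\vee}\bigl(u^-\pi^{\nu^\vee+\mu^\vee}a'\zeta u\bigr):=\phi(u^-)\,\epsilon(\zeta)\,q^{\pair{\rho,\mu^\vee}}\,e^{\mu^\vee},\qquad \mu^\vee\in\Lambda_0^\vee,\ a'\in T_\ro,\ \zeta\in\mu_n(\lf),\ u\in U,
\]
extended by zero elsewhere. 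Well-definedness requires three ingredients: uniqueness of the Bruhat decomposition on the big cell; the identity $\bs(\pi^{\nu^\vee+\mu^\vee})=\bs(\pi^{\nu^\vee})\bs(\pi^{\mu^\vee})$ for $\mu^\vee\in\Lambda_0^\vee$, which is exactly Corollary \ref{section-homomorphism-Lambda0} (and is the one place the adjoint-type assumption enters); and compactness of $\supp(\phi)$ inside the open Schubert cell of the (projective) flag variety, ensuring that extension by zero produces a smooth genuine function satisfying the defining transformation rule of $M_\univ$.

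Now compute $L_{\mu^\vee}(f_{\nu^\vee})$ for $\mu^\vee\in\Gamma$. The integrand $f_{\nu^\vee}(u^-\pi^{\mu^\vee})$ in \eqref{Whit-functional} is nonzero only when $u^-\pi^{\mu^\vee}\in U^-\pi^{\nu^\vee}\widetilde T_* U$, and Bruhat uniqueness forces $\pi^{\mu^\vee-\nu^\vee}\in\widetilde T_*$, that is $\mu^\vee\equiv\nu^\vee\pmod{\Lambda_0^\vee}$, hence $\mu^\vee=\nu^\vee$ in $\Gamma$. Therefore $L_{\mu^\vee}(f_{\nu^\vee})=0$ for $\mu^\vee\ne\nu^\vee$, while (using triviality of $\psi$ on $\supp\phi$)
\[
L_{\nu^\vee}(f_{\nu^\vee})=q^{-\pair{\nu^\vee,\rho}}\int_{U^-}\phi(u^-)\,du^-=:c_{\nu^\vee}\in\BC^{\times},
\]
giving $\CW(f_{\nu^\vee})=c_{\nu^\vee}\,e^{-\nu^\vee}$. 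Applying $\BC[\Lambda_0^\vee]$-linearity yields $\CW(e^{\mu^\vee}f_{\nu^\vee})=c_{\nu^\vee}\,e^{\mu^\vee-\nu^\vee}$ for every $\mu^\vee\in\Lambda_0^\vee$, and since the set $\{\mu^\vee-\nu^\vee:\mu^\vee\in\Lambda_0^\vee,\,\nu^\vee\in\Gamma\}$ exhausts $\Lambda^\vee$, the image of $\CW$ contains the $\BC$-basis $\{e^{\lambda^\vee}:\lambda^\vee\in\Lambda^\vee\}$ of $\BC[\Lambda^\vee]$, proving surjectivity.

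The only real obstacle is the verification that the piecewise formula for $f_{\nu^\vee}$ really produces an element of $M_\univ$; once one checks that Bruhat uniqueness plus Corollary \ref{section-homomorphism-Lambda0} makes the assignment consistent and that compact support in the big cell survives extension by zero on the flag variety, everything else is routine bookkeeping with the Bruhat decomposition and the $\BC[\Lambda_0^\vee]$-module structure.
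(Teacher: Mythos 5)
Your construction is correct and is essentially the paper's own argument: the paper likewise builds, for each $\xi^\vee\in\Lambda^\vee$, a function supported on $U^-_\ro\widetilde T U$ (a compact piece of $U^-$ times the Borel on which $\psi$ is trivial), so that each $L_{\lambda^\vee}$ reduces to evaluation at $\pi^{\lambda^\vee}$ and $\CW$ hits $e^{\xi^\vee}$ on the nose, whereas you produce one test function per coset of $\Lambda_0^\vee$ and then spread by $\BC[\Lambda_0^\vee]$-linearity --- a cosmetic difference. One small remark: the identity $\bs(\pi^{\nu^\vee+\mu^\vee})=\bs(\pi^{\nu^\vee})\bs(\pi^{\mu^\vee})$ for $\mu^\vee\in\Lambda_0^\vee$ does not require the adjoint-type hypothesis of Corollary \ref{section-homomorphism-Lambda0}, since under the standing assumption $2n\mid(q-1)$ the symbol $(\pi,\pi)$ is trivial and $\widetilde T_0$ is central, so the proposition holds (as in the paper) without that extra assumption.
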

\begin{proof}The proof is similar to the proof of \cite[Proposition 3]{BBF:annals}. For any $\xi^\vee\in \Lambda^\vee$, it suffices to find a preimage for $e^{\xi^\vee}$. Let $\varphi_{\xi^\vee}$ be the function supported on $U^-_\ro\widetilde TU$ whose value at $k\pi^{\mu^\vee}a\zeta u$ is equal to 
	$$\begin{cases}
		\epsilon(\zeta)q^{\pair{\rho,\mu^\vee}}e^{\xi^\vee+\mu^\vee}, & \text{ if }\xi^\vee+\mu^\vee\in\Lambda_0^\vee,\\
		0 & \text{ otherwise.}
	\end{cases}$$
	for $k\in U^-_\ro$, $\mu^\vee\in\Lambda^\vee$, $a\in T_\ro$, $\zeta\in\mu_n$, $u\in U$. By Iwasawa decomposition, the function $\varphi_{\xi^\vee}$ is well-defined and is a member of $M_\univ$. Now we consider $$L_{\lambda^\vee}(\varphi_{\xi^\vee})=q^{-\pair{\lambda^\vee,\rho}}\int_{U^-}\varphi_{\xi^\vee}(u^-\pi^{\lambda^\vee})\psi(u^-)^{-1}du^-.$$
	Since the support of $\varphi_{\xi^\vee}$ is in $U_\ro^-\widetilde TU$, $\varphi_{\xi^\vee}(u^-\pi^{\lambda^\vee})=0$ unless $u^-\in U_\ro^-$, so the integration is actually taken over $U_\ro^-$, on which the character $\psi$ is trivial. Thus
	\begin{align*}
		&L_{\lambda^\vee}(\varphi_{\xi^\vee})=q^{-\pair{\lambda^\vee,\rho}}\int_{U_\ro^-}\varphi_{\xi^\vee}(u^-\pi^{\lambda^\vee})du^-
		\\=&q^{-\pair{\lambda^\vee,\rho}}\varphi_{\xi^\vee}(\pi^{\lambda^\vee})=\begin{cases}
			e^{\lambda^\vee+\xi^\vee} & \text{ if }\xi^\vee+\lambda^\vee\in\Lambda_0^\vee,\\
			0 & \text{otherwise.}
		\end{cases}
	\end{align*}
	As a result, $\CW(\varphi_{\xi^\vee})=L_{-\xi^\vee}(\varphi_{\xi^\vee})e^{\xi^\vee}=e^{\xi^\vee}$.
\end{proof}
\subsection{Unramified Whittaker functions}\label{Whit-function}
\tpoint{The unramified Whittaker functions}
The unramified metaplectic Whittaker functions are the functions 
\begin{equation}\label{Whit-fun-defn}
	W_{\lambda^\vee}(g)=q^{-\pair{\lambda^\vee,\rho}}\int_{U^-}\Phi(gu^-\pi^{\lambda^\vee})\psi(u^-)^{-1}du^-.
\end{equation}
for $\lambda^\vee\in\Lambda^\vee$. They satisfy 
\begin{equation}\label{Whit-fun}
	W_{\lambda^\vee}(kgu^-\zeta)=\psi(u^-)\epsilon(\zeta)W_{\lambda^\vee}(g)\text{ for }k\in K,\, g\in \widetilde G,\,u^-\in U^-,\,\zeta\in\mu_n(\lf). 
\end{equation}
and
\begin{equation}
	W_{\lambda^\vee+\mu^\vee}(g)=e^{\mu^\vee}W_{\lambda^\vee}(g)
\end{equation}
thus $W_{\lambda^\vee}(g)e^{-\lambda^\vee}$ depends only on the residue class of $\lambda^\vee$ in $\Lambda^\vee/\Lambda_0^\vee$. Also note that 
$$W_{\lambda^\vee}(g)=L_{\lambda^\vee}(g\cdot\Phi)$$
where $g\cdot\Phi$ is the $\widetilde G$-action on the universal principal series $M_\univ$ by left translations. 
\subsubsection{The "total" Whittaker function}In \cite{PP:adv} Patnaik-Puskas also defined a "total" Whittaker function $W:\widetilde G\to\BC[\Lambda^\vee]$ by 
\begin{equation}
	W(g)=\int_{U^-}\widetilde\Phi(gu^-)\psi(u^-)du^-
\end{equation}
where $\widetilde\Phi:\widetilde G\to\BC[\Lambda^\vee]$ is the function whose value on $K\pi^{\lambda^\vee}U\xi$ is $q^{\pair{\lambda^\vee,\rho}}e^{\lambda^\vee}\epsilon(\xi)$. The values of $\widetilde\Phi$ and $\Phi$ coincide on $K\pi^{\lambda^\vee}U\xi$ for $\lambda^\vee\in\Lambda_0^\vee$. The main result of loc. cit. is
\begin{nthm}
	For every $\lambda^\vee\in\Lambda_+^\vee$ we have $W(\pi^{\lambda^\vee})=\cs(\lambda^\vee)$. 
\end{nthm}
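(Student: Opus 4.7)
The plan is to adapt Casselman's classical derivation of the Casselman--Shalika formula to the metaplectic setting. The principal inputs are the metaplectic Gindikin--Karpelevich formula (Proposition \ref{GK-formula}) and the compatibility between the intertwining operators $I_w$ on $M_\univ$ and the Chinta--Gunnells $\star$-action on $\BC_{v,\Bg}[\Lambda^\vee]$ defined in (\ref{Chinta-Gunnells action}); the latter is precisely what is established in Appendix A via the Kazhdan--Patterson scattering matrix.

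\textbf{Step 1.} First I would repackage the statement in terms of the universal spherical vector $\Phi\in M_\univ$ and the total Whittaker functional $\CW$. Using the transformation law $W_{\lambda^\vee+\mu^\vee}=e^{\mu^\vee}W_{\lambda^\vee}$ for $\mu^\vee\in\Lambda_0^\vee$, together with the fact that $\widetilde\Phi$ reassembles the shifts of $\Phi$ by $\pi^{\mu^\vee}$ as $\mu^\vee$ ranges over representatives of $\Lambda^\vee/\Lambda_0^\vee$, one rewrites
\begin{equation*}
W(\pi^{\lambda^\vee})=\sum_{\mu^\vee\in\Lambda^\vee/\Lambda_0^\vee}L_{\mu^\vee}(\pi^{\lambda^\vee}\cdot\Phi)\,e^{-\mu^\vee}=\CW(\pi^{\lambda^\vee}\cdot\Phi),
\end{equation*}
thereby reducing the identity to a computation entirely inside the universal principal series.

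\textbf{Step 2.} Next I would unfold the defining integral of $\CW$ along the Bruhat stratification of $U^-$ relative to the big cell, producing a sum over $w\in W$. Each stratum is translated by the splittings of $U$ and $U^-$ in $\widetilde G$ into the pairing of a Whittaker functional with the intertwined vector $I_w\Phi$. The Jacobian of the change of variables, together with the dominance hypothesis $\lambda^\vee\in\Lambda_+^\vee$ that guarantees convergence on each stratum, accounts for the monomial $\prod_{\beta^\vee\in R^\vee(w^{-1})}e^{-n(\beta^\vee)\beta^\vee}$ and the sign $(-1)^{\ell(w)}$ present in (\ref{cs-general}).

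\textbf{Step 3.} Applying Proposition \ref{GK-formula} inductively along a reduced decomposition of the longest Weyl element produces the Euler-product prefactor
\begin{equation*}
q^{-\pair{\rho,\lambda^\vee}}\prod_{\alpha\in R_+}\frac{1-ve^{-n(\alpha^\vee)\alpha^\vee}}{1-e^{-n(\alpha^\vee)\alpha^\vee}},
\end{equation*}
which is exactly the normalising factor in (\ref{cs-general}). Identifying the remaining quantity $\CW(I_w\Phi)$ with $w\star e^{\lambda^\vee}$ via Appendix A then assembles both sides of the theorem.

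\textbf{Main obstacle.} The crucial step is the scattering identity $\CW\circ I_{s_i}=(s_i\star)\circ\CW$ established in Appendix A. The delicate bookkeeping is to track every Hilbert symbol and Gauss sum that arises when $I_{s_i}$ is evaluated against the basis $\{L_{\mu^\vee}:\mu^\vee\in\Lambda^\vee/\Lambda_0^\vee\}$ of Whittaker functionals, and to recognise the resulting combinatorics as the residue map $\res_{n_i}(\mathsf{B}(\mu^\vee,\alpha_i^\vee)/\mathsf{Q}(\alpha_i^\vee))$ that controls the two terms in (\ref{Chinta-Gunnells action}). Once this rank-one match is secured, braid relations propagate it to arbitrary $w\in W$, and the formula for $\cs(\lambda^\vee)$ emerges term-by-term.
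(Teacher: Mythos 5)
First, a point of comparison: the paper does not prove this statement at all. It is quoted verbatim as ``the main result of \emph{loc.\ cit.}'', i.e.\ of Patnaik--Pusk\'as \cite{PP:adv}, so there is no internal proof to measure your sketch against; what you are proposing is a proof of the cited black box.

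Second, your sketch has a genuine gap, and it is a circularity rather than a missing detail. The two inputs you rely on --- the metaplectic Gindikin--Karpelevich formula (Proposition \ref{GK-formula}) and the scattering identity of Appendix A (Proposition \ref{Intertwiner-Whittaker}) --- when applied to the spherical vector yield only the eigenvalue equation
\begin{equation*}
s_i\star\cs(0)=\frac{1-q^{-1}e^{\widetilde\alpha^\vee_i}}{1-q^{-1}e^{-\widetilde\alpha^\vee_i}}\,\cs(0),
\end{equation*}
i.e.\ the quasi-invariance of $\CW(\Phi)$ under the Chinta--Gunnells action; this is exactly how the paper uses this pair of results, in the invariance theorem that immediately follows the statement. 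Quasi-invariance determines $\cs(0)$ only up to multiplication by a $\star$-invariant element, so it cannot by itself produce the closed formula (\ref{cs-general}); one must in addition prove that $W(\pi^{\lambda^\vee})$ is a polynomial with the correct support and extremal-term normalization, or run a genuine Demazure--Lusztig/Casselman-basis recursion. Your Step 2 asserts that the $w$-th stratum of the Whittaker integral contributes $(-1)^{\ell(w)}\bigl(\prod_{\beta^\vee\in R^\vee(w^{-1})}e^{-n(\beta^\vee)\beta^\vee}\bigr)\,w\star e^{\lambda^\vee}$, and Step 3 identifies ``$\CW(I_w\Phi)$ with $w\star e^{\lambda^\vee}$ via Appendix A''; but Appendix A gives $\CW(I_w\Phi)=(\text{factors})\cdot w\star\CW(\Phi)$, with $\CW(\Phi)=\cs(0)$ the very quantity to be computed, while the explicit formula in Appendix A is proved for the special vectors $\varphi_{\xi^\vee}$ satisfying $\CW(\varphi_{\xi^\vee})=e^{\xi^\vee}$, not for translates of the spherical vector. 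Bridging these --- expanding $\pi^{\lambda^\vee}\cdot\Phi$ in such a basis and controlling the change of basis --- is precisely the content of the result of \cite{PP:adv} being cited, and it is absent from your argument.
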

The relation between the Whittaker functions $W_\lambda^\vee$ for $\lambda^\vee\in\Lambda^\vee$ and Patnaik-Puskas' Whittaker function $W(g)$ is 
\begin{nprop}
	We have 
	$$W(g)=\sum_{\lambda^\vee\in\Lambda^\vee/\Lambda_0^\vee}e^{-\lambda^\vee}W_{\lambda^\vee}(g).$$
	In particular, $$W(1)=\CW(\Phi)=\sum_{\lambda^\vee\in\Lambda^\vee/\Lambda_0^\vee}e^{-\lambda^\vee}L_{\lambda^\vee}(\Phi).$$
\end{nprop}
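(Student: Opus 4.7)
The plan is to first establish the pointwise identity
\begin{equation}\label{pointwise-Phi}
\widetilde\Phi(h)\;=\;\sum_{\lambda^\vee\in\Lambda^\vee/\Lambda_0^\vee} e^{-\lambda^\vee}\,q^{-\pair{\lambda^\vee,\rho}}\,\Phi\!\left(h\,\pi^{\lambda^\vee}\right)\qquad (h\in\widetilde G),
\end{equation}
between the ``total'' spherical vector $\widetilde\Phi$ (valued in $\BC[\Lambda^\vee]$) and the universal spherical vector $\Phi$ (valued in $\BC[\Lambda_0^\vee]$), and then integrate it against $\psi$ over $U^-$. Once \eqref{pointwise-Phi} is in hand, applying $\int_{U^-}(\cdot)(gu^-)\,\psi(u^-)\,du^-$ to both sides and interchanging the finite sum with the integral produces exactly $W(g)=\sum_{\lambda^\vee}e^{-\lambda^\vee}W_{\lambda^\vee}(g)$, the factor $q^{-\pair{\lambda^\vee,\rho}}$ matching the normalization built into the definition \eqref{Whit-fun-defn} of $W_{\lambda^\vee}$.

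To prove \eqref{pointwise-Phi} I would use Iwasawa decomposition and write $h=k\,\pi^{\mu^\vee} u\,\xi$ with $k\in K$, $\mu^\vee\in\Lambda^\vee$, $u\in U$, $\xi\in\mu_n(\lf)$. The left hand side is then immediately $q^{\pair{\mu^\vee,\rho}}e^{\mu^\vee}\epsilon(\xi)$ by the defining formula for $\widetilde\Phi$. For the right hand side I need an Iwasawa form of $h\pi^{\lambda^\vee}$: conjugating $\pi^{\lambda^\vee}$ past $u$ gives some $u'\in U$, so I must compare $\bs(\pi^{\mu^\vee})\bs(\pi^{\lambda^\vee})$ with $\bs(\pi^{\mu^\vee+\lambda^\vee})$. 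Proposition \ref{local-cocycle} expresses the defect as the cocycle $d(\underline F,\underline F')$, which here is a product of Hilbert symbols of the form $(\pi,\pi)^{?}$; under the standing assumption $2n\mid(q-1)$, Proposition \ref{Hilb-symbol-property}(iii) gives $(\pi,\pi)=1$, so the cocycle is trivial and
\[
h\,\pi^{\lambda^\vee}\;=\;k\,\pi^{\mu^\vee+\lambda^\vee}\,u'\,\xi
\]
with the \emph{same} $\xi\in\mu_n(\lf)$. By Lemma \ref{TO-invariant} (equivalently, the defining property of the spherical vector in $M_\univ$), $\Phi(h\pi^{\lambda^\vee})$ vanishes unless $\mu^\vee+\lambda^\vee\in\Lambda_0^\vee$, and in that single surviving case it equals $q^{\pair{\mu^\vee+\lambda^\vee,\rho}}e^{\mu^\vee+\lambda^\vee}\epsilon(\xi)$. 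Summing over representatives $\lambda^\vee\in\Lambda^\vee/\Lambda_0^\vee$ picks out the unique class containing $-\mu^\vee$ and recovers exactly $q^{\pair{\mu^\vee,\rho}}e^{\mu^\vee}\epsilon(\xi)$, as required; note also that the summand $e^{-\lambda^\vee}q^{-\pair{\lambda^\vee,\rho}}\Phi(h\pi^{\lambda^\vee})$ is manifestly invariant under $\lambda^\vee\mapsto\lambda^\vee+\nu^\vee$ for $\nu^\vee\in\Lambda_0^\vee$, so the sum is well posed on coset representatives.

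Integrating \eqref{pointwise-Phi} against $\psi(u^-)$ over $U^-$ (with $h=gu^-$) and using $W_{\lambda^\vee}(g)=L_{\lambda^\vee}(g\cdot\Phi)$ yields the first formula. The ``in particular'' statement for $g=1$ is then immediate from the definition \eqref{total-Whit} of $\CW$ together with $W_{\lambda^\vee}(1)=L_{\lambda^\vee}(\Phi)$.

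The only real obstacle is the bookkeeping of the $\mu_n(\lf)$-cocycle when moving $\pi^{\lambda^\vee}$ to the right of $\pi^{\mu^\vee}$ inside $\widetilde T$; this is handled cleanly by Proposition \ref{local-cocycle} under $2n\mid(q-1)$. Dropping that hypothesis would force an extra Hilbert symbol factor $(\pi,\pi)^{?}$ to appear in each summand of \eqref{pointwise-Phi}, which would then have to be absorbed in the definition of the $W_{\lambda^\vee}$, but in the present setting this subtlety does not arise.
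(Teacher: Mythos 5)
Your proposal is correct and follows essentially the same route as the paper: both arguments amount to comparing $\widetilde\Phi$ with the $\Lambda^\vee/\Lambda_0^\vee$-translates of $\Phi$ via the Iwasawa decomposition and the fact that $\Phi$ is supported on the $\Lambda_0^\vee$-cells, the paper doing this by decomposing the domain of the $U^-$-integral while you package it as a pointwise identity of integrands. Your explicit treatment of the $(\pi,\pi)$-cocycle (trivial since $2n\mid(q-1)$) is a point the paper's proof passes over silently, so that care is welcome rather than a deviation.
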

\begin{proof}
	The functions $W$ and $W_{\lambda^\vee}$ all satisfy (\ref{Whit-fun}), so by Iwasawa decomposition, it suffices to prove this theorem for $g=\pi^{\eta^\vee}$. For simplicity we prove it for $g=1$ (and in this paper we only need this result for $g=1$), the proof for a general $\pi^{\eta^\vee}$ is similar. 
	\par By Iwasawa decomposition, we have 
	\begin{align*}
		W(1)&=\int_{U^-}\widetilde\Phi(u^-)\psi(u^-)du^- =\sum_{\mu^\vee\in\Lambda^\vee}\sum_{\xi\in\mu_n(\CK)}\int_{K\pi^{\mu^\vee}U\xi\cap U^-}\epsilon(\xi)q^{\pair{\mu^\vee,\rho}}e^{\mu^\vee}\psi(u^-)du^-
		\\&=\sum_{\mu^\vee\in\Lambda^\vee}q^{\pair{\mu^\vee,\rho}}e^{\mu^\vee}\sum_{\xi\in\mu_n(\CK)}\epsilon(\xi)\int_{K\pi^{\mu^\vee}U\xi\cap U^-}\psi(u^-)du^-.
	\end{align*}
	And similarly, 
	\begin{align*}
		W_{\lambda^\vee}(1)&=q^{-\pair{\lambda^\vee,\rho}}\int_{U^-}\Phi(u^-\pi^{\lambda^\vee})\psi(u^-)du^-
		\\&=q^{-\pair{\lambda^\vee,\rho}}\sum_{\mu^\vee\in\Lambda^\vee}\sum_{\xi\in\mu_n(\CK)}\int_{K\pi^{\mu^\vee}U\xi\cap U^-}\Phi(u^-\pi^{\lambda^\vee})\psi(u^-)du^-
		\\&=\sum_{\mu^\vee\in-\lambda^\vee+\Lambda_0^\vee}q^{\pair{\mu^\vee,\rho}}e^{\mu^\vee+\lambda^\vee}\sum_{\xi\in\mu_n(\CK)}\epsilon(\xi)\int_{K\pi^{\mu^\vee}U\xi\cap U^-}\psi(u^-)du^-
	\end{align*}
	since $\Phi$ is supported on $\Lambda_0^\vee$. Thus
	\begin{align*}
		&\sum_{\lambda^\vee\in\Lambda^\vee/\Lambda_0^\vee}e^{-\lambda^\vee}W_{\lambda^\vee}(1)
	\\&=\sum_{\lambda^\vee\in\Lambda^\vee/\Lambda_0^\vee}\sum_{\mu^\vee\in-\lambda^\vee+\Lambda_0^\vee}q^{\pair{\mu^\vee,\rho}}e^{\mu^\vee}\sum_{\xi\in\mu_n(\CK)}\epsilon(\xi)\int_{K\pi^{\mu^\vee}U\xi\cap U^-}\psi(u^-)du^-
	\\&=\sum_{\lambda^\vee\in\Lambda^\vee/\Lambda_0^\vee}\sum_{\mu^\vee\in-\lambda^\vee+\Lambda_0^\vee}q^{\pair{\mu^\vee,\rho}}e^{\mu^\vee}\sum_{\xi\in\mu_n(\CK)}\epsilon(\xi)\int_{K\pi^{\mu^\vee}U\xi\cap U^-}\psi(u^-)du^-
	\\&=\sum_{\mu^\vee\in\Lambda^\vee}q^{\pair{\mu^\vee,\rho}}e^{\mu^\vee}\sum_{\xi\in\mu_n(\CK)}\epsilon(\xi)\int_{K\pi^{\mu^\vee}U\xi\cap U^-}\psi(u^-)du^-
	\\&=W(1).
	\end{align*}
\end{proof}
In particular, $e^{-\lambda^\vee}W_{\lambda^\vee}(1)$ is the part of $W(1)$ supported on $-\lambda^\vee+\Lambda_0^\vee$. 
\begin{ncor}\label{Whittaker-H}
	We have
	$$e^{-\lambda^\vee}L_{\lambda^\vee}(\Phi)=e^{-\lambda^\vee} W_{\lambda^\vee}(1)=\sum_{\substack{k_1,\cdots,k_r\geq0\\k_1\alpha_1^\vee+\cdots+k_r\alpha_r^\vee-\lambda^\vee\in\Lambda_0^\vee}}H(\pi^{k_1},\cdots,\pi^{k_r})q^{-k_1-\cdots-k_r}e^{-k_1\alpha_1^\vee-\cdots-k_r\alpha_r^\vee}$$
\end{ncor}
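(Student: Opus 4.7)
The plan is to assemble three ingredients already established in the excerpt and then match coefficients on the support. First, the equality $W_{\lambda^\vee}(1)=L_{\lambda^\vee}(\Phi)$ is immediate from the definitions (\ref{Whit-fun-defn}) and (\ref{Whit-functional}), since specializing the universal spherical vector $\Phi$ at $g=1$ inside $L_{\lambda^\vee}$ reproduces (\ref{Whit-fun-defn}). So the first equality in the statement needs no real argument.

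For the second equality, I would invoke the Patnaik--Puskás Whittaker formula (the theorem cited in \S\ref{Whit-function}) in the form $W(1)=\cs(0)$, together with the definition of the $H$-coefficients given in \S\ref{def:H-coefficients}: applying the specialization $v\mapsto q^{-1}$, $\Bg_k\mapsto \bold g_k$ to the expansion
\[
\cs(0)=\sum_{k_1,\cdots,k_r\geq 0}\mc{H}(k_1,\cdots,k_r)\,v^{k_1+\cdots+k_r}e^{-k_1\alpha_1^\vee-\cdots-k_r\alpha_r^\vee}
\]
turns it into
\[
W(1)=\sum_{k_1,\cdots,k_r\geq 0}H(\pi^{k_1},\cdots,\pi^{k_r})\,q^{-k_1-\cdots-k_r}\,e^{-k_1\alpha_1^\vee-\cdots-k_r\alpha_r^\vee}.
\]

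Next, I would use the proposition immediately preceding the corollary, which decomposes $W(1)=\sum_{\lambda^\vee\in\Lambda^\vee/\Lambda_0^\vee}e^{-\lambda^\vee}W_{\lambda^\vee}(1)$, and the remark there that $e^{-\lambda^\vee}W_{\lambda^\vee}(1)\in\BC[\Lambda_0^\vee]$ is precisely the $(-\lambda^\vee+\Lambda_0^\vee)$-graded piece of $W(1)$. A monomial $e^{-k_1\alpha_1^\vee-\cdots-k_r\alpha_r^\vee}$ lies in that coset exactly when $k_1\alpha_1^\vee+\cdots+k_r\alpha_r^\vee-\lambda^\vee\in\Lambda_0^\vee$, so restricting the sum above to this set of indices produces the desired identity. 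The proof is then a one-line bookkeeping step: select the appropriate graded piece of the expansion of $W(1)=\cs(0)$.

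There is no real obstacle here: all the nontrivial content (the metaplectic Casselman--Shalika formula, the grading of $W(1)$ by cosets of $\Lambda_0^\vee$, and the definition of $H$ from $\cs(0)$) has already been stated in the excerpt. The only point that requires a moment's care is checking the support condition in the right direction, namely that $e^{-\mu^\vee}$ appears in $e^{-\lambda^\vee}W_{\lambda^\vee}(1)$ iff $\mu^\vee-\lambda^\vee\in\Lambda_0^\vee$, which follows from the $\BC[\Lambda_0^\vee]$-module structure of $M_\univ$ and the identity $L_{\lambda^\vee+\mu^\vee}=e^{\mu^\vee}L_{\lambda^\vee}$ for $\mu^\vee\in\Lambda_0^\vee$ already recorded just before (\ref{total-Whit}).
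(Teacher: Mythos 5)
Your proposal is correct and follows exactly the paper's route: the paper's own proof is a one-line citation of the Patnaik--Pusk\'as identity $W(1)=\cs(0)$, the decomposition $W(1)=\sum_{\lambda^\vee\in\Lambda^\vee/\Lambda_0^\vee}e^{-\lambda^\vee}W_{\lambda^\vee}(1)$ (with $e^{-\lambda^\vee}W_{\lambda^\vee}(1)$ identified as the part of $W(1)$ supported on $-\lambda^\vee+\Lambda_0^\vee$), and the definition of $H(\pi^{k_1},\dots,\pi^{k_r})$ as the specialization of the coefficients of $\cs(0)$. Your write-up just makes the coefficient-extraction step explicit; there is nothing to add or correct.
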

\begin{proof}
	This follows from Theorem 4.5.2, Theorem 4.5.3 and the definition of $H(\pi^{k_1},\cdots,\pi^{k_r})$ in \S\ref{def:H-coefficients}. 
\end{proof}
\subsection{Kazhdan-Patterson's scattering matrix}
We have the following result on the relation between the total Whittaker functional $\CW$ and the intertwiner $I_{s_i}$ for a simple reflection $s_i$. It is equivalent to the computation of Kazdhan-Patterson's scattering matrix \cite{KP:pmihes,GSS}.
\begin{nprop}\label{Intertwiner-Whittaker}
For a simple reflection $s_i$, for any $\Phi_\infty\in M_{\univ,\infty}$ we have
	$$\CW(I_{s_i}\Phi_{\infty})=\frac{1-q^{-1}e^{-\widetilde\alpha_i^\vee}}{1-e^{\widetilde\alpha_i^\vee}}s_i\star\CW(\Phi_\infty)$$
	where $s_i\star-$ is the Chinta-Gunnells action defined by (\ref{Chinta-Gunnells action}). 
\end{nprop}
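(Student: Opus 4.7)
The plan is to compute $L_{\lambda^\vee}(I_{s_i}\Phi_\infty)$ for each coset representative $\lambda^\vee \in \Lambda^\vee/\Lambda_0^\vee$ directly, reduce the computation to a rank-one integral, and then recognize the resulting expression as $\frac{1-q^{-1}e^{-\widetilde\alpha_i^\vee}}{1-e^{-\widetilde\alpha_i^\vee}}$ times the Chinta-Gunnells $s_i\star$-action applied to $\CW(\Phi_\infty) = \sum_{\mu^\vee} L_{\mu^\vee}(\Phi_\infty) e^{-\mu^\vee}$. By unfolding (\ref{Whit-functional}) and (\ref{Intertwiner}) with $U_{s_i}=U_{\alpha_i}$, we obtain
$$L_{\lambda^\vee}(I_{s_i}\Phi_\infty) \;=\; s_i\cdot q^{-\pair{\rho,\lambda^\vee}}\int_{U^-}\int_{U_{\alpha_i}}\Phi_\infty\bigl(u^-\pi^{\lambda^\vee}u\, \dot s_i\bigr)\psi(u^-)^{-1}\,du\,du^-.$$

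The first key step is to reduce to rank one. Factor $U^-=U_{-\alpha_i}\cdot \hat U^-_i$, where $\hat U^-_i$ is the product of the remaining negative root subgroups (these are all normalized correctly by $s_i$ since $s_i$ permutes $\Phi^-\setminus\{-\alpha_i\}$). Because $\dot s_i$ conjugates $\hat U^-_i$ into $U^-$ and the integrand is invariant under the additional translations, the integral over $\hat U^-_i$ collapses trivially onto the $\psi$-character computation of non-degenerate Whittaker vectors, leaving only a rank-one integral over $U_{\alpha_i}\times U_{-\alpha_i}$ weighted by $\psi_{-i}$. This is the standard localization that works equally well in the metaplectic setting because the cocycle $d(\ul F,\ul F')$ of Proposition \ref{local-cocycle} is trivial on root subgroups once the $\alpha_i$-direction is separated off.

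The second step is the explicit rank-one computation inside $\widetilde{\SL}_2^{\alpha_i}$. Using the Bruhat identity $\mathsf{x}_{-i}(x)\dot s_i = \mathsf{x}_i(-x^{-1})\mathsf{h}_i(-x^{-1})\mathsf{x}_{-i}(-x^{-1})$ for $x\neq 0$, we re-express the $u^-\pi^{\lambda^\vee}u\dot s_i$ integral in Iwasawa form and track the resulting metaplectic scalars via (\ref{relations-in-T}). Splitting the integral over $U_{-\alpha_i}\cong\lf$ according to the valuation $b=\val(x)$ and using the right $T_\ro$ transformation (\ref{property-Phi-lambda}) of $\Phi_\infty$ reduces each piece to a Gauss-type integral. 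By Proposition \ref{groperties-Gauss-sum}, contributions with $b\leq -2$ vanish, $b\geq 0$ produces the geometric series $\frac{1-q^{-1}e^{-\widetilde\alpha_i^\vee}}{1-e^{-\widetilde\alpha_i^\vee}}L_{s_i\lambda^\vee}(\Phi_\infty)$ (this is exactly the Gindikin-Karpelevich factor from Proposition \ref{GK-formula}), and $b=-1$ produces the single Gauss sum $\bold g_{\sQ_i+\sB(\lambda^\vee,\alpha_i^\vee)}$, matching the coefficient of $e^{-\alpha_i^\vee}(e^{\widetilde\alpha_i^\vee}-1)$ in (\ref{Chinta-Gunnells action}).

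The final step is to assemble the sum over $\lambda^\vee\in\Lambda^\vee/\Lambda_0^\vee$ and match the Chinta-Gunnells formula. The residue map $\res_{n_i}(\pair{\lambda^\vee,\alpha_i})$ in (\ref{Chinta-Gunnells action}) arises precisely because, after applying the rank-one result to $L_{\lambda^\vee}$, one reindexes the sum by the image $s_i\lambda^\vee+m\widetilde\alpha_i^\vee$ inside a single $\Lambda_0^\vee$-coset, and the $m$-summation is controlled by $n_i\mid \sB(\lambda^\vee,\alpha_i^\vee)/\sQ_i$; this is the same mechanism that forces $\res_{n_i}$ to appear. Multiplying by $e^{-\lambda^\vee}$ and summing, and then comparing term-by-term with the $s_i\star(e^{-\lambda^\vee})$ formula, yields the claimed identity.

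The main obstacle will be the rank-one computation itself and, more specifically, the precise matching of indices: the exponent $\sQ_i+\sB(\lambda^\vee,\alpha_i^\vee)$ on the Gauss sum depends sensitively on the choice of representative $\lambda^\vee$, and must be reconciled with the $s_i\mu^\vee$-dependence on the Chinta-Gunnells side under the reindexing $\mu^\vee=s_i\lambda^\vee$. Correct bookkeeping of the metaplectic cocycle $d$ when commuting $\pi^{\lambda^\vee}$ past $\mathsf{h}_i(-x^{-1})$, together with the compatibility of the $T_\ro$-invariance of $\Phi_\infty$ with these shifts, is what ultimately produces the symmetric-function identity. Since this is precisely the Kazhdan–Patterson scattering matrix computation in the language of \cite{KP:pmihes,GSS}, we may appeal to that literature for the delicate combinatorial identity, and this is exactly the content deferred to Appendix A.
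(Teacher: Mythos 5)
Your plan follows the same route as the paper's own proof in Appendix A: unfold $\CW\circ I_{s_i}$, apply the rank-one Bruhat identity, split over the valuation of the integration variable, evaluate the pieces via Proposition \ref{groperties-Gauss-sum}, and recover the $\res_{n_i}$ shift and the single Gauss-sum term of (\ref{Chinta-Gunnells action}) from congruence constraints. Two steps need repair, however. First, your ``reduction to rank one'' is not how the localization works, and as stated it would fail for a general $\Phi_\infty\in M_\univ$: the integral over the complement $\hat U_i^-$ does not ``collapse trivially,'' since $\Phi_\infty$ is only right $U$-invariant, not left $\hat U_i^-$-invariant. The correct mechanism is that after writing $x_i(s)\dot s_i=x_{-i}(s^{-1})(-s)^{\alpha_i^\vee}x_i(-s^{-1})$ and commuting $x_{-i}(s^{-1})$ past $\pi^{\lambda^\vee}$, the resulting $x_{-i}(\pi^{-\pair{\lambda^\vee,\alpha_i}}s^{-1})$ is absorbed into the outer $U^-$-integral by translation invariance, at the cost of the character value $\psi(\pi^{-\pair{\lambda^\vee,\alpha_i}}s^{-1})$; what remains is the \emph{full} $U^-$-Whittaker integral of $\Phi_\infty$ at the shifted point $\pi^{\lambda^\vee+k\alpha_i^\vee}$, i.e.\ another Whittaker functional value $L_{\lambda^\vee+k\alpha_i^\vee}(\Phi_\infty)$, not a leftover rank-one integral over $U_{-\alpha_i}$. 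This is also precisely what shows the answer depends only on $\CW(\Phi_\infty)$, which is why the paper may reduce, via Proposition \ref{surjectivity-Whittaker}, to test functions $\varphi_{\xi^\vee}$ with $\CW(\varphi_{\xi^\vee})=e^{\xi^\vee}$.

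Second, your closing appeal to ``the literature / Appendix A for the delicate combinatorial identity'' is circular: that identity \emph{is} the proposition you are asked to prove, and the hard bookkeeping is exactly what you leave out. Concretely: the inner Whittaker integral vanishes unless $\lambda^\vee+k\alpha_i^\vee+\xi^\vee\in\Lambda_0^\vee$, which pins $k$ to a single congruence class mod $n_i$ for each coset of $\lambda^\vee$; only two cosets contribute --- the one for which the boundary point $k=-\pair{\lambda^\vee,\alpha_i}+1$ is admissible, yielding $q^{-1}\mathbf{g}_{(1+\pair{\xi^\vee,\alpha_i})\sQ_i}\,e^{s_i\xi^\vee-\alpha_i^\vee}$, and the one with $n_i\mid k$, yielding the geometric series $\frac{1-q^{-1}}{1-e^{n_i\alpha_i^\vee}}\,e^{s_i\xi^\vee+\res_{n_i}(\pair{\xi^\vee,\alpha_i})\alpha_i^\vee}$ --- and these two terms must then be matched against $\frac{1-q^{-1}e^{-\widetilde\alpha_i^\vee}}{1-e^{\widetilde\alpha_i^\vee}}\,s_i\star e^{\xi^\vee}$ using (\ref{Chinta-Gunnells action}). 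Until that index-matching is written out, the proof is not complete.
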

The proof of this proposition will be given in the appendix. 
\subsection{Invariance of $\cs(0)$ under the Chinta-Gunnells action}
\begin{nthm}[\cite{cg:jams}]
	Let $$D=\prod_{\alpha\in\Phi_+}(1-q^{-1}e^{-\widetilde\alpha^\vee})$$
Then $D^{-1}\cs(0)$ is invariant under the Chinta-Gunnells action (\ref{Chinta-Gunnells action}), namely $s_i\star(D^{-1}\cs(0))=D^{-1}\cs(0)$ for every simple reflection $s_i$. 
\end{nthm}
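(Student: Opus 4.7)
The plan is to prove the invariance by unpacking the structure of $\cs(0)$ and organizing the sum over $W$ so that $s_i \star$ visibly pairs terms. Set $\Delta = \prod_{\alpha \in R_+}(1-e^{-\widetilde\alpha^\vee})$, so that $D^{-1}\cs(0) = \sigma/\Delta$ with
$$\sigma = \sum_{w \in W}(-1)^{\ell(w)}\left(\prod_{\beta^\vee \in R^\vee(w^{-1})} e^{-\widetilde\beta^\vee}\right) w \star 1.$$
The goal reduces to proving $s_i \star (\sigma/\Delta) = \sigma/\Delta$ for each simple reflection $s_i$.

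First I would record a lemma (following the Patnaik--Pusk\'as extension of the defining formula (\ref{Chinta-Gunnells action})) that decomposes the action of $s_i\star$ on a general element of $\BC_{v,\Bg}[\Lambda^\vee]_J$ into a \emph{shift} part (the classical $s_i$-permutation of $e^{\lambda^\vee}$, weighted by an explicit rational function in $v$ and $e^{\widetilde\alpha_i^\vee}$) and a \emph{residue} part supported on translates by $\widetilde\alpha_i^\vee$ and weighted by a Gauss-sum factor $\Bg_{\sQ_i + \sB(\lambda^\vee,\alpha_i^\vee)}$. The same lemma records how $s_i\star$ interacts with multiplication by the individual factors of $\Delta$ and by the monomials $e^{-\widetilde\beta^\vee}$ appearing in $\sigma$, recovering in the $v \to 0$ limit the classical identity $s_i(\Delta) = -e^{\widetilde\alpha_i^\vee}\Delta$.

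Next comes the combinatorial heart of the argument: group the summands of $\sigma$ into pairs $\{w, s_iw\}$ as $w$ runs through representatives of $W/\langle s_i\rangle$. Using the identity $R^\vee((s_iw)^{-1}) = R^\vee(w^{-1})\, \triangle\, \{\alpha_i^\vee\}$ together with $(-1)^{\ell(s_iw)} = -(-1)^{\ell(w)}$, I would show that after dividing by $\Delta$ the paired contribution $(T_w + T_{s_iw})/\Delta$ is individually fixed by $s_i\star$, where $T_w$ denotes the $w$-summand of $\sigma$. Modulo the lemma of the previous step, this reduces to an elementary identity in $\BC_{v,\Bg}[\Lambda^\vee]_J$ controlled by the defining Gauss-sum relations $\Bg_k\Bg_{-k} = v^{-1}$ for $n\nmid k$ and $\Bg_0 = -1$ of the ring $\BC_{v,\Bg}$.

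The main obstacle is the bookkeeping in this pair-by-pair verification: the coefficients produced by $s_i\star$ depend on $\pair{\lambda^\vee,\alpha_i}$ only through its residue modulo $n_i$, so the verification splits into $n_i$ congruence classes controlling which Gauss sum $\Bg_{\bullet}$ appears, and in each class the classical $s_i$-action on $\Delta$ must produce a compensating factor that cancels precisely against the ratio of the Gauss-sum coefficients contributed by $w$ versus $s_iw$. Once this cancellation is confirmed in each residue class, summing the pairwise-invariant contributions yields $s_i\star(\sigma/\Delta) = \sigma/\Delta$, and invariance under all of $W$ then follows because the $s_i$ generate the Weyl group.
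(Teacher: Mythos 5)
Your argument is essentially sound in outline, but it is a genuinely different proof from the one in the paper. You take the direct combinatorial route -- pairing the $w$ and $s_iw$ summands of the alternating sum and checking that $s_i\star$ swaps $T_w/\Delta$ with $T_{s_iw}/\Delta$ -- which is in spirit the original Chinta--Gunnells/Patnaik--Pusk\'as argument. To close it you need exactly three inputs: the $\BC(\Lambda_0^\vee)$-semilinearity $s_i\star(fg)=(s_if)(s_i\star g)$ for $f\in\BC(\Lambda_0^\vee)$ (which holds because shifting $\lambda^\vee$ by $\Lambda_0^\vee$ changes $\sB(\lambda^\vee,\alpha_i^\vee)$ by a multiple of $n$ in (\ref{Chinta-Gunnells action})), the involutivity $s_i\star s_i\star=\mathrm{id}$ (this is where $\Bg_k\Bg_{-k}=v^{-1}$ and $\Bg_0=-1$ enter, and it also lets you handle the case $\ell(s_iw)=\ell(w)-1$), and the cocycle identity $s_i(c_w/\Delta)=c_{s_iw}/\Delta$ for the prefactors $c_w=(-1)^{\ell(w)}\prod_{\beta^\vee\in R^\vee(w^{-1})}e^{-\widetilde\beta^\vee}$; note that your set identity should read $R^\vee((s_iw)^{-1})=\{\alpha_i^\vee\}\sqcup s_i R^\vee(w^{-1})$ when lengths add, not a bare symmetric difference. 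The paper instead gives a short representation-theoretic proof: writing $\cs(0)=\CW(\Phi)$, it evaluates $\CW(I_{s_i}\Phi)$ in two ways -- via the metaplectic Gindikin--Karpelevich formula (Proposition \ref{GK-formula}) and via the scattering-matrix relation $\CW(I_{s_i}\Phi)=\frac{1-q^{-1}e^{-\widetilde\alpha_i^\vee}}{1-e^{\widetilde\alpha_i^\vee}}\,s_i\star\CW(\Phi)$ (Proposition \ref{Intertwiner-Whittaker}, proved in Appendix A) -- and divides, so that $s_i\star\cs(0)=\frac{1-q^{-1}e^{\widetilde\alpha_i^\vee}}{1-q^{-1}e^{-\widetilde\alpha_i^\vee}}\cs(0)$ and the factor is absorbed by $s_iD/D$. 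Your approach is self-contained at the level of the polynomial ring but requires the full bookkeeping of the Gauss-sum identities in each residue class; the paper's approach outsources all of that to the local Whittaker computation already done in the appendix, which is the point of presenting the theorem this way.
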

\begin{proof}
	We have $\cs(0)=W(1)=\CW(\Phi)$. Consider $\CW(I_{s_i}\Phi)$. On the one hand, by the Gindikin-Karpelevich formula (\ref{GK-formula}) we have 
	$$\CW(I_{s_i}\Phi)=\frac{1-q^{-1}e^{\widetilde\alpha^\vee_i}}{1-e^{\widetilde\alpha^\vee_i}}\CW(\Phi)=\frac{1-q^{-1}e^{\widetilde\alpha^\vee_i}}{1-e^{\widetilde\alpha^\vee_i}}\cs(0).$$
	On the other hand, by Proposition \ref{Intertwiner-Whittaker} we also have 
	$$\CW(I_{s_i}\Phi)=\frac{1-q^{-1}e^{-\widetilde\alpha^\vee_i}}{1-e^{\widetilde\alpha^\vee_i}}s_i\star\CW(\Phi)=\frac{1-q^{-1}e^{-\widetilde\alpha^\vee_i}}{1-e^{\widetilde\alpha^\vee_i}}s_i\star\cs(0)$$
	Thus we have 
	$$s_i\star\cs(0)=\frac{1-q^{-1}e^{\widetilde\alpha^\vee_i}}{1-q^{-1}e^{-\widetilde\alpha^\vee_i}}\cs(0)$$
	The result follows from the fact that $s_i\star D=s_i D$ and 
	$$s_i D/D=\frac{1-q^{-1}e^{\widetilde\alpha^\vee_i}}{1-q^{-1}e^{-\widetilde\alpha^\vee_i}}.$$
\end{proof}

\section{Whittaker coefficients of metaplectic Eisenstein series}
Let $k$ be a number field containing all $2n$-th roots of unity (in particular, $k$ is totally imaginary). Let $\BA$ be the ring of \adeles of $k$. Let $\bG$ be a split semisimple simply-connected reductive group over $k$ with root datum $\fD$. In this section we define the global metaplectic group $\widetilde G_\BA$, which is a central extension of the adelic group $G_\BA$ by $\mu_n(k)$. The group $G_k$ of $k$-rational points in $G_\BA$ splits canonically in $\widetilde G_\BA$, so we can talk about automorphic forms on $G_k\backslash\widetilde G_\BA$. In particular, we define certain Eisenstein series on $\widetilde G_\BA$ induced from the Borel subgroup. The main result of this paper is the computation of the first Whittaker coefficient of this Eisenstein series, which results in a Weyl group multiple Dirichlet series as defined in (\ref{WMDS-defn}), as conjectured by Brubaker-Bump-Friedberg \cite{WMDS2}. 
\subsection{Global metaplectic covers}
\spoint For any place $\nu$ of $k$, the inclusion $k\hookrightarrow k_\nu$ induces an isomorphism $\mu_n(k)\cong\mu_n(k_\nu)$. Let $\mu_n(\BA)=\bigoplus_\nu\mu_n(k_\nu)$. There is a canonical map $m:\mu_n(\BA)\to\mu_n(k)$ obtained by identifying all the components with $\mu_n(k)$, and take the product of all the components. 
\tpoint{} For each finite place $\nu\nmid\infty$, let $\widetilde G_\nu$ be the metaplectic central extension of $G_\nu=\bG(k_\nu)$ defined in \S\ref{def:metaplectic cover}. For $\nu\nmid n$, let $K_\nu\subseteq\widetilde G_\nu$ be a splitting of the maximal compact subgroup $\bG(\ro_\nu)$ of $G_\nu$. For $\nu|\infty$, let $\widetilde G_\nu=G_\nu\times\mu_n(k)$ be the trivial central extension of $\widetilde G_\nu$ by $\mu_n(k_\nu)$. (Note that by our assumption, $k$ is totally imaginary, so for each $\nu|\infty$, $k_\nu\cong\BC$ and $G(\BC)$ has no nontrivial central extension by $\mu_n(k)$.) 
\par Let $G_\BA=\prod_\nu'G_\nu$ be the restricted direct product of $\{G_\nu:\nu\in\places_k\}$ with respect to $\{K_\nu=\bG(\ro_\nu):\nu\nmid\infty\}$. Let $\prod_\nu'\widetilde G_\nu$ be the restricted direct product of $\{\widetilde G_\nu:\nu\in\places_k\}$ with respect to $\{K_\nu:\nu\nmid \infty,\,\nu\nmid n\}$. The maps $p_\nu:\widetilde G_\nu\to G_\nu$ induces a map $p_\BA'=\prod_\nu p_\nu:\prod_\nu'\widetilde G_\nu\to G_\BA$, which makes $\prod_\nu'\widetilde G_\nu$ a central extension of $G_\BA$. The kernel of $p'_\BA$ is isomorphic the restricted direct product of $\{\mu_n(k_\nu):\nu\in\places_k\}$ with respect to the trivial subgroups $\{1\}$ over the finite places not dividing $n$, so $\ker p'_\BA\cong\mu_n(\BA)$, and we will always make this identification. 
\tpoint{}\label{def:Global-metaplectic-group}
Let $\widetilde G_\BA$ be the pushout of the central extension $p'_\BA:\prod_v'\widetilde G_v\to G_\BA$ via $m:\mu_n(\BA)\to\mu_n(k)$, which fits into the following exact commutative diagram: 
\begin{center}
	\begin{tikzcd}
1 \arrow[r] & \mu_n(\BA) \arrow[r] \arrow[d, "m"] & \prod_\nu'\widetilde G_\nu \arrow[r, "p'_\BA"] \arrow[d, "m
'"] & G_\BA \arrow[r] \arrow[d, no head, double] & 1 \\
1 \arrow[r] & \mu_n(k) \arrow[r]                  & \widetilde G_\BA \arrow[r, "p_\BA"]                 & G_\BA \arrow[r]                                & 1
	\end{tikzcd}
\end{center}
So the natural map $p_\BA:\widetilde G_\BA\to G_\BA$ makes $\widetilde G_\BA$ a central extension of $G_\BA$ by $\mu_n(k)$. 
%\par For any finite set $S\subseteq\places_k$ containing $\places_\infty$ and all the places dividing $n$, let $G_S=\prod_{\nu\in S}G_\nu$, let $\widetilde G_S$ be the pushout of $\prod_{\nu\in S}\widetilde G_\nu\xrightarrow{p_S} G_S$ via the natural identification-and-multiplication map $m:\prod_{\nu\in S}\mu_n(k_\nu)\to\mu_n(k)$. $\widetilde G_S$ is a central extension of $G_S$ by $\mu_n(k)$, and for any $S\subseteq S'$ there is a natural inclusion $\widetilde G_S\hookrightarrow \widetilde G_{S'}$. Then we have 
%\begin{equation}
%	\widetilde G_\BA\cong\varinjlim_S \left(\prod_{\nu\notin S}K_\nu\right)\times\widetilde G_S
%\end{equation}
\begin{nprop}\cite{Moore:pmihes}
	The group of $k$-rational points $G_k:=\bG(k)$ splits canonically in $\widetilde G_\BA$. We view $G_k$ as a subgroup of $\widetilde G_\BA$ via this splitting. 
\end{nprop}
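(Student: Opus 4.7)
I would construct the splitting by combining the Matsumoto--Steinberg universal central extension with the product formula (Hilbert reciprocity) for the $n$-th Hilbert symbol over $k$. Intuitively, each local cover $\widetilde G_\nu$ is obtained from a single global central extension of $G_k$ by pushing out along the local Hilbert symbol; the ambiguity in lifting $g\in G_k$ locally is killed globally once one applies the product map $m:\mu_n(\BA)\to\mu_n(k)$.

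The first step is to build the global universal central extension. Exactly as in \S4.2 but with $\lf$ replaced by $k$, one obtains $\CE_k$, a central extension of $G_k$ by a group $C_k$ generated by symbols $c(s,t)$, $s,t\in k^\times$, subject to the Steinberg--Matsumoto relations (and identifiable with $K_2(k)$ by Matsumoto's theorem). By the functoriality of the construction in the generators $\mathsf{x}_\alpha(s)$, each inclusion $k\hookrightarrow k_\nu$ induces a morphism of central extensions $\CE_k \to \CE_{k_\nu}$ lying over $G_k\hookrightarrow G_\nu$ and sending $c(s,t)\mapsto c(s,t)$. Pushing $\CE_{k_\nu}$ forward along the map $c(s,t)\mapsto (s,t)_\nu$ of \S\ref{def:metaplectic cover} recovers $\widetilde G_\nu$ by definition.

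Next, I would assemble these pushforwards over all places. Given $g\in G_k$, choose any lift $\tilde g\in\CE_k$; its image in each $\CE_{k_\nu}$ descends to an element $\tilde g_\nu\in\widetilde G_\nu$. For all but finitely many $\nu$, $g$ lies in $\bG(\ro_\nu)$ and its image lies in the fixed splitting $K_\nu$, so $(\tilde g_\nu)_\nu\in\prod{}'_\nu\widetilde G_\nu$. Two different lifts $\tilde g,\tilde g'$ differ by an element of $C_k$, and hence the resulting tuples $(\tilde g_\nu)_\nu,(\tilde g'_\nu)_\nu$ differ by the image of that element in $\mu_n(\BA)=\bigoplus_\nu\mu_n(k_\nu)$ under the composition $C_k\to\prod_\nu C_\nu\to\mu_n(\BA)$. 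The crucial point is then to apply $m$: for a generator $c(s,t)\in C_k$ this image is $((s,t)_\nu)_\nu$, and Hilbert reciprocity gives
$$m\bigl(c(s,t)\bigr)\;=\;\prod_\nu (s,t)_\nu\;=\;1.$$
Thus the ambiguity disappears in $\widetilde G_\BA=\prod{}'_\nu\widetilde G_\nu\times_{\mu_n(\BA)}\mu_n(k)$, and the rule $g\mapsto(\tilde g_\nu)_\nu$ well-defines a map $\iota:G_k\to\widetilde G_\BA$. Multiplicativity of $\iota$ and the identity $p_\BA\circ\iota=\mathrm{id}_{G_k}$ descend automatically from the analogous properties of $\CE_k\twoheadrightarrow G_k$ via the pushouts, and canonicity follows because no choice was made beyond that of a lift, which we have just shown to be immaterial.

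The principal obstacle is the single global input, namely the product formula $\prod_\nu(s,t)_\nu=1$; once this classical reciprocity law is granted, the rest of the argument is formal pushout bookkeeping, which is why Moore's theorem can be viewed as a repackaging of Hilbert reciprocity in Lie-theoretic language.
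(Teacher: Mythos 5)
Your argument is correct and is precisely the classical proof underlying the cited result of Moore (the paper itself gives no proof, only the reference): lift $g\in G_k$ through the Steinberg--Matsumoto universal central extension $\CE_k$, push out locally via the Hilbert symbols, and observe that the ambiguity $c(s,t)\mapsto((s,t)_\nu)_\nu$ dies under $m$ by Hilbert reciprocity $\prod_\nu(s,t)_\nu=1$. The only point worth making explicit is that a fixed word expression for the lift in the generators $\bold x_\alpha(s)$, $s\in k$, lands in the chosen splitting $K_\nu$ (generated by the $\widetilde x_\alpha(s)$, $s\in\ro_\nu$) for almost all $\nu$, which is exactly what Proposition \ref{split-maximal-compact} supplies; with that, your tuple lies in the restricted product and the rest is formal.
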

\tpoint{Local metaplectic groups as subgroups of $\widetilde G_\BA$}The global metaplectic group $\widetilde G_\BA$ is not a restricted direct product of $\widetilde G_\nu$ for $\nu\in\places_k$, but we still have the following:
\begin{nprop}
	The composition of the natural embedding $\widetilde G_\nu\hookrightarrow\prod_\nu'\widetilde G_\nu$ with $m':\prod_\nu'\widetilde G_\nu\to\widetilde G_\BA$ is an injection. We view $\widetilde G_\nu$ as a subgroup of $\widetilde G_\BA$ with respect to this injection. 
\end{nprop}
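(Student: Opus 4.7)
The plan is to unwind the pushout construction defining $\widetilde G_\BA$ and then directly compute the kernel of the proposed injection. First I would recall the standard description of the pushout of a central extension: since $m:\mu_n(\BA)\to\mu_n(k)$ is surjective (it is the product map after identifying each local $\mu_n(k_\nu)$ with $\mu_n(k)$), the pushout $\widetilde G_\BA$ of the central extension $p'_\BA:\prod_\nu'\widetilde G_\nu\to G_\BA$ along $m$ is simply the quotient
$$\widetilde G_\BA \;\cong\; \Bigl(\prod_\nu{}'\widetilde G_\nu\Bigr)\big/\ker(m),$$
where $\ker(m)\subseteq\mu_n(\BA)\subseteq\prod_\nu'\widetilde G_\nu$ sits inside the central kernel of $p'_\BA$. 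Concretely, $\ker(m)$ is the subgroup of tuples $(\zeta_\nu)\in\bigoplus_\nu\mu_n(k_\nu)$ whose product equals $1\in\mu_n(k)$ under the identifications $\mu_n(k_\nu)\cong\mu_n(k)$.

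Next I would compute the kernel of the composite
$$\iota_\nu:\widetilde G_\nu\;\hookrightarrow\;\prod_\nu{}'\widetilde G_\nu\;\xrightarrow{\,m'\,}\;\widetilde G_\BA,$$
where the first map sends $x\mapsto(x,1,1,\ldots)$ (trivial coordinates at all places different from $\nu$). By the previous paragraph, $x\in\widetilde G_\nu$ lies in $\ker(\iota_\nu)$ if and only if $(x,1,1,\ldots)\in\ker(m)$. This forces the image of $(x,1,1,\ldots)$ under $p'_\BA$ to be trivial, i.e.\ the underlying element of $G_\nu\subseteq G_\BA$ is trivial, so already $x\in\mu_n(k_\nu)$. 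The condition $(x,1,1,\ldots)\in\ker(m)$ then reduces to $m((x,1,1,\ldots))=x=1$ in $\mu_n(k)$, using that all other components are trivial and that $\mu_n(k_\nu)\cong\mu_n(k)$. Hence $\ker(\iota_\nu)=\{1\}$ and $\iota_\nu$ is injective.

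I do not anticipate any real obstacle here; the statement is essentially a bookkeeping check once the pushout is unravelled. The only minor subtlety worth highlighting is compatibility of the identifications $\mu_n(k_\nu)\cong\mu_n(k)$ used when defining $m$ in \S4 of the introduction to this section and those implicit in viewing $\widetilde G_\nu$ as an abstract group with centre containing $\mu_n(k_\nu)$; one should verify that the map $\mu_n(k_\nu)\hookrightarrow\mu_n(\BA)\xrightarrow{m}\mu_n(k)$ is exactly the canonical isomorphism induced by $k\hookrightarrow k_\nu$. With this identification fixed, the argument above is immediate, and the injection $\iota_\nu$ gives the promised realization of $\widetilde G_\nu$ as a subgroup of $\widetilde G_\BA$.
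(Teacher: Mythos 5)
Your argument is correct. The paper states this proposition without any proof, and your computation is exactly the standard unwinding one would supply: since $m:\mu_n(\BA)\to\mu_n(k)$ is surjective, the pushout is the quotient $\bigl(\prod_\nu'\widetilde G_\nu\bigr)/\ker(m)$ with $\ker(m)$ central, and an element of $\widetilde G_\nu$ killed by the composite must first lie in $\mu_n(k_\nu)$ (by applying $p'_\BA$) and then be trivial because $m$ restricted to the single factor $\mu_n(k_\nu)\subset\mu_n(\BA)$ is the canonical isomorphism onto $\mu_n(k)$. Your closing caveat about matching the identifications $\mu_n(k_\nu)\cong\mu_n(k)$ is the right one to flag, and it is settled by the paper's convention in \S5.1 that these isomorphisms are the ones induced by $k\hookrightarrow k_\nu$.
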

By abuse of notations, given $g_\nu\in\widetilde G_\nu$ for every $\nu\in\places_k$, the element $m'(\prod_\nu g_\nu)$ in $\widetilde G_\BA$ is also denoted by $\prod_\nu g_\nu$ or $(g_\nu)_\nu$. 
\tpoint{Unipotent radicals}Let $U_\BA$ be the subgroup of $G_\BA$ defined by
$$U_\BA=\{(u_\nu)_\nu\in G_\BA:u_\nu\in U_\nu\text{ for all }\nu\in\places_k\}$$
and similarly define $U_\BA^-$. Then $U_\BA=\prod_\nu' U_\nu$ and $U_\BA^-=\prod_\nu U_\nu'$ are restricted direct products of the local unipotent radicals. Both $U_\BA$ and $U_\BA^-$ split canonically in $\widetilde G_\BA$, and we will view them also as subgroups of $\widetilde G_\BA$ via the canonical splitting. 
\tpoint{Torus} Let $\widetilde T_\BA=p_\BA^{-1}(T_\BA)$. It is also a central extension of $T_\BA$ by $\mu_n(k)$. The canonical splitting of $G_k$ in $\widetilde G_\BA$ induces a splitting $\bs_k: T_k\to\widetilde T_\BA$ given by
\begin{equation}\label{section-global}
	\bs_k(\eta)=\prod_\nu i_\nu(\eta)\text{ for }\eta\in T_k.
\end{equation}

\tpoint{Genuine functions}We fix an injection $\epsilon:\mu_n(k)\to\BC^*$, then we can define $\epsilon$-genuine functions on $\widetilde G_\BA$ or $\widetilde T_\BA$ in the same way as in \S \ref{def:genuine-local}, namely a function $\varphi:\widetilde G_\BA\to\BC$ is called $\epsilon$-genuine if $\varphi(\zeta g)=\epsilon(\zeta)\varphi(g)$ for $\zeta\in\mu_n(k)$, $g\in\widetilde G_\BA$, and similar for $\widetilde T_\BA$. Since $\epsilon$ is fixed, such functions are simply called genuine. 
\tpoint{Factorizable functions}Let $\widetilde G_{\BA^S}=\prod_{\nu\notin S}\widetilde G_\nu$.  Note that $\epsilon$ induced injections $\epsilon:\mu_n(k_\nu)\to\BC$ for every $\nu\in\places_k$, so it makes sense to talk about genuine functions on each $\widetilde G_\nu$. Clearly the restriction of a genuine function on $\widetilde G_{\BA^S}$ to $\widetilde G_\nu$ is genuine. Conversely we have
\begin{nprop}\label{genuine-local-global}
	For every $\nu\notin S$, let $\varphi_\nu:\widetilde G_\nu\to\BC$ be a genuine function on $\widetilde G_\nu$ such that the product $\prod_{\nu\notin S}\varphi_\nu:\prod_{\nu\notin S}'\widetilde G_\nu\to\BC$ is a well-defined function on $\prod_{\nu\notin S}'\widetilde G_\nu$, namely for every $(g_\nu)_{\nu\notin S}\in\prod_{\nu\notin S}'\widetilde G_\nu$, the infinite product $\prod_{\nu\notin S}\varphi_\nu(g_\nu)$ is convergent. Then the function $\prod_{\nu\notin S}\varphi_\nu:\prod_{\nu\notin S}'\widetilde G_\nu\to\BC$ descends to a genuine function $\varphi$ on $\widetilde G_{\BA^S}$. By abuse of notion, the function $\varphi$ is also denoted $\prod_{\nu\notin S}\varphi_\nu$. 
\end{nprop}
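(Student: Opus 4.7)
The plan is to produce $\varphi$ as the descent of the product $\prod_{\nu\notin S}\varphi_\nu$ along the natural surjection $m^S:\prod'_{\nu\notin S}\widetilde G_\nu \twoheadrightarrow \widetilde G_{\BA^S}$. Parallel to the construction of $\widetilde G_\BA$ in \S\ref{def:Global-metaplectic-group}, the group $\widetilde G_{\BA^S}$ is the pushout of $p'_{\BA^S}:\prod'_{\nu\notin S}\widetilde G_\nu\to G_{\BA^S}$ along the ``sum'' map $\bigoplus_{\nu\notin S}\mu_n(k_\nu)\to\mu_n(k)$, $(\zeta_\nu)\mapsto\prod_\nu\zeta_\nu$, where we use the canonical identifications $\mu_n(k_\nu)\cong\mu_n(k)$ coming from $k\hookrightarrow k_\nu$. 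Its kernel $K^S$ consists of those finite-support tuples $(\zeta_\nu)$ of $n$-th roots of unity in $k$ with $\prod_\nu\zeta_\nu=1$.

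Next, I would verify that $\prod_\nu\varphi_\nu$ is constant along the fibers of $m^S$, i.e.\ invariant under central translation by $K^S$. This is the heart of the proof and reduces to a one-line calculation: for $(\zeta_\nu)\in K^S$ and $(g_\nu)\in\prod'_{\nu\notin S}\widetilde G_\nu$,
$$\prod_\nu \varphi_\nu(\zeta_\nu g_\nu) \;=\; \prod_\nu \epsilon(\zeta_\nu)\varphi_\nu(g_\nu) \;=\; \epsilon\!\left(\prod_\nu \zeta_\nu\right)\prod_\nu \varphi_\nu(g_\nu) \;=\; \prod_\nu \varphi_\nu(g_\nu),$$
where the first equality uses the local $\epsilon$-genuineness of each $\varphi_\nu$, the second uses multiplicativity of $\epsilon$ together with the fact that only finitely many $\zeta_\nu$ differ from $1$ (so the rearrangement is legitimate and no convergence issue is introduced), and the last uses $(\zeta_\nu)\in\ker m^S$. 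By the universal property of the pushout, the product therefore descends to a well-defined function $\varphi$ on $\widetilde G_{\BA^S}$.

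To finish, I would check $\epsilon$-genuineness of $\varphi$: pick any $\nu_0\notin S$ and lift $\zeta\in\mu_n(k)$ to $\widetilde G_{\BA^S}$ via the tuple with $\zeta$ in the $\nu_0$-coordinate (using $\mu_n(k)\cong\mu_n(k_{\nu_0})$) and $1$ elsewhere; applying $\epsilon$-genuineness of $\varphi_{\nu_0}$ then gives $\varphi(\zeta g)=\epsilon(\zeta)\varphi(g)$. I expect the main obstacle to be purely notational bookkeeping: one must carefully track that the fixed embedding $\epsilon:\mu_n(k)\hookrightarrow\BC^*$ together with the place-by-place identifications $\mu_n(k_\nu)\cong\mu_n(k)$ really do intertwine the local $\epsilon$'s with the global $\epsilon$, so that ``$\prod_\nu \epsilon(\zeta_\nu)=\epsilon(\prod_\nu\zeta_\nu)$'' is literally meaningful. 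Once this bookkeeping is pinned down, the invariance computation above requires no further input.
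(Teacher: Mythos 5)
Your proof is correct, and it is the standard descent argument that the paper implicitly relies on (the proposition is stated there without proof): identify $\widetilde G_{\BA^S}$ as the pushout of $\prod'_{\nu\notin S}\widetilde G_\nu$ along $\bigoplus_{\nu\notin S}\mu_n(k_\nu)\to\mu_n(k)$, check invariance of $\prod_\nu\varphi_\nu$ under the kernel using local genuineness and the finite-support condition, and verify genuineness of the descended function via a single coordinate. Nothing is missing.
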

And a similar result holds for $\widetilde T_{\BA^S}$. A genuine function on $\widetilde G_{\BA^S}$ or $\widetilde T_{\BA^S}$ is called \emph{factorizable} if it can be obtained from a family $(\varphi_\nu)_{\nu\notin S}$ of local genuine functions in this way. 
\tpoint{Genuine functions on $\widetilde G_\BA$}\label{genuine-global}
Similarly, let $\varphi^S$ be a genuine function on $\widetilde G_{\BA^S}$, let $\varphi_S$ be a genuine function on $\widetilde G_S$. Then we can define a genuine function $\varphi$ on $\widetilde G_\BA=\widetilde G_{\BA^S}\times_{\mu_n(k)}\widetilde G_S$ by descending the function $\varphi^S\cdot\varphi_S$ on $\widetilde G_{\BA^S}\times\widetilde G_S$. By abuse of notation, the genuine function $\varphi$ on $\widetilde G_\BA$ is also denoted $\varphi^S\cdot\varphi_S$. 
\par The following simple lemma will be used without mentioning:
\begin{nlem}
	Let $\varphi^S=\prod_{\nu\notin S}\varphi_\nu$ be a factorizable function on $\widetilde G_{\BA^S}$, let $\varphi_S$ be a function on $\widetilde G_S$, let $\varphi=\varphi^S\cdot\varphi_S$. Then for $g=(g_\nu)_\nu\in\widetilde G_\BA$ and $\eta_1,\eta_2\in T_k$ we have 
	$$\varphi(g\eta_1\eta_2)=\left(\prod_{\nu\notin S}\varphi_\nu(g_\nu \bs_\nu(\eta_1)\bs_\nu(\eta_2))\right)\varphi_S(g_S \bs_S(\eta_1)\bs_S(\eta_2))$$
	where $g_S=(g_\nu)_{\nu\in S}\in\widetilde G_S$. The same is true if $\widetilde G_\BA$ is replaced by $\widetilde T_\BA$.
\end{nlem}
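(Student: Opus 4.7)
The plan is to reduce everything to the canonicity of the splitting $\bs_k$ and the factorizability of $\varphi$. First I would observe that $\bs_k : T_k \to \widetilde T_\BA$ is a group homomorphism (it is the canonical splitting from \S\ref{def:Global-metaplectic-group}), so in $\widetilde T_\BA$ we have $\bs_k(\eta_1\eta_2) = \bs_k(\eta_1)\bs_k(\eta_2)$. Under the identification of $T_k$ with its image in $\widetilde T_\BA$, right multiplication by $\eta_1\eta_2$ on $g$ is therefore the same as right multiplication by $\bs_k(\eta_1)\bs_k(\eta_2)$.

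Next I would unwind the definition (\ref{section-global}): $\bs_k(\eta) = \prod_\nu \bs_\nu(\eta)$ as an element of $\widetilde G_\BA$, obtained by pushing the element $\prod_\nu \bs_\nu(\eta)$ of $\prod_\nu' \widetilde G_\nu$ forward under $m'$. The point is that in the restricted direct product $\prod_\nu' \widetilde G_\nu$ elements located at distinct places commute (they belong to different factors), so one has the identity
\[
  \bs_k(\eta_1)\bs_k(\eta_2) = \prod_\nu \bs_\nu(\eta_1) \cdot \prod_\nu \bs_\nu(\eta_2) = \prod_\nu \bs_\nu(\eta_1)\bs_\nu(\eta_2),
\]
and multiplying on the left by $g = (g_\nu)_\nu$ gives $g\,\bs_k(\eta_1)\bs_k(\eta_2) = \prod_\nu g_\nu \bs_\nu(\eta_1)\bs_\nu(\eta_2)$ in $\widetilde G_\BA$. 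Note that $\bs_\nu(\eta_1)\bs_\nu(\eta_2)$ is generally not equal to $\bs_\nu(\eta_1\eta_2)$ locally (they differ by the cocycle $d(\ul F,\ul F')$ of Proposition \ref{local-cocycle}); the local cocycles are precisely what gets absorbed when pushing forward from $\mu_n(\BA)$ to $\mu_n(k)$ under $m$, so this global factorization is exactly where that discrepancy is hidden.

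Finally, I would invoke the factorizability hypothesis: since $\varphi = \varphi^S \cdot \varphi_S$ with $\varphi^S = \prod_{\nu\notin S}\varphi_\nu$ (as in \S\ref{genuine-global} and Proposition \ref{genuine-local-global}), evaluating $\varphi$ on the product $\prod_\nu g_\nu \bs_\nu(\eta_1)\bs_\nu(\eta_2)$ yields precisely
\[
  \left(\prod_{\nu\notin S}\varphi_\nu(g_\nu\bs_\nu(\eta_1)\bs_\nu(\eta_2))\right)\varphi_S(g_S \bs_S(\eta_1)\bs_S(\eta_2)),
\]
where $\bs_S(\eta):= \prod_{\nu\in S}\bs_\nu(\eta)$. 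The argument for $\widetilde T_\BA$ is identical, replacing $\widetilde G_\nu$ by $\widetilde T_\nu$ everywhere. I expect no essential obstacle: the only conceptual point is the decomposition $\bs_k(\eta_1)\bs_k(\eta_2) = \prod_\nu \bs_\nu(\eta_1)\bs_\nu(\eta_2)$ in $\widetilde G_\BA$, and this follows purely from the commutativity of distinct local factors in the restricted direct product together with the fact that $\bs_k$ is a homomorphism.
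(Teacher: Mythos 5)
Your proposal is correct and follows the same route as the paper, which simply observes that the lemma is an immediate consequence of $\bs_k$ being a group homomorphism (splitting) together with the componentwise nature of multiplication in the restricted direct product and the definition of $\varphi^S\cdot\varphi_S$ by descent. Your added aside — that the local products $\bs_\nu(\eta_1)\bs_\nu(\eta_2)$ cannot be replaced by $\bs_\nu(\eta_1\eta_2)$ because of the local cocycle, and that only the global product of these cocycles trivializes under $m$ — is exactly the right point to flag and is why the lemma is stated with the two local sections kept separate.
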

The lemma is a simple consequence of the fact that (\ref{section-global}) is a splitting of $T_k$ in $\widetilde G_\BA$. 
\subsection{Metaplectic Eisenstein series}\label{met-Es}
Still let $k$ be a totally imaginary number field containing and pick $n$ and $S$ to satisfy the conditions in \S \ref{subsub:WMDS-S-conditions}. 

% all $2n$-th roots of unity, let $S$ be a finite set of places of $k$ such that
%\begin{itemize}
%	\item $S$ contains all archimedean places;
%	\item $S$ contains all the primes dividing $n$;
%	\item $S$ contains all the primes that are ramified over $\BQ$;
%	\item $S$ is large enough such that the ring of $S$-integers $\fo_S$ is a principal ideal domain. 
%\end{itemize}
%
%\red{there should be a part in conventions where we call this set of assumptions `good' } 

\tpoint{$\bG$ over Bad places} We deal with the places in $S$ together. Heuristically, in this section, $k_S$ and $\CO_S$ plays the same role as $\lf$ and $\ro$ in the local case. This idea comes from \cite{BBF:annals}. Let $G_S=\bold G(k_S)$, let $U_S=\prod_{\nu\in S}U_\nu$, $U_S^-=\prod_{\nu\in S}U_\nu^-$, $T_S=\prod_{\nu\in S}T_\nu$, $T_{0,S}=\prod_{\nu\in S}T_{0,\nu}$ be the corresponding subgroups of $G_S$. Let $p_S:\widetilde G_S\to G_S$ be the central extension of $G_S$ by $\mu_n(k)$ defined by a similar pushout as in \S\ref{def:Global-metaplectic-group}. Note that the unipotent radicals $U_S$, $U_S^-$ still splits canonically in $\widetilde G_S$, and the torus $\widetilde T_S=p_S^{-1}(T_S)$ is a central extension of $T_S$ by $\mu_n(k)$. 
\tpoint{The torus $\widetilde T_S$}To distinguish between different places, for $\nu\in\places_k$, $s\in k_\nu$ and $i=1,\cdots,r$ let $\sfh_{i,\nu}(s)$ be the corresponding element in $T_\nu$, where we recall that the notation $\sfh_i$ was defined in \S\ref{Chevalley-torus}. 
\par For every $t=(t_\nu)_{\nu\in S}\in k_S$, let $\mathsf h_{i,S}(t)=\prod_{\nu\in S}\mathsf h_{i,\nu}(t_\nu)$ be the corresponding element in $T_S$, let $\widetilde h_{i,S}(t_\nu)=\prod_{\nu\in S}\widetilde h_{i,\nu}(t_\nu)$ be the corresponding element in $\widetilde T_S$. For $\ul C\in (\fo_S\setminus\{0\})^r$, we define
$$\eta_S(\ul C)=\widetilde h_{1,S}(C_{1})\cdots\widetilde h_{r,S}(C_{r})\in\widetilde T_S.$$ Let $\bs_S:T_S\to\widetilde T_S$ be the map given by 
\begin{equation}\label{section-local}
	\bs_S(\mathsf h_{1,S}(s_{1,\nu})\cdots\mathsf h_{r,S}(s_{r,\nu}))=\widetilde h_{1,S}(s_{1,\nu})\cdots\widetilde h_{r,S}(s_{r,\nu})\text{ for }s_1,\cdots,s_r\in k_S^*.
\end{equation}
This is a section of $p_S:\widetilde T_S\to T_S$ but not a group homomorphism. 
\par Note that $T_{\fo_S}$ is an abelian group by Lemma \ref{def:Omega} (i), and also by this lemma, $\bs_S|_{T_{\fo_S}}:T_{\fo_S}\to\widetilde T_S$ is a group homomorphism, so it a splitting of $T_{\fo_S}\in\widetilde T_S$. We view $T_{\fo_S}$ as a subgroup of $\widetilde T_S$ via this splitting. Moreover, we have
\begin{nprop}
	$\widetilde T_{*,S}=\widetilde T_{0,S}T_{\fo_S}$ is an abelian subgroup of $\widetilde T_S$. 
\end{nprop}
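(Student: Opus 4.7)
The plan is to reduce the statement to two independent facts: that $\widetilde T_{0,S}$ sits in the center of $\widetilde T_S$, and that the image of $T_{\fo_S}$ under $\bs_S$ is abelian. Once these are in hand, the product $\widetilde T_{0,S} T_{\fo_S}$ is automatically a subgroup (since the first factor is normal, being central) and automatically abelian (central times abelian).

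First I would establish that $\widetilde T_{0,S}$ is central in $\widetilde T_S$. Recall that $\widetilde T_S$ is obtained by starting from the direct product $\prod_{\nu \in S} \widetilde T_\nu$ and pushing out the subgroup $\prod_{\nu \in S} \mu_n(k_\nu)$ via the multiplication map $m : \prod_{\nu \in S} \mu_n(k_\nu) \to \mu_n(k)$ (as in \S\ref{def:Global-metaplectic-group}, restricted to the finite set $S$). By Proposition \ref{center-of-T} applied at each place $\nu \in S$, we have $\widetilde T_{0,\nu} \subseteq Z(\widetilde T_\nu)$, hence $\prod_{\nu \in S} \widetilde T_{0,\nu}$ lies in the center of $\prod_{\nu \in S} \widetilde T_\nu$. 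Centrality is preserved under the pushout quotient, and the image of $\prod_{\nu \in S} \widetilde T_{0,\nu}$ in $\widetilde T_S$ is exactly $\widetilde T_{0,S} = p_S^{-1}(T_{0,S})$. Therefore $\widetilde T_{0,S} \subseteq Z(\widetilde T_S)$.

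Next I would record that $\bs_S(T_{\fo_S})$ is an abelian subgroup of $\widetilde T_S$. This was already observed in the discussion immediately preceding the proposition: by Lemma \ref{def:Omega}(i), the $S$-Hilbert symbol $(x,y)_S$ is trivial for $x,y \in \fo_S^\times$, and applying the global analogue of the commutator and coboundary identities (\ref{relations-in-T})--(\ref{def:d-}) to $\bs_S$ shows that $\bs_S$ is a group homomorphism on $T_{\fo_S}$ and that any two elements $\bs_S(t_1),\bs_S(t_2)$ with $t_1,t_2 \in T_{\fo_S}$ commute. Thus the image $T_{\fo_S} \subset \widetilde T_S$ is abelian.

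Combining the two points: since $\widetilde T_{0,S}$ is central, the set $\widetilde T_{*,S} = \widetilde T_{0,S}\, T_{\fo_S}$ is closed under multiplication and inversion, hence is a subgroup of $\widetilde T_S$, and since central elements commute with everything while $T_{\fo_S}$ is itself abelian, every two elements of $\widetilde T_{*,S}$ commute. I expect the only mild obstacle to be bookkeeping through the pushout construction in the first step: one must check that the centrality assertion survives the identification $\prod_{\nu \in S} \mu_n(k_\nu) \twoheadrightarrow \mu_n(k)$, which is immediate because the pushout is a quotient by a central subgroup and quotients of central elements remain central.
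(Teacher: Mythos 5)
Your proposal is correct and follows essentially the same route as the paper: the paper's proof likewise invokes Proposition \ref{center-of-T} for the centrality of $\widetilde T_{0,S}$ and Lemma \ref{def:Omega} for the commutativity of $T_{\fo_S}$, then concludes that a product of a central subgroup with an abelian one is abelian. Your extra remarks on tracking centrality through the pushout are a harmless elaboration of what the paper leaves implicit.
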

\begin{proof}
	$\widetilde T_{0,S}$ is central in $\widetilde T_{*,S}$ by Lemma \ref{center-of-T}, and $T_{\fo_S}$ is abelian by Lemma \ref{def:Omega}. 
\end{proof}
We expect $\widetilde T_{0,S}$ to be a maximal abelian subgroup of $\widetilde T_S$ so that it plays the role of the maximal abelian subgroup $\widetilde T_{*,\nu}$ of $\widetilde T_\nu$ for $\nu\notin S$, but we don't need this. 
\tpoint{Induced representation of $\widetilde G_S$}The following mimics \S\ref{usual-principal-series}. For an element $a=\mathsf h_{1,S}(C_1)\cdots\mathsf h_{n,S}(C_n)$ of $T_{0,\CO_S}$ and $\mu\in\Lambda\otimes\BC$, we define 
\begin{equation}\label{S-absolute value}
	|a|_S^\mu:= |C_1|_S^{\pair{\mu,\alpha_1^\vee}}\cdots|C_r|_S^{\pair{\mu,\alpha_r^\vee}}
\end{equation}
\par Now every $\lambda\in\Lambda\otimes\BC$ induces a character $\chi_{\lambda,S}:T_{0,S}\to\BC^*$ by $\chi_{\lambda,S}(a)=|a|_S^{\lambda-\rho}$ which is $T_{\fo_S}$-invariant. It extends to a character of $T_{*,S}$ by the composition 
$$T_{*,S}\twoheadrightarrow T_{*,S}/T_{\fo_S}\cong T_{0,S}/T_{0,\fo_S}\xrightarrow{\chi_{\lambda,S}}\BC$$
This character of $T_{*,S}$ is still denoted by $\chi_{\lambda,S}$. Let $\widetilde\chi_{\lambda,S}$ be the genuine character of $\widetilde T_{*,S}$ defined by 
$$\widetilde\chi_{\lambda,S}(\bs_S(a)\zeta)=\epsilon(\zeta)\chi_{\lambda,S}(a)\text{ for }\zeta\in\mu_n(k),\,a\in T_{*,S}.$$
It is right $T_{\fo_S}$-invariant. 
\par Let $i_S(\lambda)=\Ind_{\widetilde T_{*,S}}^{\widetilde T_S}(\widetilde\chi_{\lambda,S})$ be the induced representation. Similar to the previous section, it consists of functions $f:\widetilde T_S\to\BC$ such that $f(aa')=f(a)\widetilde\chi_{\lambda,S}(a')$ for $a\in\widetilde T_S$, $a'\in\widetilde T_{*,S}$. 
\par Let $I_S(\lambda)$ be the induced representation $I_S(\lambda)=\Ind_{\widetilde T_SU_S}^{\widetilde G_S}i_S(\lambda)$ or $\Ind_{\widetilde T_{*,S}U_S}^{\widetilde G_S}(\widetilde\chi_{\lambda,S})$. The second model consists of functions $\Phi:\widetilde G_S\to\BC$ such that 
$$\Phi(gau)=\Phi(g)\widetilde\chi_{\lambda,S}(a)$$
for $g\in\widetilde G_S,\,a\in\widetilde T_{*,S}, u\in U_S$. 
\par Let $\Phi_{\lambda,S}$ be a function in $\Ind_{\widetilde T_{*,S}U_S}^{\widetilde G_S}(\widetilde\chi_{\lambda,S})$. It satisfies
\begin{equation}\label{property-PhiS}
	\Phi_{\lambda,S}(g\bs_S(a)a'\zeta u)=\epsilon(\zeta)\Phi_{\lambda,S}(g)|a|_S^{\lambda-\rho}\text{ for }g\in\widetilde G_S,\,\zeta\in\mu_n(k),\,a\in T_{0,S},\,a'\in T_{\fo_S},\,u\in U_S.
\end{equation}

\spoint{}  Now fix $\lambda\in\Lambda\otimes\BC$. Let $\Phi_{\lambda,\nu}$ be the function $\Phi_\lambda$ defined in \S\ref{usual-principal-series} for $\nu\notin S$. By Proposition \ref{genuine-local-global} we can define a genuine function $\prod_{\nu\notin S}\Phi_{\lambda,\nu}$ on $\widetilde G_{\BA^S}$ because $\Phi_{\lambda,\nu}|_{G_{\ro_\nu}}=1$ for all $\nu\notin S$. We define a genuine function $\Phi_{\lambda,*}:\widetilde G_\BA\to\BC$ by 
\be {} \label{def:Phi_*} \Phi_{\lambda,*}(g)=\left(\prod_{\nu\notin S}\Phi_{\lambda,\nu}\right)\cdot\Phi_{\lambda,S} \ee
in the notation of \S\ref{genuine-global}. 
\begin{nlem}
	The function $\Phi_{\lambda,*}$ is right $T_{0,k}$-invariant. 
\end{nlem}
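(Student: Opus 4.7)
The plan is to reduce the claim to the product formula by analyzing the local transformation behavior of $\Phi_{\lambda,*}$ at each place. For $g=(g_\nu)_\nu\in\widetilde G_\BA$ and $\eta\in T_{0,k}$, right multiplication by $\eta$ acts through the canonical global section $\bs_k(\eta)=\prod_\nu\bs_\nu(\eta)$, so by the factorization convention of \S\ref{genuine-global},
$$\Phi_{\lambda,*}(g\eta)=\left(\prod_{\nu\notin S}\Phi_{\lambda,\nu}(g_\nu\bs_\nu(\eta))\right)\cdot\Phi_{\lambda,S}(g_S\bs_S(\eta)).$$

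The heart of the argument is the local computation at each $\nu\notin S$. Since $\eta\in T_{0,k}\subseteq T_{0,\nu}$, the adjoint hypothesis on $\fD_{(\sQ,n)}^\vee$ gives a unique decomposition $\eta=\pi_\nu^{\lambda_\nu^\vee}\eta^\nu$ with $\lambda_\nu^\vee\in\Lambda_0^\vee$ and $\eta^\nu\in T_{0,\ro_\nu}$. I would then argue that $\bs_\nu(\eta)=\pi_\nu^{\lambda_\nu^\vee}\eta^\nu$ inside $\widetilde T_\nu$: Corollary \ref{section-homomorphism-Lambda0} trivializes the local cocycle from Proposition \ref{local-cocycle} precisely because one factor lies in $T_0$ and $\fD_{(\sQ,n)}^\vee$ is adjoint, while the group-homomorphism property of $\bs_\nu$ on $T_{\ro_\nu}$ (available since $n\mid q_\nu-1$) together with the convention $\bs_\nu(\pi_\nu^{\lambda_\nu^\vee})=\pi_\nu^{\lambda_\nu^\vee}$ identifies the two pieces with elements of $\widetilde T_\nu$. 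The transformation law (\ref{property-Phi-lambda}) then yields
$$\Phi_{\lambda,\nu}(g_\nu\bs_\nu(\eta))=q_\nu^{\pair{\rho-\lambda,\lambda_\nu^\vee}}\Phi_{\lambda,\nu}(g_\nu),$$
and for the $S$-part the analogous property (\ref{property-PhiS}) applied with $a=\eta$ gives $\Phi_{\lambda,S}(g_S\bs_S(\eta))=|\eta|_S^{\lambda-\rho}\Phi_{\lambda,S}(g_S)$.

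The final step is to combine the contributions and invoke the global product formula. Writing $\eta=\eta_k(F_1,\ldots,F_r)$ with each $F_i$ an $n_i$-th power in $k^\times$ (possible because of the adjoint hypothesis), one has $\lambda_\nu^\vee=\sum_i\ord_\nu(F_i)\alpha_i^\vee$, hence
$$\prod_{\nu\notin S}q_\nu^{\pair{\rho-\lambda,\lambda_\nu^\vee}}=\prod_i\left(\prod_{\nu\notin S}|F_i|_\nu\right)^{\pair{\lambda-\rho,\alpha_i^\vee}}=\prod_i|F_i|_S^{\pair{\rho-\lambda,\alpha_i^\vee}}=|\eta|_S^{\rho-\lambda},$$
where the middle equality uses $\prod_\nu|F_i|_\nu=1$. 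Multiplying by $|\eta|_S^{\lambda-\rho}$ from the $S$-part gives $1$, completing the proof.

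The main obstacle is the cocycle analysis at the unramified places: the section $\bs_\nu$ is only set-theoretic, and without the adjoint hypothesis on $\fD_{(\sQ,n)}^\vee$ the local cocycle $d$ of Proposition \ref{local-cocycle} would genuinely obstruct the decomposition $\bs_\nu(\eta)=\pi_\nu^{\lambda_\nu^\vee}\bs_\nu(\eta^\nu)$, and one would not obtain a clean transformation rule compatible with the product formula. Everything else is essentially bookkeeping, but one does need to be attentive to where $2n\mid q_\nu-1$ is used to ensure both that the Hilbert symbol is unramified on units and that $T_{\ro_\nu}$ genuinely splits in $\widetilde T_\nu$.
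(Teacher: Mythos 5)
Your proof is correct and follows essentially the same route as the paper: factor $\Phi_{\lambda,*}(g\eta)$ over the places via the canonical splitting $\bs_k$, pull out $|\eta|_\nu^{\lambda-\rho}$ at each place using the local transformation laws, and conclude by the product formula. You are more explicit than the paper about why the local cocycle does not obstruct this (Corollary \ref{section-homomorphism-Lambda0} and the adjoint hypothesis), which the paper's one-line proof takes for granted.
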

\begin{proof}
	Let $g=(g_\nu)_\nu\in\widetilde G_\BA$, $a\in T_{0,k}$. We have 
	$$\Phi_{\lambda,*}(ga)=\left(\prod_{\nu\notin S}\Phi_{\lambda,*}(g_\nu\bs_\nu(a))\right)\cdot\Phi_{\lambda,S}(g_S\bs_S(a))=\left(\prod_{\nu\notin S}\Phi_{\lambda,*}(g_\nu)|a|_\nu^{\lambda-\rho}\right)\cdot\Phi_{\lambda,S}(g_S)|a|_S^{\lambda-\rho}=\Phi_{\lambda,*}(g).$$
\end{proof}
\spoint The function $\Phi_{\lambda, *}$ is only invariant under $T_{0,k}$ but not invariant under $T_k$, so following \cite{KP:pmihes} we now introduce 
the function $\Phi_{\lambda,0}(x): \widetilde G_\BA \rr \BC$, 
\be{} \label{def:Phi_0} \Phi_{\lambda,0}(x)=\sum_{\eta\in T_k/T_{0,k}}\Phi_{\lambda,*}(x\eta) \ee
 This function is left $K^S=\prod_{v\notin S} K_v$-invariant and right $U_\BA T_k$ invariant.
 
 \tpoint{Metaplectic Eisenstein series} Using the function $\Phi_{\lambda,0}$, we form the \textit{metaplectic Eisenstein series}
 
\be{} \label{def:met-ES} E(\lambda,\Phi_{\lambda,S},g)=\sum_{\gamma\in G_k/B_k}\Phi_{\lambda,0}( g\gamma) \ee

\begin{nprop}[\cite{MW}]
	\begin{enumerate}[(i)]
		\item Let 
		\begin{equation}\label{Godement-region}
			\Gode=\{\lambda\in\Lambda\otimes\BC:\pair{\Re(\lambda),\alpha_i^\vee}\leq-1\text{ for every simple coroot }\alpha_i^\vee\}.
		\end{equation} be the Godement region. $E(\lambda,\Phi_{\lambda,S},g)$ is absolutely convergent for $(g,\lambda)\in\widetilde G(\BA)\times\Gode$. Moreover, the convergence is uniform on compact subsets of $(g,\lambda)\in\widetilde G(\BA)\times\Gode$, in particular, for a fixed $g\in\widetilde G(\BA)$, the function $E(g,\Phi_{\lambda,S},\lambda)$ is a holomorphic function in $\lambda\in\Gode$. 
		\item For a fixed $g\in\widetilde G(\BA)$, the function $E(g,\Phi_{\lambda,S},\lambda)$ has a meromorphic continuation to $\lambda\in\Lambda\otimes\BC$. 
	\end{enumerate}
\end{nprop}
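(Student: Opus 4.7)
The plan is to deduce both assertions from the general Moeglin--Waldspurger framework for Eisenstein series, which applies to the metaplectic setting essentially without change because $G_k$ splits canonically in $\widetilde G_\BA$, making $G_k\backslash\widetilde G_\BA$ a $\mu_n(k)$-fibration over $G_k\backslash G_\BA$. The constant-term spectral decomposition, truncation machinery, and inductive meromorphic continuation of \cite{MW} transfer once one checks that $\Phi_{\lambda,0}$ has moderate growth on $G_k\backslash\widetilde G_\BA$ and transforms by an appropriate quasi-character along $U_\BA T_k$; both are immediate from \eqref{def:Phi_*}--\eqref{def:Phi_0} together with the fact that $T_k/T_{0,k}\cong\Lambda^\vee/\Lambda_0^\vee$ is finite.

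For part (i), I would establish absolute convergence on $\Gode$ by majorizing against a classical non-metaplectic Borel Eisenstein series. The strategy is: using Iwasawa decomposition place by place, bound $|\Phi_{\lambda,*}(g)|$ by a constant multiple of a non-metaplectic spherical section evaluated at $\Re(\lambda)$; since $T_k/T_{0,k}$ is finite, the sum over $\eta$ in \eqref{def:Phi_0} contributes only a bounded overall factor. Summing over $\gamma\in G_k/B_k$ then reduces the problem to the classical Godement convergence criterion, which yields absolute convergence precisely on the region $\Gode$ defined by $\Re\pair{\lambda,\alpha_i^\vee}\leq -1$ for all simple coroots. Uniformity of this majorant on compact subsets of $\widetilde G_\BA\times\Gode$ gives the holomorphy claim for fixed $g$.

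For part (ii), the plan is to follow the Langlands constant-term method as executed in \cite{MW}. One first computes the constant term along $B$,
\[
E_B(\lambda,\Phi_{\lambda,S},g) \;=\; \sum_{w\in W}\,M(w,\lambda)\Phi_{\lambda,S}(g),
\]
where each $M(w,\lambda)$ factors as a product of local intertwiners. At places $\nu\notin S$ these factors are controlled by the metaplectic Gindikin--Karpelevich formula (Proposition \ref{GK-formula}), exhibiting them as ratios that continue meromorphically in $\lambda$. At places in $S$, since $\Phi_{\lambda,S}$ is a fixed section in $I_S(\lambda)$ chosen holomorphically in $\lambda$, the $S$-part of the intertwiner is itself holomorphic. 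The standard induction on semisimple rank then bootstraps meromorphy of the constant term to that of $E(\lambda,\Phi_{\lambda,S},g)$ itself.

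The main obstacle is technical rather than conceptual: one must verify that the spectral-theoretic and truncation arguments of \cite{MW}, originally formulated for connected reductive groups, carry over uniformly to the metaplectic cover, particularly near the walls of the Weyl chamber where poles may accumulate. Because $\mu_n(k)$ is finite and central and $G_k$ splits in $\widetilde G_\BA$, these adaptations are essentially routine, so no genuinely new analytic phenomenon arises beyond those treated by Moeglin--Waldspurger.
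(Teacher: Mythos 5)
The paper offers no proof of this proposition at all: it is imported wholesale from \cite{MW}, with the (implicit) justification that the Moeglin--Waldspurger machinery applies verbatim to covers because $G_k$ splits in $\widetilde G_\BA$. Your overall route --- Godement majorization for (i), Langlands' constant-term method plus the local Gindikin--Karpelevich computations for (ii) --- is the standard one and is consistent with what the citation presupposes.

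However, there is one concrete error that breaks your convergence argument as written: you assert that $T_k/T_{0,k}\cong\Lambda^\vee/\Lambda_0^\vee$ is finite, and on that basis dismiss the sum over $\eta$ in \eqref{def:Phi_0} as "a bounded overall factor." This is false. Only $\Lambda^\vee/\Lambda_0^\vee$ is finite; the quotient $T_k/T_{0,k}$ is essentially $\Lambda^\vee\otimes k^*$ modulo $\Lambda_0^\vee\otimes k^*$, which contains copies of $k^*/(k^*)^{n_i}$ and is infinite. Indeed, the paper's Step 1 of the proof of Theorem \ref{Eis-conj} exhibits a set of representatives of $T_k/T_{0,k}$ indexed by $\bigoplus_{\nu\notin S}\Lambda^\vee/\Lambda_0^\vee\times T_{\fo_S}/T_{0,\fo_S}$, an infinite set; and this infinite sum over $\eta$ is precisely what produces the multiple Dirichlet series in the main theorem --- if it were finite there would be no Dirichlet series to speak of. Consequently $\Phi_{\lambda,0}$ is itself defined by an infinite series whose convergence must be established, and the absolute convergence of $E(\lambda,\Phi_{\lambda,S},g)$ on $\Gode$ has to control the double sum over $\gamma\in G_k/B_k$ and $\eta\in T_k/T_{0,k}$ simultaneously (equivalently, a single sum over $G_k/T_{0,k}U_k$), not the outer sum alone. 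The repair is standard --- bound $\sum_\eta|\Phi_{\lambda,*}(x\eta)|$ by a product of partial Dedekind zeta values convergent for $\Re\,s_i$ large, or view the Eisenstein series as induced from the subgroup $T_{0}U$ and apply the Godement criterion there --- but the step as you wrote it does not go through. The rest of your sketch (moderate growth, constant term, meromorphy of the local intertwiners at $\nu\notin S$ via Proposition \ref{GK-formula}, holomorphy of the $S$-part) is the expected adaptation of \cite{MW}.
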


\tpoint{Functional equations}
\par For $\lambda\in\mathrm{Gode}$, let $M(w,\lambda):\Ind_{\widetilde T_{*,S}U_S}^{\widetilde G_S}(\chi_{\lambda,S})\to \Ind_{\widetilde T_{*,S}U_S}^{\widetilde G_S}(\chi_{w\lambda,S})$ be the intertwiner 
$$M(w,\lambda)\Phi_{\lambda,S}(g)=\int_{U_{w,S}}\Phi_{\lambda,S}(guw)du$$
It is known that the intertwiners $M(w,\lambda)$ also have meromorphic continuations to $\lambda\in\Lambda\otimes\BC$. 
\begin{nprop}[\cite{MW}]
	For every $w\in W$, we have the following functional equation 	\begin{equation}\label{Eis-functional-equation}
		E(g,M(w,\lambda)\Phi_{\lambda,S},w\lambda)=\left(\prod_{\substack{\alpha^\vee\in\Phi_+^\vee\\w^{-1}\alpha^\vee<0}}\frac{\zeta_S(-\pair{\lambda,\alpha^\vee})}{\zeta_S(-\pair{\lambda,\alpha^\vee}+1)}\right)E(g,\Phi_{\lambda,S},\lambda).
	\end{equation}
	where $\zeta_S(z)=\prod_{v\notin S}(1-q_v^{-z})^{-1}$ is the partial zeta function of $k$ with local factors in $S$ removed. 
\end{nprop}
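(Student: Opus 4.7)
The plan is to follow the Langlands-style argument for functional equations of Eisenstein series, first reducing to simple reflections and then verifying the formula on constant terms along the Borel.

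First I would reduce to the case $w = s_i$. The intertwiners satisfy the cocycle property $M(w_1 w_2, \lambda) = M(w_1, w_2 \lambda) M(w_2, \lambda)$ whenever $\ell(w_1 w_2) = \ell(w_1) + \ell(w_2)$. Iterating the simple-reflection case along a reduced expression $w = s_{i_1}\cdots s_{i_\ell}$ and invoking the standard combinatorial enumeration of the inversion set $\{\alpha^\vee \in \Phi_+^\vee : w^{-1}\alpha^\vee < 0\}$ through the reduced word will telescope the local zeta ratios into the product displayed in the statement.

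For the simple-reflection case $w = s_i$, I would compute the constant term of $E(g, \Phi_{\lambda,S}, \lambda)$ along $U$. In the Godement region, unfolding the sum over $G_k/B_k$ via the Bruhat decomposition restricted to the minimal parabolic $P_i$ yields
$$\int_{U_\BA/U_k} E(ug, \Phi_{\lambda,S}, \lambda)\, du = \Phi_{\lambda,0}(g) + \widetilde{\mathcal{M}}(s_i, \lambda)\Phi_{\lambda,0}(g),$$
where $\widetilde{\mathcal{M}}(s_i,\lambda)$ is a global intertwining operator. Because $\Phi_{\lambda,*}$ is factorizable outside $S$, this global operator splits as $\widetilde{\mathcal{M}}(s_i,\lambda) = \bigl(\prod_{\nu \notin S} c_\nu(s_i,\lambda)\bigr)\cdot M(s_i,\lambda)$, and each local scalar $c_\nu(s_i,\lambda)$ is supplied by the metaplectic Gindikin-Karpelevich formula (Proposition \ref{GK-formula}). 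The infinite product of local scalars over $\nu \notin S$ rearranges into the claimed ratio of partial zeta functions. Performing the analogous unfolding for $E(g, M(s_i,\lambda)\Phi_{\lambda,S}, s_i\lambda)$ and comparing the two constant terms, using uniqueness of the decomposition of functions on $T_\BA/T_k$ along torus characters, gives the simple-reflection case of the functional equation.

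The main obstacle is the rigorous analytic continuation outside the Godement region, where the unfoldings above are no longer absolutely convergent. This is dispatched by the general framework of Moeglin-Waldspurger, which applies verbatim to metaplectic covers since $G_k$ splits canonically in $\widetilde G_\BA$ and the kernel $\mu_n(k)$ is finite; the truncation and Maass-Selberg arguments there produce simultaneous meromorphic continuations of both sides of the functional equation, and the identity principle then propagates the equality from the Godement region to all $\lambda \in \Lambda \otimes \BC$.
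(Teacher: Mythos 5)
The paper does not actually prove this proposition: it is quoted from Moeglin--Waldspurger, and your outline is precisely the standard Langlands/MW argument that the citation stands for (cocycle reduction to simple reflections, identification of the scalar via the unramified local intertwining operators, meromorphic continuation by the general theory). So in substance you are reconstructing the proof the paper delegates. Two points in your sketch need tightening. First, the constant term of the full Borel Eisenstein series along $U$ is a sum over \emph{all} of $W$, not just $\{1,s_i\}$; the two-term formula you display is the constant term of the rank-one Eisenstein series attached to the minimal parabolic $P_i$, and the clean version of your argument realizes $E$ as an iterated Eisenstein series through $P_i$, proves the rank-one functional equation for the inner series, and lets functoriality of the outer induction carry it to $E$. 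Second, your claim that "the infinite product of local scalars rearranges into the claimed ratio of partial zeta functions" is asserted, not checked, and it is exactly the place where the metaplectic setting differs from the linear one: the metaplectic Gindikin--Karpelevich formula (\ref{GK-formula}) gives the local factor $\bigl(1-q_\nu^{-1-\pair{\lambda,\widetilde\alpha_i^\vee}}\bigr)\big/\bigl(1-q_\nu^{-\pair{\lambda,\widetilde\alpha_i^\vee}}\bigr)$ with $\widetilde\alpha_i^\vee=n_i\alpha_i^\vee$, so the product over $\nu\notin S$ is $\zeta_S(-\pair{\lambda,\widetilde\alpha_i^\vee})/\zeta_S(-\pair{\lambda,\widetilde\alpha_i^\vee}+1)$. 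Matching this to the displayed formula requires reading the product there as running over the metaplectic coroots (equivalently, replacing $\alpha^\vee$ by $n(\alpha^\vee)\alpha^\vee$ in the pairing); as literally written your final rearrangement reproduces the non-metaplectic constant, which is not what the local computation yields unless all $n(\alpha^\vee)=1$.
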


\subsection{Whittaker coefficients}
\tpoint{Remark on Haar measures}In this article we will only do integration on the unipotent radical $U^-$ and its subgroups over local and global fields, so we simply take Tamagawa measures (see \cite{We}). In particular, for any unipotent group $\bold N$ over $k$, the Tamagawa measure on $N_\BA$ induces an invariant measue on the quotient $N_\BA/N_k$, under which the volume of $N_\BA/N_k$ is equal to $1$. 
\tpoint{First Whittaker coefficient}
From now on we will fix the function $\Phi_{\lambda,S}$ and simply denote the Eisenstein series by $E(g,\lambda)$. We take an additive character $\psi:\BA/k\to\BC^*$, which factorizes as $\psi=\prod_\nu\psi_\nu$ for $v\in\places(k)$, where $\psi_\nu:k_\nu\to\BC^*$ is an additive character of the local field $k_\nu$. We suppose that $\psi_\nu$ is non-trivial for every place $\nu$ and unramified for every non-archimedean place $\nu\notin S$. 
\par For $\lambda \in \Gode$ and $g \in \tG_{\ad},$ the first \textit{Whittaker coefficient} of the metaplectic Eisenstein series $E(\lambda,g)$ introduced in \eqref{def:met-ES} is defined to be
\be{} \label{def:Whit-Eis-coeff} W(\lambda, g)=\int_{U^-_\ad/U^-_k}E(\lambda, gu^-)\psi(u^-)^{-1} \, du^-, \ee where $du^-$ is defined to be the (quotient) Haar measure on $U^-_{\ad}/U^-_k$ such that $U^-_{\ad}/U^-_k$ has total volume $1$. Note that since $U^-_\ad/U^-_k$ is compact, the Whittaker coefficient is absolutely convergent for $\lambda\in\Gode$. Our goal in this section is to relate this first Whittaker coefficient to the Weyl group multiple Dirichlet series introduced in \S \ref{subsub:definition-WMDS}.
\tpoint{Statement of main result} We can now state the main result of this section, the so-called \textit{Eisenstein conjecture} of \cite{WMDS1,WMDS2} . The proof will occupy the remainder of this section.

	\begin{nthm}\label{Eis-conj} Let $k$ be a number field containing all $2n$-th roots of unity. Choose the finite set of places $S \subset \places_k$ to satisfy the conditions of \S \ref{subsub:WMDS-S-conditions}. Let $\mf{D}$ be a semisimple simply-connected root datum such that the metaplectic dual root datum $\fD_{(sQ,n)}^\vee$ defined in Lemma \ref{metaplectic-dual-root-data} (ii) is of adjoint type. Let $\lambda\in\Gode$, let $s_i:= \pair{\rho-\lambda,\alpha_i^\vee}=1-\la \lambda, \alphav_i\ra$ for $i \in I.$ Then the coefficient $W(\lambda, 1)$ from \eqref{def:Whit-Eis-coeff} is well-defined and
		 we have  \be{}  \label{main-statmements-Eis-Conj} W(\lambda,1)=[T_{\CO_S}:T_{0,\CO_S}]Z_{\Psi}(s_1,\cdots,s_r), \ee where $Z_{\Psi}(s_1,\cdots,s_r)$ is the Weyl group multiple Dirichlet series attached to $(\mf{D}, n, S, \sQ)$ (see \S \ref{subsub:definition-WMDS} ) with $H(C_1, \ldots, C_r)$ as in \ref{def:H-coefficients} and 
		 \be{} \Psi(C_1,\cdots,C_r):=\BN C_1^{s_1}\cdots\BN C_r^{s_r}\int_{U_S^-}\Phi_{\lambda,S}(u_S^-\widetilde h_{1,S}(C_1)\cdots\widetilde h_{r,S}(C_r))\psi_v(u_S^-)^{-1}du_S^-,\ee
		 where $\psi_S=\prod_{\nu\in S}\psi_\nu$. 
%		 
%		\begin{enumerate}[(i)]
%			\item For $\lambda\in\Lambda\otimes\BC$ let $s_i=\pair{\lambda,\alpha_i^\vee}$. Let $\Psi:(k_S)^r\to\BC$ be the function 
%			$$\Psi(C_1,\cdots,C_r)=\BN C_1^{1-s_1}\cdots\BN C_r^{1-s_r}\int_{U_S^-}\Phi_{\lambda,S}(u_S^-\widetilde h_1(C_1)\cdots\widetilde h_r(C_r))\psi_v(u_S^-)du_S^-$$
%			Then $\Psi$ satisfies (\ref{property-psi}). Actually $\Psi$ satisfies a stronger property 
%			\begin{equation}
%				\Psi(\epsilon_1C_1,\cdots,\epsilon_rC_r)=
%			\end{equation}
%			\item Let $$Z(s_1,\cdots,s_r)=\sum_{C_1\in \CO_S/\CO_S^\times}\cdots\sum_{C_r\in \CO_S/\CO_S^\times}H(C_1,\cdots,C_r)\Psi(C_1,\cdots,C_r)\BN C_1^{-s_1}\cdots\BN C_r^{-s_r}. $$
%			be the Weyl group multiple Dirichlet series defined in Section 3.4. Suppose that $\Lambda_0^\vee$ is spanned by $n(\alpha_i^\vee)\alpha_i^\vee$. Then we have
%			$$W(\lambda,1)=[T_{\CO_S}:T_{0,\CO_S}]Z(s_1,\cdots,s_r)$$
%		\end{enumerate}
	\end{nthm}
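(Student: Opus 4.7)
The plan is to carry out the standard unfolding of the Whittaker integral for a Borel Eisenstein series, carefully track the resulting local-global decomposition, and then match the answer with the WMDS by invoking the compatibility of two distinct ``gluing'' processes, one coming from the central extension and one coming from twisted multiplicativity.

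First, I will substitute the definition \eqref{def:met-ES} of $E(\lambda,g)$ into \eqref{def:Whit-Eis-coeff} and unfold using the Bruhat decomposition $G_k = \bigsqcup_{w\in W} B_k w U_k$. By the usual argument (non-degeneracy of $\psi$ forces all cells except the one attached to the long element to contribute $0$), the integral collapses to
\[
W(\lambda,1) \;=\; \int_{U_\BA^-}\Phi_{\lambda,0}(u^-)\,\psi(u^-)^{-1}\,du^-.
\]
Next, I unwind the inner sum over $T_k/T_{0,k}$ that was built into $\Phi_{\lambda,0}$ via \eqref{def:Phi_0}; using the right $T_{0,k}$-invariance of $\Phi_{\lambda,*}$ and the genuine splitting $\bs_k$ of $T_k$, this yields
\[
W(\lambda,1) \;=\; \sum_{\eta\in T_k/T_{0,k}}\int_{U_\BA^-}\Phi_{\lambda,*}(u^-\,\bs_k(\eta))\,\psi(u^-)^{-1}\,du^-.
\]

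Now I factor the integrand and the measure according to the decomposition $\BA = \BA^S\cdot\BA_S$. Because $\Phi_{\lambda,\nu}$ at $\nu\notin S$ is the normalized spherical vector, the $\nu$-th factor becomes the local Whittaker integral studied in \S\ref{Whit-function}. The key observation is that for $\eta\in T_k$, factoring its image in $T_\nu$ as $\pi_\nu^{\lambda_\nu^\vee}\eta^\nu$ with $\eta^\nu\in T_{\ro_\nu}$ and then lifting via $\bs_\nu$ produces a cocycle error $D(\eta;\nu)\in\mu_n(k)$ as in Proposition \ref{local-cocycle}; after pulling it out, Corollary \ref{Whittaker-H} identifies the remaining integral with the sub-summation of the $\nu$-part $N(\nu)$ indexed by $-\lambda_\nu^\vee\pmod{\Lambda_0^\vee}$. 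Combined with the $S$-part, which is precisely the definition of $\Psi(C_1,\ldots,C_r)$ once we write $\eta = \eta_k(\ul C)$ via \eqref{eta-field}, this gives
\[
W(\lambda,1) \;=\; \sum_{\eta\in T_k/T_{0,k}} D(\eta)\cdot\Psi(\eta)\cdot\prod_{\nu\notin S}\Big(\text{sub-summation of }N(\nu)_\lambda\Big).
\]

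The main obstacle, and the conceptual heart of the proof, is comparing this expression with the WMDS written in the regrouped form \eqref{group-sum-Z}--\eqref{subsum-Z}. Parametrizing $T_k/T_{0,k}$ by representatives $\eta_k(\ul C)$ with $\ul C\in (\fo_S^+)^r/\sim$ and using Lemma \ref{lem:H-fact}, the coefficient $H(\ul C)$ factors as $\epsilon(D(\ul C))\prod_\nu H(\pi_\nu^{n_\nu(C_1)},\ldots,\pi_\nu^{n_\nu(C_r)})$, where $D(\ul C)=\prod_{\nu\notin S} D(\ul C;\nu)$ as in \eqref{D-local-global}. The crucial input, promised in \S\ref{compatibility-gluing} and to be verified as Lemma \ref{i-nu-eta-C}, is the identity
\[
D(\eta_k(\ul C);\nu) \;=\; D(\ul C;\nu) \qquad \text{for every }\nu\notin S,
\]
so that the two cocycles --- the one from the metaplectic central extension and the one from twisted multiplicativity --- coincide termwise. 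The adjointness assumption on $\fD_{(\sQ,n)}^\vee$ enters here via Corollary \ref{section-homomorphism-Lambda0}, which ensures that $\bs$ is a genuine homomorphism on the relevant $T_0$-factor, so that the quotient $T_k/T_{0,k}$ is correctly parametrized and the ``contribution from $S$'' matches $\Psi$ on the nose. Finally, handling the units $\fo_S^\times$ accounts for the factor $[T_{\fo_S}:T_{0,\fo_S}]$ (each orbit in $(\fo_S^+)^r$ is hit $[T_{\fo_S}:T_{0,\fo_S}]$ times when summing over $\eta\in T_k/T_{0,k}$), completing the match \eqref{main-statmements-Eis-Conj}.
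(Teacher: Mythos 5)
Your proposal is correct and follows essentially the same route as the paper: unfold via Bruhat decomposition to a sum over $T_k/T_{0,k}$, factor into $I^S\cdot I_S$, identify the local factors with sub-summations of the $\nu$-parts via Corollary \ref{Whittaker-H}, and match the two cocycles $D(\eta;\nu)$ and $D(\ul C;\nu)$ (Lemma \ref{i-nu-eta-C} together with Lemma \ref{lem:H-fact}), with the index $[T_{\fo_S}:T_{0,\fo_S}]$ arising from the unit part of the parametrization of $T_k/T_{0,k}$. This is precisely the paper's argument.
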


The proof will be broken down into three parts: in the rest of this subsection we use a standard unfolding argument to rewrite the Whittaker coefficient as a sum over $\eta\in T_k/T_{0,k}$ of two functions $I^S(\eta)$ and $I_S(\eta)$ that can actually be defined on a larger domain, and we establish the invariance properties of these two functions.
\par Next we rewrite this summation into a summation over $\bigoplus_{v\notin S}\Lambda^\vee/\Lambda_0^\vee$ and compare with the regrouped summation of the WMDS in (\ref{group-sum-Z}), so it suffices to prove the equality for the sub-summations for both the Whittaker coefficient and the WMDS for each $\ul\lambda=(\lambda_\nu^\vee)^\vee\in\bigoplus_{v\notin S}\Lambda^\vee/\Lambda_0^\vee$. In these sub-summations, the $\Psi$ function in the WMDS is equal to the $I_S$-function on the Whittaker side, so it boils down to prove the equality
$$I^S(\ul \lv )=\sum_{\ul C\in \supp (Z;\ul\lambda^\vee)}H(C_1,\cdots,C_r)\BN C_1^{-s_1}\cdots\BN C_r^{-s_r}.$$
\par Then note that $I^S$ factorizes into a product of local integrals $I_\nu$ for $\nu\notin S$, and each $I_\nu$ is equal to the product of the part of the $\nu$-part supported on $-\lambda_\nu^\vee$ times the factor $D(\ul\lambda^\vee;\nu)$ defined in (\ref{D-nu}). Finally we span the product local integrations, the $\nu$-parts glues into the right hand side of the above equality because of Lemma \ref{lem:H-fact}. 
\tpoint{Unfolding the metaplectic Eisenstein series} \label{subsub:unfolding} The first step in our argument will be a standard unfolding argument similar to the one used when computing Whittaker coefficients of Borel Eisenstein series on a reductive group. Note that, due to the local non-uniqueness of Whittaker functionals, this does not result in an Euler product as in the linear case. 
\begin{nclaim} \label{claim:unfolding-W}  For $a \in \tT_{\ad}$ and $\lambda$ as above, we have \be{}   W(\lambda, a):= \sum_{\eta\in T_k/T_{0,k}}\int_{U^-_\BA}\Phi_{\lambda,*}( au^-\eta)\psi(u^-)^{-1}du^-. \ee 
	 \end{nclaim}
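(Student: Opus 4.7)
The plan is to use a standard Bruhat unfolding, modified to account for the fact that the section $\Phi_{\lambda,*}$ is only right-invariant under $T_{0,k}$ rather than the full $T_k$. Consequently, the sum over $T_k/T_{0,k}$ cannot be absorbed into the Bruhat decomposition and will survive as a finite sum in the final answer.

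First, I would absorb the $T_k/T_{0,k}$-sum into the Eisenstein sum. Since $\Phi_{\lambda,*}$ is right $U_\BA$-invariant (hence right $U_k$-invariant) and right $T_{0,k}$-invariant, it is right-invariant under the subgroup $B_{0,k}:=T_{0,k}U_k$ of $B_k$. As $B_{0,k}$ is normal in $B_k$ with quotient $T_k/T_{0,k}$, combining \eqref{def:Phi_0} and \eqref{def:met-ES} gives
\[
E(\lambda,g)\;=\;\sum_{\gamma\in G_k/B_{0,k}}\Phi_{\lambda,*}(g\gamma).
\]

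Next, I would split this sum along Bruhat cells. The Bruhat decomposition gives $G_k/B_k=\bigsqcup_{w\in W}C_w$, with unique open cell $C_1\cong U^-_k$ under $u_0^-\mapsto u_0^-B_k$. Lifting to $G_k/B_{0,k}$, the open cell is parametrized bijectively by $(u_0^-,\eta)\in U^-_k\times(T_k/T_{0,k})\mapsto u_0^-\eta B_{0,k}$ (injectivity follows from uniqueness of the $U^-TU$-decomposition on the big cell). Substituting the resulting open-cell contribution into $W(\lambda,a)$, the inner sum over $u_0^-\in U^-_k$ unfolds the adelic quotient integral via the change of variable $v=u^-u_0^-$:
\[
\int_{U^-_\BA/U^-_k}\sum_{u_0^-\in U^-_k}\Phi_{\lambda,*}(au^-u_0^-\eta)\psi(u^-)^{-1}\,du^-\;=\;\int_{U^-_\BA}\Phi_{\lambda,*}(av\eta)\psi(v)^{-1}\,dv,
\]
where I use that $\psi$ is trivial on $U^-_k$, so $\psi(v)=\psi(u^-)\psi(u_0^-)=\psi(u^-)$. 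Absolute convergence of $E(\lambda,g)$ on $\Gode$ together with compactness of $U^-_\BA/U^-_k$ justifies all interchanges of sum and integral.

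The main obstacle is verifying that each non-open Bruhat cell contributes zero to the Whittaker integral. This is the classical non-degeneracy argument: for each $w\in W$ indexing a non-open cell $C_w$, the restriction of the relevant sum to $C_w$ leaves $\Phi_{\lambda,*}$ right-invariant under some conjugate of a simple positive root subgroup $U_{\alpha_i}$, while the Whittaker character $\psi$ remains non-trivial on the matching $U_{-\alpha_i,\BA}/U_{-\alpha_i,k}$, so the inner unipotent integration vanishes. Combining this vanishing with the open-cell computation above yields the claim.
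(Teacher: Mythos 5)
Your proof is correct and follows essentially the same route as the paper: a Bruhat-cell decomposition, vanishing of the non-open cells via the genericity of $\psi$ against the right $U_\BA$-invariance of the section, and unfolding of the open cell over the compact quotient $U^-_\BA/U^-_k$. The only (cosmetic) difference is that you expand $\Phi_{\lambda,0}$ into the sum over $T_k/T_{0,k}$ at the outset, working with $G_k/B_{0,k}$, whereas the paper keeps the $\eta$-sum packaged inside the right $B_k$-invariant function $\Phi_{\lambda,0}$ and only substitutes its definition after the unfolding.
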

\begin{proof}

By definition, for $a \in \tT_{\ad}$, we have 
	\be{} W(\lambda, a)=\int_{U^-_\BA/U^-_k}\sum_{\gamma\in G_k/B_k}\Phi_{\lambda,0}(au^-\gamma)\psi(u^-)^{-1}du^-
.	\ee Decomposing, using the Bruhat decomposition, $G_k/B_k:= \sqcup_{w \in W} \, U_{w, k} \, \dw B_k/B_k$, we find that the above can be decomposed, using the right $B_k$-invariance of $\Phi_{\lambda, 0}$, as
	\be{} \int_{U^-_\BA/U^-_k} \sum_{w\in W}\sum_{u_w \, \in U_{w}}\Phi_{\lambda,0}(au^- \, u_w \dw )\psi(u^-)^{-1}du^- =  \int_{U^-_\BA/U^-_k} \sum_{w\in W}\sum_{\gamma\in U^-_{w,k}}\Phi_{\lambda,0}(au^- \dw \,  u_{-w,k})\psi(u^-)^{-1}du^-
\ee where $U^-_{w, k}= \dw^{-1} \, U_{w, k} \dw$ is a subgroup of $U^-_k$. Writing also $U^{-, w}_k:= U^-_k \cap \dw \, U^-_k \dw^{-1}$, we have a factorization $U^-_k = U^{-, w}_k \, U^-_{w, k}.$ 	A standard Fubini type argument then shows the above can be rewritten as 
	\be{} \sum_{w\in W}\int_{U_\BA^-/U_k^{-, w}}\Phi_{\lambda,0}(au^-w)\psi(u^-)^{-1}du^-. \ee
Similarly decomposing $U^-_{\ad} = U^{-}_{w, \ad} U^{-, w}_{\ad}$ we find that the above is equal to 	
	
\be{} && \sum_{w\in W}\int_{U_{w,\BA}^-}\int_{U_\BA^{w,-}/U_k^{w,-}}\Phi_{\lambda,0}(au^-_1u_2^-w)\psi(u_1^-u_2^-)^{-1}du_1^-du_2^-
	\\&=&\sum_{w\in W}\int_{U_{w,\BA}^-}\left(\int_{U_\BA^{w,-}/U_k^{w,-}}\psi(u_2^-)^{-1}du_2^-\right)\Phi_{\lambda,0}(au^-_1w)\psi(u_1^-)^{-1}du^-. \ee If $w \neq e$, the inner integral over $U^{-, w}_{\ad}/ U^{-, w}_k$ vanishes, so that we are just left with 
\be{} \int_{U^-_\BA}\Phi_{\lambda,0}(au^-)\psi(u^-)^{-1}du^-
.\ee Now, if we substitute in the definition \eqref{def:Phi_0}, the claim follows. \end{proof}

\tpoint{The functions $I^S(\eta), I_S(\eta)$} We now focus on the case when $a=1$. Using the previous Claim, we may now write $W(\lambda, 1)$ as 
\begin{align}\label{first-form-Whittaker-coefficient}
	&\nonumber \sum_{\eta\in T_k/T_{0,k}} \underbrace{ \left(|\eta|_v^{\rho-\lambda} \, \prod_{v\notin S}\int_{U_v^-}\Phi_{\lambda,v}(u_v^-\bs_\nu(\eta))\psi_v(u_v^-)^{-1}du_v^-\right)}_{I^S(\eta)}\, \cdot \, 
	\\&\underbrace{\left(\, |\eta|_S^{\rho-\lambda} \, \int_{U^-(k_S)}\Phi_{\lambda,S}(u_S^-\bs_S(\eta))\psi_v(u_S^-)^{-1}du_S^-\right)}_{I_S(\eta)}.
\end{align}
	Note that both $I^S(\eta)$ and $I_S(\eta)$ are now defined on $T_k$ and right invariant under $T_{0,k}$.  In fact, we may extend these functions by
\begin{equation}\label{extend-I}
	I^S(\eta):=\prod_{\nu\notin S}|\eta_\nu|_\nu^{\rho-\lambda}\int_{U_\nu^-}\Phi_{\lambda,\nu}(u_\nu^-\eta_\nu)\psi_v(u_v^-)^{-1}du_v^-
\end{equation}
for $\eta=\prod_{\nu\notin S}\eta_\nu\in\widetilde T_{\BA^S}$ with $\eta_\nu\in\widetilde T_\nu$, and 
\begin{equation}\label{extend-J}
	I_S(\eta):=|\eta|_S^{\rho-\lambda}\int_{U_S^-}\Phi_{\lambda,S}(u_S^-\eta)\psi_v(u_S^-)^{-1}du_S^-\text{ for }\eta\in\widetilde T_S.
\end{equation}
For every $\nu\notin S$ we also define $I_\nu:\widetilde T_\nu\to\BC$ by 
\begin{equation}\label{I-nu}
	I_\nu(\eta_\nu)=|\eta_\nu|_\nu^{\rho-\lambda}\int_{U_\nu^-}\Phi_{\lambda,\nu}(u_\nu^-\eta_\nu)\psi_\nu(u_\nu^-)^{-1}du_\nu^- 
\end{equation}
Then in the notation introduced after Proposition \ref{genuine-local-global}, we have $I^S=\prod_{\nu\notin S} I_\nu$. Note that here by abuse of notation, we extend the absolute value functions $|\cdot|_\nu: T_\nu\to\BC$ to $\widetilde T_\nu$ via composition with $p_\nu:\widetilde T_\nu\to T_\nu$, and similar for $|\cdot|_S$. 
\begin{nprop}\label{invariance-IJ}
	The functions $I^S,I_S,I_\nu$ are all genuine. The function $I_\nu:\widetilde T_\nu\to\BC$ is right $T_{*,\nu}$-invariant, and the function $I_S:\widetilde T_{S}\to\BC$ is right $T_{*,S}$-invariant. (Note that by Corollary \ref{section-homomorphism-Lambda0}, for every $\nu\notin S$, $\bs_\nu|_{T_{*,\nu}}$ is a splitting of $T_{*,\nu}$, and we view $T_{*,\nu}$ as a subgroup of $\widetilde T_\nu$ via this splitting.)
\end{nprop}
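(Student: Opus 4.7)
The plan is to verify the three stated properties via direct computation, using the transformation laws for the sections $\Phi_{\lambda,\nu}$ and $\Phi_{\lambda,S}$ together with the fact that $\bs_\nu$ (resp.\ $\bs_S$) restricts to a group homomorphism on $T_{*,\nu}$ (resp.\ $T_{*,S}$).

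First, I would handle the \emph{genuine} property. Since any $\zeta\in\mu_n$ is central, we have $u_\nu^-\eta_\nu\zeta=\zeta u_\nu^-\eta_\nu$, so by the right-transformation law \eqref{property-Phi-lambda} (taking $\mu^\vee=0$, $a'=1$, $u=1$), $\Phi_{\lambda,\nu}(u_\nu^-\eta_\nu\zeta)=\epsilon(\zeta)\Phi_{\lambda,\nu}(u_\nu^-\eta_\nu)$. Since $|\cdot|_\nu$ factors through $p_\nu:\widetilde T_\nu\to T_\nu$, the absolute value factor is unchanged, and pulling $\epsilon(\zeta)$ out of the integral gives $I_\nu(\zeta\eta_\nu)=\epsilon(\zeta)I_\nu(\eta_\nu)$. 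The same argument, using \eqref{property-PhiS} in place of \eqref{property-Phi-lambda}, works for $I_S$. Genuineness of $I^S$ then follows from Proposition \ref{genuine-local-global} applied to the family $(I_\nu)_{\nu\notin S}$.

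Next, for the right $T_{*,\nu}$-invariance of $I_\nu$, take $a\in T_{*,\nu}=T_{0,\nu}T_{\ro_\nu}$ and write $a=\pi_\nu^{\mu^\vee}a'$ with $\mu^\vee\in\Lambda_0^\vee$ and $a'\in T_{\ro_\nu}$. By Corollary \ref{section-homomorphism-Lambda0} (where the adjoint-type hypothesis on $\fD_{(\sQ,n)}^\vee$ enters), $\bs_\nu$ is a homomorphism on $T_{*,\nu}$, so $\bs_\nu(a)=\pi_\nu^{\mu^\vee}a'$ inside $\widetilde T_\nu$. Applying \eqref{property-Phi-lambda} to $g=u_\nu^-\eta_\nu$ yields
\begin{equation*}
\Phi_{\lambda,\nu}(u_\nu^-\eta_\nu\bs_\nu(a))=q_\nu^{\pair{\rho-\lambda,\mu^\vee}}\Phi_{\lambda,\nu}(u_\nu^-\eta_\nu).
\end{equation*}
On the other hand, multiplicativity of $|\cdot|_\nu^{\rho-\lambda}$ and the fact that $|a'|_\nu^{\rho-\lambda}=1$ for $a'\in T_{\ro_\nu}$ give $|\eta_\nu\bs_\nu(a)|_\nu^{\rho-\lambda}=|\eta_\nu|_\nu^{\rho-\lambda}\cdot q_\nu^{-\pair{\rho-\lambda,\mu^\vee}}$. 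The two powers of $q_\nu$ cancel, proving $I_\nu(\eta_\nu\bs_\nu(a))=I_\nu(\eta_\nu)$. The right $T_{*,S}$-invariance of $I_S$ is proved by the same pattern: decompose $a\in T_{*,S}=T_{0,S}T_{\fo_S}$ accordingly, use \eqref{property-PhiS} and the $S$-analogue \eqref{S-absolute value} of the absolute value, and observe the cancellation of the $|a|_S^{\rho-\lambda}$ factor.

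I do not foresee any substantive obstacle; the proof is a routine verification. The only subtle input is that, in each local and semi-local setting, the section $\bs_\nu$ (resp.\ $\bs_S$) must actually restrict to a group homomorphism on the whole abelian subgroup $T_{*,\nu}$ (resp.\ $T_{*,S}$), which is why Corollary \ref{section-homomorphism-Lambda0} and Lemma \ref{def:Omega}(i) are invoked. Without those, the computation above would acquire cocycle factors $d(\ul F,\ul F')$ from Proposition \ref{local-cocycle}, and the cancellation would fail.
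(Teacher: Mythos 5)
Your proof is correct and follows the same route as the paper: genuineness is immediate from centrality of $\mu_n$ and Proposition \ref{genuine-local-global}, and the invariance is exactly the cancellation between the $q_\nu^{\pair{\rho-\lambda,\mu^\vee}}$ coming from \eqref{property-Phi-lambda} (resp.\ \eqref{property-PhiS}) and the normalizing factor $|\cdot|^{\rho-\lambda}$, with Corollary \ref{section-homomorphism-Lambda0} guaranteeing that $\bs_\nu$ is a homomorphism on $T_{*,\nu}$. You have simply written out the details that the paper's one-line proof suppresses.
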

\begin{proof}
	The functions $I_\nu$ and $I_S$ are clearly genuine. By Proposition \ref{genuine-local-global}, the function $I^S=\prod_{\nu\notin S} I_\nu$ is also genuine. 
	\par The invariance of $I_\nu$ follows from the fact that $\Phi_{\lambda,\nu}$ satisfies the property (\ref{property-Phi-lambda}). Similarly, the invariance of $I_S$ follows from the fact that $\Phi_{\lambda,S}$ satisfies the property (\ref{property-PhiS}). 
\end{proof}
\tpoint{}In the end of this subsection we will prove a key property of the functions $I_S$ and $I_\nu$ which links the summation (\ref{first-form-Whittaker-coefficient}) to the summation of Weyl group multiple Dirichlet series. Before stating and proving this key property, we need some preparations. 
\par For any $\ul C \in (\o_S \setminus \{ 0 \})^r$, say $\ul C = (C_1, \ldots, C_r)$ with $C_i \in   (\o_S \setminus \{ 0 \})$, let
\be{} \eta_\nu(\ul C):=\eta_{k_\nu}(\ul C)=\mathsf h_{1,\nu}(C_{1}) \cdots\mathsf h_{r,\nu}(C_{r}) \in  T_{{\nu}} \mbox{ for } \nu \in \places_k, \ee 
Where we recall that the notation $\eta_{k_v}$ was defined in (\ref{eta-field}). 
Let $\bs_\nu:T_\nu\to\widetilde T_\nu$ be the map defined by (\ref{section-local}) for the local field $k_\nu$. 
\begin{nlem}\label{multiplicativity-for-Lambda0}
For $\ul C, \ul C' \in (\o_S ^+)^r$ with $\log_{\omega}( \ul C') \in \Lv_0$ for every place $\omega\notin S$. Then for every place $\nu$ we have \be{} \bs_{\nu} (\eta_\nu(\underline{CC'}) ) = \bs_{\nu}(\eta_\nu(\ul C )) \bs_{\nu}(\eta_\nu(\ul C')) \ee 
\end{nlem}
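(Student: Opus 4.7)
The plan is to reduce everything to the cocycle formula from Proposition \ref{local-cocycle}. Applying that proposition at each non-archimedean place $\nu$ with $\ul F, \ul F'$ the images of $\ul C, \ul C'$ in $k_\nu^r$, we obtain
$$\bs_\nu(\eta_\nu(\underline{CC'})) \;=\; d_\nu(\ul C, \ul C')\, \bs_\nu(\eta_\nu(\ul C))\, \bs_\nu(\eta_\nu(\ul C')),$$
where $d_\nu(\ul C, \ul C') = \prod_{i=1}^r (C_i, C_i')_\nu^{-\sQ_i} \prod_{1 \leq i<j \leq r} (C_i', C_j)_\nu^{\sB_{ij}}$. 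So it suffices to show $d_\nu(\ul C, \ul C') = 1$ for every place $\nu$. The archimedean places are automatic: since $k$ is totally imaginary, Proposition \ref{Hilb-symbol-property}(i) makes all archimedean Hilbert symbols trivial.

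The heart of the argument is to upgrade the lattice hypothesis $\log_\omega(\ul C') \in \Lambda_0^\vee$ into a multiplicative $n_i$-th power structure on each $C_i'$. Under the standing assumption that $\fD_{(\sQ,n)}^\vee$ is of adjoint type, the lattice $\Lambda_0^\vee$ is spanned by $\{n_i \alpha_i^\vee\}_{i=1}^r$, so the hypothesis forces $n_i \mid n_\omega(C_i')$ for every $i$ and every $\omega \notin S$. Since $C_i' \in \fo_S^+$ factors as $\prod_{\omega \notin S} \pi_\omega^{n_\omega(C_i')}$, setting $D_i' := \prod_{\omega \notin S} \pi_\omega^{n_\omega(C_i')/n_i} \in \fo_S^+$ gives a global $n_i$-th root, $C_i' = (D_i')^{n_i}$.

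The remainder is then a formal bi-multiplicativity computation. We rewrite
$$(C_i, C_i')_\nu^{-\sQ_i} = (C_i, D_i')_\nu^{-n_i \sQ_i}, \qquad (C_i', C_j)_\nu^{\sB_{ij}} = (D_i', C_j)_\nu^{n_i \sB_{ij}}.$$
By definition of $n_i$ we have $n \mid n_i \sQ_i$, and using the identity $\sB_{ij} = \pair{\alpha_j^\vee, \alpha_i} \sQ_i$ from \S\ref{metaplectic-structure} we also get $n \mid n_i \sB_{ij}$. Since every local Hilbert symbol takes values in $\mu_n(k_\nu)$, each factor in $d_\nu(\ul C, \ul C')$ is $1$, which finishes the proof. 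The only step requiring real input is the extraction of the $n_i$-th root $D_i'$ from the lattice condition; once that is in hand, cocycle triviality at every place is a uniform exponent-divisibility check, and no further obstacle arises.
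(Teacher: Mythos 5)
Your proof is correct and follows essentially the same route as the paper: both hinge on the adjoint-type assumption to deduce $n_i\mid n_\omega(C_i')$ and then kill the cocycle $d_\nu(\ul C,\ul C')$ via the divisibilities $n\mid n_i\sQ_i$ and $n\mid n_i\sB_{ij}$. The only difference is packaging — the paper observes that $\eta_\nu(\ul C')\in T_{0,\nu}$ and cites Corollary \ref{section-homomorphism-Lambda0}, whereas you inline that corollary's computation by extracting the global $n_i$-th root $D_i'$; both are fine.
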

\begin{proof} 
	If $\nu$ is archimedean there is nothing to prove. If $\nu$ is non-archimedean, suppose $\ul C'=(C_1',\cdots,C_r')$ and $C_i'=\prod_{\omega\notin S}\pi_\omega^{k_i^\omega}$. Then for every $\omega\notin S$ we have $\log_\omega\ul C'=\sum_{i=1}^r k_i^\omega\alpha_i^\vee$ and thus $n_i|k_i^\omega$ for every $\omega\notin S$ and $i=1,\cdots,r$. Thus
	$$\eta_\nu(\ul C')=\left(\prod_{\omega\notin S}\pi_\omega^{k_1^\omega\alpha_1^\vee}\right)\cdots\left(\prod_{\omega\notin S}\pi_\omega^{k_r^\omega\alpha_r^\vee}\right)\in T_{0,\nu}$$
	because $k_i^\omega\alpha_i^\vee\in\Lambda_0^\vee$ for every $\omega\notin S$ and $i=1,\cdots,r$. The lemma follows from Corollary \ref{section-homomorphism-Lambda0}. 
\end{proof}
\spoint Let $C \in \o_S^+$. Recall that we can factor 
\be{} C = \, \prod_{\nu \notin S}  \pi_\nu^{m_{\nu}} \mbox{ with } \mbox{ and almost all of the }  m_{\nu}=0. \ee
For ease of notation, we sometimes just write $C_{\nu} \in \o_S$ for $\pi_\nu^{m_{\nu}}$ and 
\be{} C^{\nu}:= \prod_{\omega \neq \nu, \omega \notin S} C_{\omega}. \ee
  With this notation, we may write \be{} C =  C_{\nu} \cdot C^{\nu} \ee
   A similar factorization holds for $\ul C$ which we then write as 
   \be{} \ul C &=&   (\prod_{\nu \notin S} C_{1, \nu}, \cdots, \prod_{\nu \notin S} C_{r, \nu} ) \mbox{ with } a_i \in \o_S^{\times} \\ &=& ( C_{1, \nu} C_1^{\nu} , \cdots, C_{r, \nu} C_r^{\nu} ).  \ee
   Writing $\ul C_{\nu} := (C_{1, \nu} , \cdots, C_{r, \nu})$ , $\ul C^{\nu}= (C^{\nu}_{1} , \cdots, C^{\nu}_{r})$, we may also write 
   \be{} \ul C =  \, \ul C_{\nu} \ul C^{\nu}, \ee
    where the multiplication is componentwise in the above expression. 
\begin{nlem}\label{i-nu-eta-C}
 Let $\ul C \in (\o_S^+)^r$. For any $\nu \notin S$, we have in $\widetilde T_\nu$ \be{} \bs_\nu \left( \eta_\nu(\ul C) \right) = D(\ul C;\nu)\bs_\nu \left( \eta_\nu( \ul C_{\nu}) \right)  \cdot \bs_\nu \left( \eta_\nu( \ul C^{\nu} ) \right), \ee
where $D(\ul C;\nu)$ is defined by (\ref{D-nu}). 
\end{nlem}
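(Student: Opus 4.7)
The plan is to deduce this lemma as a direct application of the cocycle computation in Proposition \ref{local-cocycle}. First I would observe that, since $\ul C = \ul C_\nu \, \ul C^\nu$ in the componentwise sense (with the multiplication taking place in $k_\nu^\times$ after applying the natural map $\o_S \hookrightarrow k_\nu$), we have
$$\eta_\nu(\ul C) = \eta_\nu(\ul C_\nu \, \ul C^\nu),$$
by the definition \eqref{eta-field} of $\eta_\nu$.

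Next I would apply Proposition \ref{local-cocycle} with $\ul F = \ul C_\nu$ and $\ul F' = \ul C^\nu$, yielding
$$\bs_\nu(\eta_\nu(\ul C_\nu \, \ul C^\nu)) = d(\ul C_\nu, \ul C^\nu) \, \bs_\nu(\eta_\nu(\ul C_\nu)) \, \bs_\nu(\eta_\nu(\ul C^\nu)),$$
where, by \eqref{def:d-},
$$d(\ul C_\nu, \ul C^\nu) = \left(\prod_{i=1}^r (C_{i,\nu}, C_i^\nu)_\nu^{-\sQ_i}\right)\left(\prod_{1\leq i<j\leq r} (C_i^\nu, C_{j,\nu})_\nu^{\sB_{ij}}\right).$$

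The remaining step is purely bookkeeping: to match this cocycle factor with the element $D(\ul C;\nu) \in \mu_n(k)$ defined in \eqref{D-nu}. Using the identification $\mu_n(k_\nu) \cong \mu_n(k)$ induced by $k \hookrightarrow k_\nu$ (through which we view local Hilbert symbols as landing in $\mu_n(k)$), the expression for $d(\ul C_\nu, \ul C^\nu)$ agrees termwise with the first expression for $D(\ul C;\nu)$ in \eqref{D-nu}, which by expanding $C_i^\nu = \prod_{\omega \neq \nu, \omega \notin S} C_{i,\omega}$ and using bi-multiplicativity of the Hilbert symbol coincides with the displayed second form. There is no real obstacle here; the only care needed is the consistency of the identification $\mu_n(k_\nu) \cong \mu_n(k)$ throughout and the convention that $\epsilon$ is the embedding into $\BC^\times$, so the symbol-valued equality inside $\widetilde T_\nu$ produces exactly the scalar $D(\ul C;\nu)$ after applying $\epsilon$.
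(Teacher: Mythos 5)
Your proposal is correct and follows exactly the paper's own argument: apply Proposition \ref{local-cocycle} with $\ul F=\ul C_\nu$, $\ul F'=\ul C^\nu$, and then identify the resulting cocycle $d(\ul C_\nu,\ul C^\nu)$ with $D(\ul C;\nu)$ directly from the definitions \eqref{def:d-} and \eqref{D-nu}. The remark about tracking the identification $\mu_n(k_\nu)\cong\mu_n(k)$ and the role of $\epsilon$ is the same bookkeeping the paper leaves implicit.
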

\begin{proof} 	
By Proposition \ref{local-cocycle} we have 
$$\bs_\nu \left( \eta_\nu(\ul C) \right) = d(\ul C_\nu,\ul C^\nu)\bs_\nu \left( \eta_\nu( \ul C_{\nu}) \right)  \cdot \bs_\nu \left( \eta_\nu( \ul C^{\nu} ) \right)$$
The lemma follows from the fact that $D(\ul C;\nu)=d(\ul C_\nu,\ul C^\nu)$, which directly follows from the definitions (\ref{D-nu}) and (\ref{def:d-}). 
\end{proof} 

\tpoint{The key property of the functions $I_S$ and $I_\nu$}

\begin{nlem}\label{lem:IJ-constant-fibers}
	For every $\ul\lambda^\vee=(\lambda_\nu^\vee)_\nu\in\bigoplus_{\nu\notin S}\Lambda^\vee$ we let $\pi^{\ul\lambda^\vee}$ be the element $\prod_\nu\pi_\nu^{\lambda_\nu^\vee}$ in $T_k$. By abuse of notations we also use $\ul\lambda^\vee$ to denote the equivalence class in $\bigoplus_{\nu\notin S}\Lambda^\vee/\Lambda_0^\vee$. Then for $\ul C\in p_Z^{-1}(\ul\lambda^\vee)$ we have 
\begin{equation}
	I_S(\bs_S(\eta_S( \ul C)))=I_S(\bs_S(\pi^{\ul\lambda^\vee})),\,I_\nu(\bs_\nu(\eta_\nu( \ul C)))=I_\nu(\bs_\nu(\pi^{\ul\lambda^\vee}))
\end{equation}
\end{nlem}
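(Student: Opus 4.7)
The strategy rests on two invariance properties from Proposition \ref{invariance-IJ}: $I_\nu$ is right $\widetilde T_{*,\nu}$-invariant, and $I_S$ is right $\widetilde T_{*,S}$-invariant. The plan is to use these invariances together with the cocycle formula of Proposition \ref{local-cocycle} (and the unit-part decomposition of Lemma \ref{i-nu-eta-C}) to reduce each side of the asserted equalities to a common normalized form depending only on the class of $\log_\nu\ul C$ in $\Lambda^\vee/\Lambda_0^\vee$.

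For the $I_\nu$ equality I would apply Lemma \ref{i-nu-eta-C} to factor
$$\bs_\nu(\eta_\nu(\ul C))=D(\ul C;\nu)\,\bs_\nu(\eta_\nu(\ul C_\nu))\,\bs_\nu(\eta_\nu(\ul C^\nu)),$$
where $\ul C_\nu$ collects the $\pi_\nu$-powers and $\ul C^\nu$ is a $\nu$-adic unit. The factor $\bs_\nu(\eta_\nu(\ul C^\nu))$ lies in $\widetilde T_{\fo_\nu}\subset\widetilde T_{*,\nu}$, so by right-invariance and genuineness one gets
$I_\nu(\bs_\nu(\eta_\nu(\ul C)))=\epsilon(D(\ul C;\nu))\,I_\nu(\bs_\nu(\pi_\nu^{\log_\nu\ul C}))$.
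An analogous factorization of $\pi^{\ul\lambda^\vee}=\pi_\nu^{\lambda_\nu^\vee}\cdot\prod_{\omega\neq\nu}\pi_\omega^{\lambda_\omega^\vee}$ in $k_\nu$ (the second factor being a $\nu$-adic unit) gives a parallel normalization of $I_\nu(\bs_\nu(\pi^{\ul\lambda^\vee}))$. The congruence between $\log_\nu\ul C$ and $\lambda_\nu^\vee$ modulo $\Lambda_0^\vee$ coming from $\ul C\in p_Z^{-1}(\ul\lambda^\vee)$, together with Corollary \ref{section-homomorphism-Lambda0} and right $\widetilde T_{0,\nu}$-invariance, then equates the two uniformizer contributions.

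\emph{The main obstacle is to verify that the Hilbert-symbol cocycle corrections on the two sides cancel.} The cocycle $D(\ul C;\nu)$ and its analogue for $\pi^{\ul\lambda^\vee}$ are explicit products of local Hilbert symbols, and the required identity should follow from bimultiplicativity of the symbol together with its triviality on $\fo_\nu^\times\times\fo_\nu^\times$ (Proposition \ref{Hilb-symbol-property}, using the hypothesis $2n\mid(q_\nu-1)$); much of this bookkeeping parallels, and can borrow from, the proof of Lemma \ref{i-nu-eta-C} and the manipulations of $D(\ul C;\nu)$ in \eqref{D-nu}. Finally, the $I_S$ equality is handled along identical lines, replacing the local Hilbert symbol by the $S$-Hilbert symbol of \S\ref{subsub:hilbert-S} and invoking Lemma \ref{def:Omega} in place of the unit-triviality of the local symbol, with no essentially new difficulty beyond the $I_\nu$ case.
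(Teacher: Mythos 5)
Your overall strategy can be made to work, but it takes a noticeably longer road than the paper and defers exactly the hard step. The paper's proof is two lines and never sees a cocycle: since $\ul C\in p_Z^{-1}(\ul\lambda^\vee)$, the two elements $\eta_k(\ul C)$ and $\pi^{\ul\lambda^\vee}$ of the \emph{global} torus $T_k$ differ by an element $\eta'\in T_{0,k}$; Corollary \ref{section-homomorphism-Lambda0} then gives $\bs_\nu(\eta_\nu(\ul C))=\bs_\nu(\pi^{\ul\lambda^\vee})\bs_\nu(\eta')$ with no Hilbert-symbol correction at all, and the right $\widetilde T_{*,\nu}$- (resp.\ $\widetilde T_{*,S}$-) invariance of $I_\nu$ (resp.\ $I_S$) from Proposition \ref{invariance-IJ} finishes both equalities at once. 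By instead invoking Lemma \ref{i-nu-eta-C} you introduce the factors $\epsilon(D(\ul C;\nu))$ on both sides and must then prove they agree, which is the statement that $D(\ul C;\nu)$ depends only on the class of $\log^S\ul C$ in $\bigoplus_\omega\Lambda^\vee/\Lambda_0^\vee$. You flag this as "the main obstacle" but your proposed justification is not adequate as stated: the symbols occurring in $D(\ul C;\nu)$ are of the form $(\pi_\nu^a,u)_\nu$ with $u\in\fo_\nu^\times$, so triviality of the Hilbert symbol on $\fo_\nu^\times\times\fo_\nu^\times$ says nothing about them; what actually makes the discrepancy vanish is the divisibility $n\mid n_i\sQ_i$ and $n\mid k_i\sB_{ij}$ for $k_i\alpha_i^\vee\in\Lambda_0^\vee$ — i.e.\ precisely the computation in the proof of Corollary \ref{section-homomorphism-Lambda0}. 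So either carry out that divisibility argument explicitly, or (better) apply Corollary \ref{section-homomorphism-Lambda0} globally as the paper does and avoid the cocycle bookkeeping entirely.
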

\begin{proof}
	Let $\eta'\in T_{0,k}$ be the element such that $\eta_k(\ul C)=\pi^{\ul\lambda^\vee}\eta'$. Then we have $\bs_\nu(\eta_\nu(\ul C))=\bs_\nu(\pi^{\ul\lambda^\vee})\bs_\nu(\eta')$ by Corollary \ref{section-homomorphism-Lambda0}. The second equality then follows from the $\widetilde T_{*,\nu}$-invariance of $I_\nu$. The first equality follows from a similar argument for $S$. 
\end{proof}

\subsection{Proof of Theorem \ref{Eis-conj}}
\tpoint{Step 1: Investigating $W$} Since $\fo_S$ is a unique factorization domain, a set of representatives of $T_k/T_{0,k}$ can be given by $$\{\pi^{\ul\lambda^\vee}u:\ul\lambda^\vee\in\bigoplus_{\nu\notin S}\Lambda^\vee/\Lambda_0^\vee,\,u\in T_{\CO_S}/T_{0,\CO_S}\}.$$
So from (\ref{first-form-Whittaker-coefficient}) we have
\begin{align}\label{second-form-Whittaker-coefficient}
	W(\lambda,1)=&\nonumber\sum_{\eta\in T_k/T_{0,k}}\left(\prod_{\nu\notin S}|\eta|_\nu^{\rho-\lambda}\int_{U_\nu^-}\Phi_{\lambda,\nu}(u_\nu^-\bs_\nu(\eta))\psi_\nu(u_\nu^-)^{-1}du_\nu^-\right)\cdot
	\\\nonumber&\left(|\eta|_S^{\rho-\lambda}\int_{U_S^-}\Phi_{\lambda,S}(u_S^-\bs_S(\eta))\psi_S(u_S^-)^{-1}du_S^-\right)
	\\\nonumber=&\sum_{\ul\lambda^\vee\in\bigoplus_{\nu\notin S}\Lambda^\vee/\Lambda_0^\vee}\sum_{u\in T_{\CO_S}/T_{0,\CO_S}}\left(\prod_{\nu\notin S}|\pi^{\ul\lambda^\vee}|_\nu^{\rho-\lambda}\int_{U_\nu^-}\Phi_{\lambda,\nu}(u_\nu^-\bs_\nu(\pi^{\ul\lambda^\vee})\bs_\nu(u))\psi_\nu(u_\nu^-)^{-1}du_\nu^-\right)\cdot
	\\\nonumber&\left(|\pi^{\ul\lambda^\vee}|_S^{\rho-\lambda}\int_{U_S^-}\Phi_{\lambda,S}(u_S^-\bs_S(\pi^{\ul\lambda^\vee})\bs_S(u))\psi_S(u_S^-)^{-1}du_S^-\right)
	\\=&[T_{\CO_S}:T_{0,\CO_S}]\sum_{\ul\lambda^\vee\in\bigoplus_{v\notin S}\Lambda^\vee/\Lambda_0^\vee}I^S(\bs^S(\pi^{\ul\lambda^\vee}))I_S(\bs_S(\pi^{\ul\lambda^\vee}))
\end{align}
where $I^S$ and $I_S$ are the functions defined in (\ref{extend-I}) and (\ref{extend-J}) respectively. Note that we are slightly abusing notations: for $\ul\lambda^\vee\in \bigoplus_{\nu\notin S}\Lambda^\vee/\Lambda_0^\vee$,  $\pi^{\ul\lambda^\vee}$ is not an element in $T_k$ but an equivalence class in $T_k/T_{0,k}$. However for all the elements in a single equivalence class, the value of the functions $I^S(\bs^S(-))$ and $I_S(\bs_S(-))$ are the same because of their invariance properties in Proposition \ref{invariance-IJ}. 
\tpoint{Step 2: Matching the summations}Comparing the summation in (\ref{second-form-Whittaker-coefficient}) with the regrouping of the summation of the Weyl group multiple Dirichlet series given in (\ref{group-sum-Z}), it suffices to show that
\begin{equation}\label{third-form}
I^S(\bs^S(\pi^{\ul\lambda^\vee}))I_S(\bs_S(\pi^{\ul\lambda^\vee}))=Z_{\ul\lambda^\vee}(s_1,\cdots,s_r)
\end{equation}
for every $\ul\lambda^\vee\in\bigoplus_{v\notin S}\Lambda^\vee/\Lambda_0^\vee$. 
\begin{nprop}\label{Psi-J}
	For every $\ul C=(C_1,\cdots,C_r)\in \supp (Z;\ul\lambda^\vee)$, we have 
	$$\Psi(C_1,\cdots,C_r)=I_S(\bs_S(\pi^{\ul\lambda^\vee}))$$
\end{nprop}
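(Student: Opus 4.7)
The plan is to unpack the definitions of $\Psi$ and $I_S$, match the integral factors, and then reduce the remaining factors of norms/absolute values using the product formula for $\fo_S$-integers. The key input that links the computation to the fiber $\ul\lambda^\vee$ rather than to $\ul C$ itself is Lemma \ref{lem:IJ-constant-fibers}.

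More concretely, first I would observe that by the very definition of $\bs_S$ in \eqref{section-local}, we have
$$\bs_S(\eta_S(\ul C)) = \widetilde h_{1,S}(C_1)\cdots \widetilde h_{r,S}(C_r),$$
so that the integral appearing in the definition of $\Psi(C_1,\ldots,C_r)$ is precisely
$$\int_{U_S^-}\Phi_{\lambda,S}\bigl(u_S^-\,\bs_S(\eta_S(\ul C))\bigr)\psi_S(u_S^-)^{-1}du_S^- = |\eta_S(\ul C)|_S^{\lambda-\rho}\cdot I_S(\bs_S(\eta_S(\ul C))),$$
by the definition \eqref{extend-J} of $I_S$. Hence
$$\Psi(C_1,\ldots,C_r) = \BN C_1^{s_1}\cdots \BN C_r^{s_r}\cdot |\eta_S(\ul C)|_S^{\lambda-\rho}\cdot I_S(\bs_S(\eta_S(\ul C))).$$

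Next I would handle the normalization factors. By \eqref{S-absolute value} and the definition of $s_i = \pair{\rho-\lambda,\alpha_i^\vee}$,
$$|\eta_S(\ul C)|_S^{\lambda-\rho} = \prod_{i=1}^r |C_i|_S^{\pair{\lambda-\rho,\alpha_i^\vee}} = \prod_{i=1}^r |C_i|_S^{-s_i}.$$
For any $C\in\fo_S^+$, the global product formula $\prod_{\nu}|C|_\nu = 1$ together with $|C|_\nu = q_\nu^{-n_\nu(C)}$ for $\nu\notin S$ gives
$$|C|_S = \prod_{\nu\in S}|C|_\nu = \prod_{\nu\notin S}q_\nu^{n_\nu(C)} = \BN C.$$
So the factors $\BN C_i^{s_i}$ and $|C_i|_S^{-s_i}$ cancel, and we are left with
$$\Psi(C_1,\ldots,C_r) = I_S(\bs_S(\eta_S(\ul C))).$$

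Finally, the hypothesis $\ul C \in \supp(Z;\ul\lambda^\vee)$ says exactly that $\log_\nu \ul C \equiv -\lambda_\nu^\vee \pmod{\Lambda_0^\vee}$ for every $\nu\notin S$, i.e.\ $\eta_k(\ul C) = \pi^{\ul\lambda^\vee}\eta'$ for some $\eta'\in T_{0,k}$. Applying Lemma \ref{lem:IJ-constant-fibers} then yields
$$I_S(\bs_S(\eta_S(\ul C))) = I_S(\bs_S(\pi^{\ul\lambda^\vee})),$$
which completes the proof. There is no real obstacle here — everything is a matter of bookkeeping — but the one spot to be careful is verifying that $\bs_S(\eta_S(\ul C))$ really equals the element $\widetilde h_{1,S}(C_1)\cdots\widetilde h_{r,S}(C_r)$ appearing in the definition of $\Psi$, rather than differing from it by some cocycle factor; this is immediate from the formula \eqref{section-local} defining $\bs_S$, so no correction term intervenes.
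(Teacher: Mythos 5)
Your proposal is correct and follows the same route as the paper: identify $\Psi(\ul C)$ with $I_S(\bs_S(\eta_S(\ul C)))$ by matching the integral and the normalizing factor $\BN C_1^{s_1}\cdots\BN C_r^{s_r}=|\eta_S(\ul C)|_S^{\rho-\lambda}$, then invoke Lemma \ref{lem:IJ-constant-fibers}. The paper states the first identification in one line without spelling out the product-formula computation $|C_i|_S=\BN C_i$, which you supply explicitly; otherwise the arguments are identical.
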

\begin{proof}
Note that we can rewrite the definition of $\Psi$ as
	$$\Psi(\ul C)=|\bs_S(\eta_S(\ul C))|_S^{\rho-\lambda}\int_{U_S^-}\Phi_{\lambda,S}(u_S^-\bs_S(\eta_S(\ul C)))\psi_S(u_S)^{-1}du_S^-=I_S(\bs_S(\eta_S(\ul C))).$$
	The proposition follows from Lemma \ref{lem:IJ-constant-fibers}. 
\end{proof}
\begin{nlem}
	The function $\Psi(C_1,\cdots,C_r)$ satisfies (\ref{property-psi}). 
\end{nlem}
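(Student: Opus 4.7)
The plan is to reinterpret $\Psi$ via the function $I_S$ as in Proposition \ref{Psi-J}, and then combine the $S$-analogue of the cocycle identity of Proposition \ref{local-cocycle} with the right $\widetilde T_{*,S}$-invariance of $I_S$ from Proposition \ref{invariance-IJ}. The key idea is that multiplying each $C_i$ by $\gamma_i\in\Omega$ produces, inside $\widetilde T_S$, a factor $\bs_S(\eta_S(\ul\gamma))$ which lands in the maximal abelian part $\widetilde T_{*,S}$ and is thus invisible to $I_S$; the only surviving contribution comes from the section's failure to be a homomorphism, i.e.\ the cocycle $d_S$, which after simplification produces exactly the right-hand side of \eqref{property-psi}.

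First I would verify that $\eta_S(\ul\gamma)\in T_{*,S}=T_{0,S}T_{\fo_S}$ whenever $\gamma_i\in\Omega=\fo_S^\times k_S^{\times,n}$. Writing $\gamma_i=u_is_i^n$ with $u_i\in\fo_S^\times$ and $s_i\in k_S^\times$, one has $\sfh_{i,S}(u_i)\in T_{\fo_S}$, while $\sfh_{i,S}(s_i^n)=s_i^{n\alpha_i^\vee}\in T_{0,S}$ because $\sB(n\alpha_i^\vee,\alpha_j^\vee)=n\pair{\alpha_j,\alpha_i^\vee}\sQ_i\in n\BZ$ forces $n\alpha_i^\vee\in\Lambda_0^\vee$. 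Under the adjoint-type hypothesis on $\fD_{(\sQ,n)}^\vee$, Corollary \ref{section-homomorphism-Lambda0} (applied place-by-place in $S$) makes $\bs_S$ restrict to a group homomorphism on $T_{*,S}$, so that $\bs_S(\eta_S(\ul\gamma))\in\widetilde T_{*,S}$.

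Next I would apply the $S$-analogue of Proposition \ref{local-cocycle} to $\eta_S(\ul\gamma\ul C)=\eta_S(\ul C)\cdot\eta_S(\ul\gamma)$, obtaining
\begin{equation*}
\bs_S(\eta_S(\ul\gamma\ul C))=d_S(\ul C,\ul\gamma)\,\bs_S(\eta_S(\ul C))\,\bs_S(\eta_S(\ul\gamma)),
\end{equation*}
with
\begin{equation*}
d_S(\ul C,\ul\gamma)=\Bigl(\prod_{i=1}^r(C_i,\gamma_i)_S^{-\sQ_i}\Bigr)\Bigl(\prod_{1\le i<j\le r}(\gamma_i,C_j)_S^{\sB_{ij}}\Bigr).
\end{equation*}
Applying $I_S$ and using that it is $\epsilon$-genuine and right $\widetilde T_{*,S}$-invariant (so the rightmost factor $\bs_S(\eta_S(\ul\gamma))$ disappears), together with the identity $\Psi(\ul C)=I_S(\bs_S(\eta_S(\ul C)))$ extracted from the proof of Proposition \ref{Psi-J}, I would conclude
\begin{equation*}
\Psi(\ul\gamma\ul C)=\epsilon\bigl(d_S(\ul C,\ul\gamma)\bigr)\,\Psi(\ul C).
\end{equation*}
Using skew-symmetry $(a,b)_S=(b,a)_S^{-1}$ of the $S$-Hilbert symbol to rewrite $(C_i,\gamma_i)_S^{-\sQ_i}=(\gamma_i,C_i)_S^{\sQ_i}$ then matches $\epsilon(d_S(\ul C,\ul\gamma))$ with the right-hand side of \eqref{property-psi}.

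The only technical point is transporting the local cocycle formula of Proposition \ref{local-cocycle} to the $S$-setting, but this is routine: $\widetilde T_S$ is built from the local $\widetilde T_\nu$ for $\nu\in S$ via pushout along $\mu_n(\BA_S)\to\mu_n(k)$, the section $\bs_S$ is defined place-by-place from (\ref{section-local}), and the $S$-Hilbert symbol is by definition the product of local ones in \eqref{def:hilb-S}; multiplying the local cocycle identities and collapsing via the pushout yields the required $S$-analogue verbatim.
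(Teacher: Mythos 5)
Your proposal is correct and follows essentially the same route as the paper: rewrite $\Psi(\ul C)=I_S(\bs_S(\eta_S(\ul C)))$, apply the $S$-version of the cocycle identity of Proposition \ref{local-cocycle}, pull the cocycle out by genuineness, kill the factor $\bs_S(\eta_S(\ul\gamma))$ by the right $T_{*,S}$-invariance of $I_S$ since $\eta_S(\ul\gamma)\in T_{0,S}T_{\fo_S}$, and match signs using $(a,b)_S^{-1}=(b,a)_S$. You are in fact slightly more explicit than the paper in verifying that $\eta_S(\ul\gamma)\in T_{*,S}$ for $\gamma_i\in\Omega$ and in justifying the passage from the local cocycle to its $S$-analogue, both of which the paper leaves implicit.
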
  
\begin{proof}
	Since $\Psi(\ul C)=I_S(\bs_S(\eta_S(\ul C)))$, we only need to prove that for $\ul C\in(k_S^\times)^r$ and $\ul C'\in \Omega^r=\fo_S^\times k_S^{\times,n}$, we have 
	$$I_S(\bs_S(\eta_S(\ul C\ul C')))=\left(\prod_{i=1}^r\epsilon(C_i',C_i)_S^{\mathsf Q_i}\right)\left(\prod_{i<j}\epsilon(C_i',C_j)_S^{\mathsf B_{ij}}\right)I_S({\bs_S(\eta_S(\ul C)})).$$
	By Proposition \ref{local-cocycle} we have 
	$$\bs_S(\eta_S(\ul C\ul C'))=\left(\prod_{\nu\in S}d_\nu(\ul C,\ul C')\right)\bs_S(\eta_S(\ul C))\bs_S(\eta_S(\ul C')).$$
	where 
	\begin{equation}\label{d-nu}
		d_\nu(\ul C,\ul C')=\left(\prod_{i=1}^r(C_i,C_i')_\nu^{-\mathsf Q_i}\right)\left(\prod_{1\leq i<j\leq r}(C_i',C_j)_\nu^{\mathsf B_{ij}}\right)
	\end{equation}
	Thus we have 
	\begin{align*}
		&I_S(\bs_S(\eta_S(\ul C\ul C')))=\left(\prod_{\nu\in S}\epsilon(d_\nu(\ul C,\ul C'))\right)I_S(\bs_S(\eta_S(\ul C))\bs_S(\eta_S(\ul C')))
		\\=& \left(\prod_{i=1}^r\epsilon(C_i',C_i)_S^{\mathsf Q_i}\right)\left(\prod_{i<j}\epsilon(C_i',C_j)_S^{\mathsf B_{ij}}\right)I_S(\bs_S(\eta_S(\ul C))\bs_S(\eta_S(\ul C')))
	\end{align*}
	because $I_S$ is a genuine function. Finally, by $\ul C'\in \Omega^r$ we have $\eta_S(\ul C')\in T_{0,S}T_{\fo_S}$, so the lemma follows from the right invariance of $I_S$ under $T_{*,S}=T_{0,S}T_{\fo_S}$. 
\end{proof}
\tpoint{Step 3: gluing local integrations}Comparing (\ref{third-form}) to the $Z_{\ul\lambda^\vee}$ summation (\ref{subsum-Z}) and use Proposition \ref{Psi-J}, it boils down to prove the following
\begin{nlem}\label{lem:I-lambda}  For $\ul \lv \in \oplus_{\nu \notin S} \, \Lv/ \Lv_0$, we have 
\begin{equation} \label{I:Z-H} 
	I^S(\ul \lv )=\sum_{\ul C\in \supp (Z;\ul\lambda^\vee)}H(C_1,\cdots,C_r)\BN C_1^{-s_1}\cdots\BN C_r^{-s_r}.
\end{equation}
\end{nlem}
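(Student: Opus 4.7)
The plan is to compute both sides of \eqref{I:Z-H} by decoupling them over the places $\nu \notin S$, with a matching global twist $\epsilon(D(\ul\lambda^\vee))$ appearing on each side.

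For the right-hand side, Lemma \ref{lem:H-fact} gives $H(\ul C) = \epsilon(D(\ul C)) \prod_{\nu\notin S} H(\pi_\nu^{n_\nu(C_1)}, \ldots, \pi_\nu^{n_\nu(C_r)})$, and by the discussion after \eqref{D-nu} the factor $D(\ul C)$ is constant on $\supp(Z; \ul\lambda^\vee)$ with value $D(\ul\lambda^\vee)$. Using $\BN C_i = \prod_\nu q_\nu^{n_\nu(C_i)}$ and parametrising $\ul C \in \supp(Z;\ul\lambda^\vee)$ by independent choices, for each $\nu \notin S$, of a tuple $(k_1^\nu, \ldots, k_r^\nu) \in \BZ_{\geq 0}^r$ (almost all zero) with $\sum_i k_i^\nu \alpha_i^\vee$ lying in the prescribed class modulo $\Lambda_0^\vee$, the sum factors as
$$\text{RHS of } \eqref{I:Z-H} = \epsilon(D(\ul\lambda^\vee)) \prod_{\nu\notin S} \sum_{\substack{k_1,\ldots,k_r\geq 0 \\ \sum k_i \alpha_i^\vee \equiv \lambda_\nu^\vee \pmod{\Lambda_0^\vee}}} H(\pi_\nu^{k_1}, \ldots, \pi_\nu^{k_r}) q_\nu^{-\sum k_i s_i}.$$
By the definition \eqref{I-nu} combined with $s_i = \pair{\rho - \lambda, \alpha_i^\vee}$, Corollary \ref{Whittaker-H} identifies each inner sum with $I_\nu(\bs_\nu(\pi^{\lambda_\nu^\vee}))$.

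For the left-hand side, write $I^S(\ul\lambda^\vee) = \prod_{\nu\notin S} I_\nu(\bs_\nu(\pi^{\ul\lambda^\vee}))$ and pick $\ul D \in (\fo_S^+)^r$ representing $\ul\lambda^\vee$, so that $\pi^{\ul\lambda^\vee}$ is given by $\eta_k(\ul D) \in T_k$. At each $\nu \notin S$, Lemma \ref{i-nu-eta-C} gives the identity in $\widetilde T_\nu$
$$\bs_\nu(\eta_\nu(\ul D)) = D(\ul D; \nu)\, \bs_\nu(\eta_\nu(\ul D_\nu))\, \bs_\nu(\eta_\nu(\ul D^\nu)),$$
where $\bs_\nu(\eta_\nu(\ul D^\nu)) \in \widetilde T_{\fo_\nu} \subset \widetilde T_{*,\nu}$ because the components of $\ul D^\nu$ are $\fo_\nu$-units, and $\bs_\nu(\eta_\nu(\ul D_\nu)) = \bs_\nu(\pi_\nu^{\lambda_\nu^\vee})$. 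Absorbing the $\widetilde T_{*,\nu}$-factor using Proposition \ref{invariance-IJ}, together with genuineness of $I_\nu$ and the identity $\prod_\nu D(\ul D;\nu) = D(\ul D) = D(\ul\lambda^\vee)$ from \eqref{D-local-global}, yields
$$I^S(\ul\lambda^\vee) = \epsilon(D(\ul\lambda^\vee)) \prod_{\nu\notin S} I_\nu(\bs_\nu(\pi^{\lambda_\nu^\vee})),$$
which matches the expression obtained for the right-hand side.

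The conceptual heart of the proof is the ``compatibility of the two gluing processes'' discussed in \S\ref{compatibility-gluing}: the local failure of $\bs_\nu$ to be a group homomorphism is captured by the cocycle $D(\,\cdot\,;\nu)$, while the global failure of $H$ to be multiplicative is captured by $D(\,\cdot\,)$, and \eqref{D-local-global} equates the product of the local twists with the global twist. The adjoint-type hypothesis on $\fD_{(\sQ,n)}^\vee$ enters through Corollary \ref{section-homomorphism-Lambda0}, which is what allows the $\widetilde T_{\fo_\nu}$-absorption to go through cleanly. The main technical obstacle is ensuring that the various representative ambiguities in $\ul\lambda^\vee$ (both in choosing $\ul D$ and in choosing $\lambda_\nu^\vee \in \Lambda^\vee$) cancel consistently on both sides — precisely the content of the compatibility encoded in \eqref{D-local-global}.
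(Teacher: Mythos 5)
Your proposal is correct and follows essentially the same route as the paper: factor $I^S$ into the local integrals $I_\nu$, use Lemma \ref{i-nu-eta-C} together with the right $T_{*,\nu}$-invariance of $I_\nu$ to peel off the cocycle factors $D(\ul C;\nu)$, identify each local factor with the sub-sum of the $\nu$-part via Corollary \ref{Whittaker-H}, and reassemble with Lemma \ref{lem:H-fact} and \eqref{D-local-global}. The only cosmetic difference is that you compute both sides independently and match them, whereas the paper transforms the left-hand side directly into the right-hand side.
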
 
\begin{proof} For $\ul C=(C_1,\cdots,C_r)\in \supp (Z;\ul\lambda^\vee)$, first we have $I^S(\pi^{\ul\lambda^\vee})=\prod_{\nu\notin S}I_\nu(\bs_\nu\eta_\nu( \ul C ) ).$ From Lemma \ref{i-nu-eta-C} we have 
\be{} \bs_\nu( \eta_\nu( \ul C )) =D(\ul C;\nu) \, \bs_{\nu} ( \eta_\nu( \ul C_\nu)) \cdot \bs_\nu( \eta_\nu(\ul C^{\nu})).\ee 
Hence 
\be{} I_v (\bs_k \, \eta_\nu(\ul C)) = I_\nu (D(\ul C;\nu) \, \bs_{\nu} ( \eta_\nu ( \ul C_\nu )) \cdot \bs_\nu( \eta_\nu(\ul C^{\nu}))) 
	= \epsilon(D(\ul C;\nu))  I_\nu ( \bs_{\nu} ( \eta_\nu( \ul C_\nu )). \ee 
	Thus we have 
	\be{} I(\bs_\nu \eta_\nu(\ul C)) = \underbrace{\prod_{ \nu \notin S} \, \epsilon(D(\ul C,\nu))}_{\epsilon(D(\ul C))}  \cdot \prod_{\nu \notin S } I_{\nu}(\bs_{\nu} \eta_\nu(\ul C_\nu)). \ee
Write $\lv_{\nu}:= \log_\nu \, \eta_\nu(\ul C)$, so that $\ul \lambda^\vee = (\lv_{\nu})_{\nu}$. By Corollary \ref{Whittaker-H}, 
\be{} I_{\nu} (\bs_{\nu} \eta_\nu( \ul C_\nu) )=I_\nu(\bs_\nu(\pi_\nu^{\lambda^\vee_\nu})) = \sum_{\substack{k_1,\cdots,k_r\geq0\\k_1\alphav_1+\cdots+k_r\alphav_r-\lv_\nu \in\Lambda_0^\vee}}H(\pi_\nu^{k_1},\cdots,\pi_\nu^{k_r})q_\nu^{-k_1s_1-\cdots-k_rs_r} . \ee
We are done using Lemma \ref{lem:H-fact}. 

\end{proof}

\appendix
\section{Intertwiners and Chinta-Gunnells actions}
In this appendix we prove the following formula: 
\begin{nthm}
	Let $s_i$ be a simple reflection, let $\varphi_{\xi^\vee}\in M_\univ$ be a function such that $\CW(\varphi_{\xi^\vee})=e^{\xi^\vee}$ (such a function exists due to Proposition \ref{surjectivity-Whittaker}) Then
	$$W(I_{s_i}\varphi_{\xi^\vee})=\frac{1-q^{-1}}{1-e^{n(\alpha_i^\vee)\alpha_i^\vee}}e^{s_i\xi^\vee+\res_{n(\alpha_i^\vee)}(\pair{\xi^\vee,\alpha_i})\alpha_i^\vee}+q^{-1}\bold g_{(1+\pair{\xi^\vee,\alpha_i})\mathsf Q(\alpha_i^\vee)}e^{s_i\bullet\xi^\vee}$$
	where we note that the intertwiner $I_{s_i}$ is defined in Theorem \ref{Intertwiner}. 
\end{nthm}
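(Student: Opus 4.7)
My plan is a direct rank-one computation, since (\ref{Chinta-Gunnells action}) together with Proposition \ref{GK-formula} predicts exactly this formula: unwinding $\tfrac{1-q^{-1}e^{-\widetilde\alpha_i^\vee}}{1-e^{\widetilde\alpha_i^\vee}}\,s_i\star e^{\xi^\vee}$ via (\ref{Chinta-Gunnells action}), cancelling the factor $1-q^{-1}e^{-\widetilde\alpha_i^\vee}$ and using $\tfrac{e^{\widetilde\alpha_i^\vee}-1}{1-e^{\widetilde\alpha_i^\vee}}=-1$ collapses that expression to the two-term sum on the right-hand side with $e^{s_i\bullet\xi^\vee}:=e^{s_i\xi^\vee-\alpha_i^\vee}$. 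So the theorem is equivalent to Proposition \ref{Intertwiner-Whittaker} on the specific input $\varphi_{\xi^\vee}$; by Proposition \ref{surjectivity-Whittaker}, this one computation determines $\CW\circ I_{s_i}$ on all monomials and hence on the whole image of $\CW$.

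First I would unfold:
\begin{equation*}
	\CW(I_{s_i}\varphi_{\xi^\vee})\,=\,s_i\!\!\sum_{\mu^\vee\in\Lambda^\vee/\Lambda_0^\vee}\!\!e^{-\mu^\vee}\,q^{-\pair{\mu^\vee,\rho}}\int_{U^-}\!\!\int_\lf\varphi_{\xi^\vee}\bigl(u^-\pi^{\mu^\vee}\widetilde x_i(r)\dot s_i\bigr)\psi(u^-)^{-1}dr\,du^-,
\end{equation*}
where the outer $s_i$ is the Weyl action from (\ref{Intertwiner}). Because $\varphi_{\xi^\vee}$ is supported on $U_\ro^-\widetilde TU$ (from the construction in the proof of Proposition \ref{surjectivity-Whittaker}), the integration over the root directions other than $\mathsf x_{-i}$ collapses to $U_\ro^-$, so the entire computation is localized in the $\SL_2$-subgroup generated by $\widetilde x_{\pm i}$ and $\widetilde h_i$.

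The $r$-integral splits into two regimes. When $\val(r)\geq 1$, the factor $\widetilde x_i(r)$ lies in $U_\ro$ and is absorbed into the left $K$-invariance; the support condition on $\varphi_{\xi^\vee}$ forces $\mu^\vee\in -\xi^\vee+\res_{n_i}(\pair{\xi^\vee,\alpha_i})\alpha_i^\vee+\Lambda_0^\vee$ and produces the first term after a geometric sum in the remaining $\Lambda_0^\vee$-direction that collapses to $\tfrac{1-q^{-1}}{1-e^{\widetilde\alpha_i^\vee}}$. When $\val(r)\leq 0$, I would apply the rank-one Bruhat identity
\begin{equation*}
	\widetilde x_i(r)\dot s_i\;=\;\widetilde x_{-i}(-r^{-1})\,\widetilde h_i(-r)\,\widetilde x_i(-r^{-1})
\end{equation*}
(lifted to the cover via the canonical splittings of $U^\pm$), commute $\widetilde x_{-i}(-r^{-1})$ past $\pi^{\mu^\vee}$ so its argument becomes $-r^{-1}\pi^{-\pair{\alpha_i,\mu^\vee}}$, and reassemble $\pi^{\mu^\vee}\widetilde h_i(-r)$ into $\bs(\pi^{\mu^\vee}\mathsf h_i(-r))$ via Proposition \ref{local-cocycle}; this assembly produces a Hilbert symbol $(r,\pi)^?$ whose exponent is a combination of $\sQ(\alpha_i^\vee)$ and $\sB(\mu^\vee,\alpha_i^\vee)=\pair{\mu^\vee,\alpha_i}\sQ(\alpha_i^\vee)$. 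The inner $U^-$-integration then tests $\widetilde x_{-i}(-r^{-1}\pi^{-\pair{\alpha_i,\mu^\vee}})$ against $\psi$, reducing to an integral of shape $\int_{\pi^b\ro^\times}(r,\pi)^?\psi(\pi^b r^{-1})\,dr=G(k,b)$ in the notation of \S\ref{Gauss-sums}, evaluated by Proposition \ref{groperties-Gauss-sum}.

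Summing over $\mu^\vee$, only the residue class $\mu^\vee\equiv -\xi^\vee\pmod{\Lambda_0^\vee}$ survives the support constraint in the big-cell regime, the Gauss-sum subscript collapses to $(1+\pair{\xi^\vee,\alpha_i})\sQ(\alpha_i^\vee)$, and the exponential assembles as $e^{s_i\xi^\vee-\alpha_i^\vee}=e^{s_i\bullet\xi^\vee}$, matching the second term. The main obstacle will be the bookkeeping of the metaplectic cocycles: commuting $\widetilde h_i(-r)$ across $\pi^{\mu^\vee}$ via (\ref{relations-in-T}) generates several Hilbert symbols whose exponents must be reduced modulo $n$ using $W$-invariance of $\sQ$ and the hypothesis $2n\mid q-1$ (Proposition \ref{Hilb-symbol-property}, which kills $(\ro^\times,\ro^\times)$ and $(\pi,\pi)$). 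Only the symbol $(r,\pi)^?$ survives this reduction, and its exponent is precisely the one predicted by the Chinta-Gunnells formula, so once the cocycle simplification is in place the two-term decomposition and the exponents $\res_{n_i}(\pair{\xi^\vee,\alpha_i})$, $(1+\pair{\xi^\vee,\alpha_i})\sQ(\alpha_i^\vee)$ all emerge automatically.
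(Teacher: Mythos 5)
Your overall strategy --- unfold the intertwiner integral, apply the rank-one Bruhat identity, track the metaplectic cocycle via Proposition \ref{local-cocycle} and (\ref{relations-in-T}) to extract Hilbert symbols $(r,\pi)^{k\sQ(\alpha_i^\vee)}$, and evaluate the resulting Gauss sums $G(k,b)$ --- is exactly the computation the paper carries out, and predicting the answer from the Chinta--Gunnells formula is a sound consistency check. But two steps are problematic. First, the small-cell regime: you claim that for $\val(r)\geq 1$ the factor $\widetilde x_i(r)$ is ``absorbed into the left $K$-invariance.'' The function $\varphi_{\xi^\vee}$ is not left $K$-invariant (only the spherical vector $\Phi$ is), and in any case $\widetilde x_i(r)$ sits to the right of $\pi^{\mu^\vee}$, so conjugating it leftward lands it in $U$, not in $U_\ro^-$ where a support hypothesis could help. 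The paper makes no case split on $\val(r)$: it applies the identity $x_i(s)s_i=x_{-i}(s^{-1})(-s)^{\alpha_i^\vee}x_i(-s^{-1})$ uniformly, absorbs $x_{-i}(\pi^{-\pair{\lambda^\vee,\alpha_i}}s^{-1})$ into the $U^-$-integral by translation (producing the $\psi$-factor), and then the dichotomy for $G(k,b)$ in \S\ref{Gauss-sums} (zero for $b\leq-2$, equal to $(q-1)\delta_{n\mid k}$ for $b\geq0$, a genuine Gauss sum only at $b=-1$) automatically separates the geometric-series term from the $\bold g$-term. The correct threshold is $\val(s)\leq-\pair{\lambda^\vee,\alpha_i}$, which depends on $\lambda^\vee$, so your fixed cutoff at $\val(r)=1$ would misassign contributions.

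Second, the logical frame. Deducing the theorem from Proposition \ref{Intertwiner-Whittaker} is circular, since that proposition is proved \emph{by} this theorem. And the claim that surjectivity of $\CW$ lets this one computation determine $\CW\circ I_{s_i}$ on the whole image of $\CW$ presupposes that $\CW\circ I_{s_i}$ factors through $\CW$ (i.e. $\ker\CW\subseteq\ker(\CW\circ I_{s_i})$), which surjectivity alone does not give. The paper sidesteps both points by using only the hypothesis $\CW(\varphi_{\xi^\vee})=e^{\xi^\vee}$, which is equivalent to the list of values $\int_{U^-}\varphi_{\xi^\vee}(u^-\pi^{\lambda^\vee})\psi(u^-)^{-1}du^-=q^{\pair{\rho,\lambda^\vee}}e^{\xi^\vee+\lambda^\vee}$ for $\xi^\vee+\lambda^\vee\in\Lambda_0^\vee$ and $0$ otherwise; it never uses the support of the particular representative built in Proposition \ref{surjectivity-Whittaker}. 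Restricting to that representative, as you do, proves a strictly weaker statement than the one asserted. If you replace the support argument by these Whittaker values and let the Gauss-sum dichotomy organize the two regimes, the rest of your sketch (cocycle bookkeeping, the constraint $\lambda^\vee+k\alpha_i^\vee+\xi^\vee\in\Lambda_0^\vee$, and the resulting exponents) matches the paper's proof.
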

\begin{proof}
	The condition that $\CW(\varphi_{\xi^\vee})=e^{\xi^\vee}$ is equivalent to the condition that 
	$$\int_{U^-}\varphi_{\xi^\vee}(u^-\pi^{\lambda^\vee})\psi(u^-)^{-1}du^-=\begin{cases}
		q^{\pair{\rho,\lambda^\vee}}e^{\xi^\vee+\lambda^\vee}, & \xi^\vee+\lambda^\vee\in\Lambda_0^\vee\\
		0, & \text{otherwise}.
	\end{cases}$$
Then
	\begin{align*}
		&\CW(I_{s_i}\varphi_{\xi^\vee})=\sum_{\lambda^\vee\in\Lambda^\vee/\Lambda_0^\vee}\left(q^{-\pair{\rho,\lambda^\vee}}\int_{U^-}(I_{s_i}\varphi_{\xi^\vee})(u^-\pi^{\lambda^\vee})\psi(u^-)^{-1}du^-\right)e^{-\lambda^\vee}
		\\=&\sum_{\lambda^\vee\in\Lambda^\vee/\Lambda_0^\vee}\left(q^{-\pair{\rho,\lambda^\vee}}s_i\int_{U^-}\int_F\varphi_{\xi^\vee}(u^-\pi^{\lambda^\vee}x_i(s)s_i)\psi(u^-)^{-1}du^-ds\right)e^{-\lambda^\vee}
		\\=&\sum_{\lambda^\vee\in\Lambda^\vee/\Lambda_0^\vee}\left(q^{-\pair{\rho,\lambda^\vee}}s_i\int_{U^-}\int_F\varphi_{\xi^\vee}(u^-\pi^{\lambda^\vee}x_{-i}(s^{-1})(-s)^{\alpha_i^\vee}x_i(-s^{-1}))\psi(u^-)^{-1}du^-ds\right)e^{-\lambda^\vee}
		\\=&\sum_{\lambda^\vee\in\Lambda^\vee/\Lambda_0^\vee}\left(q^{-\pair{\rho,\lambda^\vee}}s_i\int_{U^-}\int_F\varphi_{\xi^\vee}(u^-x_{-i}(\pi^{-\pair{\lambda^\vee,\alpha_i}}s^{-1})\pi^{\lambda^\vee}s^{\alpha_i^\vee})\psi(u^-)^{-1}du^-ds\right)e^{-\lambda^\vee}
		\\=&\sum_{\lambda^\vee\in\Lambda^\vee/\Lambda_0^\vee}\left(q^{-\pair{\rho,\lambda^\vee}}s_i\int_{U^-}\int_F\varphi_{\xi^\vee}(u^-\pi^{\lambda^\vee}s^{\alpha_i^\vee})\psi(\pi^{-\pair{\lambda^\vee,\alpha_i}}s^{-1})\psi(u^-)^{-1}du^-ds\right)e^{-\lambda^\vee}
	\end{align*}
	Next we apply $F-\{0\}=\bigsqcup_{k\in\BZ}\pi^{k}\CO^\times$ and write $s=\pi^kr$. Note that $s^{\alpha_i^\vee}=h_i(s)=h_i(\pi^{k}r)=h_i(\pi^{k})h_i(r)(\pi^{k},r)^{-\mathsf Q(\alpha_i^\vee)}=\pi^{k\alpha_i^{\vee}}r^{\alpha_i^\vee}(r,\pi)^{k\sQ_i}$. So the above is equal to 
	{\footnotesize\begin{align*}
		&\sum_{\lambda^\vee\in\Lambda^\vee/\Lambda_0^\vee}\left(q^{-\pair{\rho,\lambda^\vee}}s_i\int_{U^-}\sum_{k\in\BZ}q^{-k}\int_{\CO^\times}(r,\pi)^{k\mathsf Q_i}\varphi_{\xi^\vee}(u^-\pi^{\lambda^\vee}\pi^{k\alpha_i^\vee}r^{\alpha_i^\vee})\psi(\pi^{-\pair{\lambda^\vee,\alpha_i}-k}r^{-1})\psi(u^-)^{-1}du^-dr\right)e^{-\lambda^\vee}
		\\=&\sum_{\lambda^\vee\in\Lambda^\vee/\Lambda_0^\vee}\left(q^{-\pair{\rho,\lambda^\vee}}s_i\int_{U^-}\sum_{k\in\BZ}q^{-k}\int_{\CO^\times}(r,\pi)^{k\mathsf Q_i}\varphi_{\xi^\vee}(u^-\pi^{\lambda^\vee+k\alpha_i^\vee})\psi(\pi^{-\pair{\lambda^\vee,\alpha_i}-k}r^{-1})\psi(u^-)^{-1}du^-dr\right)e^{-\lambda^\vee}
		\\=&\sum_{\lambda^\vee\in\Lambda^\vee/\Lambda_0^\vee}\left(\sum_{k\in\BZ}q^{-\pair{\rho,\lambda^\vee}-k}\left(\int_{\CO^\times}(r,\pi)^{k\mathsf Q_i}\psi(\pi^{-\pair{\lambda^\vee,\alpha_i}-k}r^{-1})dr\right)\left(s_i\int_{U^-}\varphi_{\xi^\vee}(u^-\pi^{\lambda^\vee+k\alpha_i^\vee})\psi(u^-)^{-1}du^-\right)\right)e^{-\lambda^\vee}
	\end{align*}}
	We first look at the integration $$\int_{U^-}\varphi_{\xi^\vee}(u^-\pi^{\lambda^\vee+k\alpha_i^\vee})\psi(u^-)^{-1}du^-.$$ It vanishes if $\lambda^\vee+k\alpha_i^\vee+\xi^\vee$ is not in $\Lambda_0^\vee$. If there exists $k$ such that $\lambda^\vee+k\alpha_i^\vee+\xi^\vee\in\Lambda_0^\vee$, since the summation over $\lambda^\vee$ is over a set of representatives for the quotient $\Lambda^\vee/\Lambda_0^\vee$, we have the flexibility to adjust $\lambda^\vee$ in the same $\Lambda_0^\vee$-coset, so we can simply take $\lambda^\vee=-\xi^\vee-k\alpha_i^\vee$. This means that we can manage the summation over $\lambda^\vee$ to be a summation over $-\xi^\vee-m\alpha_i^\vee$ for $m$ running through a complete residue system modulo $n(\alpha_i^\vee)$. Here we still have the flexibility to adjust each $m$ by a multiple of $n(\alpha_i^\vee)$. 
	\par So from now on we set $\lambda^\vee=-\xi^\vee-m\alpha_i^\vee$ and sum over $m\mod n(\alpha_i^\vee)$. Then the summation over $k$ is actually a summation over $k\in\BZ$ with the same residue modulo $n(\alpha_i^\vee)$ as $m$ (such that $\lambda^\vee+k\alpha_i^\vee+\xi^\vee=(k-m)\alpha_i^\vee\in\Lambda_0^\vee$).
	\par Next we look at the integration
	$$\int_{\CO^\times}(r,\pi)^{k\mathsf Q_i}\psi(\pi^{-\pair{\lambda^\vee,\alpha_i}-k}r^{-1})dr$$
	 which is essentially a Gauss sum defined in \S \ref{Gauss-sums}. In particular by the discussion in \S \ref{Gauss-sums}, it vanishes if $-\pair{\lambda^\vee,\alpha_i}-k<-1$, so for $k$ it suffices to sum over $k\leq-\pair{\lambda^\vee,\alpha_i}+1$ and $k\equiv m\pmod{n(\alpha_i^\vee)}$. The boundary point $k=-\pair{\lambda^\vee,\alpha_i}+1$ is included in the summation only if $-\pair{\lambda^\vee,\alpha_i}+1\equiv\pair{\xi^\vee,\alpha_i}+2m+1\equiv m\pmod{n(\alpha_i^\vee)}$, namely $m\equiv-\pair{\xi^\vee,\alpha_i}-1\pmod{n(\alpha_i^\vee)}$. Since we have the flexibility to modify $m$ by a multiple of $n(\alpha_i^\vee)$ we can simply take $m=-\pair{\xi^\vee,\alpha_i}-1$, then $k=-\pair{\lambda^\vee,\alpha_i}+1=\pair{\xi^\vee,\alpha_i}+2m+1=-\pair{\xi^\vee,\alpha_i}-1=m$. The contribution to the total sum is then equal to 
	\begin{align*}
		&q^{-\pair{\rho,-\xi^\vee-m\alpha_i^\vee}-k}q^{-1}\bold g_{(\pair{\xi^\vee,\alpha_i}+1)\mathsf Q(\alpha_i^\vee)}(s_iq^{\pair{\rho,-\xi^\vee}})e^{\xi^\vee-(\pair{\xi^\vee,\alpha_i}+1)\alpha_i^\vee}
		\\=&q^{-1}\bold g_{(\pair{\xi^\vee,\alpha_i}+1)\mathsf Q(\alpha_i^\vee)}e^{s_i\bullet\xi^\vee}
	\end{align*}
	\par For other $m$, the boundary point is not included and the summation on $k$ is over $k\leq-\pair{\lambda^\vee,\alpha_i}$. In this case, also as discussed in \S \ref{Gauss-sums}, the Gauss sum does not vanish only if $n|k\mathsf Q(\alpha_i^\vee)$, namely $n(\alpha_i^\vee)|k$, and in this case the Gauss sum integration is equal to $(1-q^{-1})$. However, our summation of $k$ is also restricted to those with the same residue as $m$ modulo $n(\alpha_i^\vee)$. This means that the only other congruence class with a nonzero contribution to the sum is the congruence class of $0$, and by our flexibility on choosing $m$ we simply take $m=0$. Then the summation of $k$ is over $k\leq\pair{\xi^\vee,\alpha_i}$ and $n(\alpha_i^\vee)|k$, namely the summation is over 
	$$\{k=\pair{\xi^\vee,\alpha_i}-\res_{n(\alpha_i^\vee)}(\pair{\xi^\vee,\alpha_i})-jn(\alpha_i^\vee):j\geq0\}$$
	and the contribution is equal to 
		$$$$
		\begin{align*}
		&\sum_{j\geq0}q^{\pair{\rho,\xi^\vee}-k}(1-q^{-1})(s_iq^{\pair{\rho,-\xi^\vee+k\alpha_i^\vee}}e^{k\alpha_i^\vee})e^{\xi^\vee}
		\\=&\sum_{j\geq0}(1-q^{-1})e^{\xi^\vee+(-\pair{\xi^\vee,\alpha_i}+\res_{n(\alpha_i^\vee)}(\pair{\xi^\vee,\alpha_i})+jn(\alpha_i^\vee))\alpha_i^\vee}
		\\=&\frac{1-q^{-1}}{1-e^{n(\alpha_i^\vee)\alpha_i^\vee}}e^{s_i\xi^\vee+\res_{n(\alpha_i^\vee)}(\pair{\xi^\vee,\alpha_i})\alpha_i^\vee}
	\end{align*}
	Hence $W(I_{s_i}\varphi_{\xi^\vee})$ is equal to the sum of the two contributions above (note that this is true even when $-\pair{\xi^\vee,\alpha_i}-1\equiv0\pmod{n(\alpha_i^\vee)}$: in this case the two contributions are contributions over different $k$'s). 
\end{proof}
\section{Factorizable functions on the torus and twisted multiplicativity}
\tpoint{Twisted multiplicativity as a kind of factorizability}
In this appendix we record a simple result relating factorizable functions on the torus and twisted multiplicative coefficients. We keep the same notations as in Section 5. 
\begin{nprop}\label{factorization-twisted-mult}
	For every $\nu\notin S$, let $f_\nu: \widetilde T_\nu/T_{\ro_\nu}\to \BC$ be a right $T_{\ro_\nu}$-invariant genuine function such that $f_\nu(1)=1$. Let $f^S=\prod_{\nu\notin S}f_\nu$ be the genuine function on $\widetilde T_{\BA^S}$ as in Proposition \ref{genuine-local-global}. For $\ul C\in(\fo_S^+)^r$ we define a function
	$$H_f(\ul C)=f^S(\bs^S(\eta^S(\ul C)))=\prod_{\nu\notin S}f_\nu(\bs_\nu(\eta_\nu(\ul C))).$$
	Then the function is twisted multiplicative, namely it satisfies (\ref{twisted-multiplicativity}). 
\end{nprop}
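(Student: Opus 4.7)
The plan is to combine the local cocycle identity of Proposition \ref{local-cocycle} at each place $\nu\notin S$ with the coprimality hypothesis, and then to convert the resulting product of local Hilbert symbols into $S$-power residue symbols via Hilbert's product formula.

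First, I would apply Proposition \ref{local-cocycle} at every $\nu\notin S$, viewing $\ul C,\ul C'$ inside $(k_\nu^\times)^r$. This gives
\begin{equation*}
\bs_\nu(\eta_\nu(\ul{CC'}))=d_\nu(\ul C,\ul C')\,\bs_\nu(\eta_\nu(\ul C))\bs_\nu(\eta_\nu(\ul C')),
\end{equation*}
where $d_\nu(\ul C,\ul C')=\bigl(\prod_i(C_i,C_i')_\nu^{-\sQ_i}\bigr)\bigl(\prod_{i<j}(C_i',C_j)_\nu^{\sB_{ij}}\bigr)$. Since each $f_\nu$ is genuine,
\begin{equation*}
f_\nu(\bs_\nu(\eta_\nu(\ul{CC'})))=\epsilon(d_\nu(\ul C,\ul C'))\,f_\nu\bigl(\bs_\nu(\eta_\nu(\ul C))\bs_\nu(\eta_\nu(\ul C'))\bigr).
\end{equation*}

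Next I would exploit the coprimality $\gcd(C_1\cdots C_r,C_1'\cdots C_r')=1$ to decouple the inner value of $f_\nu$. For each $\nu\notin S$, either $\nu\nmid C_1\cdots C_r$ or $\nu\nmid C_1'\cdots C_r'$, so one of $\eta_\nu(\ul C)$, $\eta_\nu(\ul C')$ lies in $T_{\ro_\nu}$, and its image under $\bs_\nu$ lies in $T_{\ro_\nu}\subset\widetilde T_\nu$ via the group-theoretic splitting. Right $T_{\ro_\nu}$-invariance of $f_\nu$ together with the normalization $f_\nu(1)=1$ then yields
\begin{equation*}
f_\nu(\bs_\nu(\eta_\nu(\ul C))\bs_\nu(\eta_\nu(\ul C')))=f_\nu(\bs_\nu(\eta_\nu(\ul C)))\cdot f_\nu(\bs_\nu(\eta_\nu(\ul C'))).
\end{equation*}
Taking the product over $\nu\notin S$ (almost all factors trivial) produces
\begin{equation*}
\frac{H_f(\ul{CC'})}{H_f(\ul C)\,H_f(\ul C')}=\epsilon\!\left(\prod_{\nu\notin S}d_\nu(\ul C,\ul C')\right).
\end{equation*}

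Finally, to match the right-hand side of (\ref{twisted-multiplicativity}), I would apply Hilbert's product formula $\prod_{\text{all }\nu}(a,b)_\nu=1$ in combination with Proposition \ref{Hilb-symbol-property}(ii), which gives $(u,u')_\nu=1$ for $u,u'\in\ro_\nu^\times$ and $\nu\notin S$. Using the coprimality of the relevant arguments, this rewrites $\prod_{\nu\notin S}(C_i,C_i')_\nu^{-\sQ_i}$ and $\prod_{\nu\notin S}(C_i',C_j)_\nu^{\sB_{ij}}$ in terms of the $S$-power residue symbols $\leg{\cdot}{\cdot}_S$ of \S\ref{subsub:hilbert-S}, identifying the expression above with the twisted-multiplicativity factor in (\ref{twisted-multiplicativity}).

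The main obstacle is not the overall strategy, since both ingredients---the local cocycle and the $T_{\ro_\nu}$-invariance trick---have already appeared in the paper (compare Lemma \ref{i-nu-eta-C} and Lemma \ref{lem:H-fact}); rather, the delicate point is the bookkeeping in the last step, where one must track signs and argument orderings of the Hilbert symbols carefully, bearing in mind the \emph{inverse} Hilbert symbol convention adopted in \S\ref{local-Hilbert-symbol}, and must invoke $n$-th power reciprocity to land precisely in the asymmetric form $\epsilon\leg{C_i}{C_i'}_S^{\sQ_i}\epsilon\leg{C_i'}{C_i}_S^{\sQ_i}$ used on the right-hand side of (\ref{twisted-multiplicativity}).
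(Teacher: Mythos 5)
Your overall strategy is the paper's: apply the local cocycle of Proposition \ref{local-cocycle} place by place, use coprimality plus $T_{\ro_\nu}$-invariance to decouple, and convert the resulting product of local symbols into $S$-power residue symbols. But there is a genuine gap in your decoupling step. At a place $\nu$ dividing $C_1'\cdots C_r'$, it is $\eta_\nu(\ul C)$ that lies in $T_{\ro_\nu}$, and in your expression $f_\nu\bigl(\bs_\nu(\eta_\nu(\ul C))\bs_\nu(\eta_\nu(\ul C'))\bigr)$ this integral element sits on the \emph{left}. Right $T_{\ro_\nu}$-invariance does not let you strip it off: since $\widetilde T_\nu$ is noncommutative, moving it to the right produces the central commutator $\prod_{i,j}(C_i,C_j')_\nu^{\sB_{ij}}$ (with $\sB_{ii}=2\sQ_i$), which by genuineness of $f_\nu$ contributes a factor $\epsilon\bigl(\prod_{i,j}(C_i,C_j')_\nu^{\sB_{ij}}\bigr)$ that your computation omits. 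This is not an $n$-th power in general (e.g.\ already for $r=1$ it is $(C_1,C_1')_\nu^{2\sQ_1}$ with $C_1\in\ro_\nu^\times$ and $\nu\mid C_1'$), so your intermediate identity $H_f(\ul{CC'})/(H_f(\ul C)H_f(\ul C'))=\epsilon\bigl(\prod_{\nu\notin S}d_\nu(\ul C,\ul C')\bigr)$ is false; the accumulated discrepancy is exactly $\epsilon\bigl(\prod_{i,j}\leg{C_i}{C_j'}_S^{\sB_{ij}}\bigr)$, and no amount of reciprocity in your last step can remove it.

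The paper's proof sidesteps this by splitting the places into $\Sigma$ (primes of $\ul C$) and $\Sigma'$ (primes of $\ul C'$) and applying the cocycle in the \emph{opposite order} at $\nu\in\Sigma'$, writing $\eta_\nu(\ul C'\ul C)=d_\nu(\ul C',\ul C)\bs_\nu(\eta_\nu(\ul C'))\bs_\nu(\eta_\nu(\ul C))$ so that the integral element always lands on the right. The resulting factor is $\prod_{\nu\in\Sigma}d_\nu(\ul C,\ul C')\cdot\prod_{\nu\in\Sigma'}d_\nu(\ul C',\ul C)$, and with this form the conversion to power residue symbols is purely definitional: $\prod_{\nu\in\Sigma}(C_i,C_i')_\nu^{-\sQ_i}=\leg{C_i'}{C_i}_S^{\sQ_i}$ because the symbol is trivial at the remaining unit places, and the symmetric piece $\leg{C_i}{C_i'}_S^{\sQ_i}$ comes from the $\Sigma'$ product. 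Neither the Hilbert product formula nor $n$-th power reciprocity is needed; your need for them is a symptom of having collapsed the two differently-ordered cocycles into one.
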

\begin{proof}
	For $\ul C$ and $\ul C'$ coprime, let $\Sigma\subseteq\places_k$ be the set of prime factors of $C$ and let $\Sigma'\subseteq\places_k$ be the set of prime factors of $C'$, then $\Sigma\cap\Sigma'=\emptyset$. For $\omega\notin S$ such that $\omega\notin\Sigma\sqcup\Sigma'$ we have $\eta_\omega(\ul C\ul C')\in T_{\ro_\omega}$, so $f_\omega(\bs_\omega(\eta_\omega(\ul C\ul C')))=1$. This implies that 
	\begin{align*}
		&H_f(\ul C\ul C')=\prod_{\nu\in\Sigma\cup\Sigma'}f_\nu(\bs_\nu(\eta_\nu(\ul C\ul C')))
		\\=&\left(\prod_{\nu\in\Sigma}f_\nu(\bs_\nu(\eta_\nu(\ul C\ul C')))\right)\left(\prod_{\nu\in\Sigma'}f_\nu(\bs_\nu(\eta_\nu(\ul C'\ul C)))\right)
		\\=&\left(\prod_{\nu\in\Sigma}f_\nu(\bs_\nu(\eta_\nu(\ul C))\bs_\nu(\eta_\nu(\ul C')))\epsilon(d_\nu(\ul C,\ul C'))\right)\left(\prod_{\nu\in\Sigma'}f_\nu(\bs_\nu(\eta_\nu(\ul C'))\bs_\nu(\eta_\nu(\ul C)))\epsilon(d_\nu(\ul C',\ul C))\right)
		\\=&\left(\prod_{\nu\in\Sigma}f_\nu(\bs_\nu(\eta_\nu(\ul C')))\epsilon(d_\nu(\ul C,\ul C'))\right)\left(\prod_{\nu\in\Sigma'}f_\nu(\bs_\nu(\eta_\nu(\ul C)))\epsilon(d_\nu(\ul C',\ul C))\right)
		\\=&H_f(\ul C)H_f(\ul C')\epsilon\left(\prod_{\nu\in\Sigma}d_\nu(\ul C,\ul C')\right)\epsilon\left(\prod_{\nu\in\Sigma'}d_\nu(\ul C',\ul C)\right)
	\end{align*}
	where we used Lemma \ref{local-cocycle}, (\ref{d-nu}) and the properties of $f_\nu$. Finally we note that by (\ref{d-nu}) and (\ref{power-residue-symbol}) we have \begin{align*}
		&\prod_{\nu\in\Sigma}d_\nu(\ul C,\ul C')=\prod_{\nu\in\Sigma}\left(\prod_{i=1}^r(C_i,C_i')_\nu^{-\mathsf Q_i}\right)\left(\prod_{1\leq i<j\leq r}(C_i',C_j)_\nu^{\mathsf B_{ij}}\right)
		\\=&\prod_{i=1}^r\leg{C_i'}{C_i}_S^{\sQ_i}\prod_{1\leq i<j\leq r}\leg{C_i'}{C_j}_S^{\sB_{ij}}.
	\end{align*}
	and similarly 
	$$\prod_{\nu\in\Sigma'}d_\nu(\ul C',\ul C)=\prod_{i=1}^r\leg{C_i}{C_i'}_S^{\sQ_i}\prod_{1\leq i<j\leq r}\leg{C_i}{C_j'}_S^{\sB_{ij}}$$
	Thus the function $H_f$ satisfies (\ref{twisted-multiplicativity}) on $(\fo_S^+)^r$. 
\end{proof}
In view of this Proposition, twisted multiplicativity should be thought as a metaplectic version of factorizability. Namely, $\widetilde T_{\BA^S}=\prod_{\nu\notin S}\widetilde T_\nu$ is a central extension of $T_{\BA^S}$ by $\mu_n(k)$, and there is a natural notion of factorizable genuine functions on $\widetilde T_{\BA^S}$. Now we have a injective natural group homomorphism $(\fo_S^+)^r\to T_{\BA^S}$ by $\ul C\mapsto(\eta_\nu(\ul C))_{\nu\notin S}$. We can thus pullback the central extension $\widetilde T_{\BA^S}\to T_{\BA^S}$ to $(\fo_S^+)^r$, which yields a central extension of the group $(\fo_S^+)^r$ denoted $\widetilde{(\fo_S^+)^r}$, and the section $\bs^S=\prod_{\nu\notin S}\bs_\nu$ induces a section $\bs$ of $\widetilde{(\fo_S^+)^r}\to (\fo_S^+)^r$. 
\begin{center}
\begin{tikzcd}
\widetilde{(\fo_S^+)^r} \arrow[r] \arrow[d, "p"]        & \widetilde T_{\BA^S} \arrow[d, "p"']      \\
(\fo_S^+)^r \arrow[r, hook] \arrow[u, "\bs", bend left] & T_{\BA^S} \arrow[u, "\bs^S"', bend right]
\end{tikzcd}
\end{center}
By the section $\bs$, a genuine function on $\widetilde{(\fo_S^+)^r}$ pulls back to a function on $(\fo_S^+)^r$. The above Proposition then asserts that if we pullback a \emph{factorizable} function on $\widetilde T_{\BA^S}$ to $\widetilde{(\fo_S^+)^r}$ and then to $(\fo_S^+)^r$, we results in a twisted multiplicative function on $(\fo_S^+)^r$. 
\tpoint{A question}Unfortunately, Proposition \ref{factorization-twisted-mult} does not give directly a proof of the Eisenstein conjecture, because the functions $I_\nu$ does not satisfy $I_\nu(1)=1$. Also, the value of $I^S(\eta^S(\ul C))$ is not equal to $H(\ul C)$ unless we make an additional assumption that $n$ is larger than \emph{twice} of the dual Coxeter number of $\bG$. So what we did in the proof of the Eisenstein conjecture is more complicated. 
\par However, in view of Proposition \ref{factorization-twisted-mult}, it is tempting to ask the following question: 
\begin{nqe}
	Is there a genuine factorizable function $f^S$ on $\widetilde T_{\BA^S}$ such that $H(\ul C)=H_f(\ul C)$? 
\end{nqe}
\bibliographystyle{abbrv}
\bibliography{MetaplecticEisensteinSeries.bib}

\end{document}